\theoremstyle{plain}
\newtheorem{theorem}{Theorem}[section]
\newtheorem{corollary}[theorem]{Corollary}
\newtheorem{lemma}[theorem]{Lemma}
\newtheorem{question}{Question}
\newtheorem{conjecture}[theorem]{Conjecture}
\theoremstyle{definition}
\newtheorem{definition}[theorem]{Definition}
\begin{document}

\pagenumbering{roman}

\thispagestyle{empty}
\newgeometry{left=1.5in,right=1in}
\linespread{1.5}
\textsc{
\vspace*{0in}
\begin{center}
\LARGE
The Graceful Tree Conjecture: \\
A Class of Graceful Diameter-6 Trees
\end{center}
\vspace{.6in}
\begin{center}
Matthew C.\ Superdock
\end{center}
\vspace{.6in}
\begin{center}
A Senior Thesis \\
Submitted to the Department of Mathematics of Princeton University \\
in Partial Fulfillment of the Requirements for the Degree of \\
Bachelor of Arts.
\end{center}
\vspace{.3in}
\begin{center}
Adviser: Shiva Kintali
\end{center}
\vspace{.3in}
\begin{center}
May 2013
\end{center}
}
\linespread{1}
\clearpage

\newpage
\vspace*{\fill}
\begin{center}
This thesis represents my own work in accordance with University regulations.
\end{center}

\begin{center}
/s/ Matthew C.\ Superdock
\end{center}
\vspace*{\fill}

\clearpage
\restoregeometry

\newpage
\addcontentsline{toc}{section}{Abstract}
\begin{center}
\Large \textbf{Abstract}
\end{center}
We survey the current state of progress on the Graceful Tree Conjecture, 
and then we present several new results toward the conjecture, driven by three new ideas:

{\raggedright
\begin{enumerate}
\item It has been proven that \emph{generalized banana trees} are graceful by rearranging the branches at the root.

\hspace{0.5in} ---Consider rearranging branches at all internal vertices.\\
\item The method of \emph{transfers} has typically involved \emph{type-1 transfers} and \emph{type-2 transfers}.

\hspace{0.5in} ---All type-2 transfers are type-1 transfers in disguise, and\\
\hspace{1in}hence can be removed from the discussion.\\
\item The method of \emph{transfers} has typically used the sequence of transfers
$$0\rightarrow n\rightarrow 1\rightarrow n - 1\rightarrow\cdots$$
\hspace{0.5in} ---Transfer backwards to manipulate the resulting labels.\\
\end{enumerate}
}

\noindent Using these ideas, we prove that several classes of diameter-6 trees are graceful, and we generalize some of these classes to larger trees.  We also introduce a class of graceful spiders, prove that attaching sufficiently many leaves to any tree gives a graceful tree, and extend known results on trees with perfect matchings.

\clearpage

\newpage
\addcontentsline{toc}{section}{Acknowledgements}
\begin{center}
\Large \textbf{Acknowledgements}
\end{center}
This thesis represents a year of God's faithfulness; he called me to work on this particular conjecture against conventional wisdom, brought about timely conversations that helped me approach the conjecture more effectively, and guided me to my key insight in a remarkable way.

I sincerely thank my adviser, Shiva Kintali, for introducing me to the conjecture and for guiding me through the research process; his patience and his generosity with his time have been invaluable.

I thank Robert Gunning for his sage advice throughout my undergraduate career, but especially this year; one conversation particularly altered my approach to the problem and helped spark my first bit of progress.

I thank Richard Zhang for helping me edit this thesis, and I thank the many people who have been encouraged me and prayed for me throughout this process, especially my family, the Bromans, the '13, and Sara Marie.

I give an especially heartfelt thanks to Allie, who has infused a dangerous amount of joy into my life these past few months, and has already changed me in surprising ways.

\clearpage



\tableofcontents
\clearpage

\pagenumbering{arabic}

\parindent=1.5em

\section{Introduction}


The Graceful Tree Conjecture, conjectured by Kotzig (see Bermond \cite{survey0}) and made famous by Rosa's 1967 paper \cite{rosa}, has drawn much attention for its remarkable simplicity.  We can introduce the conjecture informally as follows:

Consider a tree with $n$ edges and $n + 1$ vertices.  Label the vertices with the distinct integers $0, 1, \ldots , n$, and then label each edge with the positive difference between the labels of its ends.  If the edge labels are the distinct integers $1, \ldots , n$, the vertex labeling is called a \emph{graceful labeling}, and the Graceful Tree Conjecture states that every tree has such a labeling.\newline

Let $G = (V(G), E(G))$ be a simple, undirected graph, where $V(G)$ is the vertex set of $G$, and $E(G)$ is the edge set of $G$.  We define a \emph{labeling} of $G$ in general, and then we define four particular types of labelings of $G$, before narrowing our focus to trees.

\begin{definition}
A \emph{labeling} $f$ of $G$ is an injective mapping of the vertices of $G$ to the non-negative integers.  Under the labeling $f$, the \emph{label} of a vertex $v$ is $f(v)$, and the \emph{induced label} of an edge $uv$ is the absolute difference of the labels of its ends, $|f(u) - f(v)|$.
\end{definition}

Let $G$ be a graph with $n$ edges, and let $f$ be a labeling of $G$.  Let $V_{f}$ be the set of vertex labels under $f$, and let $E_{f}$ be the set of induced edge labels under $f$.  We now define our four particular types of labelings, first introduced by Rosa \cite{rosa}.  (Note that each of these types of labelings implicitly requires the induced labels of the edges to be distinct.)

\begin{definition}
The labeling $f$ is a \emph{$\rho$-labeling} if $V_{f}\subseteq\{0, 1, \ldots , 2n\}$ and $E_{f} = \{x_{1}, \ldots , x_{n}\}$, where $x_{i} = i$ or $x_{i} = (2n + 1) - i$ for each $i$.
\end{definition}

\begin{definition}
The labeling $f$ is a \emph{$\sigma$-labeling} if $V_{f}\subseteq\{0, 1, \ldots , 2n\}$ and $E_{f} = \{1, \ldots , n\}$.
\end{definition}

\begin{definition}
The labeling $f$ is a \emph{graceful labeling} if $V_{f}\subseteq\{0, 1, \ldots , n\}$ and $E_{f} = \{1, \ldots , n\}$.  The graph $G$ is \emph{graceful} if it has a graceful labeling $f$.
\end{definition}

\begin{definition}
The labeling $f$ is an \emph{$\alpha$-labeling} with \emph{index} $k$ if $f$ is graceful, and $k$ is an integer such that for each edge $uv$, either $f(u)\le k< f(v)$ or $f(v)\le k < f(u)$.
\end{definition}

We have the following immediate consequences (Rosa \cite{rosa}):

\begin{itemize}
\item The labelings form a hierarchy.  Specifically, all $\alpha$-labelings are graceful labelings, all graceful labelings are $\sigma$-labelings, and all $\sigma$-labelings are $\rho$-labelings.
\item Only graphs with at most $n + 1$ vertices can have graceful labelings.
\item Only bipartite graphs can have $\alpha$-labelings.
\end{itemize}

All trees $T$ with $n$ edges have exactly $n + 1$ vertices, so any graceful labeling $f$ of $T$ must have $V_{f} = \{0, 1, \ldots , n\}$.  In other words, any graceful labeling of a tree $T$ uses exactly the integers $\{0, 1, \ldots , n\}$ as vertex labels.

We may now state the Graceful Tree Conjecture, conjectured by Kotzig (see Bermond \cite{survey0}) and referenced by Rosa \cite{rosa} in 1967.  The conjecture has also been referred to as Rosa's conjecture, the Ringel-Kotzig conjecture, or the Ringel-Kotzig-Rosa conjecture (see Alfalayleh et.\ al.\ \cite{survey1}).

\begin{conjecture}
All trees are graceful.
\end{conjecture}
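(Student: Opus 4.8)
The plan is to attack the conjecture by strong induction, organized around the diameter of the tree, using exactly the machinery developed in this thesis: transfers (with the type-2 transfers absorbed into type-1), backward transfers, and the freedom to rearrange branches at every internal vertex. I should be candid that this is the central open problem of the area and that no unconditional proof is currently known; what follows is therefore the strategy I would pursue and a precise account of where it breaks, rather than a routine argument. The diameter induction is the natural line because the thesis already settles diameters up to $6$, so the conjecture is equivalent to pushing the inductive step through for every larger diameter simultaneously.

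Concretely, I would induct on the diameter $d$ of the tree $T$, with the number of edges $n = |E(T)|$ as a secondary parameter. The base cases are immediate: diameter $\le 2$ gives stars, and diameter $3$ gives double stars, both of which admit direct graceful labelings (place a center at $0$, its neighbor at $n$, and fill in the remaining labels greedily). For the inductive step, fix a central vertex (or central edge) of $T$, and let $L$ be the set of leaves at maximal distance from the center. Deleting $L$ yields a subtree $T'$ of strictly smaller diameter, which by the induction hypothesis carries a graceful labeling $f'$ using the labels $\{0,1,\ldots,n'\}$ with $n' = |E(T')|$, and producing exactly the induced edge labels $\{1,\ldots,n'\}$. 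The entire difficulty is then concentrated in re-attaching the leaves of $L$ so as to extend $f'$ to a graceful labeling of $T$.

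The re-attachment is where transfers enter. The labels $n'+1,\ldots,n$ are unused by $f'$, and the edges meeting $L$ must be assigned precisely the missing induced labels $\{n'+1,\ldots,n\}$; so the newly attached leaves must simultaneously consume the large vertex labels and manufacture the large edge labels, without disturbing the edge labels $\{1,\ldots,n'\}$ already realized inside $T'$. I would handle this by performing transfers at the penultimate-layer vertices (those carrying the leaves of $L$): the backward-transfer idea lets me start from a provisional distribution of the large labels and then correct the resulting internal labels to restore injectivity, while branch rearrangement at each internal vertex supplies the degrees of freedom needed to match the label demand at that vertex to the labels actually available there. Reducing all transfers to type-1 transfers, as established earlier, keeps this bookkeeping uniform across the whole penultimate layer.

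The hard part — and the reason the conjecture has resisted proof — is \emph{global} consistency of these local corrections. A transfer that resolves the label demand at one penultimate-layer vertex perturbs the pool of labels still available at its neighbors, and for an arbitrary tree there is no known invariant guaranteeing that the local fixes can always be arranged to hold at every internal vertex at once. In the diameter-$6$ families treated in this thesis the penultimate layer has bounded, explicitly analyzable structure, so the interaction can be controlled case by case; the barrier is that for unbounded diameter the number and arrangement of branches becomes arbitrary, and I do not have a mechanism forcing the simultaneous solvability of the transfer system. Constructing such an invariant — or proving that branch rearrangement plus backward transfers always leaves enough freedom to satisfy every vertex's label demand together — is exactly the step on which a complete proof would stand or fall, and it is where any serious attempt along these lines must concentrate its effort.
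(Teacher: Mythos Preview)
The statement is a \emph{conjecture}, not a theorem: the paper does not prove it and makes no claim to. It is stated as the Graceful Tree Conjecture, and the entire thesis is devoted to establishing special cases (certain diameter-$6$ classes, even-caterpillar banana trees, etc.) and developing tools (transfers, attainable sequences, BPS's) that might eventually contribute to a proof. So there is no ``paper's own proof'' to compare against.

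You correctly recognize this and present a strategy rather than a proof, which is the honest move. However, your write-up contains a factual error that undermines even the base of your proposed induction: the thesis does \emph{not} settle diameter $6$. Diameter at most $5$ is known (Hrn\u{c}iar \& Haviar, cited as Theorem~\ref{diameter_five}), but for diameter $6$ the paper only handles restricted families---trees whose root and level-$1$ vertices have an odd number of children, with further constraints on the leaves (Theorems~\ref{first_big_boy}, \ref{second_big_boy}, \ref{rearrange_those_bad_boys}). General diameter-$6$ trees remain open; indeed the paper closes by posing two conjectures that are themselves special cases of diameter $6$. So your induction would already lack its first nontrivial base case.

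Beyond that, the gap you identify---global consistency of local transfer corrections across an arbitrary penultimate layer---is real and is exactly why the paper's results come with parity and structural hypotheses: those hypotheses are what make the corresponding BPS provably attainable. Your diameter-peeling scheme also has a subtler issue you gloss over: extending a graceful labeling of $T'$ by assigning the labels $n'+1,\ldots,n$ to the outer leaves forces each new edge label to be $\ell - p$ with $\ell > n' \ge p$, and there is no reason these differences should hit $\{n'+1,\ldots,n\}$ bijectively unless the parent labels are tightly controlled. The paper's transfer machinery does not start from an arbitrary graceful labeling of a smaller tree; it starts from a labeled star and builds outward so that the alternating/transferable structure is present by construction. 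Your outline would need to guarantee that the inductively obtained labeling of $T'$ has this structure at the penultimate layer, and nothing in the induction hypothesis provides it.
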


This conjecture was originally motivated by problems in graph decompositions.  To state these problems, we need to introduce a few definitions.  In the following definitions, let $G^{*}$ be a graph with vertices $v_{0}, \ldots , v_{n^{*}}$.  (We use $G^{*}$ in order to reserve $G$ for subgraphs of $G^{*}$, since we will be considering labelings of subgraphs of $G^{*}$.)

\begin{definition}
A \emph{decomposition} of $G^{*}$ is a set $S$ of subgraphs of $G^{*}$ such that each edge of $G^{*}$ belongs to exactly one subgraph in $S$.
\end{definition}

\begin{definition}
To \emph{turn} an edge $v_{i}v_{j}$ of $G^{*}$ is to increase both indices by one, resulting in the edge $v_{i + 1}v_{j + 1}$, where we take indices modulo $n^{*} + 1$.  To \emph{turn} a subgraph $G$ of $G^{*}$ is to simultaneously turn all edges of $G$.
\end{definition}

\begin{definition}
A decomposition $S$ of $G^{*}$ is \emph{cyclic} if turning any subgraph in $S$ gives another subgraph in $S$.
\end{definition}

Two natural questions arise from these definitions, for general $G^{*}, G$:

\begin{itemize}
\item Does $G^{*}$ have a decomposition into subgraphs isomorphic to $G$?
\item Does $G^{*}$ have a cyclic decomposition into subgraphs isomorphic to $G$?
\end{itemize}

\noindent In the case where $G$ is a tree $T$ with $n$ edges, and $G^{*}$ is the complete graph $K_{2n + 1}$, the following conjectures have been made, the second a stronger version of the first.

\begin{conjecture} \label{ringel}
{\normalfont (Ringel \cite{ringel1963problem})} Let $T$ be a tree with $n$ edges.  Then the complete graph $K_{2n + 1}$ has a decomposition into subgraphs isomorphic to $T$.
\end{conjecture}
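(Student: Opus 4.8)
The plan is to deduce Ringel's conjecture from the Graceful Tree Conjecture by means of Rosa's cyclic-decomposition construction \cite{rosa}. Suppose $T$ has a graceful labeling $f$ (a $\rho$-labeling would already suffice, but for a tree with $n$ edges a graceful labeling is the natural object, and the Graceful Tree Conjecture asserts one exists). Identify the vertex set of $K_{2n+1}$ with $\mathbb{Z}_{2n+1}=\{0,1,\ldots,2n\}$ and embed $T$ as the subgraph $G_{0}\subseteq K_{2n+1}$ obtained by sending each vertex $v$ of $T$ to the vertex $f(v)$. Then form the $2n+1$ turns $G_{0},G_{1},\ldots,G_{2n}$ of $G_{0}$, where $G_{k}$ adds $k$ modulo $2n+1$ to every vertex. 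I claim $\{G_{0},\ldots,G_{2n}\}$ is a cyclic decomposition of $K_{2n+1}$ into copies of $T$.

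The key bookkeeping device is the \emph{length} of an edge $ij$ of $K_{2n+1}$, defined as $\min(|i-j|,\,2n+1-|i-j|)\in\{1,\ldots,n\}$; there are exactly $2n+1$ edges of each length, and turning preserves the length of every edge. First I would check that $G_{0}$ contains exactly one edge of each length: since $f$ is graceful with $V_{f}\subseteq\{0,\ldots,n\}$, every edge $uv$ of $T$ has $|f(u)-f(v)|\le n<2n+1-|f(u)-f(v)|$, so its length in $K_{2n+1}$ equals the induced label $|f(u)-f(v)|$, and $E_{f}=\{1,\ldots,n\}$ (a set of size $n$ for the $n$ edges) forces these lengths to be $1,\ldots,n$, each occurring once. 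Hence each $G_{k}$ likewise contains exactly one edge of each length. Next, for a fixed length $\ell$, the unique length-$\ell$ edge $\{a,b\}$ of $G_{0}$ has images $\{a+k,b+k\}$ under the $2n+1$ turns; these are pairwise distinct (two coincide only if $2(b-a)\equiv 0\pmod{2n+1}$, impossible since $2n+1$ is odd and $b\not\equiv a$), so they run through all $2n+1$ length-$\ell$ edges of $K_{2n+1}$. Summing over $\ell=1,\ldots,n$ shows every edge of $K_{2n+1}$ lies in at least one $G_{k}$; since $(2n+1)\cdot n=\binom{2n+1}{2}$, the count is tight, so each edge lies in exactly one $G_{k}$ and the decomposition is genuine. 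Because turning is a relabeling of $K_{2n+1}$ by a bijection of $\mathbb{Z}_{2n+1}$, it is an automorphism of $K_{2n+1}$, so each $G_{k}\cong G_{0}\cong T$; and cyclicity is immediate, as turning $G_{k}$ gives $G_{k+1}$ (indices mod $2n+1$), again a member of the set.

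The true obstacle is the hypothesis rather than the construction: the argument above is entirely routine once a graceful labeling is in hand, but exhibiting such a labeling for an arbitrary tree is exactly the Graceful Tree Conjecture, which remains open. The honest status of this plan is therefore conditional — it reduces Ringel's conjecture to the Graceful Tree Conjecture — and a substantial part of this thesis is devoted to producing graceful labelings for new classes of trees, thereby confirming Ringel's conjecture for those classes.
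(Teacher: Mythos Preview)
The statement you were asked to prove is stated in the paper as an \emph{open conjecture}, not a theorem; the paper gives no proof of it. What the paper does is precisely the reduction you carry out: it states Rosa's theorem that $K_{2n+1}$ has a cyclic decomposition into copies of $G$ if and only if $G$ has a $\rho$-labeling, and then observes that the Graceful Tree Conjecture therefore implies both Kotzig's and Ringel's conjectures. Your proposal spells out the forward direction of Rosa's theorem (which the paper cites without proof) via the standard length-class argument in $\mathbb{Z}_{2n+1}$, and your computation is correct. Your closing paragraph, flagging that the argument is conditional on the existence of a graceful labeling and hence only a reduction, is exactly the paper's own stance. So there is no disagreement in approach; you have simply supplied the details behind a result the paper quotes, while correctly identifying that Ringel's conjecture itself remains unproved.
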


\begin{conjecture} \label{kotzig}
{\normalfont (Kotzig, see Rosa \cite{rosa})} Let $T$ be a tree with $n$ edges.  Then the complete graph $K_{2n + 1}$ has a cyclic decomposition into subgraphs isomorphic to $T$.
\end{conjecture}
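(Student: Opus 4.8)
The target is Conjecture~\ref{kotzig}, Kotzig's statement that $K_{2n+1}$ admits a cyclic $T$-decomposition. The plan is to factor the problem through the labeling hierarchy: since every graceful labeling is a $\sigma$-labeling and every $\sigma$-labeling is a $\rho$-labeling, it suffices to exhibit a $\rho$-labeling of $T$ and then to convert that labeling into a cyclic decomposition. So I would split the argument into two parts: (i) a \emph{labeling-to-decomposition} step, which is purely combinatorial and holds for any graph with $n$ edges, and (ii) an \emph{existence} step, producing the $\rho$-labeling of the tree $T$ itself.

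For step (i), identify $V(K_{2n+1})$ with the cyclic group $\mathbb{Z}_{2n+1} = \{0, 1, \ldots, 2n\}$, so that turning an edge is exactly adding $1 \pmod{2n+1}$ to both endpoints. Embed $T$ in $K_{2n+1}$ by placing each vertex $v$ at position $f(v)$, where $f$ is the assumed $\rho$-labeling. The edges of $K_{2n+1}$ fall into $n$ \emph{length classes}, where the class of an edge $ab$ is $\min(|a - b|, 2n + 1 - |a - b|) \in \{1, \ldots, n\}$, and each class is a single orbit of size $2n + 1$ under turning. The defining condition of a $\rho$-labeling forces each induced edge label $x_{i}$, whether it equals $i$ or $2n + 1 - i$, to lie in length class $i$; hence the $n$ images of the edges of $T$ occupy the $n$ length classes one apiece. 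Turning $T$ through $t = 0, 1, \ldots, 2n$ therefore sweeps the single edge in each class through all $2n + 1$ edges of that class exactly once, with no collisions across classes. The $2n + 1$ turns of $T$ thus partition $E(K_{2n+1})$, and by construction this orbit is closed under turning, giving the desired cyclic decomposition; in particular this also yields Conjecture~\ref{ringel} as an immediate corollary, since a cyclic decomposition is a fortiori a decomposition. This step is routine and is essentially Rosa's original observation.

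Step (ii) is where the difficulty concentrates, and it is the reason Conjecture~\ref{kotzig} remains open. Nothing proved in the preliminaries guarantees that an \emph{arbitrary} tree admits a $\rho$-labeling: this would follow from the Graceful Tree Conjecture via the implication graceful $\Rightarrow$ $\rho$, but the existence of $\rho$-labelings for all trees is itself an unresolved problem, only formally weaker than gracefulness. I therefore do not expect to establish Conjecture~\ref{kotzig} in full by this route without first resolving the existence of $\rho$-labelings for all trees, and I regard that existence question as the main obstacle. The feasible program, and the one this thesis carries out, is to prove the existence of graceful (hence $\rho$-) labelings for broad families of trees, including diameter-6 trees, certain spiders, trees obtained by attaching sufficiently many leaves, and trees with perfect matchings. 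For every tree in such a family, step (i) then upgrades the labeling to a cyclic decomposition automatically, so Conjecture~\ref{kotzig} holds class by class even while the general statement stays out of reach.
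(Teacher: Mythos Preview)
Your analysis is correct and matches the paper's treatment exactly: the paper does not prove Conjecture~\ref{kotzig}, since it is an open problem. The paper simply states Rosa's theorem (that a graph with $n$ edges has a cyclic decomposition of $K_{2n+1}$ if and only if it has a $\rho$-labeling), observes that graceful $\Rightarrow$ $\rho$, and concludes that the Graceful Tree Conjecture would imply Kotzig's conjecture---precisely the reduction you outline in steps (i) and (ii), with step (ii) correctly identified as the unresolved obstacle.
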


The following theorems, which we state without proof, establish the connection between graph decompositions and the labelings defined above.

\begin{theorem}
{\normalfont (Rosa \cite{rosa})} Let $G$ be a graph with $n$ edges.  Then the complete graph $K_{2n + 1}$ has a cyclic decomposition into subgraphs isomorphic to $G$ if and only if $G$ has a $\rho$-labeling.
\end{theorem}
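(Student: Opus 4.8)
The plan is to identify the vertex set of $K_{2n+1}$ with $\mathbb{Z}_{2n+1} = \{0, 1, \ldots, 2n\}$ and to attach to each edge $\{a,b\}$ its \emph{length} $\ell(a,b) = \min(|a-b|,\, (2n+1)-|a-b|) \in \{1, \ldots, n\}$. Turning preserves length, so the whole argument reduces to bookkeeping with this one invariant. The preliminary fact I would establish first is that, for each fixed $\ell \in \{1,\ldots,n\}$, the $2n+1$ edges of length $\ell$ form a single orbit under turning, and every edge has trivial stabilizer. Indeed, the orbit of $\{0,\ell\}$ consists of the edges $\{k, k+\ell\}$ for $k \in \mathbb{Z}_{2n+1}$, and $\{k, k+\ell\} = \{k', k'+\ell\}$ forces either $k \equiv k'$ or $2\ell \equiv 0 \pmod{2n+1}$; the latter is impossible since $2n+1$ is odd and $1 \le \ell \le n$. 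In particular $K_{2n+1}$ has exactly $2n+1$ edges of each length $1, \ldots, n$, accounting for all $\binom{2n+1}{2} = n(2n+1)$ of its edges.

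For the direction $(\Leftarrow)$, view a $\rho$-labeling $f$ of $G$ as an embedding of $G$ into $K_{2n+1}$, with image $G_0$. An edge whose induced label is $i$ or $(2n+1)-i$ has length $i$, so the hypothesis $E_f = \{x_1, \ldots, x_n\}$ with $x_i \in \{i, (2n+1)-i\}$ says exactly that $G_0$ has precisely one edge of each length $1, \ldots, n$. Now let $G_0, G_1, \ldots, G_{2n}$ be the successive turns of $G_0$, each isomorphic to $G$. If an edge $e$ lay in both $G_k$ and $G_{k'}$, then the unique length-$\ell(e)$ edge of $G_0$ would be carried onto $e$ by turning $k$ times and also by turning $k'$ times, forcing $k \equiv k'$ by the orbit fact; hence the $G_k$ are pairwise edge-disjoint. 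Since $2n+1$ edge-disjoint copies of $G$ use $n(2n+1) = \binom{2n+1}{2}$ edges, they form a decomposition, and it is cyclic because turning $G_k$ gives $G_{k+1}$.

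For the direction $(\Rightarrow)$, let $S$ be a cyclic decomposition of $K_{2n+1}$ into copies of $G$ and fix any $G_0 \in S$; by cyclicity all turns of $G_0$ lie in $S$. The crux is to show the stabilizer $H \le \mathbb{Z}_{2n+1}$ of $G_0$ under turning is trivial (assuming $n \ge 1$; the case $n=0$ is vacuous). If not, $H$ is a nontrivial subgroup of the cyclic group $\mathbb{Z}_{2n+1}$, and the edge set of $G_0$ is a nonempty union of $H$-orbits of edges; since every edge has trivial stabilizer, each such orbit has size $|H|$, so $|H|$ divides $|E(G_0)| = n$. But $|H|$ also divides $2n+1$, and $\gcd(n, 2n+1) = 1$, so $|H| = 1$ — a contradiction. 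Hence the orbit of $G_0$ has size $2n+1$, and being $2n+1$ edge-disjoint copies of $G$ it already exhausts all $\binom{2n+1}{2}$ edges, so $S$ equals this orbit. Finally, for each length $\ell$, every $G_k$ has the same number $a_\ell$ of length-$\ell$ edges, the $2n+1$ copies are edge-disjoint, and together they cover all $2n+1$ edges of length $\ell$; thus $(2n+1)a_\ell = 2n+1$, i.e.\ $a_\ell = 1$. So $G_0$ has exactly one edge of each length $1, \ldots, n$, and reading off its vertex labels from the identification $V(K_{2n+1}) = \{0, \ldots, 2n\}$ gives a labeling of $G$ with $V_f \subseteq \{0,\ldots,2n\}$ and $E_f = \{x_1,\ldots,x_n\}$, $x_\ell \in \{\ell, (2n+1)-\ell\}$ — precisely a $\rho$-labeling.

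I expect the main obstacle to be the stabilizer computation in $(\Rightarrow)$: one must rule out a subgraph of the decomposition being invariant under a nontrivial rotation, and this is exactly the point where the coprimality of $n$ and $2n+1$ is used. The $(\Leftarrow)$ direction and the rest of $(\Rightarrow)$ are straightforward once the length invariant and the single-orbit fact are in place.
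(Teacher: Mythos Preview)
Your proof is correct; the paper itself states this theorem without proof, citing Rosa's original paper. What you have written is essentially the standard argument: the $(\Leftarrow)$ direction via the length invariant and the orbit structure of edges under rotation, and the $(\Rightarrow)$ direction via the stabilizer computation using $\gcd(n,2n+1)=1$. One small remark on presentation: in the last step of $(\Rightarrow)$ you might make explicit that the labeling you read off is injective because an isomorphism $G\to G_0$ is a bijection onto a vertex subset of $\{0,\ldots,2n\}$, and that the resulting $n$ induced edge labels are automatically distinct since the sets $\{\ell,(2n+1)-\ell\}$ are pairwise disjoint for $1\le\ell\le n$. Otherwise there is nothing to add.
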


\begin{theorem}
{\normalfont (Rosa \cite{rosa})} Let $G$ be a graph with $n$ edges.  If $G$ has an $\alpha$-labeling, then the complete graph $K_{2kn + 1}$ has a cyclic decomposition into subgraphs isomorphic to $G$ for each $k\in\mathbb{N}$.
\end{theorem}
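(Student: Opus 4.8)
The plan is to \emph{dilate} the given $\alpha$-labeling into an explicit cyclic decomposition of $K_{2kn+1}$. Fix an $\alpha$-labeling $f$ of $G$ with index $\lambda$: thus $V_{f}\subseteq\{0,1,\dots,n\}$, $E_{f}=\{1,\dots,n\}$, and every edge $uv$ has one end labeled $\le\lambda$ and the other labeled $>\lambda$. Identify $V(K_{2kn+1})$ with $\mathbb{Z}/(2kn+1)\mathbb{Z}$, so that turning a subgraph adds $1$ to every vertex index modulo $2kn+1$.

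First I would record how turning acts on $K_{2kn+1}$. Assign to an edge $\{a,b\}$ the \emph{length} $\min\bigl(|a-b|,\,2kn+1-|a-b|\bigr)\in\{1,\dots,kn\}$; turning preserves length, and for each $\ell\in\{1,\dots,kn\}$ the edges of length $\ell$ are exactly $\{a,a+\ell\}$ for $a\in\mathbb{Z}/(2kn+1)\mathbb{Z}$, which are $2kn+1$ distinct edges forming a single turning orbit (here we use that $2kn+1$ is odd, so $2\ell\not\equiv 0$). Since $k(2kn+1)n=kn(2kn+1)=\binom{2kn+1}{2}$, a turning-closed family of $k(2kn+1)$ copies of $G$, each with $n$ edges, is a decomposition of $K_{2kn+1}$ as soon as it covers every edge at least once.

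Next I would construct the base copies. For $j=1,\dots,k$, define $f_{j}\colon V(G)\to\{0,1,\dots,2kn\}$ by $f_{j}(v)=f(v)$ if $f(v)\le\lambda$ and $f_{j}(v)=f(v)+(j-1)n$ if $f(v)>\lambda$, and let $G^{(j)}$ be the image of $G$ in $K_{2kn+1}$ under $f_{j}$. The $\alpha$-property is exactly what is needed here: because every edge crosses $\lambda$, the induced label of an edge under $f_{j}$ equals its induced label under $f$ plus $(j-1)n$, so as $uv$ ranges over $E(G)$ the induced labels of $G^{(j)}$ run over $\{(j-1)n+1,\dots,jn\}$, each once. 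All these values are $\le kn$, so they are the lengths (not the complements) of the corresponding edges; moreover $f_{j}$ takes values in $\{0,\dots,kn\}$ with no wraparound and is injective (the shift only moves the ``high'' part up, and a low label $\le\lambda$ can never equal a shifted high label), so $G^{(j)}\cong G$. Finally, the sets $\{(j-1)n+1,\dots,jn\}$ for $j=1,\dots,k$ partition $\{1,\dots,kn\}$.

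Then I would assemble the decomposition: letting $G^{(j)}_{i}$ denote $G^{(j)}$ turned $i$ times, take $\mathcal{S}=\{\,G^{(j)}_{i}:1\le j\le k,\ 0\le i\le 2kn\,\}$, which is turning-closed and cyclic. Given any edge $e$ of $K_{2kn+1}$, say of length $\ell$, there is a unique $j$ with $\ell\in\{(j-1)n+1,\dots,jn\}$; the copy $G^{(j)}$ contains an edge of length $\ell$, and since the length-$\ell$ edges form one turning orbit, some turn $G^{(j)}_{i}$ contains $e$. Hence $\mathcal{S}$ covers every edge, so by the count above it is a cyclic decomposition of $K_{2kn+1}$ into subgraphs isomorphic to $G$, for every $k\in\mathbb{N}$. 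I expect the one genuinely non-routine step to be the length/orbit bookkeeping of the second paragraph — identifying turning orbits with length classes and pinning down their sizes — since everything afterward is a matter of matching the base copies' induced labels to these classes.
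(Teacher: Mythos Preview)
The paper does not actually prove this theorem: it is introduced with ``The following theorems, which we state without proof, establish the connection\ldots'' and is simply cited to Rosa~\cite{rosa}. So there is no proof in the paper to compare against.

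That said, your argument is correct and is precisely the classical proof from Rosa's paper. The dilation $f_{j}$ that shifts only the ``high'' part of the $\alpha$-labeling by $(j-1)n$ is the standard construction; the $\alpha$-property is exactly what guarantees that every edge label increases by $(j-1)n$, so the $k$ base copies together realise each length in $\{1,\dots,kn\}$ once. Your orbit/length bookkeeping and the counting argument (total edge-incidences equal $|E(K_{2kn+1})|$, hence covering once implies covering exactly once, and in particular the $k(2kn+1)$ copies are pairwise edge-disjoint and distinct) are clean and complete.
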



Therefore, if every tree $T$ has a $\rho$-labeling, then Conjecture \ref{kotzig} holds, and hence Conjecture \ref{ringel} also holds.  Since any graceful labeling is also a $\rho$-labeling,
the Graceful Tree Conjecture also implies both of these conjectures.

Though originally motivated by these conjectures in graph decompositions, the Graceful Tree Conjecture has since taken on a life of its own.  There is now general interest in solving it for its own sake, and the conjecture has inspired the study of other graph labelings, most notably harmonious labelings (Graham \& Sloane \cite{graham1980additive}) and cordial labelings (Cahit \cite{cahit1987cordial}).\newline

%

Now we show several standard ways of transforming graceful labelings and $\alpha$-labelings into new graceful labelings and $\alpha$-labelings.  We are mainly concerned with graceful labelings, but also with $\alpha$-labelings, since a later theorem will allow us to obtain a graceful tree by combining two trees, one with a graceful labeling and one with an $\alpha$-labeling.

\begin{lemma}
Let $f$ be a graceful labeling of $G$.  Then the labeling $f'$ defined by $f'(v) = n - f(v)$ is also a graceful labeling of $G$.
\end{lemma}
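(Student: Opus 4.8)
The plan is to verify directly that $f'$ is a graceful labeling by checking the two defining conditions: that $f'$ is an injective map into $\{0, 1, \ldots, n\}$, and that the induced edge labels $E_{f'}$ are exactly $\{1, \ldots, n\}$. Since the map $x \mapsto n - x$ is a bijection from $\{0, 1, \ldots, n\}$ to itself, and since $f$ is graceful so that $V_f = \{0, 1, \ldots, n\}$ (as noted in the excerpt, any graceful labeling of a tree — or more generally any graceful graph where $V_f \subseteq \{0,\ldots,n\}$ — has image inside this set), it follows immediately that $f'$ is injective and that $V_{f'} \subseteq \{0, 1, \ldots, n\}$.

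Next I would handle the edge labels. For any edge $uv$ of $G$, the induced label under $f'$ is
\[
|f'(u) - f'(v)| = |(n - f(u)) - (n - f(v))| = |f(v) - f(u)| = |f(u) - f(v)|,
\]
which is precisely the induced label of $uv$ under $f$. Hence $E_{f'} = E_f = \{1, \ldots, n\}$, and in particular the induced edge labels are distinct. This establishes that $f'$ is a graceful labeling of $G$.

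There is essentially no obstacle here: the key observation is simply that $f \mapsto f'$ is the "complementation" symmetry of the label set, and complementation preserves all pairwise differences. The only point requiring the slightest care is confirming that $f'$ does land in $\{0, 1, \ldots, n\}$ — this uses that the vertex labels of $f$ lie in $\{0, \ldots, n\}$, which holds by definition of a graceful labeling. Everything else is a one-line computation.
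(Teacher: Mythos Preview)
Your proof is correct and follows essentially the same approach as the paper: verify that $f'$ is injective with image in $\{0,1,\ldots,n\}$ because $x\mapsto n-x$ is a bijection of this set, and observe that the induced edge labels are unchanged since $|(n-f(u))-(n-f(v))|=|f(u)-f(v)|$.
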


\begin{proof}
Since $f$ is injective and has range $\{0, 1, \ldots , n\}$, $f'$ is also injective and has range $\{0, 1, \ldots , n\}$.  The induced edge labels are the same under $f, f'$, so $f'$ is graceful.
\end{proof}

\begin{definition}
The labeling $f'$ produced by the lemma above is called the \emph{complementary labeling} of $f$.
\end{definition}

\begin{lemma}
Let $f$ be an $\alpha$-labeling of $G$ with index $k$.  Then the complementary labeling $f'$ is also an $\alpha$-labeling of $G$ with index $n - k - 1$.
\end{lemma}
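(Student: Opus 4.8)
The plan is to verify directly that the complementary labeling $f'$, which the previous lemma already guarantees is a graceful labeling of $G$, satisfies the defining property of an $\alpha$-labeling with the claimed index $n - k - 1$. Recall $f'(v) = n - f(v)$. Since $f$ has index $k$, for each edge $uv$ we have either $f(u) \le k < f(v)$ or $f(v) \le k < f(u)$; without loss of generality assume the first. The task is then to show that this forces $f'(v) \le n - k - 1 < f'(u)$ (the roles swap, which is fine since the $\alpha$-labeling condition is symmetric in the two endpoints).

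First I would take the inequality $f(u) \le k < f(v)$ and apply the order-reversing map $x \mapsto n - x$. This sends $f(u) \le k$ to $n - f(u) \ge n - k$, i.e. $f'(u) \ge n - k$, and sends $k < f(v)$ to $n - k > n - f(v)$, i.e. $f'(v) < n - k$. So far this gives $f'(v) < n - k \le f'(u)$, which is almost the desired statement but with threshold $n - k$ rather than $n - k - 1$ and with non-strict/strict inequalities in the wrong spots. The key observation that fixes this is integrality: all labels are integers, so $f'(v) < n - k$ is equivalent to $f'(v) \le n - k - 1$, and $f'(u) \ge n - k$ is equivalent to $f'(u) > n - k - 1$. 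Combining, $f'(v) \le n - k - 1 < f'(u)$, which is exactly the $\alpha$-labeling condition at index $n - k - 1$ for the edge $uv$. The symmetric case is identical with $u$ and $v$ interchanged.

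It remains to check that $n - k - 1$ is a legitimate index value, i.e. an integer, which is immediate since $n$ and $k$ are integers. (One might also note $0 \le n - k - 1$, but this is automatic: since $G$ has at least one edge $uv$ with $f(u) \le k < f(v) \le n$, we get $k < n$, so $n - k - 1 \ge 0$; and $k \ge 0$ follows from $f(u) \le k$ with $f(u) \ge 0$, so $n - k - 1 \le n - 1$.) The only real content is the interplay between reversing the inequality direction under $x \mapsto n - x$ and the shift by one coming from integrality — there is no genuine obstacle here, just the bookkeeping of strict versus non-strict inequalities. I would present it as a short direct computation: state the two cases for the edge $uv$, apply $x \mapsto n-x$ to each inequality, invoke integrality to absorb the shift, and conclude.
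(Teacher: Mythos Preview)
Your proposal is correct and follows essentially the same approach as the paper: invoke the previous lemma for gracefulness, take an arbitrary edge with $f(u)\le k<f(v)$, apply $x\mapsto n-x$ to obtain $f'(v)<n-k\le f'(u)$, and use integrality to rewrite this as $f'(v)\le n-k-1<f'(u)$. The paper's proof is just a terser version of exactly this computation; your additional remarks on the range of the index are sound but not required.
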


\begin{proof}
By the previous lemma, $f'$ is graceful.  Suppose $uv$ is an edge of $G$.  Since $f$ is an $\alpha$-labeling, we have, without loss of generality, $f(u)\le k < f(v)$.  Then $f'(v) < n - k\le f'(u)$, so that $f'(v)\le n - k - 1 < f'(u)$.  Hence $f'$ is an $\alpha$-labeling of $G$ with index $n - k - 1$.
\end{proof}

\begin{lemma} \label{inverse}
{\normalfont (Koh, Rogers \& Tan \cite{koh2})} Let $f$ be an $\alpha$-labeling of $G$.  Then the labeling
$$f'(v) = \left\{
     \begin{array}{lr}
       k - f(v) & \text{if } f(v)\le k\\
       n + k + 1 - f(v) & \text{if } f(v) > k
     \end{array}
   \right.
$$
is also an $\alpha$-labeling with index $k$.
\end{lemma}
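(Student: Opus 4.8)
The plan is to verify, in order, the three things we need: that $f'$ is a well-defined injective labeling with $V_{f'}\subseteq\{0,1,\ldots,n\}$; that $f'$ is graceful, i.e.\ $E_{f'}=\{1,\ldots,n\}$; and that $k$ witnesses $f'$ as an $\alpha$-labeling. The whole argument is a direct computation, organized around the observation that $f'$ reverses the ``low block'' $\{0,\ldots,k\}$ and the ``high block'' $\{k+1,\ldots,n\}$ separately, while shifting the edge labels to their complements $i\mapsto n+1-i$.

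First I would use the $\alpha$-property of $f$: every vertex label $f(v)$ lies either in $\{0,\ldots,k\}$ or in $\{k+1,\ldots,n\}$, and $k$ separates the two sides of every edge. On the low side, $v\mapsto k-f(v)$ is the reversal of $\{0,\ldots,k\}$ restricted to the labels actually used, so it is injective with image contained in $\{0,\ldots,k\}$. On the high side, if $k+1\le f(v)\le n$ then $k+1\le n+k+1-f(v)\le n$, so $v\mapsto n+k+1-f(v)$ is injective with image contained in $\{k+1,\ldots,n\}$. Since these two images lie in disjoint ranges, $f'$ is injective, and plainly $V_{f'}\subseteq\{0,\ldots,n\}$.

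Next, for the edge labels, take any edge $uv$; since $f$ is an $\alpha$-labeling with index $k$ we may assume $f(u)\le k<f(v)$, and then $|f'(u)-f'(v)| = |(k-f(u))-(n+k+1-f(v))| = |(f(v)-f(u))-(n+1)|$. Because $f$ is graceful, $f(v)-f(u)=|f(u)-f(v)|$ is the induced $f$-label of $uv$, hence lies in $\{1,\ldots,n\}$; thus $(f(v)-f(u))-(n+1)\in\{-n,\ldots,-1\}$ and the induced $f'$-label of $uv$ equals $(n+1)-(f(v)-f(u))$. So $f'$ sends each edge to the complement of its $f$-label, and since $E_f=\{1,\ldots,n\}$ we get $E_{f'}=\{(n+1)-i: i\in\{1,\ldots,n\}\}=\{1,\ldots,n\}$; hence $f'$ is graceful. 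Finally, for the same edge, $f'(u)=k-f(u)\le k$ (as $f(u)\ge 0$) and $f'(v)=n+k+1-f(v)\ge k+1$ (as $f(v)\le n$), so $f'(u)\le k<f'(v)$, and $f'$ is an $\alpha$-labeling with index $k$.

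There is no deep obstacle here. The one step that needs care is the edge-label computation: the absolute value $|(f(v)-f(u))-(n+1)|$ must be resolved using the bound $1\le f(v)-f(u)\le n$, and this bound is precisely what gracefulness of $f$ supplies — without invoking it one cannot conclude the simplification to $(n+1)-(f(v)-f(u))$. A secondary point worth stating explicitly is why the low and high images cannot collide: it is the $\alpha$-property of $f$ that forces each side's $f$-labels, and therefore its $f'$-labels after reversal, to stay inside $\{0,\ldots,k\}$ and $\{k+1,\ldots,n\}$ respectively.
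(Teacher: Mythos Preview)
Your proof is correct and follows essentially the same route as the paper: pick an edge $uv$ with $f(u)\le k<f(v)$, compute that $f'(v)-f'(u)=(n+1)-(f(v)-f(u))$, and observe that this bijects the edge labels onto $\{1,\ldots,n\}$ while preserving the separation by $k$. You are simply more explicit than the paper about injectivity of $f'$ and about resolving the absolute value, but the core computation is identical.
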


\begin{proof}
Suppose $uv$ is an edge of $G$.  Since $f$ is an $\alpha$-labeling, we have, without loss of generality, $f(u)\le k < f(v)$.  Then $f'(u)\le k < f'(v)$, and
\begin{align*}
f'(v) - f'(u) &= (n + k + 1 - f(v)) - (k - f(u))\\
&= (n + 1) - (f(v) - f(u))
\end{align*}
Therefore, the induced labels of the edges under $f'$ are also $\{1, \ldots , n\}$, so $f'$ is a graceful labeling and hence an $\alpha$-labeling with index $k$.
\end{proof}

\begin{definition}
The labeling $f'$ produced by the lemma above is called the \emph{inverse labeling} of $f$.
\end{definition}


\section{Known Results}

\subsection{Paths}

Let $P_{n}$ be the path with $n$ edges.

\begin{theorem} \label{path}
{\normalfont (Rosa \cite{rosa})} All paths are graceful.
\end{theorem}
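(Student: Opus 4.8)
The plan is to exhibit an explicit graceful labeling of $P_{n}$ — the classical ``zigzag'' (or back-and-forth) labeling — and verify directly that it works. Write the vertices of $P_{n}$ in order as $v_{0}, v_{1}, \ldots , v_{n}$, so that $v_{i}v_{i + 1}$ is an edge for $0\le i\le n - 1$. I would define a labeling $f$ by alternately drawing labels from the bottom and the top of the range $\{0, 1, \ldots , n\}$: set $f(v_{i}) = i/2$ when $i$ is even, and $f(v_{i}) = n - (i - 1)/2$ when $i$ is odd.

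The first step is to check that $f$ is a labeling, i.e.\ that it is injective with $V_{f} = \{0, 1, \ldots , n\}$. The even-indexed vertices receive the labels $0, 1, 2, \ldots , \lfloor n/2\rfloor$, an initial segment of $\{0, \ldots , n\}$, while the odd-indexed vertices receive $n, n - 1, \ldots$ down to $\lfloor n/2\rfloor + 1$, a final segment; together these partition $\{0, \ldots , n\}$. This step needs a brief case split on the parity of $n$ to confirm that the two segments abut exactly, with no overlap and no gap.

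The second step is to compute the induced edge labels. For the edge $v_{i}v_{i + 1}$, whether $i$ is even (so that $i + 1$ is odd) or $i$ is odd (so that $i + 1$ is even), a one-line computation gives $|f(v_{i}) - f(v_{i + 1})| = n - i$. Hence as $i$ runs over $0, 1, \ldots , n - 1$ the edge labels run over $n, n - 1, \ldots , 1$, so $E_{f} = \{1, \ldots , n\}$ and $f$ is graceful.

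There is no serious obstacle here; the only things to watch are the parity bookkeeping in the injectivity step and keeping the two subcases ($i$ even versus $i$ odd) straight when computing edge labels. One could instead argue by induction on $n$, peeling off the endpoint carrying the largest label, but the explicit construction is cleaner and, as a bonus, shows that the labeling is in fact an $\alpha$-labeling (with index $\lfloor (n - 1)/2\rfloor$), which will be convenient later when combining trees.
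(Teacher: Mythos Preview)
Your proof is correct and is exactly the paper's approach: the same zigzag labeling $0,\,n,\,1,\,n-1,\ldots$, just written out with explicit formulas and a fuller verification than the paper bothers with. One small slip in your bonus remark: the $\alpha$-labeling index is $\lfloor n/2\rfloor$, not $\lfloor (n-1)/2\rfloor$ (check $n$ even, where the ``small'' labels run up to $n/2$).
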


\begin{proof}
Label $P_{n}$ by starting at one end of the path and alternating between the least and greatest remaining label along the path, so that the labels are
$$0,\; n,\; 1,\; n - 1,\; \ldots$$
\end{proof}

It is natural to ask, which vertices of $P_{n}$ can receive which labels in a graceful labeling?  This is an interesting problem for general trees $T$, and an especially important problem for the label 0 (or equivalently, by taking complementary labelings, the label $n$), since a later theorem will allow us to combine any tree with an $\alpha$-labeling and any tree with a graceful labeling by identifying the vertices labeled 0, and obtain a larger graceful tree.

\begin{definition}
(Chung \& Hwang \cite{rotatable}, Bloom \cite{bloom1979chronology}) A tree $T$ is \emph{$0$-rotatable} if, for each vertex $v$ of $T$, there exists a graceful labeling $f$ of $T$ with $f(v) = 0$.
\end{definition}

For paths, in most cases we can find a graceful labeling such that a chosen vertex receives a chosen label, as the following theorems show.

\begin{theorem} \label{paths_0-rotatable}
{\normalfont (Rosa \cite{rosa2})} The path $P_{n}$ is $0$-rotatable for all $n$.
\end{theorem}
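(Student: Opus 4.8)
The plan is to exhibit, for each vertex $v$ of $P_n$, an explicit graceful labeling of $P_n$ that assigns $v$ the label $0$. Write the path as $u_0 u_1 \cdots u_n$, and suppose we wish to place $0$ at $u_j$ for some fixed $j$ with $0 \le j \le n$. The natural idea is to reuse the ``zigzag'' labeling from Theorem \ref{path}: that labeling already gives $0$ to an endpoint, so the cases $j = 0$ and (via the complementary labeling, which moves $0$ to the other end) $j = n$ are immediate. For interior $j$, I would try to build a labeling by running two zigzags outward from $u_j$ in the two directions along the path, carefully interleaving which of the ``small'' labels ($0,1,2,\dots$) and ``large'' labels ($n, n-1, \dots$) get used on each side so that the induced edge labels are exactly $\{1,\dots,n\}$ with no repeats.

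Concretely, the first step is to set $f(u_j) = 0$. The edge $u_{j-1}u_j$ should get label $n$, so set $f(u_{j-1}) = n$; the edge $u_j u_{j+1}$ should then get some label, and the cleanest choice is to alternate: on the ``left'' arm $u_j, u_{j-1}, u_{j-2}, \dots, u_0$ continue the usual alternation $0, n, 1, n-1, 2, n-2, \dots$, and on the ``right'' arm $u_j, u_{j+1}, \dots, u_n$ use the labels not yet consumed, again in an alternating high/low pattern but started appropriately so the first right-edge label is distinct from all left-edge labels. The second step is to check two things: (a) the multiset of vertex labels used is exactly $\{0, 1, \dots, n\}$, which follows because the two arms partition the label pool by construction; and (b) the $n$ induced edge labels are exactly $\{1, \dots, n\}$. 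For (b) I would observe that within a single alternating arm the consecutive differences are $n, n-1, n-2, \dots$ in decreasing steps (this is the content of the proof of Theorem \ref{path}), so the left arm uses the top few edge labels and the right arm uses a contiguous block below them, and the one ``seam'' edge at $u_j$ is handled separately; a short bookkeeping argument shows the blocks tile $\{1,\dots,n\}$ exactly.

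The main obstacle is getting the parity/indexing at the seam right: depending on whether $j$ is even or odd, and on the lengths $j$ and $n-j$ of the two arms, the arm that ``ends low'' versus ``ends high'' flips, and one must start the right arm's alternation from the correct end (smallest remaining vs.\ largest remaining) so that no label is repeated and the edge labels stay contiguous. I expect this forces a case split on the parity of $j$ (or equivalently on $\min(j, n-j)$), with the two cases being mirror images of each other. An alternative that may dodge the casework entirely: induct on $n$, using the fact (essentially Rosa's eureka construction) that from a graceful labeling of $P_{n-2}$ with $0$ at a prescribed vertex one can splice in two new vertices at an end and relabel, thereby shifting the target vertex; but the explicit two-arm construction is more transparent and is the route I would write up, citing Rosa \cite{rosa2} for the original argument and Theorem \ref{path} for the single-arm edge-label computation.
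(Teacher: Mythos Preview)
The paper does not prove this theorem; it is stated and attributed to Rosa \cite{rosa2} without argument. So there is no proof in the paper to compare against, only the question of whether your sketch can be completed.

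Your two-arm zigzag has a genuine gap. Take $n=5$ and $j=2$. The left arm $u_2,u_1,u_0$ receives labels $0,5,1$ and consumes edge labels $5,4$; the remaining vertex labels are $\{2,3,4\}$, and the right arm $u_2,u_3,u_4,u_5$ must produce edge labels $\{1,2,3\}$. Alternating ``largest remaining, smallest remaining'' gives $f(u_3),f(u_4),f(u_5)=4,2,3$, whose first edge $|4-0|=4$ repeats a left-arm edge label; starting low gives $2,4,3$ with edges $2,2,1$, repeating $2$. A valid right arm does exist---e.g.\ $0,3,2,4$ with edges $3,1,2$---but no simple high/low alternation of the leftover block produces it, and the ``short bookkeeping argument'' you promise does not materialize.

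The underlying obstruction is structural: once the left arm has consumed the extreme labels, the right arm must realize edge labels $\{1,\ldots,n-j\}$ on a path one of whose endpoints is labeled $0$ while all other vertices come from a block $\{a,\ldots,a+n-j-1\}$ with $a>1$. The edge at $0$ then has label at least $a$, so the small edge labels $1,\ldots,a-1$ must all be produced \emph{inside} the block, which forces a non-alternating pattern. Solving this sub-problem in general is essentially as hard as the theorem itself, so the reduction gains nothing. A correct route is either to follow Rosa's explicit construction (which does not decompose into two independent zigzags), or---more in the spirit of this paper---to invoke the stronger $\alpha$-labeling result, Theorem~\ref{zero_alpha}, which already gives a graceful labeling with $0$ at any prescribed vertex of $P_n$ except the centre of $P_4$, and then exhibit a single labeling (say $1,4,0,2,3$) for that one exceptional case.
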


\begin{theorem}
{\normalfont (Flandrin, Fournier \& Germa \cite{path-labelings2})} Let $n\ge 8$.  Then for all vertices $v$ of $P_{n}$, and for all $k$ with $0\le k\le n$, there exists a graceful labeling $f$ of $P_{n}$ with $f(v) = k$.
\end{theorem}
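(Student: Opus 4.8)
The plan is to produce explicit graceful labelings, organized by a symmetry reduction, a family of ``modified zigzag'' constructions, and a finite check at the end.

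First I would exploit the two symmetries available for path labelings. Reversing the path (the automorphism $v_i \mapsto v_{n-i}$) sends the pair (position $j$, label $k$) to $(n-j, k)$, while replacing a graceful labeling $f$ by its complementary labeling $v \mapsto n - f(v)$ sends $(j,k)$ to $(j, n-k)$; since the zigzag labeling is an $\alpha$-labeling one could also throw in its inverse labelings. Because reversal and complementation act independently on the two coordinates, it suffices to realize every pair with $j \le \lfloor n/2 \rfloor$ and $k \le \lfloor n/2 \rfloor$.

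Second comes the key rigidity to keep in mind: in any graceful labeling of $P_n$ the edge of induced label $n$ must join the vertex labeled $0$ to the vertex labeled $n$, since from a vertex of label $a$ one reaches only $a \pm n$, forcing $a \in \{0,n\}$; more generally the large induced labels essentially force a zigzag $0, n, 1, n-1, 2, \dots$ near the vertices carrying $0$ and $n$. Rosa's zigzag places label $k$ at position $2k$, and its reversal/complement variants place it at only a few other positions, so the heart of the construction is to \emph{truncate} the zigzag after a controlled number of steps and splice in a short monotone run (or a second, lower-amplitude zigzag) so that a chosen interior vertex receives the chosen label while the remaining induced labels still fill out $\{1,\dots,n\}$. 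Concretely, for $v_j$ interior I would split $P_n$ at $v_j$, run the block of large differences as a zigzag along the side containing the vertices labeled $0$ and $n$, set $f(v_j) = k$, and fill the other side using the remaining small differences arranged as a short zigzag starting at $k$; the orientations are then forced up to reflection by the requirement that all labels land in $\{0,\dots,n\}$. When $k$ is so small that it can only be reached late in a zigzag, a variant interleaving one large difference near $v_j$ is needed, and the endpoint cases $j\in\{0,n\}$ (where $v_j$ must be a path end of whatever zigzag it sits on) are treated separately.

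Third, the finitely many pairs $(j,k)$ not covered by these generic families --- those near the boundary of the reduced region, and all pairs for the smallest $n$ --- would be dispatched by ad hoc labelings. The hypothesis $n\ge 8$ is precisely what guarantees enough ``room'' for the two blocks of differences to coexist without a collision at the splice; for $n\le 7$ genuine exceptions occur (for example, the middle vertex of $P_2$ can never be labeled $1$), so any proof must quarantine the small cases. I expect the main obstacle to be exactly this bookkeeping: checking that for each $j$ in the reduced range the modified-zigzag family really sweeps out every admissible $k$, that no vertex label and no induced edge label is repeated at the splice point, and that the leftover boundary pairs together with the base values of $n$ all work out. A clean choice of which differences go on which side, and of where to place the splice, is what makes or breaks the argument.
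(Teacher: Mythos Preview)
The paper does not prove this theorem; it is stated as a known result with a bare citation to Flandrin, Fournier \& Germa and no argument is given. So there is no ``paper's own proof'' to compare your proposal against.

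As for the proposal itself: what you have written is a plan, not a proof. The symmetry reduction to $j\le\lfloor n/2\rfloor$, $k\le\lfloor n/2\rfloor$ is correct and standard. The idea of splicing a zigzag on one side of $v_j$ with a second block of small differences on the other side is the natural thing to try, and is indeed how arguments of this type proceed. But the entire content of the theorem lies in the part you describe as ``bookkeeping'': you have not specified which labels go on which side, which induced edge labels each block is meant to produce, or why the splice vertex never creates a repeated vertex or edge label. Your sentence ``A clean choice of which differences go on which side, and of where to place the splice, is what makes or breaks the argument'' is exactly right, and you have not made that choice. The phrase ``fill the other side using the remaining small differences arranged as a short zigzag starting at $k$'' is not a construction until you say precisely which differences, in which order, and verify the range constraint $0\le f(v)\le n$ along that side. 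Until those details are written down and checked case by case (and the finite residual cases actually exhibited), this remains an outline of where a proof might live rather than a proof.
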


It has been shown that the number of graceful labelings of $P_{n}$ grows asymptotically at least as fast as $(5/3)^{n}$ (Aldred, Siran \& Siran \cite{path-asymptotics}), and the bound has since been improved to $(2.37)^{n}$ (Adamaszek \cite{path-asymptotics2}).


\begin{theorem}
{\normalfont (Rosa \cite{rosa})} All paths have an $\alpha$-labeling.
\end{theorem}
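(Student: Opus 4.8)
The plan is to exhibit an explicit $\alpha$-labeling of $P_n$ and verify the index condition. I would take the same labeling used in the proof of Theorem \ref{path}: starting from one end, alternate between the least and greatest unused labels, producing the vertex sequence $0,\, n,\, 1,\, n-1,\, 2,\, n-2,\, \ldots$ along the path. This labeling is already known to be graceful, so the only new thing to check is that there is an integer $k$ such that every edge has one endpoint with label $\le k$ and one with label $> k$.

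The natural candidate is $k = \lfloor (n-1)/2 \rfloor$, so that the ``small'' labels are $0, 1, \ldots, \lfloor (n-1)/2 \rfloor$ and the ``large'' labels are the rest. First I would observe that in the alternating sequence, consecutive vertices always consist of one label drawn from the low half $\{0,1,\ldots\}$ and one from the high half $\{\ldots, n-1, n\}$: the labels in odd positions are $0, 1, 2, \ldots$ (increasing from below) and the labels in even positions are $n, n-1, n-2, \ldots$ (decreasing from above). So I would split into cases according to the parity of $n$, count how many vertices lie in each position class, and check that the increasing run of small labels and the decreasing run of large labels meet exactly at the threshold $k$ — i.e.\ that the small labels used are precisely $\{0,\ldots,k\}$ and the large ones precisely $\{k+1,\ldots,n\}$. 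Since each edge of the path joins a vertex in an odd position to one in an even position, this immediately gives the $\alpha$-condition.

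Alternatively, and perhaps more cleanly, I would just invoke the hierarchy already noted in the excerpt together with a direct structural remark: the labeling is ``monotone bipartite'' in the sense that one side of the bipartition of $P_n$ receives an initial segment of $\{0,\ldots,n\}$ and the other side the complementary final segment. One then reads off $k$ as the largest label on the side containing the vertex labeled $0$. I would present whichever of these two phrasings is shorter, likely as a one-line remark that the path labeling from Theorem \ref{path} already has this separating property.

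The main obstacle is entirely bookkeeping: getting the off-by-one in $k$ right and handling the parity of $n$ so that the two monotone runs of labels partition $\{0,1,\ldots,n\}$ at exactly the point $k$. There is no conceptual difficulty — the labeling is explicit and the $\alpha$-property is visibly built in — so the ``hard part'' is just writing the case split on $n \bmod 2$ cleanly enough to be convincing without being tedious.
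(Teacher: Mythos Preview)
Your approach is essentially identical to the paper's: the paper's entire proof is the single sentence ``The graceful labeling of $P_{n}$ in the proof of Theorem \ref{path} is an $\alpha$-labeling with index $\lfloor n/2\rfloor$.'' Your proposed verification via the bipartition of positions is exactly the reasoning implicit behind that sentence.

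One small correction: your candidate index $k = \lfloor (n-1)/2 \rfloor$ is off by one when $n$ is even. For instance, with $n=2$ the labeling is $0,2,1$; the small side is $\{0,1\}$, so $k=1=\lfloor n/2\rfloor$, not $\lfloor (n-1)/2\rfloor = 0$. In general, the odd-position vertices receive labels $0,1,\ldots,\lfloor n/2\rfloor$ and the even-position vertices receive $\lfloor n/2\rfloor+1,\ldots,n$, so the correct index is $\lfloor n/2\rfloor$, matching the paper. You anticipated exactly this kind of off-by-one as the only real hazard, and indeed it is.
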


\begin{proof}
The graceful labeling of $P_{n}$ in the proof of Theorem \ref{path} is an $\alpha$-labeling with index $\lfloor n/2\rfloor$.
\end{proof}

As with graceful labelings, it is natural to ask, which vertices of $P_{n}$ can receive which labels in an $\alpha$-labeling?  It is useful to formulate this problem in terms of bipartitions of $T$, since the possible labels for a vertex depend on the size of its part.

Let $T$ be a tree, and let $V_{1}, V_{2}$ be vertex sets of a bipartition of $T$.  If $T$ has an $\alpha$-labeling, then either
\begin{itemize}
\item The vertices in $V_{1}$ are labeled $\{0, \ldots , |V_{1}| - 1\}$, and the vertices in $V_{2}$ are labeled $\{n - |V_{2}| + 1, \ldots , n\}$, or
\item The vertices in $V_{1}$ are labeled $\{n - |V_{2}| + 1, \ldots , n\}$, and the vertices in $V_{1}$ are labeled $\{0, \ldots , |V_{2}| - 1\}$.
\end{itemize}
If the $\alpha$-labeling is of the first type, then the complementary $\alpha$-labeling is of the second type, and vice versa, so that there is a one-to-one correspondence between the $\alpha$-labelings of each type.

Consider a vertex $v$ of $T$, and suppose $v\in V_{i}$.  The vertices in $V_{i}$ are labeled either $\{0, \ldots , |V_{i}| - 1\}$ or $\{n - |V_{i}| + 1, \ldots , n\}$.  We would like to know which labels $v$ can take in each of these cases.  But it suffices to answer this question in just one case, since our one-to-one correspondence will then give the corresponding answer in the other case.  So we may assume that the vertices in $V_{i}$ are labeled $\{0, \ldots , |V_{i}| - 1\}$.

These considerations motivate the following definition:

\begin{definition}
(Kotzig \cite{depth}) Let $T$ be a tree, and let $V_{1}, V_{2}$ be vertex sets of a bipartition of $T$.  Let the \emph{potential depth set} $D_{p}(v)$ of a vertex $v$ of $T$ be the set $\{0, \ldots , |V_{i}| - 1\}$, where $v\in V_{i}$.  Let the \emph{depth set} $D(v)$ of $T$ be the subset of elements of $D_{p}(v)$ which are the label of $v$ in some $\alpha$-labeling of $T$.
\end{definition}

With this framework in place, we now present a complete answer to the question of which vertices of $P_{n}$ can receive which labels in an $\alpha$-labeling.
As with graceful labelings, we give special attention to the label 0.

\begin{theorem} \label{zero_alpha}
{\normalfont (Rosa \cite{rosa2})}  Let $v$ be a vertex of the path $P_{n}$.  There exists an $\alpha$-labeling $f$ of $P_{n}$ with $f(v) = 0$, unless $v$ is the central vertex of $P_{4}$.
\end{theorem}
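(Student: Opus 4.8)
The plan is to give an explicit $\alpha$-labeling of $P_n$ placing $0$ at the chosen vertex $v$, handling small $n$ by hand and using a uniform construction for large $n$. Label the vertices of $P_n$ as $u_0, u_1, \ldots, u_n$ along the path, and suppose $v = u_j$; by the reflection symmetry of the path we may assume $j \le n/2$. The standard ``zig-zag'' labeling from Theorem~\ref{path} assigns $0$ to an endpoint and is an $\alpha$-labeling with index $\lfloor n/2\rfloor$, so the case $j = 0$ is already done. For the general case, I would break $P_n$ at $u_j$ into two subpaths, one on the vertices $u_0, \ldots, u_j$ and one on $u_j, \ldots, u_n$, and build the labeling so that $u_j$ gets $0$, one side receives small labels and the other side receives large labels in a way compatible with a single bipartition-respecting threshold $k$.

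The key steps, in order: (1) Fix the bipartition $V_1, V_2$ of $P_n$ with $u_j \in V_1$; then in an $\alpha$-labeling of the ``first type'' the vertices of $V_1$ get labels $\{0, \ldots, |V_1|-1\}$, so $0 \in D_p(u_j)$ is a necessary starting point. (2) On the subpath $u_j, u_{j-1}, \ldots, u_0$, run a zig-zag that starts at $u_j$ with $0$ and alternately picks the next-smallest available label in $V_1$'s range and the next-smallest available label in $V_2$'s range; this consumes a prefix of each part's label block. (3) On the subpath $u_j, u_{j+1}, \ldots, u_n$, continue with a zig-zag on the remaining labels, again alternating between the two blocks, arranged so that after both halves are labeled, $V_1$ has been assigned exactly $\{0, \ldots, |V_1|-1\}$ and $V_2$ exactly $\{|V_1|, \ldots, n\}$, with every edge crossing the threshold $k = |V_1| - 1$. (4) Verify the induced edge labels are exactly $\{1, \ldots, n\}$: as in the path proof, each step of a zig-zag that alternates between ``take smallest remaining low label'' and ``take smallest remaining high label'' produces edge differences that decrease by exactly $1$ each time, and one checks that gluing the two zig-zags at $u_j$ does not create a collision. (5) Finally, dispose of the small cases $n \le 7$ (in particular exhibiting why $P_4$ with $v$ its center genuinely fails — there $|V_1| = 1$ forces $0$ to be isolated in its part, but the center of $P_4$ lies in the part of size $2$, or a direct case check shows no valid threshold exists) by explicit labelings or an exhaustive argument.

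The main obstacle I expect is step (4) combined with the parity bookkeeping in steps (2)--(3): when you glue two zig-zags at the vertex $u_j$, the two subpaths each want to ``start counting'' their edge differences from the top of the remaining range, and making both halves simultaneously exhaust the low block $\{0,\ldots,|V_1|-1\}$ and the high block while keeping all $n$ edge differences distinct requires choosing the right offsets depending on the parities of $j$ and $n-j$. This is essentially a careful but elementary interval-packing argument; the nontrivial content is identifying exactly which $(n,j)$ pairs force a conflict, and showing the only genuine obstruction is $(n,j) = (4,2)$. I would organize this by writing the candidate label of $u_i$ as an explicit piecewise-linear formula in $i$, $j$, $n$ (with cases by the parity of $i-j$), then directly computing $|f(u_{i+1}) - f(u_i)|$ and checking the multiset of these values equals $\{1,\ldots,n\}$ and that the threshold condition holds.
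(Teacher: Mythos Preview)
The paper does not actually prove this theorem; it is stated as a cited result of Rosa with no proof given, so there is nothing to compare your approach against. That said, your outline is a reasonable route to a direct proof, and the two-sided zig-zag idea glued at $u_j$ is essentially the standard way one constructs such labelings.

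There is, however, a concrete error in your step~(5). In $P_4$ the vertices are $u_0,\ldots,u_4$, the bipartition is $\{u_0,u_2,u_4\}\cup\{u_1,u_3\}$, and the central vertex $u_2$ lies in the part of size~$3$, not size~$2$; your parenthetical explanation of the obstruction (``$|V_1|=1$ forces $0$ to be isolated\ldots'') does not match the actual situation. The genuine reason the center of $P_4$ fails is that once $f(u_2)=0$ forces $\{f(u_1),f(u_3)\}=\{3,4\}$, the two remaining edges $u_0u_1$ and $u_3u_4$ cannot be given distinct labels from $\{1,2\}$: a short case check on the placement of $1,2$ at $u_0,u_4$ always produces a repeated edge label. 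You should replace your parenthetical with this (or an equivalent) argument.

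Your honest assessment of step~(4) is accurate: the gluing of two zig-zags is where all the content lies, and the parity bookkeeping is genuinely fiddly. If you carry out your plan of writing $f(u_i)$ as an explicit piecewise formula in $i,j,n$ and checking the edge differences, it will go through, but expect several parity cases (on $j$, on $n-j$, and on $n$) rather than one clean formula. An alternative that avoids some of this casework is to use induction on $n$: given an $\alpha$-labeling of $P_{n-2}$ with $0$ at the desired interior vertex, append two vertices at the end carrying label $n$ (or $n-1$) and shift the high block up by one, then verify the new edge labels are $n$ and $n-1$. Either route works; just be aware that your current write-up is a plan, not yet a proof, and the $P_4$ analysis needs correcting.
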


\begin{theorem}
{\normalfont (Cattell \cite{path-labelings})} Let $v$ be a vertex of the path $P_{n}$.  Then $D(v) = D_{p}(v)$ except in the following cases:
\begin{itemize}
\item If $v$ is a leaf of $P_{4k}$, then $D(v) = D_{p}(v)\backslash k$.
\item If $v$ is the central vertex of $P_{4}$, then $D(v) = D_{p}(v)\backslash 0$.
\item If $v$ is adjacent to a leaf of $P_{6}$, then $D(v) = D_{p}(v)\backslash 1$.
\end{itemize}
\end{theorem}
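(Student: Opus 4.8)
The plan is to split the claim into a \emph{positive direction} (every non-exceptional pair (vertex, label) is realized by some $\alpha$-labeling) and a \emph{negative direction} (the three listed values really are unattainable). First I would fix the bipartition $V_{1},V_{2}$ of $P_{n}$ with $v_{0}\in V_{1}$, so that $V_{1}$ is the set of even-indexed vertices and $V_{2}$ the odd-indexed ones. Recall that in any $\alpha$-labeling of $P_{n}$ the part receiving the label-block containing $0$ has size one more than the index, so, after applying the complementary labeling and the inverse labeling of Lemma~\ref{inverse}, we may always \emph{normalize} to the case where the part $V_{i}$ containing the chosen vertex $v$ receives exactly $\{0,\ldots,|V_{i}|-1\}$ --- precisely the situation that $D(v)\subseteq D_{p}(v)$ measures. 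The inverse labeling also yields a reflection $j\leftrightarrow(|V_{i}|-1)-j$ on $f(v)$, and reversing the path interchanges $v_{i}$ with $v_{n-i}$; so it suffices to realize, for each $v=v_{i}$ with $i\le n/2$ and each target $j\le(|V_{i}|-1)/2$ not on the exceptional list, an $\alpha$-labeling with $f(v)=j$.

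For the positive direction I would begin with Rosa's $\alpha$-labeling $0,\,n,\,1,\,n-1,\,\ldots$ from Theorem~\ref{path} and perturb it by \emph{transfers} along the path: a transfer rewrites the labels on a short interval of vertices so as to move the label of one chosen vertex up or down by a controlled amount while keeping the labeling graceful and still an $\alpha$-labeling, and pushing transfers outward from $v$ toward an end of the path sweeps $f(v)$ through the values of $D_{p}(v)$. Equivalently, one can split $P_{n}$ at $v=v_{i}$ into the subpaths $v_{0}\cdots v_{i}$ and $v_{i}\cdots v_{n}$, $\alpha$-label each with the prescribed common label at $v_{i}$, and glue them by the standard concatenation of $\alpha$-labeled paths, inducting on $n$ with Theorem~\ref{zero_alpha} and the short paths as base cases. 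In either approach the real content is the exact accounting that the vertex labels are $\{0,\ldots,n\}$ and the induced edge labels $\{1,\ldots,n\}$; this bookkeeping breaks into a handful of cases according to $n\bmod 4$ and $i\bmod 2$, and it is there --- not in any single clever step --- that the three exceptional configurations first appear, as exactly the pairs the construction cannot reach.

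The negative direction is, I expect, the real obstacle, and it rests on a telescoping identity for $\alpha$-labeled paths. Writing $e_{t}$ for the induced label of the $t$-th edge and using that consecutive vertices lie on opposite sides of the index, each $e_{t}$ equals $\pm(f(v_{t})-f(v_{t-1}))$ with alternating sign; summing the $e_{t}$ after grouping them in consecutive pairs expresses $f(v_{0})+f(v_{n})$ through the two part-sums of the vertex labels. For $P_{4k}$ both leaves lie in the larger part, which in the normalized labeling carries $\{0,\ldots,2k\}$, and the identity collapses to $f(v_{0})+f(v_{n})=2k$; since the two leaf labels are distinct elements of $\{0,\ldots,2k\}$, neither can equal $k$, so $k\notin D(v)$ for a leaf $v$ of $P_{4k}$, and together with the positive direction (which realizes $j$ and $2k-j$ at once for every $j\ne k$) this gives $D(v)=D_{p}(v)\setminus\{k\}$. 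The remaining exceptional cases --- the central vertex of $P_{4}$ and a neighbor of a leaf in $P_{6}$ --- reduce to a finite check: once the part containing $v$ is fixed to carry $\{0,1,2\}$ only a few label patterns survive, and computing the edge labels directly (in $P_{4}$, for instance, the two ``outer'' edges would be forced to sum to the wrong value) excludes precisely the stated label and nothing more.

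To close the argument I would confirm that the exception list is complete: for $n$ odd the two parts have equal size and the identity imposes no obstruction, and for each residue of $n$ modulo $4$ the same identities leave every other value of $D_{p}(v)$ consistent, which the positive-direction constructions then realize. The single hardest point is making that construction uniform enough to cover \emph{every} surviving pair $(v,j)$ without gaps --- the invariants locate where a gap could occur, but only an explicit $\alpha$-labeling for each such $(v,j)$ actually finishes the proof.
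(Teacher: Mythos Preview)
The paper does not prove this theorem at all: it is stated as a result of Cattell \cite{path-labelings} and then the paper immediately moves on to caterpillars. So there is no ``paper's own proof'' to compare your proposal against; this is a cited result in the survey portion of the thesis, not something the author re-proves.

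That said, your sketch is a reasonable outline of how such a proof would go, and your negative-direction argument for the leaf case is essentially correct. Summing $e_t$ with the signs dictated by the $\alpha$-index gives
\[
\sum_{t=1}^{4k} e_t \;=\; 2\sum_{v\in V_2} f(v) \;-\; 2\sum_{v\in V_1} f(v) \;+\; f(v_0) + f(v_{4k}),
\]
and plugging in $\sum e_t = 2k(4k+1)$, $\sum_{V_2} f = k(6k+1)$, $\sum_{V_1} f = k(2k+1)$ yields exactly $f(v_0)+f(v_{4k}) = 2k$, forcing $k\notin D(v)$ for each leaf. The $P_4$ and $P_6$ exclusions are indeed small finite checks. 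The positive direction is where your proposal is thinnest: ``perturb by transfers'' and ``glue $\alpha$-labeled subpaths'' are the right ideas, but turning them into a construction that provably hits every non-excluded pair $(v,j)$ is the bulk of Cattell's work, and you have not actually carried it out. If you want a complete proof you will need either explicit labeling families indexed by $(n,i,j)$ or a clean induction, with the residue-class bookkeeping done in full.
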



\subsection{Caterpillars}

\begin{definition}
(see Morgan \cite{lobsters-perfect}) The tree $T$ is \emph{$m$-distant} if there exists a path $P$, such that all vertices of $T$ are a distance at most $m$ from $P$.
\end{definition}

\begin{definition}
A \emph{caterpillar} is a 1-distant tree.
\end{definition}




\begin{theorem}
{\normalfont (Rosa \cite{rosa})} All caterpillars have $\alpha$-labelings.
\end{theorem}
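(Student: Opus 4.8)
The plan is to write down an explicit labeling generalizing the zig-zag path labeling of Theorem~\ref{path}. Let $P = p_1 p_2 \cdots p_\ell$ be a path witnessing that the caterpillar $T$ (with $n$ edges) is $1$-distant, and for each $i$ let $L_i$ be the set of neighbours of $p_i$ not lying on $P$; since $T$ is a tree, each such neighbour is a leaf, so $V(T)$ is the disjoint union of $\{p_1,\dots,p_\ell\}$ and the $L_i$ (some of which may be empty). As $T$ is bipartite, fix a $2$-colouring with parts $A \ni p_1$ and $B$; then every leaf in $L_i$ lies in the part opposite to that of $p_i$, and $|A|+|B| = n+1$.

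I would then label $T$ by sweeping along $P$ from $p_1$ to $p_\ell$ while maintaining two counters, $\mathrm{lo}$ starting at $0$ and $\mathrm{hi}$ starting at $n$. On reaching a spine vertex $p_i$: if it lies in $A$, give it the current value of $\mathrm{lo}$ and then increment $\mathrm{lo}$; if it lies in $B$, give it $\mathrm{hi}$ and then decrement $\mathrm{hi}$. Immediately afterwards, assign the leaves in $L_i$ (which lie in the opposite part) successive values taken from the opposite counter, incrementing or decrementing after each. By construction $A$ receives exactly $\{0,1,\dots,|A|-1\}$ and $B$ receives $\{|A|,\dots,n\}$; since every edge of $T$ joins $A$ to $B$, every edge crosses $k := |A|-1$, so once we know the labeling is graceful it is automatically an $\alpha$-labeling of index $k$.

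It remains to check gracefulness, which I would do by induction along the spine with the following invariant: after the sweep has handled $p_1,\dots,p_i$ and all their leaves, the edges already determined carry precisely the top $c_i$ induced labels $\{n-c_i+1,\dots,n\}$, where $c_i$ is their number, while $f(p_i) = \mathrm{lo}-1$ or $\mathrm{hi}+1$ according as $p_i\in A$ or $p_i\in B$, and $\mathrm{hi}-\mathrm{lo} = n-c_i-1$. The step is a short computation in two symmetric cases: using the side condition on $f(p_i)$, the spine edge $p_i p_{i+1}$ receives induced label $\mathrm{hi}-\mathrm{lo}+1$, and then each successive pendant edge at $p_{i+1}$ lowers the induced label by exactly $1$ as one counter moves by $1$; the base case $i=1$ is the same calculation with no incoming spine edge. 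When $i = \ell$ every vertex has been used, $c_\ell = n$, and the induced labels are exactly $\{1,\dots,n\}$.

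I expect the only real obstacle to be bookkeeping: keeping the two counters, the two parts, and the two cases $p_i\in A$ versus $p_i\in B$ aligned through the inductive step, and pinning down the invariant so that it propagates cleanly (in particular, that the side conditions on $f(p_i)$ and on $\mathrm{hi}-\mathrm{lo}$ are exactly the ones that carry forward). As a consistency check, taking all $L_i = \emptyset$ collapses the construction to the labeling $0,\,n,\,1,\,n-1,\dots$ of Theorem~\ref{path}.
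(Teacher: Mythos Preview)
Your construction is exactly the one the paper gives: sweep along the spine alternating smallest and largest remaining labels, assigning to each spine vertex and then to its pendant leaves before moving on. The paper simply asserts that ``continuing in this way gives the desired $\alpha$-labeling,'' whereas you spell out the two-counter formalism and an inductive invariant to verify it; the approaches are the same.
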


\begin{proof}
Let $T$ be a caterpillar, and let $P$ be a path of $T$ such that each vertex of $T$ is a distance at most 1 from $P$.  Let $v_{0}, \ldots , v_{k}$ be the vertices of $P$, in order.  We start at one end of the path and alternate between small and large labels along the path, also labeling the non-path vertices as we go.  The first few steps are as follows:
\begin{itemize}
\item Give $v_{0}$ the label 0.
\item Give the neighbors of $v_{0}$ not in $P$ the largest remaining labels.
\item Give $v_{1}$ the largest remaining label.
\item Give the neighbors of $v_{1}$ not in $P$ the smallest remaining labels.
\item Give $v_{2}$ the smallest remaining label.
\end{itemize}
Continuing in this way gives the desired $\alpha$-labeling.
\end{proof}

As with paths, it is important to determine which vertices of a caterpillar can be labeled 0 in a graceful labeling or $\alpha$-labeling.  Caterpillars are not $0$-rotatable in general; consider for example the tree obtained by replacing an edge of the star $K_{1, 3}$ with a path of length 3 (Duke \cite{duke1969can}).  However, the following results, particularly the first theorem below, will later allow us to attach caterpillars to arbitrary graceful graphs.

\begin{definition}
Let $v$ be a vertex of a graph $G$.  The \emph{eccentricity} of $v$ is the maximum distance, over all vertices $u$ of $G$, between $u$ and $v$.
\end{definition}

\begin{theorem} \label{caterpillar}
{\normalfont (Hrn\u{c}iar \& Haviar \cite{diameter5})} Let $v$ be a vertex of a caterpillar $T$, such that either $v$ has maximum eccentricity or $v$ is adjacent to a vertex of maximum eccentricity.  Then there exists an $\alpha$-labeling $f$ of $T$ with $f(v) = 0$.
\end{theorem}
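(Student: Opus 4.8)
The plan is to reduce the statement to the construction used in the proof that every caterpillar has an $\alpha$-labeling. That construction takes a caterpillar $T$ together with a path $P = s_0 s_1 \cdots s_p$ with every vertex of $T$ at distance at most $1$ from $P$ --- call such a $P$ a \emph{spine} of $T$ --- and outputs an $\alpha$-labeling $f$ with $f(s_0) = 0$; crucially, it never uses that $P$ is a \emph{longest} such path, only that it is a spine. So it is enough to exhibit, for the given vertex $v$, some spine of $T$ having $v$ as an endpoint.

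I would first record two structural facts. (The trees that are a single vertex, a single edge, or a star are $0$-rotatable and may be checked by hand, so assume the derived path $D = u_0 u_1 \cdots u_m$ of $T$ --- the path obtained by deleting all leaves --- has $m \ge 1$.) \emph{Fact 1: every diametral path $P$ of $T$ is a spine.} Since every vertex of $T$ lies on $D$ or is adjacent to $D$, it suffices to prove $D \subseteq P$. As leaves of $T$ have degree $1$, the non-leaf part of $P$ is a contiguous subpath $u_a u_{a+1} \cdots u_b$ of $D$, so $P$ has length at most $(b - a) + 2 \le m + 2$; but $u_0$ and $u_m$, being non-leaves of $T$ with only one neighbour on $D$, each have a leaf neighbour, so $\operatorname{diam}(T) \ge m + 2$, forcing $a = 0$, $b = m$, i.e.\ $D \subseteq P$. \emph{Fact 2: every vertex of maximum eccentricity in $T$ is a leaf.} A non-leaf vertex equals some $u_i$, and the vertex farthest from $u_i$ is a leaf at $u_0$ or at $u_m$, so $\operatorname{ecc}(u_i) = \max(i+1,\, m-i+1) \le m+1 < m+2 \le \operatorname{diam}(T)$.

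Now I split on the hypothesis. If $v$ has maximum eccentricity, then $v$ is a leaf by Fact 2; choosing $w$ with $d(v,w) = \operatorname{diam}(T)$ gives a diametral path $P$ with endpoint $v$, which is a spine by Fact 1, and the caterpillar construction applied with $s_0 = v$ produces the desired $\alpha$-labeling. If instead $v$ is adjacent to a vertex $u$ of maximum eccentricity, then $u$ is a leaf and $v$ is its unique neighbour; a diametral path out of $u$ has the form $P = u, v, \ldots, w$, a spine by Fact 1, and deleting the leaf $u$ leaves $P' = v, \ldots, w$, still a spine --- the only vertex removed is the leaf $u$, whose sole neighbour $v$ remains on $P'$, so no vertex of $T$ loses its ``distance at most $1$'' status --- now with $v$ as an endpoint; apply the construction with $s_0 = v$.

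The one genuinely load-bearing step is Fact 1 (and with it the truncation in the second case): one must verify that a diametral path really runs the full length of the derived path and that removing its terminal leaf preserves the spine property. Everything after that is bookkeeping, because the known caterpillar construction automatically places the label $0$ at the starting endpoint of whatever spine it is given; the only other thing to check is the short list of degenerate caterpillars, where the conclusion is immediate.
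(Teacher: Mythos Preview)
Your proof is correct and follows the same approach as the paper: exhibit a spine of $T$ with $v$ as an endpoint and then apply Rosa's caterpillar construction, which places the label $0$ at the starting vertex of the spine. The paper's proof simply asserts in one sentence that such a spine exists, whereas you supply the detailed justification via your Facts~1 and~2 and the case split on whether $v$ itself or a neighbour of $v$ has maximum eccentricity.
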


\begin{proof}
There exists a path $P$ of $T$, starting with $v$, such that all vertices of $T$ are a distance at most 1 from $P$.  Then the construction from the proof above produces an $\alpha$-labeling $f$ with $f(v) = 0$, as desired.
\end{proof}


We end this section by presenting one particular class of 0-rotatable caterpillars, without proof.

\begin{definition}
(Chung \& Hwang \cite{rotatable}) A \emph{$t$-toe caterpillar} is a caterpillar whose internal vertices all have degree exactly $t + 2$.
\end{definition}

\begin{theorem}
{\normalfont (Chung \& Hwang \cite{rotatable})} All $t$-toe caterpillars are 0-rotatable.
\end{theorem}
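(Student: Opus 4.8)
The plan is to normalise the shape of a $t$-toe caterpillar, dispatch its ``end'' vertices with Theorem~\ref{caterpillar}, and handle the remaining ``interior'' vertices by cutting the caterpillar at a spine vertex and reassembling graceful pieces at a shared $0$-vertex. Fix notation: let $T$ be a $t$-toe caterpillar and take its spine $v_{0},v_{1},\dots,v_{n}$ to be a longest path, so that $v_{0},v_{n}$ are leaves, the internal vertices are exactly $v_{1},\dots,v_{n-1}$, and each of these carries exactly $t$ pendant leaves (``toes''); put $N=n+(n-1)t=|E(T)|$, so graceful labelings of $T$ use $\{0,1,\dots,N\}$. If $n\le 3$ there are no interior spine vertices and the next step handles everything, and if $t=0$ then $T$ is a path and Theorem~\ref{paths_0-rotatable} applies, so assume $t\ge 1$ and $n\ge 4$. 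A short eccentricity computation shows the vertices of maximum eccentricity in $T$ are precisely $v_{0}$, $v_{n}$, the toes at $v_{1}$, and the toes at $v_{n-1}$, whose only neighbours are $v_{1}$ and $v_{n-1}$; so Theorem~\ref{caterpillar} already gives, for each such vertex $w$, an $\alpha$-labeling of $T$ with $w$ labeled $0$. It remains to put $0$ on (a) an interior spine vertex $v_{j}$ with $2\le j\le n-2$, and (b) a toe $\ell$ of such a $v_{j}$.

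For (a): split $T$ at $v_{j}$ into the sub-caterpillar $T_{L}$ spanned by $v_{0},\dots,v_{j}$ together with all toes of $v_{1},\dots,v_{j}$ (so in $T_{L}$ the vertex $v_{j}$ is a spine endpoint bearing $t$ toes), and the sub-caterpillar $T_{R}$ spanned by $v_{j},\dots,v_{n}$ together with the toes of $v_{j+1},\dots,v_{n-1}$ (so $v_{j}$ is a leaf of $T_{R}$); these share only $v_{j}$ and their union is $T$. One checks that $v_{j}$ is adjacent to a maximum-eccentricity vertex of $T_{L}$ (one of its own toes) and is itself a maximum-eccentricity vertex of $T_{R}$, so Theorem~\ref{caterpillar} yields an $\alpha$-labeling of $T_{L}$ and an $\alpha$-labeling (hence a graceful labeling) of $T_{R}$, each with $v_{j}$ labeled $0$. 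The tree-merging theorem foreshadowed in the introduction --- identifying the $0$-vertices of a tree with an $\alpha$-labeling and a tree with a graceful labeling --- then produces a graceful labeling of $T$ in which the merged vertex $v_{j}$ is still labeled $0$.

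For (b): since $\ell$ is a leaf of $T$ attached to $v_{j}$, it suffices to produce a graceful labeling $g$ of $T-\ell$ with $g(v_{j})=0$. Indeed, such a $g$ uses the labels $\{0,1,\dots,N-1\}$; setting $g(\ell)=N$ extends it to a graceful labeling of $T$ (the new edge $\ell v_{j}$ receives the one unused label $N$) in which $\ell$ is labeled $N$, and complementing this labeling moves $\ell$ to $0$. So it suffices to label the caterpillar $T-\ell$ --- in which $v_{j}$ now bears $t-1$ toes while every other internal vertex still bears $t$ --- with $v_{j}$ labeled $0$, and this is done by the split-at-$v_{j}$ argument of (a) applied to $T-\ell$ in place of $T$.

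I expect (b) to be the main obstacle. No merging argument can place the label $0$ directly on a leaf --- the identified vertex in the merging construction necessarily receives label $0$, and a leaf cannot be that vertex --- so the deletion-and-complementation detour through $T-\ell$ is essentially forced; and one must check it survives the case $t=1$, where deleting $\ell$ removes $v_{j}$'s last toe, so $v_{j}$ becomes a bare spine endpoint and the small pieces degenerate (for $j$ near $2$, into stars), which means re-verifying the eccentricity hypothesis of Theorem~\ref{caterpillar} for $v_{j}$ in those pieces. The remainder --- boundary bookkeeping for small $n$ and the handful of eccentricity computations used above --- is routine.
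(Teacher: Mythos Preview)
The paper does not prove this theorem; it is explicitly presented ``without proof,'' with a citation to Chung and Hwang. So there is no paper argument to compare against, and I can only assess your proposal on its own merits.

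There is a genuine gap in case~(a). You assert that identifying the $0$-vertices of $T_L$ and $T_R$ via the merging theorem produces a graceful labeling of $T$ \emph{in which $v_j$ is still labeled $0$}. That is not what the merging theorem gives. Look at the construction behind Theorems~\ref{combine_alpha_alpha} and~\ref{combine_graceful}: one first replaces $f_1$ by its inverse $\alpha$-labeling $f_1'$, which moves $v_1$ from label $0$ to label $k_1$ (the $\alpha$-index), and in the resulting labeling $g$ of the merged tree the identified vertex sits at $g(v_1)=k_1$, not at $0$. In your situation the bipartition class of $T_L$ containing $v_j$ has at least two vertices (it also contains $v_{j-2}$, since $j\ge 2$), so $k_L\ge 1$. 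None of the available symmetries rescue this: the complementary labeling sends $v_j$ to $N-k_L$, and the inverse $\alpha$-labeling sends it to $k_R$. The merge certifies that $T$ is graceful, but it gives you no control over the label landing on $v_j$ --- and that control is exactly what $0$-rotatability demands.

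Since your case~(b) reduces to case~(a) applied to $T-\ell$, it inherits the same defect. There is no cheap repair using only the tools in this paper: Theorem~\ref{caterpillar} only places $0$ near the ends of a caterpillar, and every $\alpha$-merge at a shared $0$-vertex relabels that vertex to an index. The original Chung--Hwang argument builds the required labelings directly rather than by decomposition, precisely to retain control of the $0$-label.
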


\subsection{Symmetrical trees}

In this section, we present several important methods of forming larger graceful trees from smaller ones.  All of these methods are based on a single method called the \emph{$\Delta$-construction}, which 
allows us to combine several copies of a graceful tree $T$ to obtain a larger graceful tree.

In this section, we use $n$ to refer to the number of vertices, rather than the number of edges, of a tree, departing from our usual convention.  This makes the theorem statements and proofs slightly cleaner.

\begin{definition}
(Stanton \& Zarnke \cite{stanton1973labeling}, see Koh, Rogers \& Tan \cite{koh3} and Burzio \& Ferrarese \cite{generalized-delta}) Let $S, T$ be two trees, and let $v$ be a vertex of $T$.  Consider attaching one copy of $T$ at each vertex of $S$, by identifying each vertex of $S$ with the vertex corresponding to $v$ in a distinct copy of $T$.  We denote the resulting tree by $S\Delta T$, and we call the construction the \emph{$\Delta$-construction}.

\end{definition}

\begin{theorem} \label{delta}
{\normalfont (Stanton \& Zarnke \cite{stanton1973labeling})} If $S, T$ are graceful, then $S\Delta T$ is graceful.
\end{theorem}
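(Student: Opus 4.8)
Put $s=|V(S)|$ and $t=|V(T)|$; then $S\Delta T$ has $st$ vertices and $s(t-1)+(s-1)=st-1$ edges, so a graceful labeling must use all of $\{0,\dots,st-1\}$ on the vertices and have edge labels exactly $\{1,\dots,st-1\}$. Fix a graceful labeling $g$ of $S$ (vertex labels $0,\dots,s-1$), a graceful labeling $h$ of $T$ (vertex labels $0,\dots,t-1$), and a bipartition $V(T)=A\sqcup B$. Every vertex $x$ of $S\Delta T$ lies in a unique copy $T_w$ of $T$ ($w\in V(S)$) and corresponds there to a unique vertex $u=u(x)$ of $T$; I would define
$$
f(x)=\begin{cases} t\,g(w)+h(u) & \text{if } u\in A,\\ t\,(s-1-g(w))+h(u) & \text{if } u\in B. \end{cases}
$$
Thus the copy $T_w$ is placed into the ``block'' of $t$ consecutive labels indexed by $g(w)$, except that its $B$-part is reflected into the block indexed by $s-1-g(w)$. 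This reflection is the crux: under a plain block construction every copy of $T$ would contribute exactly the edge-label set $\{1,\dots,t-1\}$, so the edge labels would collide.

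I would first check that $f$ is a bijection onto $\{0,\dots,st-1\}$ by inverting it: the residue $f(x)\bmod t$ equals $h(u)$, which recovers $u$ and hence which of $A,B$ contains $u$; the quotient $\lfloor f(x)/t\rfloor$ then equals $g(w)$, since both $k\mapsto k$ and $k\mapsto s-1-k$ are bijections of $\{0,\dots,s-1\}$; and $(w,u)$ determines $x$. Next I would compute the induced edge labels, splitting into the two kinds of edges of $S\Delta T$. An edge arising from an edge $ww'$ of $S$ joins the marked vertices of $T_w$ and $T_{w'}$, which correspond to the \emph{same} vertex $v$ of $T$, lying in the same part; so its induced label is $t\,|g(w)-g(w')|$ regardless of whether $v\in A$ or $v\in B$. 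Since $g$ is graceful on $S$, these labels are exactly $\{t,2t,\dots,(s-1)t\}$, the multiples of $t$ in $\{1,\dots,st-1\}$.

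An edge arising from an edge $uu'$ of $T$, say $u\in A$ and $u'\in B$, inside the copy $T_w$: writing $k=g(w)$ and $\delta=h(u)-h(u')$ (so $e:=|\delta|\in\{1,\dots,t-1\}$), its induced label is
$$
\bigl|\,t\,g(w)+h(u)-t\,(s-1-g(w))-h(u')\,\bigr|=\bigl|\,t\,(2k-s+1)+\delta\,\bigr|.
$$
As $k$ runs over $\{0,\dots,s-1\}$, the integer $2k-s+1$ runs over all integers of one fixed parity in $[-(s-1),s-1]$; since $0<e<t$, the $s$ resulting labels are non-multiples of $t$ and are pairwise distinct. The final step is to show that, taken over all $t-1$ edges of $T$ and all $s$ copies, these $s(t-1)$ labels are precisely the non-multiples of $t$ in $\{1,\dots,st-1\}$, each once. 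Since $h$ is graceful, $T$ has exactly one edge with $|h\text{-difference}|=e$ for each $e\in\{1,\dots,t-1\}$; sorting a candidate label $N=qt+\rho$ ($1\le\rho\le t-1$) by its residue $\rho$, the residue-$\rho$ labels produced above are exactly those of the form $qt\pm\rho$ (together with $\rho$ itself when it occurs), contributed by the $T$-edges with $e=\rho$ and with $e=t-\rho$, and as $k$ varies they fill each admissible block $q$ exactly once. Combining with the multiples of $t$ from the $S$-edges yields all of $\{1,\dots,st-1\}$, so $f$ is graceful.

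I expect this last counting step to be the only real work: verifying that the residue-by-residue tally is an exact bijection (with a harmless split according to the parity of $s$, which governs which parity class the numbers $2k-s+1$ occupy), and — the one genuinely non-obvious idea — recognizing from the outset that each copy must have its $B$-part reflected, since the naive block construction fails. The injectivity of $f$ and the handling of the $S$-edges are routine.
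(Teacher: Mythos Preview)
Your construction is exactly the paper's: the paper defines auxiliary labelings $g_i$ of $T$ by $g_i(x)=in_T+g(x)$ for $x\in A$ and $g_i(x)=(n_S-1-i)n_T+g(x)$ for $x\in B$, and then labels the copy at $w\in S$ by $g_{f(w)}$, which is precisely your $f$. Your verification is also correct and in fact more detailed than the paper's, which simply asserts that the union of edge labels over the copies is $\{1,\dots,n_Sn_T-1\}\setminus\{n_T,2n_T,\dots\}$ as ``straightforward''; a slightly cleaner route than your residue count is to pair copy $i$ with copy $s-1-i$ (so that their $T$-edge labels are $(s-1-2i)t+d_e$ and $(s-1-2i)t-d_e$, and since $\{d_e\}\cup\{-d_e\}=\{\pm1,\dots,\pm(t-1)\}$ the pair contributes exactly the interval $[(s-2-2i)t+1,(s-2i)t-1]$ minus its midpoint), but your approach is fine.
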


\begin{proof}
Let $f, g$ be graceful labelings of $S, T$, and suppose $S, T$ have $n_{S}, n_{T}$ vertices, so that $S\Delta T$ contains $n_{S}$ copies of $T$.  We construct $n_{S}$ auxiliary labelings $g_{0}, g_{1} \ldots , g_{n_{S} - 1}$ of $T$ by choosing a bipartition $(A, B)$ of $T$, and taking
$$g_{i}(x) = \left\{
     \begin{array}{lr}
       in_{T} + g(x) & \text{if } x\in A\\
       (n_{S} - i - 1)n_{T} + g(x) & \text{if } x\in B
     \end{array}
   \right.
$$
Consider labeling $n_{S}$ copies of $T$ with the labelings $g_{i}$.  It is straightforward to show that the vertex labels are all distinct, that the edge labels are all distinct, and that the sets $\cup V_{g_{i}}, \cup E_{g_{i}}$ of all vertex and edge labels used are
\begin{align*}
\cup V_{g_{i}} &= \{0, 1, \ldots , n_{S}n_{T} - 1\}\\
\cup E_{g_{i}} &= \{1, \ldots , n_{S}n_{T} - 1\}\backslash\{n_{T}, 2n_{T}, \ldots , (n_{S} - 1)n_{T}\}
\end{align*}

Now label $S\Delta T$ by labeling the copy of $T$ at each vertex $x$ of $S$ with the labeling $g_{f(x)}$.  The resulting labeling uses the correct vertex labels by the above.

Consider the edge labels.  For an edge $x_{1}x_{2}$ of $S$, both vertices correspond to the vertex $v$ of $T$, and the induced edge label is
$$|g_{f(x_{1})}(v) - g_{f(x_{2})}(v)| = |f(x_{1}) - f(x_{2})|\cdot n_{T}$$
Therefore, our labeling of $S\Delta T$ assigns the labels $\{n_{T}, 2n_{T}, \ldots , (n_{S} - 1)n_{T}\}$ to the edges in $S$.  By the above, the labeling assigns the other labels in $\{1, \ldots , n_{S}n_{T} - 1\}$ to the edges in the $n_{S}$ copies of $T$.  Therefore, the labeling is graceful, and hence $S\Delta T$ is graceful.
\end{proof}

\begin{definition}
(Stanton \& Zarnke \cite{stanton1973labeling}, see Burzio \& Ferrarese \cite{generalized-delta}) Let $S, T$ be trees, and let $u, v$ be vertices of $S, T$, respectively.  Consider attaching one copy of $T$ at each vertex of $S$ other than $u$, by identifying each vertex of $S$ other than $u$ with the vertex corresponding to $v$ in a distinct copy of $T$.  We denote the resulting tree by $S\Delta_{+1} T$, and we call the construction the \emph{$\Delta_{+1}$-construction}.
\end{definition}

\begin{theorem}
{\normalfont (Stanton \& Zarnke \cite{stanton1973labeling})}  If $S, T$ have graceful labelings $f, g$, with $f(u) = n_{S} - 1$ and $g(v) = 0$, where $n_{S}$ is the number of vertices of $S$, then $S\Delta_{+1} T$ is graceful.
\end{theorem}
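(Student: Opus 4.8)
The plan is to recycle the labeling from the proof of Theorem \ref{delta}, run with one fewer copy of $T$. Write $m = n_S - 1$, the number of copies of $T$ occurring in $S\Delta_{+1}T$. Choose the bipartition $(A,B)$ of $T$ so that $v \in A$, and define auxiliary labelings $g_0, \ldots, g_{m-1}$ of $T$ exactly as in that proof, but with the parameter $m$ in place of $n_S$:
\[
g_i(x) = \begin{cases} i\,n_T + g(x) & x \in A,\\ (m - i - 1)\,n_T + g(x) & x \in B.\end{cases}
\]
Since $g(v) = 0$ and $v \in A$, we get $g_i(v) = i\,n_T$. I would then label $S\Delta_{+1}T$ by putting $g_{f(x)}$ on the copy of $T$ attached at each vertex $x \neq u$ of $S$, and assigning $u$ the single label $m\,n_T = (n_S - 1)n_T$.

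For the vertex labels: the vertices of $S$ other than $u$ carry $f$-labels $\{0,\dots,m-1\}$, so the copies are labeled by $g_0,\dots,g_{m-1}$, and the bookkeeping in the proof of Theorem \ref{delta} (verbatim, with $n_S$ replaced by $m$) shows these use each of $\{0,\dots,m\,n_T-1\}$ exactly once. Adjoining $u$'s label $m\,n_T$ yields precisely $\{0,\dots,(n_S-1)n_T\}$ with no repeats. Moreover the shared vertex $x$ of $S$ receives $g_{f(x)}(v) = f(x)\,n_T$, and $u$'s assigned label $m\,n_T = f(u)\,n_T$ fits the same pattern since $f(u) = n_S - 1 = m$; thus every vertex $x$ of $S$ (now including $u$) carries the label $f(x)\,n_T$.

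For the edge labels: an edge $x_1x_2$ of $S$ — incident to $u$ or not — joins vertices labeled $f(x_1)n_T$ and $f(x_2)n_T$, so its induced label is $|f(x_1)-f(x_2)|\,n_T$; as $f$ is graceful on $S$, these range over $\{n_T, 2n_T, \dots, m\,n_T\}$. The edges lying inside the copies of $T$, again by the proof of Theorem \ref{delta} with $n_S \to m$, have induced labels exactly $\{1,\dots,m\,n_T-1\}\setminus\{n_T,2n_T,\dots,(m-1)n_T\}$. These two sets are disjoint and union to $\{1,\dots,m\,n_T\} = \{1,\dots,(n_S-1)n_T\}$, so the labeling is graceful.

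The only substantive point — and the only place the two hypotheses get used — is checking that $u$ slots cleanly into the pattern inherited from Theorem \ref{delta}: the hypothesis $g(v)=0$ (with $v\in A$) is what forces every vertex of $S$ to receive a multiple of $n_T$, and the hypothesis $f(u)=n_S-1$ is what makes $u$'s label $(n_S-1)n_T$ the unique multiple of $n_T$ not already used on the copies and simultaneously makes the edges at $u$ supply exactly the differences needed to complete $\{n_T,\dots,m\,n_T\}$. Everything else is the arithmetic of Theorem \ref{delta} re-run with $n_S-1$ in place of $n_S$, and the degenerate case $n_S = 1$ (where $S\Delta_{+1}T$ is a single vertex) is trivial.
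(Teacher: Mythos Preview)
Your proof is correct and is essentially identical to the paper's own argument: both replace $n_S$ by $n_S-1$ in the auxiliary labelings from Theorem~\ref{delta}, choose the bipartition with $v\in A$ so that each vertex $x$ of $S$ receives label $f(x)\,n_T$, and assign $u$ the single remaining label $(n_S-1)n_T$. Your write-up is slightly more explicit about where each hypothesis is used, but the construction and verification are the same.
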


\begin{proof}
We construct a labeling analogous to the one in the theorem above.  Suppose $S, T$ have $n_{S}, n_{T}$ vertices, so that $S\Delta T$ contains $n_{S} - 1$ copies of $T$.  Let $(A, B)$ be the bipartition of $T$ with $v\in A$.  We construct $n_{S} - 1$ auxiliary labelings $g_{0}, g_{1}, \ldots , g_{n_{S} - 2}$ of $T$ by taking
$$g_{i}(x) = \left\{
     \begin{array}{lr}
       in_{T} + g(x) & \text{if } x\in A\\
       (n_{S} - i - 2)n_{T} + g(x) & \text{if } x\in B
     \end{array}
   \right.
$$
Consider labeling $n_{S} - 1$ copies of $T$ with the labelings $g_{i}$.  It is straightforward to show that the vertex labels are all distinct, that the edge labels are all distinct, and that the sets $\cup V_{g_{i}}, \cup E_{g_{i}}$ of all vertex and edge labels used are
\begin{align*}
\cup V_{g_{i}} &= \{0, 1, \ldots , (n_{S} - 1)n_{T} - 1\}\\
\cup E_{g_{i}} &= \{1, \ldots , (n_{S} - 1)n_{T} - 1\}\backslash\{n_{T}, 2n_{T}, \ldots , (n_{S} - 2)n_{T}\}\\
&= \{1, \ldots , (n_{S} - 1)n_{T}\}\backslash\{n_{T}, 2n_{T}, \ldots , (n_{S} - 1)n_{T}\}
\end{align*}

Now label $S\Delta_{+1}T$ by labeling the copy of $T$ at each vertex $x$ of $S$ other than $u$ with the labeling $g_{f(x)}$, and by labeling the vertex $u$ with the label $(n_{S} - 1)n_{T}$.  The resulting labeling uses the correct vertex labels by the above.

Consider the edge labels.  For edges in $S$, note that each vertex $x$ of $S$ receives the label $f(x)n_{T}$, so our labeling of $S\Delta_{+1}T$ assigns the labels $\{n_{T}, 2n_{T}, \ldots , (n_{S} - 1)n_{T}\}$ to the edges in $S$.  By the above, the labeling assigns the other labels in $\{1, \ldots ,  (n_{S} - 1)n_{T}\}$ to the edges in the $n_{S} - 1$ copies of $T$.  Therefore, the labeling is graceful, and hence $S\Delta_{+1}T$ is graceful.
\end{proof}

Burzio \& Ferrarese \cite{generalized-delta} generalize each of these constructions one step further.  These generalizations require a shift in thinking.  Below we give the required shift of thinking and the resulting generalization for each construction.

\begin{center}
\emph{The generalized $\Delta$-construction.}
\end{center}

Above, we obtained $S\Delta T$ by attaching $n_{S}$ copies of $T$ to the vertices of $S$.  However, we can also obtain $S\Delta T$ by attaching $n_{S}$ copies of $T$ to each other, according to the structure of $S$.  More precisely, we establish a correspondence between the copies of $T$ and the vertices of $S$, and we connect two copies of $T$ by an edge if the corresponding vertices of $S$ are adjacent, where the ends of this edge are the vertices corresponding to the fixed vertex $v$ of $T$.

In the proof of Theorem \ref{delta}, it is essential that the ends of this edge correspond to the same vertex of $T$, but it is not essential that the ends correspond to a fixed vertex $v$ of $T$ across all such edges.  Therefore, the $\Delta$-construction can be generalized by allowing the ends of each edge connecting two copies of $T$ to be any pair of corresponding vertices in the two copies.

\begin{center}
\emph{The generalized $\Delta_{+1}$-construction.}
\end{center}

Analogously, we can obtain $S\Delta_{+1} T$ by attaching $n_{S} - 1$ copies of $T$, and one single vertex $u$, all to each other, according to the structure of $S$.  We establish the same correspondence as above, in some cases connecting two copies of $T$ by an edge, and in some cases connecting a copy of $T$ with $u$ by an edge.  The ends of these edges that are in copies of $T$ all correspond to the fixed vertex $v$ of $T$.

The $\Delta_{+1}$-construction can be generalized in a similar way to the $\Delta$-construction, but only for the edges connecting two copies of $T$.  For the edges connecting a copy of $T$ with $u$, the end of the edge in the copy of $T$ must be the vertex corresponding to the fixed vertex $v$ of $T$.  But as with the $\Delta$-construction, we can allow the ends of each edge connecting two copies of $T$ to be any pair of corresponding vertices in the two copies.\newline

Before applying some of these constructions, we present a theorem about the $\Delta$-construction.  Chung \& Hwang \cite{rotatable} prove the following theorem for the standard $\Delta$-construction, but their proof also directly extends to the generalized $\Delta$-construction.

\begin{theorem}
{\normalfont (Chung \& Hwang \cite{rotatable})} Let $S, T$ be 0-rotatable trees.  Then $S\Delta T$ is 0-rotatable.
\end{theorem}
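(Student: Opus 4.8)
The plan is to reduce this to the construction in the proof of Theorem~\ref{delta}, using the $0$-rotatability of $S$ and $T$ to make judicious choices of the ingredient labelings. Recall that in that proof, a graceful labeling of $S\Delta T$ is built from a graceful labeling $f$ of $S$, a graceful labeling $g$ of $T$, and a bipartition $(A,B)$ of $T$: the copy of $T$ attached at a vertex $x$ of $S$ is labeled by the auxiliary labeling $g_{f(x)}$, where $g_i(y) = i\,n_T + g(y)$ for $y\in A$ and $g_i(y) = (n_S - i - 1)n_T + g(y)$ for $y\in B$. The key observation is that the proof goes through for \emph{any} choice of $f$ and $g$, and for \emph{either} labeling of the classes of the bipartition as $A$ and $B$; so we are free to tailor these ingredients to a prescribed vertex.

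First I would fix an arbitrary vertex $w$ of $S\Delta T$. It lies in a unique copy of $T$, attached at some vertex $x_0$ of $S$, and corresponds to some vertex $w_T$ of that copy (with $w_T$ possibly equal to the attachment vertex $v$, in which case $w = x_0$ is itself a vertex of $S$). Since $T$ is $0$-rotatable, I choose a graceful labeling $g$ of $T$ with $g(w_T) = 0$, and I let $(A,B)$ be the (essentially unique) bipartition of $T$, named so that $w_T\in A$. Since $S$ is $0$-rotatable, I choose a graceful labeling $f$ of $S$ with $f(x_0) = 0$.

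Then I would run the construction of Theorem~\ref{delta} verbatim with these choices, producing a graceful labeling $h$ of $S\Delta T$. By construction the copy of $T$ at $x_0$ is labeled by $g_{f(x_0)} = g_0$, and since $w_T\in A$ we get $h(w) = g_0(w_T) = 0\cdot n_T + g(w_T) = 0$. As $w$ was arbitrary, $S\Delta T$ is $0$-rotatable. I would close with a short remark that the argument is insensitive to which pairs of corresponding vertices are joined between copies of $T$: the value $h(w)$ is computed exactly as above, and the verification that $h$ is graceful (in particular that the edges of $S$ receive the labels $n_T, 2n_T, \ldots, (n_S-1)n_T$, using $|g_i(y) - g_j(y)| = |i-j|\,n_T$ whether $y\in A$ or $y\in B$) carries over unchanged, so the same proof yields $0$-rotatability for the generalized $\Delta$-construction.

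I do not expect a genuine obstacle here; the only point requiring care is the bookkeeping in the degenerate case $w_T = v$, where $w$ is an attachment vertex identified with $x_0\in V(S)$ — but there too $h(w) = g_0(v) = g(v) = 0$ provided we chose $g$ with $g(v)=0$, which is precisely what $0$-rotatability of $T$ supplies. So the single substantive input is that $S$ and $T$ are each $0$-rotatable, invoked once apiece to steer the label $0$ to the right vertex of the right copy.
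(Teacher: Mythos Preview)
Your proof is correct and is essentially identical to the paper's: both pick an arbitrary vertex of $S\Delta T$, locate it in its copy of $T$ at some vertex of $S$, use $0$-rotatability of $S$ and $T$ to put $0$ at those two spots, place the target vertex in the $A$-class of the bipartition, and read off that the Theorem~\ref{delta} construction assigns it label $0$. Your added remark that the argument carries over to the generalized $\Delta$-construction matches the paper's own comment preceding the theorem.
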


\begin{proof}
Let $f, g$ be graceful labelings of $S, T$, and let $x$ be a vertex of $S\Delta T$.  Then $x$ is in one of the $n_{S}$ copies of $T$.  Suppose $x$ is in the copy of $T$ attached at the vertex $x_{S}$ of $S$, and suppose $x$ corresponds to the vertex $x_{T}$ of $T$.  Then the label of $x$ in $S\Delta T$ is $g_{f(x_{S})}(x_{T})$.

Since $S, T$ are 0-rotatable, we can choose $f, g$ such that $f(x_{S}), g(x_{T}) = 0$, and we can choose the bipartition $(A, B)$ such that $x_{T}\in A$.  Then
$$g_{f(x_{S})}(x_{T}) = f(x_{S})\cdot n_{T} + g(x_{T}) = 0$$
Therefore, the label of $x$ in $S\Delta T$ is 0.  Since $x$ is arbitrary, $S\Delta T$ is 0-rotatable.
\end{proof}

With these constructions, we can prove that certain classes of symmetrical trees are graceful.

\begin{definition}
A rooted tree $T$ with root $v$ is \emph{symmetrical} if any two vertices at the same level have the same number of children
\end{definition}

\begin{theorem} \label{symmetrical_graceful}
{\normalfont (Stanton \& Zarnke \cite{stanton1973labeling})} Let $T$ be a symmetrical tree with root $v$.  Then $T$ has a graceful labeling $f$ with $f(v) = 0$.
\end{theorem}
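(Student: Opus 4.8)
The plan is to exhibit every symmetrical tree as an iterated $\Delta_{+1}$-construction built from stars, and then to induct from the deepest level upward, using the complementary-labeling lemma to hold the root at label $0$ at every stage.

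First I would encode $T$ by its parameter sequence $(d_1,\dots,d_h)$, where the root has $d_1$ children, every vertex at level $1$ has $d_2$ children, and so on, with the vertices at level $h$ having no children. For $1\le k\le h$ let $T_k$ denote the symmetrical tree with parameter sequence $(d_k,\dots,d_h)$, rooted at its top vertex, so that $T_1=T$ and $T_h=K_{1,d_h}$ (rooted at its center). The structural claim I would verify is that $T_k = K_{1,d_k}\,\Delta_{+1}\,T_{k+1}$, taking $u$ to be the center of the star $S=K_{1,d_k}$ and $v$ to be the root of $T_{k+1}$: the $\Delta_{+1}$-construction attaches one copy of $T_{k+1}$, identified at its root, to each of the $d_k$ leaves of $S$, and keeps $u$ as a lone vertex joined to each of those roots — which is exactly a root with $d_k$ children, each the root of a copy of $T_{k+1}$.

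Next I would prove, by downward induction on $k$, that $T_k$ has a graceful labeling sending its root to $0$. The base case $T_h=K_{1,d_h}$ is immediate: label the center $0$ and the leaves $1,\dots,d_h$. For the inductive step, assume $T_{k+1}$ has a graceful labeling $g$ with $g(v)=0$ at the root $v$. The star $S=K_{1,d_k}$ has a graceful labeling $f$ with the center $u$ labeled $d_k=n_S-1$ (the complement of the usual star labeling). Thus the two hypotheses of the $\Delta_{+1}$-construction theorem are precisely met, so $T_k=S\,\Delta_{+1}\,T_{k+1}$ is graceful; moreover, in the labeling produced by that theorem the vertex $u$ — the root of $T_k$ — receives the label $(n_S-1)\,n_{T_{k+1}}$, which equals the number of edges of $T_k$ and hence is the largest label used. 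The complementary-labeling lemma then converts this into a graceful labeling of $T_k$ with the root labeled $0$, closing the induction; the case $k=1$ is the theorem.

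I expect no genuine difficulty here; the one thing to watch is the bookkeeping: verifying that the $\Delta_{+1}$-construction theorem's hypotheses $f(u)=n_S-1$ and $g(v)=0$ are exactly what the induction supplies, and that $u$ really does carry the maximum label so that complementation relocates it to $0$. Both facts follow by reading off the formulas in that theorem's proof, and the degenerate possibilities ($T$ a single vertex, or some $d_k=1$) are harmless.
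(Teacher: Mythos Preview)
Your proposal is correct and follows exactly the approach the paper indicates: the paper's proof is the single line ``Apply the $\Delta_{+1}$-construction repeatedly, with $S$ a star,'' and you have faithfully unpacked that line, including the complementation step needed to relocate the root to label~$0$ after each application. The bookkeeping you flag (that $u$ receives the maximum label $(n_S-1)n_{T_{k+1}}$, which equals the edge count of $T_k$) is right.
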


\begin{proof}
Apply the $\Delta_{+1}$-construction repeatedly, with $S$ a star. 
\end{proof}



\begin{corollary}
All complete binary and $k$-ary trees are graceful.
\end{corollary}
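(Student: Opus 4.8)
The plan is to observe that a complete $k$-ary tree is just a symmetrical tree, and then invoke Theorem~\ref{symmetrical_graceful} directly. First I would fix conventions: a \emph{complete $k$-ary tree} is a rooted tree in which every internal vertex has exactly $k$ children and every leaf lies at the same (deepest) level; the complete binary trees are the special case $k = 2$. So it suffices to prove the statement for complete $k$-ary trees with $k \ge 2$ arbitrary.

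Next I would check that such a tree $T$, rooted at its root $v$, satisfies the definition of a symmetrical tree. Take any two vertices $u, w$ at the same level $\ell$ of $T$. If $\ell$ is not the last level, then $u$ and $w$ are both internal, hence each has exactly $k$ children; if $\ell$ is the last level, then $u$ and $w$ are both leaves, hence each has $0$ children. In either case $u$ and $w$ have the same number of children, so $T$ is symmetrical.

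Finally, I would apply Theorem~\ref{symmetrical_graceful} to $T$ with root $v$: it produces a graceful labeling $f$ of $T$ (in fact one with $f(v) = 0$), so $T$ is graceful. Since this argument is uniform in $k$ and covers $k = 2$ as a special case, all complete binary and $k$-ary trees are graceful.

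There is essentially no obstacle here — the corollary is an immediate specialization of Theorem~\ref{symmetrical_graceful}. The one point worth stating explicitly is the reading of the word ``complete'': it must require \emph{all} leaves to be at the same depth. If the last level were allowed to be only partially filled, the tree would generally fail to be symmetrical (two vertices on the second-to-last level could have different numbers of children), and the corollary would no longer follow from Theorem~\ref{symmetrical_graceful} alone.
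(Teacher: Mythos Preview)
Your proposal is correct and matches the paper's approach exactly: the corollary is listed immediately after Theorem~\ref{symmetrical_graceful} with no separate proof, so it is intended to follow simply by noting that complete $k$-ary trees are symmetrical rooted trees and invoking that theorem. Your explicit verification of the symmetry condition and your remark about the meaning of ``complete'' are appropriate elaborations of what the paper leaves implicit.
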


Until now, we have worked with the concept of symmetry with respect to a vertex.  In contrast, Poljak \& Sura \cite{Poljak82} introduce the notion of symmetry with respect to a caterpillar.  In the following definitions, let $T$ be a tree with root $v$.

\begin{definition}
(Poljak \& Sura \cite{Poljak82}) Let $C$ be a caterpillar with internal vertices $v_{0}, \ldots , v_{k}$, and let $T_{0}, \ldots , T_{k}$ be rooted trees.  Then consider the tree obtained by identifying the roots of two copies of each $T_{i}$ with $v_{i}$ for each $i$.  If $T$ can be obtained by this construction, with $v_{0}$ corresponding to $v$, then we say that $T$ is \emph{c-symmetrical}.
\end{definition}

If a tree is symmetrical, then it is also c-symmetrical, but the converse is not necessarily true.  Therefore, c-symmetry is a weaker concept than symmetry.

\begin{definition}
For vertices $u, w$ of $T$, the vertex $w$ is a \emph{descendant} of $u$ if $u$ is between $v$ and $w$.  The tree $T$ is \emph{equidescendant} if any two non-leaf vertices equidistant from $v$ have the same number of descendants.
\end{definition}

\begin{theorem}
{\normalfont (Poljak \& Sura \cite{Poljak82})} All equidescendant, c-symmetrical trees are graceful.
\end{theorem}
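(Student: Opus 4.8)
The plan is to generalize the proof that symmetrical trees are graceful (Theorem \ref{symmetrical_graceful}): there one builds the tree from the root outward using the $\Delta_{+1}$-construction with stars, complementing after each step to keep the root at label $0$. Here the caterpillar spine $v_0, \ldots, v_k$ should play the role that the single root vertex played, so the construction should proceed by peeling the tree apart along its spine rather than strictly level by level, with the caterpillar serving as the analogue of the star.

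First I would root $T$ at $v = v_0$ and record the block structure forced by the equidescendant hypothesis. For each level $\ell$, all non-leaf vertices at distance $\ell$ from $v$ have the same number of descendants, hence their rooted subtrees all have the same \emph{size} (number of vertices) $s_\ell$ --- and this is the crucial point, because the $\Delta$-type arithmetic on labels depends only on the sizes of the pieces, not on their shapes. Combined with c-symmetry, this lets me partition $\{0, 1, \ldots, n\}$ into consecutive blocks indexed by level, and recursively subdivide each level's block among the equally-sized subtrees hanging at that level, exactly mirroring how the auxiliary labelings $g_i$ are built in the proof of Theorem \ref{delta}.

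Then I would carry out the labeling by induction on the number of vertices, strengthening the statement to: every equidescendant c-symmetric tree has a graceful (ideally $\alpha$-) labeling $f$ with $f(v_0) = 0$, so that the constructions compose. The base case is a caterpillar, which has an $\alpha$-labeling by Rosa's theorem (and more generally a c-symmetric tree of small height). For the inductive step, the cleanest formulation is to exhibit $T$ as obtained from a strictly smaller equidescendant c-symmetric tree $T'$ --- roughly, the spine together with the "top slices" of the $T_i$'s --- by a generalized $\Delta_{+1}$-construction (in the sense of Burzio \& Ferrarese) that reattaches the lower slices along the spine; each such step places the relevant vertex at the maximum label $n$, and a complementary labeling restores it to $0$, just as in the symmetric case.

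The main obstacle is that the subtrees sitting at a given level have equal size but not necessarily equal shape, so one cannot literally invoke a single $\Delta_{+1}$-construction, which attaches identical copies of one fixed tree. I expect to have to either (i) arrange the induction so that, at the level being processed, the attached pieces are genuinely isomorphic, pushing all shape discrepancies down into the recursive calls; or (ii) replace the black-box $\Delta_{+1}$ step by an explicit simultaneous labeling of the several distinct (but equal-size) subtrees that reproduces the $\Delta_{+1}$ label arithmetic, using that only their common size $s_\ell$ enters the computation. A secondary nuisance is the ``c-'' in c-symmetric: the caterpillar $C$ may contribute pendant leaves at a spine vertex in odd numbers, breaking exact symmetry there, so the bookkeeping at spine vertices must tolerate one unmatched leaf --- but this is precisely the flexibility already present in Rosa's caterpillar construction, so it should be absorbable into the spine step.
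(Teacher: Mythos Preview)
The paper does not actually prove this theorem: it is stated as a cited result of Poljak and S\'{u}ra in the survey section, with no proof given. So there is no ``paper's own proof'' to compare against here.

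As for your proposal on its own merits: the central intuition is right. The equidescendant hypothesis forces all non-leaf vertices at a given level to have subtrees of equal \emph{size}, and the $\Delta$-type label arithmetic (the auxiliary labelings $g_i$ in Theorem~\ref{delta}) depends only on sizes, not shapes. That is exactly the mechanism Poljak and S\'{u}ra exploit. Your option~(ii) --- abandon the black-box $\Delta_{+1}$-construction and instead write down the labels directly, level by level, using only the common size $s_\ell$ at each level and the inductive graceful labelings of the smaller pieces --- is the route that actually works and is close to what the original paper does.

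What is missing is the execution. You have correctly identified the obstacle (equal size, unequal shape) and named the fix, but you have not carried it out, and the details are not entirely routine. In particular: the c-symmetry gives you \emph{two} copies of each $T_i$ at each spine vertex $v_i$, and this bilateral symmetry is what lets the labeling on the two copies be arranged as complementary blocks (so that the edge labels interleave correctly across the spine). Your sketch gestures at the spine but does not make explicit how the two copies at each $v_i$ get paired up in the label arithmetic, nor how the caterpillar's own pendant leaves at the $v_i$ are absorbed. Option~(i) as stated will not work: there is no way to arrange the induction so that the attached pieces become genuinely isomorphic, since the hypothesis simply does not give you that. So the proposal is a sound plan pointing at the right construction, but it stops short of the actual argument.
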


\subsection{Subdividing edges}

Using the $\Delta_{+1}$-construction, we can show that under certain conditions, subdividing edges of a graceful tree produces another graceful tree.  The following lemma will be useful.

\begin{lemma} \label{subdivide_lemma}
Let $T$ be a tree, and let $M$ be a perfect matching of $T$.  It is possible to direct the edges of $M$ such that the ends of each edge not in $M$ are both heads or both tails of edges in $M$.
\end{lemma}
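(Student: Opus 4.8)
The plan is to recast the statement as a $2$-colouring problem. Once the edges of $M$ are directed, call a vertex a \emph{head} if it is the head of the (unique) edge of $M$ incident to it, and a \emph{tail} otherwise; since $M$ is a perfect matching, every vertex is exactly one of the two, and directing $M$ amounts to choosing a colouring of $V(T)$ by $\{\text{head},\text{tail}\}$ in which the two ends of each edge of $M$ receive opposite colours. The condition in the statement then says precisely that the two ends of each edge \emph{not} in $M$ receive the \emph{same} colour. So I want a colouring of $V(T)$ by two colours that is proper on $M$ and constant on $E(T)\setminus M$.

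First I would consider the forest $F$ consisting of the vertex set $V(T)$ and the edges of $T$ not in $M$. ``Constant on $E(T)\setminus M$'' just means the colouring is constant on each connected component of $F$, so it is enough to pick one colour per component of $F$ in such a way that, for every edge $uv\in M$, the component containing $u$ and the component containing $v$ receive opposite colours. These two components are distinct: if $u$ and $v$ lay in a common component of $F$, the $F$-path joining them together with the edge $uv$ would be a cycle in $T$.

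Next I would pass to the auxiliary graph $H$ whose vertices are the components of $F$, with one edge for each $uv\in M$ joining the component of $u$ to the component of $v$ --- equivalently, $H$ is $T$ with every component of $F$ contracted to a point. Then $H$ is connected, since contraction preserves connectivity and $T$ is connected. Moreover $F$ has $|V(T)|$ vertices and $|E(T)|-|M|$ edges, hence $|V(T)|-|E(T)|+|M| = |M|+1$ components (using $|E(T)| = |V(T)|-1$), so $H$ has $|M|+1$ vertices and $|M|$ edges; a connected graph with one more vertex than edge is a tree. Since trees are bipartite, $H$ has a proper $2$-colouring; colouring each vertex of $T$ by the colour of its $F$-component and directing each edge of $M$ from its tail-coloured end to its head-coloured end gives the required orientation.

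The only real content is that $H$ is a tree, and the substance there is acyclicity of $T$: it is what forces each edge of $M$ to join two distinct components of $F$ (so $H$ has no loops) and, through the vertex/edge count, what makes $H$ acyclic rather than merely connected. Everything else is routine verification. \emph{(One could also argue by induction on $|V(T)|$: a leaf $\ell$ of $T$ forces its edge $\ell p$ into $M$; delete $\ell$ and $p$, apply induction to each component of what remains, reverse all of $M$'s edges inside a component when needed so that every $T$-neighbour of $p$ becomes a head, and finally direct $\ell p$ so that $p$ is a head --- reversing all matching edges within a component preserves constancy inside it.)}
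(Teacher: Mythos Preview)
Your proof is correct. The reformulation as a $2$-colouring problem is clean, and the key step---that the quotient $H$ obtained by contracting each component of $F=(V(T),E(T)\setminus M)$ is a tree---is fully justified by your vertex/edge count together with connectivity. (One might worry about multi-edges in $H$, but a connected multigraph with one more vertex than edge cannot have any, so the count already forces $H$ to be simple.)

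The paper takes a different, purely inductive route: it builds $T$ up two vertices at a time, maintaining a chain of subtrees $T_1\subset T_2\subset\cdots\subset T_n=T$ where each $T_i$ has a perfect matching contained in $M$, and at each step directs the one new matching edge so as to preserve the desired property on the one new non-matching edge. Your parenthetical ``peel off a leaf and its matched neighbour'' sketch is closer in spirit to this, though it works top-down rather than bottom-up and requires the extra observation that flipping all directions within a component of $T\setminus\{\ell,p\}$ is harmless.

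Your global argument is arguably more illuminating: it explains \emph{why} such an orientation exists (bipartiteness of a certain quotient tree) rather than just constructing one, and it makes the role of acyclicity of $T$ explicit. The paper's induction, on the other hand, is shorter to write and requires no auxiliary constructions; it also yields the orientation algorithmically with no backtracking. Both are perfectly adequate for a lemma of this size.
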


\begin{proof}
Suppose $T$ has $2n$ vertices.  There is a sequence of subtrees $T_{1}, \ldots , T_{n}$ of $T$ such that
\begin{itemize}
\item $T_{i}$ has $2i$ vertices, and has a perfect matching that is a subset of $M$.
\item $T_{1}$ is a single edge in $M$, and $T_{n}$ is $T$.
\item $T_{i + 1}$ is obtained from $T_{i}$ by attaching a vertex of $T$ adjacent to $T_{i}$, and then attaching the other end of the edge in $M$ incident with that vertex.
\end{itemize}

It is possible to direct the edges of $T_{1}$ in the desired way.  Moreover, if it is possible to direct the edges of $T_{i}$ in the desired way, then it is also possible to direct the edges of $T_{i + 1}$ in the desired way, since we can always direct the new edge in $M$ so that the property holds for the new edge not in $M$.  Therefore, by induction it is possible to direct the edges of all $T_{i}$ in the desired way, in particular $T$ itself, proving the lemma.
\end{proof}

\begin{theorem} \label{subdivide}
{\normalfont (Burzio \& Ferrarese \cite{generalized-delta})} The tree obtained from a graceful tree by subdividing each edge once is graceful.
\end{theorem}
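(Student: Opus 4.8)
The plan is to realize the subdivided tree as an instance of the generalized $\Delta_{+1}$-construction and then invoke the corresponding gracefulness theorem. Let $T$ be the given graceful tree, with $n$ edges; fix a graceful labeling $f$ of $T$ and let $u$ be the vertex with $f(u)=n$. Let $T'$ be the tree obtained by subdividing each edge of $T$ once (so $T'$ has $2n$ edges and $2n+1$ vertices), and let $P_{1}$ be the single edge, with vertices $v$ (its distinguished vertex) and $w$. The key claim is that $T'\cong T\,\Delta_{+1}\,P_{1}$ for a suitable choice of the corresponding pairs of vertices joined in the generalized construction. The identification is the obvious one once $T$ is rooted at $u$: to each non-root vertex $t$ associate the copy of $P_{1}$ whose two vertices are $t$ and the midpoint $m_{e}$ of the edge $e$ joining $t$ to its parent, so that the lone edge of this copy is the edge $m_{e}t$ of $T'$; and let $u$ be the single leftover vertex. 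Every edge of $T$ incident to $u$ then corresponds, as the construction requires, to an edge from $u$ to the distinguished vertex of a copy, and every edge of $T$ between two non-root vertices corresponds to an edge between two copies; the only thing left to specify is, for each of the latter, which corresponding pair of vertices it joins.

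This is exactly where Lemma~\ref{subdivide_lemma} enters. Attach a pendant vertex $u'$ at $u$; then the edge $uu'$ together with, for each edge $e=tt'$ of $T$ with $t$ the endpoint farther from $u$, the edge $m_{e}t$ of $T'$, forms a perfect matching $M$ of $T'+u'$. Orient $M$ as in Lemma~\ref{subdivide_lemma}, so that the two ends of every non-$M$ edge lie in the same class (both heads, or both tails of their $M$-edges). The non-$M$ edges are precisely the edges $m_{e}t'$ with $t'$ the parent-endpoint of $e$ — that is, exactly the edges that the construction draws from $u$ to a copy or between two copies — while the two vertices of each copy, being the two ends of an oriented $M$-edge, lie in opposite classes. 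Hence if we declare the distinguished vertex $v$ of each copy to be whichever of its two vertices lies in $u$'s class, the choice becomes consistent: each of the edges in question joins either two distinguished vertices or two non-distinguished vertices, which is a legal choice in the generalized construction, and the resulting graph is $T'$. (Concretely this class is simply the parity of the distance from $u$, so the lemma can be bypassed, but it organizes the bookkeeping cleanly.)

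Finally, apply the gracefulness theorem for the $\Delta_{+1}$-construction (Stanton \& Zarnke \cite{stanton1973labeling}), in the generalized form of Burzio \& Ferrarese \cite{generalized-delta}; this remains valid for the same reason as Theorem~\ref{delta}, since for any vertex $z$ of $P_{1}$ one has $|g_{i}(z)-g_{j}(z)|=|i-j|\cdot n_{P_{1}}$, independent of $z$. Its hypotheses hold here: $T$ carries the graceful labeling $f$ with $f(u)=n$, one less than the number of vertices of $T$; and $P_{1}$ carries the graceful labeling $g(v)=0$, $g(w)=1$. Therefore $T\,\Delta_{+1}\,P_{1}=T'$ is graceful. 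The main obstacle is the middle step: verifying carefully that the generalized $\Delta_{+1}$-construction, with the connecting pairs dictated by the orientation from Lemma~\ref{subdivide_lemma}, reproduces the subdivided tree $T'$ exactly and nothing else.
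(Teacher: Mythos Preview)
Your proof is correct and follows essentially the same approach as the paper: realize the subdivided tree via the generalized $\Delta_{+1}$-construction with $S$ the original tree, $T=P_{1}$, and $u$ the vertex labeled $n_{S}-1$; identify the copies of $P_{1}$ by rooting at $u$ (the paper phrases this as directing all edges of $S$ away from $u$ and taking, for each edge, the subdivided half incident with its head); then attach a pendant at $u$ and invoke Lemma~\ref{subdivide_lemma} to orient the copies consistently. Your parenthetical remark that the two classes are simply the distance-parity classes from $u$ is a tidy observation the paper does not make explicit.
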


\begin{proof}
The resulting tree can be formed by the generalized $\Delta_{+1}$-construction, where
\begin{itemize}
\item $S$ is the original tree, and $u$ is a vertex of $S$, such that $f(u) = n_{S} - 1$ for some graceful labeling $f$ of $S$.
\item $T$ is the path $P_{1}$, and $v$ is a leaf of $T$.
\end{itemize}

To show that the resulting tree can be decomposed in this way, we prove the following claim, which is different from the lemma above.  Then we will apply the lemma above to show that the copies of $P_{1}$ can be oriented in the appropriate way.\\

\noindent\emph{Claim:  It is possible to direct the edges of $S$ such that no two edges have the same head, and no edge has head $v$.}\\

\noindent\emph{Proof of Claim.}  There is a sequence of subtrees $S_{1}, \ldots , S_{n_{S}}$ of $S$ such that
\begin{itemize}
\item $S_{1}$ is the isolated vertex $u$, and $S_{n_{S}}$ is $S$.
\item $S_{i + 1}$ is obtained from $S_{i}$ by attaching a leaf to $S_{i}$.
\end{itemize}

It is possible to direct the edges of $S_{1}$ in the desired way.  Moreover, if it is possible to direct the edges of $S_{i}$ in the desired way, then it is also possible to direct the edges of $S_{i + 1}$ in the desired way, since we can direct the new edge toward the new leaf.  Therefore, by induction it is possible to direct the edges of all $S_{i}$ in the desired way, in particular $S$ itself, proving the claim.\\

Each edge of the original tree $S$ corresponds to two edges of the resulting tree, one incident with its head and one incident with its tail.  For each edge of $S$, consider the corresponding edge incident with its head.  These edges of the resulting tree are our copies of $P_{1}$ in the $\Delta_{+1}$-construction.

It remains to show that we can orient these copies such that edges between different copies connect corresponding vertices.  To do this, we attach a leaf to the resulting tree at the vertex $u$, resulting in a tree with a perfect matching, and then apply the lemma above.  We orient the copies of $P_{1}$ according to the directions of the edges given by the lemma, as follows:
\begin{itemize}
\item If $u$ is the head of an edge in the matching, then for each copy of $P_{1}$, put the vertex corresponding to $v$ at the head of its edge.
\item If $u$ is the tail of an edge in the matching, then for each copy of $P_{1}$, put the vertex corresponding to $v$ at the tail of its edge.
\end{itemize}
This ensures that the copies of $P_{1}$ are oriented in the appropriate way, so the resulting tree is graceful by the generalized $\Delta_{+1}$-construction.
\end{proof}

\begin{theorem}
{\normalfont (Burzio \& Ferrarese \cite{generalized-delta})} The tree obtained from a graceful tree by replacing each edge with a path of fixed length $k$ is graceful.
\end{theorem}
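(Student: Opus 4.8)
The plan is to realize the new tree $S^{(k)}$ (the original graceful tree $S$ with every edge replaced by a path of length $k$) as a generalized $\Delta_{+1}$-construction $S\Delta_{+1}P_{k-1}$, exactly paralleling the proof of Theorem~\ref{subdivide}, which is the case $k=2$. The case $k=1$ is trivial, so assume $k\ge 2$. Let $S$ have $n_S$ vertices and a graceful labeling $f$; since $S$ is a tree, $f$ attains every value in $\{0,1,\dots,n_S-1\}$, so there is a unique vertex $u$ with $f(u)=n_S-1$. Take $T=P_{k-1}$ with the alternating graceful labeling $g=0,\,k-1,\,1,\,k-2,\dots$ from the proof of Theorem~\ref{path}, which satisfies $g(v)=0$ for one of the two leaves $v$ of $P_{k-1}$. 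By the generalized $\Delta_{+1}$-construction (valid since $f(u)=n_S-1$ and $g(v)=0$), it then suffices to exhibit $S^{(k)}$ as such a construction: a partition of $S^{(k)}$ into one isolated vertex in the role of $u$ together with $n_S-1$ vertex-disjoint copies of $P_{k-1}$, joined by edges mirroring the edges of $S$, every edge meeting $u$ attached on the copy side at the vertex corresponding to $v$, and every edge between two copies attached at a corresponding pair.

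First I would set up the decomposition. Root $S$ at $u$ and orient its edges away from $u$, so each non-root vertex $c$ is the head of a unique edge $e_c$ from its parent $p$ to $c$. In $S^{(k)}$ the edge $e_c$ becomes a path $p=w_0-w_1-\dots-w_k=c$ of length $k$, with $w_1,\dots,w_{k-1}$ new. Assign to $c$ the copy of $P_{k-1}$ induced on $\{w_1,\dots,w_{k-1},c\}$ — a path of length $k-1$ whose endpoints are the new vertex $w_1$ and the original vertex $c$ — and let the remaining edge $w_0 w_1$ be the ``$S$-edge'' of $c$. One checks that these copies and $S$-edges partition the edges of $S^{(k)}$, that the copies (together with $\{u\}$) partition its vertices, and that the $S$-edge of $c$ joins $c$'s copy either to $u$ (if $p=u$) or to $p$'s copy, meeting the latter exactly at its original-vertex endpoint $p$; hence the pattern of $S$-edges is isomorphic to $S$ with $u$ playing the root.

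The hard part will be checking that the copies can be \emph{oriented}, i.e. identified with the abstract path $P_{k-1}$ up to its reversal automorphism, so that all incidence requirements of the generalized $\Delta_{+1}$-construction hold; this is the analogue of the perfect-matching argument (via Lemma~\ref{subdivide_lemma}) used in Theorem~\ref{subdivide}, and the one genuinely new wrinkle is that each copy now carries internal path structure, so one must track which \emph{endpoint} of each copy plays which role. Write the abstract vertices of $P_{k-1}$ as $t_0,\dots,t_{k-1}$ along the path with $v=t_0$. I would orient the copy of a depth-$d$ vertex $c$ so that its new endpoint corresponds to $t_0=v$ when $d$ is odd and to $t_{k-1}$ when $d$ is even (equivalently its original endpoint $c$ corresponds to $t_{k-1}$ or $t_0$ respectively); this depends only on the parity of $d$, so it is well-defined. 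Then an $S$-edge meeting $u$ comes from a depth-$1$ vertex and is incident on the copy side at its new endpoint, which corresponds to $t_0=v$, as required; and an $S$-edge between the copies of $p$ (depth $d-1$) and $c$ (depth $d$) joins the original endpoint of $p$'s copy to the new endpoint of $c$'s copy, and by the parity rule these correspond to $t_{k-1}$ and $t_{k-1}$ when $d-1$ is odd, and to $t_0$ and $t_0$ when $d-1$ is even — a corresponding pair in every case. With all requirements met, $S^{(k)}$ is graceful by the generalized $\Delta_{+1}$-construction.
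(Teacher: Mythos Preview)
Your proof is correct and follows the same approach as the paper: realize the subdivided tree via the generalized $\Delta_{+1}$-construction with $S$ the original tree, $u$ the vertex labeled $n_S-1$, and $T$ a path with a leaf labeled $0$. The paper's proof is a one-liner (``As above, but where $T$ is the path $P_k$'') that defers entirely to the argument of Theorem~\ref{subdivide}, including the use of Lemma~\ref{subdivide_lemma} to orient the copies; you instead give a direct parity-of-depth rule for the orientation, which is cleaner and avoids the auxiliary perfect-matching lemma, but is otherwise the same idea. Note also that the paper writes $T=P_k$ while you take $T=P_{k-1}$; your indexing is the one consistent with the vertex count (and with $T=P_1$ in the $k=2$ case of Theorem~\ref{subdivide}), so this looks like a slip in the paper rather than in your argument.
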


\begin{proof}
As above, but where $T$ is the path $P_{k}$.  Just as each copy of $P_{1}$ above can be oriented in two ways, each copy of $P_{k}$ can be oriented in two ways.
\end{proof}



\subsection{Trees with perfect matchings} \label{perfect_matching_prelim_section}

Trees with perfect matchings have similar structure to the subdivision graph considered in the section above, leading to the following result.



%
%

\begin{definition} \label{contree}
(Broersma \& Hoede \cite{broersma1999another})  Let $T$ be a tree, and let $M$ be a perfect matching of $T$.  The \emph{contree} of $T$ is the tree obtained from $T$ by contracting the edges in $M$.
\end{definition}

\begin{theorem} \label{broersma's_mistake}
{\normalfont (Broersma \& Hoede \cite{broersma1999another})} If $T$ has a perfect matching, and the contree of $T$ is graceful, then $T$ is graceful.
\end{theorem}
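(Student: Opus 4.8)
The plan is to realize $T$ as the output of the generalized $\Delta$-construction applied to its contree and a single edge, and then to invoke Theorem~\ref{delta} together with Lemma~\ref{subdivide_lemma}.

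Let $M$ be a perfect matching of $T$, and suppose $T$ has $2n$ vertices, so that the contree $T'$ has $n$ vertices and is graceful by hypothesis. First I would record the structural observation that $T$ is obtained from $T'$ by the generalized $\Delta$-construction with $S = T'$ and $T = P_1$: contracting the $n$ edges of $M$ collapses $T$ to $T'$, so, conversely, $T$ consists of $n$ disjoint copies of $P_1$ — namely the edges of $M$, one for each vertex of $T'$ — together with one extra edge joining two copies exactly when the corresponding vertices of $T'$ are adjacent. Since $T$ is acyclic, at most one non-matching edge of $T$ joins any two of these copies (two such edges, or a joining edge together with $M$-edges, would create a cycle), so the map sending a non-matching edge of $T$ to its image in $T'$ is a bijection onto $E(T')$, and the joining pattern of the copies is precisely the edge set of $T'$.

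The one remaining thing to check is the rigidity condition of the generalized $\Delta$-construction: each copy of $P_1$ must be oriented — that is, its two ends identified with the two vertices of the abstract $P_1$ — so that every joining edge connects a pair of corresponding vertices. This is exactly what Lemma~\ref{subdivide_lemma} delivers: since $M$ is a perfect matching of $T$, we may direct the edges of $M$ so that the two ends of each non-matching edge are both heads or both tails of their $M$-edges; orienting each copy of $P_1$ so that its head plays the role of the distinguished vertex then forces every joining edge to connect corresponding vertices. With this orientation fixed, $T = T' \Delta P_1$ in the generalized sense, and since both $T'$ and $P_1$ are graceful, the generalized $\Delta$-construction (Theorem~\ref{delta} and the discussion following it) produces a graceful labeling of $T$.

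I expect the only real subtlety to be that middle step: confirming that letting a joining edge attach at either end of a copy of $P_1$ does not disturb the edge-label bookkeeping of Theorem~\ref{delta} — it does not, since in the auxiliary labelings $g_i$ an edge between the copies at vertices $x_1, x_2$ of $S$ receives induced label $|f(x_1) - f(x_2)|\, n_T$ regardless of which corresponding pair of vertices it uses — and that the orientation supplied by Lemma~\ref{subdivide_lemma} reproduces $T$ on the nose. Everything else amounts to assembling results already in hand; note in particular that, unlike the subdivision argument of Theorem~\ref{subdivide}, here $M$ is given, so no auxiliary leaf need be attached before applying Lemma~\ref{subdivide_lemma}.
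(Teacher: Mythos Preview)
Your proposal is correct and follows essentially the same approach as the paper: realize $T$ via the generalized $\Delta$-construction with $S$ the contree and the building block $P_1$, then use Lemma~\ref{subdivide_lemma} to orient the matching edges so that every non-matching edge joins corresponding vertices. The paper's proof is terser but uses exactly these ingredients; your added remarks about the bijection between non-matching edges and edges of the contree, and about the edge-label bookkeeping being insensitive to which corresponding pair is chosen, are sound elaborations of points the paper leaves implicit.
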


\begin{proof}
Let $S$ be the contree of $T$, and let $n_{S}$ be the number of vertices of $S$.  The tree $T$ can be formed by the generalized $\Delta$-construction, by attaching $n_{S}$ copies of $P_{1}$ to each other, according to the structure of $S$.

As in the proof of Theorem \ref{subdivide}, we must show that we can orient these copies such that edges between different copies connect corresponding vertices.  By Lemma \ref{subdivide_lemma}, it is possible to direct the edges of the perfect matching such that the ends of each edge not in the matching are both heads or both tails of edges in $M$.  We orient the copies of $P_{1}$ according to the directions of the edges.

This ensures that the copies of $P_{1}$ are oriented in the appropriate way, so the resulting tree is graceful by the generalized $\Delta$-construction.
\end{proof}

\begin{corollary} \label{broersma's_redemption}
{\normalfont (Broersma \& Hoede \cite{broersma1999another})} If $T$ has a perfect matching, and the contree of $T$ is a caterpillar, then $T$ is graceful.
\end{corollary}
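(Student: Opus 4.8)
The plan is to derive this corollary in one step from Theorem~\ref{broersma's_mistake}, using only the already-recorded fact that caterpillars are graceful. Theorem~\ref{broersma's_mistake} reduces the gracefulness of a tree with a perfect matching to the gracefulness of its contree, so all that is needed is the observation that a caterpillar is a graceful tree.

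First I would recall that every caterpillar has an $\alpha$-labeling (Rosa~\cite{rosa}), and that by the hierarchy of labelings every $\alpha$-labeling is in particular a graceful labeling; hence every caterpillar is graceful. Then, given a tree $T$ with a perfect matching $M$ whose contree $S$ (as in Definition~\ref{contree}) is a caterpillar, the tree $S$ is graceful by the previous sentence, so Theorem~\ref{broersma's_mistake} applies to $T$ and $S$ and yields that $T$ is graceful.

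I do not anticipate any real obstacle here, since the statement is an immediate consequence of Theorem~\ref{broersma's_mistake} combined with a known labeling fact; the work has essentially all been done in proving Theorem~\ref{broersma's_mistake}. The only point that merits a moment's thought is whether the hypothesis is well-posed, i.e.\ whether ``the contree of $T$ is a caterpillar'' could depend on which perfect matching $M$ is chosen in Definition~\ref{contree}; but a tree has at most one perfect matching, so there is no ambiguity and the reduction goes through verbatim.
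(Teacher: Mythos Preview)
Your proposal is correct and matches the paper's approach: the corollary is stated immediately after Theorem~\ref{broersma's_mistake} with no separate proof, the implication being exactly what you wrote---caterpillars are graceful, so the theorem applies. Your remark on the uniqueness of a perfect matching in a tree is a nice clarification that the paper leaves implicit.
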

\noindent We will later apply this corollary to prove that all lobsters with perfect matchings are graceful.

Before continuing, we note a few strange features of Broersma \& Hoede's paper \cite{broersma1999another}:
\begin{itemize}
\item They prove the theorem above within a proof of another result, but never explicitly state the theorem above as a result.
\item They do explicitly state the corollary above as a result, but prove it by a different method.
\end{itemize}
Because of these issues, the theorem above has often been overlooked in subsequent work (see Morgan \cite{lobsters-perfect}).


%

%
%
%
%

\subsection{Trees formed from caterpillars}

The main limitation of the two constructions introduced in the sections above, the $\Delta$-construction and $\Delta_{+1}$-construction, is that the combined trees must be identical.  Here we introduce a construction that allows us to combine two non-identical trees, provided that one has an $\alpha$-labeling.  Since all caterpillars have $\alpha$-labelings, this construction is particularly applicable to trees formed from caterpillars.


\begin{theorem} \label{combine_alpha_alpha}
{\normalfont (Huang, Kotzig \& Rosa \cite{rosa3})}  Let $T_{1}, T_{2}$ be disjoint trees, and let $v_{1}, v_{2}$ be vertices of $T_{1}, T_{2}$, respectively.  Let $f_{1}$ be an $\alpha$-labeling of $T_{1}$ with $f_{1}(v_{1}) = 0$, and let $f_{2}$ be an $\alpha$-labeling of $T_{2}$ with $f_{2}(v_{2}) = 0$.  Then the tree $T$ obtained by identifying $v_{1}$ with $v_{2}$ has an $\alpha$-labeling.
\end{theorem}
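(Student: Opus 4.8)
The plan is to write down an explicit $\alpha$-labeling of $T$ by placing a shifted copy of $f_2$ on $T_2$ and a stretched copy of a \emph{modification} of $f_1$ on $T_1$, arranged so that the two copies agree at the shared vertex and together use each label in $\{0,\dots,n_1+n_2\}$ exactly once. Write $n_i$ for the number of edges of $T_i$, let $f_i$ have index $k_i$, and let $A_i=f_i^{-1}(\{0,\dots,k_i\})$ and $B_i=f_i^{-1}(\{k_i+1,\dots,n_i\})$ be the two classes of the bipartition of $T_i$, so that $v_i\in A_i$ and (both classes being nonempty) $0\le k_i\le n_i-1$. The difficulty is precisely that $f_i(v_i)=0$ puts $v_i$ at the \emph{bottom} of its block in \emph{both} trees: if one simply laid the two labelings down with their $A$-classes occupying initial segments, the shared vertex $v$ would have to be the smallest label on both sides, and the sets $A_1\setminus\{v\}$ and $A_2\setminus\{v\}$ would be forced to overlap.

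To get around this, I would first replace $f_1$ by its inverse labeling $f_1'$ from Lemma \ref{inverse}, which is again an $\alpha$-labeling of $T_1$ with index $k_1$ but has $f_1'(v_1)=k_1-f_1(v_1)=k_1$, so that $v_1$ now sits at the \emph{top} of the block $A_1$. Then I would define a labeling $g$ of $T=T_1\cup_{v_1=v_2}T_2$ by
\[
g(x)=\begin{cases} f_1'(x) & x\in A_1,\\ f_1'(x)+n_2 & x\in B_1,\\ f_2(x)+k_1 & x\in V(T_2).\end{cases}
\]
The first and third clauses agree on the shared vertex, since $g(v)=f_1'(v_1)=k_1$ and also $g(v)=f_2(v_2)+k_1=k_1$. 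Intuitively, on $T_1$ we keep $A_1$ fixed and push $B_1$ up by $n_2$ to open a gap of exactly $n_2$ labels immediately above $A_1$; that gap is then filled by the $n_2$ vertices of $T_2\setminus\{v\}$, which arrive via a uniform shift of $f_2$ by $k_1$.

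It then remains to verify three things, each routine bookkeeping once the block structure is written out. First, $g$ sends $A_1$ onto $\{0,\dots,k_1\}$ (with $v\mapsto k_1$), $A_2\setminus\{v\}$ onto $\{k_1+1,\dots,k_1+k_2\}$, $B_2$ onto $\{k_1+k_2+1,\dots,k_1+n_2\}$, and $B_1$ onto $\{k_1+n_2+1,\dots,n_1+n_2\}$; these blocks partition $\{0,\dots,n_1+n_2\}$, so $g$ is a bijection onto it. Second, every edge of $T$ lies wholly in $T_1$ or wholly in $T_2$: an edge of $T_1$ has $g$-label equal to its $f_1'$-label plus $n_2$ (the shift on $B_1$ survives the subtraction because $A_1$ is unshifted, whether or not the $A_1$-end is $v$), so the edges of $T_1$ receive exactly $\{n_2+1,\dots,n_1+n_2\}$; an edge of $T_2$ has $g$-label equal to its $f_2$-label, because $g$ shifts all of $V(T_2)$ by the same constant, so the edges of $T_2$ receive exactly $\{1,\dots,n_2\}$; hence the edge labels of $T$ are $\{1,\dots,n_1+n_2\}$. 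Third, the bipartition $(A_1\cup A_2,\,B_1\cup B_2)$ of $T$ is mapped into $\{0,\dots,k_1+k_2\}$ and $\{k_1+k_2+1,\dots,n_1+n_2\}$ respectively, so every edge straddles $k_1+k_2$; thus $g$ is an $\alpha$-labeling of $T$ with index $k_1+k_2$. The only genuine idea is the use of the inverse labeling to move $v_1$ to the top of its block; once that is done the two shifts are forced and everything else is arithmetic. (By symmetry one could equally well leave $f_1$ alone and instead apply the inverse labeling to $f_2$.)
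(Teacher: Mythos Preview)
Your proof is correct and is essentially identical to the paper's own argument: you invoke the same inverse labeling $f_1'$ to move $v_1$ to the top of its block, and your three-case definition of $g$ is literally the paper's labeling (the conditions $x\in A_1$, $x\in B_1$ are exactly $f_1'(x)\le k_1$, $f_1'(x)>k_1$). Your verification is in fact more careful than the paper's, which contains a typo in the edge-label ranges; your $\{1,\dots,n_2\}$ and $\{n_2+1,\dots,n_1+n_2\}$ are the right sets.
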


\begin{proof}
Suppose $T_{1}, T_{2}$ have $n_{1}, n_{2}$ edges, and suppose $f_{1}$ has index $k_{1}$.  By Lemma \ref{inverse}, the inverse labeling $f_{1}'$ is an $\alpha$-labeling with index $k_{1}$, and we have $f_{1}'(v_{1}) = k_{1}$.  Then consider the labeling
$$g(v) = \left\{
     \begin{array}{lr}
       f_{1}'(v) & \text{if $v\in V(T_{1})$ and $f_{1}'(v)\le k_{1}$}\\
       f_{1}'(v) + n_{2} & \text{if $v\in V(T_{1})$ and $f_{1}'(v) > k_{1}$}\\
       k_{1} + f_{2}(v) & \text{if $v\in V(T_{2})$}
     \end{array}
   \right.
$$
Since $f_{1}'(v_{1}) = k_{1} + f_{2}(v_{2})$, the labeling $g$ is well-defined, and the set $V_{g}$ of vertex labels under $g$ is $\{0, \ldots , n_{1} + n_{2}\}$.  The edges in $T_{2}$ have labels $\{0, \ldots , n_{2} - 1\}$ under $g$, and the edges in $T_{1}$ have labels $\{n_{2}, \ldots , n_{1} + n_{2} - 1\}$ under $g$, since each edge label in $T_{1}$ under $g$ is $n_{2}$ more than the corresponding edge label under $f_{1}'$.  Therefore, $g$ is graceful.

Since $f_{1}, f_{2}$ are $\alpha$-labelings, $g$ is an $\alpha$-labeling with index $k_{1} + k_{2}$.
\end{proof}

\begin{theorem} \label{combine_graceful}
{\normalfont (Huang, Kotzig \& Rosa \cite{rosa3})}  Let $T_{1}, T_{2}$ be disjoint trees, and let $v_{1}, v_{2}$ be vertices of $T_{1}, T_{2}$, respectively.  Let $f_{1}$ be an $\alpha$-labeling of $T_{1}$ with $f_{1}(v_{1}) = 0$, and let $f_{2}$ be a graceful labeling of $T_{2}$ with $f_{2}(v_{2}) = 0$.  Then the tree $T$ obtained by identifying $v_{1}$ with $v_{2}$ is graceful.
\end{theorem}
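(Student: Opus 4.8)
The plan is to recycle, essentially line for line, the construction used to prove Theorem \ref{combine_alpha_alpha}. The one and only place that earlier proof invoked the hypothesis that $f_2$ is an $\alpha$-labeling, rather than merely graceful, was its final sentence, where it upgraded the combined labeling to an $\alpha$-labeling; since here we only want the conclusion that $T$ is graceful, a graceful $f_2$ will do. Concretely, suppose $T_1, T_2$ have $n_1, n_2$ edges and that $f_1$ has index $k_1$. By Lemma \ref{inverse} the inverse labeling $f_1'$ is again an $\alpha$-labeling of $T_1$ with index $k_1$, and since $f_1(v_1) = 0 \le k_1$ we get $f_1'(v_1) = k_1$. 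Then define a labeling $g$ of $T$ by
\[
g(v) = \begin{cases}
f_1'(v) & \text{if } v \in V(T_1) \text{ and } f_1'(v) \le k_1,\\
f_1'(v) + n_2 & \text{if } v \in V(T_1) \text{ and } f_1'(v) > k_1,\\
k_1 + f_2(v) & \text{if } v \in V(T_2).
\end{cases}
\]
Since $f_1'(v_1) = k_1 = k_1 + f_2(v_2)$, the first and third clauses agree on the identified vertex, so $g$ is well-defined.

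Next I would check the two routine points. For injectivity: the vertices of $T_1$ with $f_1' \le k_1$ receive $g$-labels exactly $\{0,\dots,k_1\}$, those with $f_1' > k_1$ are pushed up to $\{k_1 + n_2 + 1, \dots, n_1 + n_2\}$, and the vertices of $T_2$ receive $\{k_1, \dots, k_1 + n_2\}$; the union is $\{0, \dots, n_1 + n_2\}$, the only overlap being at $k_1$, which is precisely the identified vertex, so $V_g = \{0,\dots,n_1+n_2\}$ as a graceful labeling of a tree with $n_1 + n_2$ edges requires. For the edge labels: on $T_2$ the map $g$ is the translation $f_2 + k_1$, so the $T_2$-edges keep their labels $\{1,\dots,n_2\}$; on $T_1$, because $f_1'$ is an $\alpha$-labeling with index $k_1$, every edge $uw$ has one end with $f_1'$-label $\le k_1$ and the other $> k_1$, so passing to $g$ fixes the smaller end and raises the larger end by exactly $n_2$, shifting every $T_1$-edge label up by $n_2$, to $\{n_2 + 1, \dots, n_1 + n_2\}$. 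Hence the induced edge labels under $g$ are $\{1,\dots,n_1+n_2\}$, so $g$ is graceful and $T$ is graceful.

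The only genuine content — and the reason the statement asks for $f_1$ (and not $f_2$) to be an $\alpha$-labeling — is the uniform shift of $T_1$'s edge labels. Were $f_1$ merely graceful, some edges of $T_1$ could lie entirely below $k_1$ (keeping their labels) while others straddle $k_1$ (moving up by $n_2$), destroying the clean partition of $\{1,\dots,n_1+n_2\}$ into a low block from $T_2$ and a high block from $T_1$. The $\alpha$-property rules this out by forcing every edge of $T_1$ to cross the index, and so there is no real obstacle; the argument is a transcription of the proof of Theorem \ref{combine_alpha_alpha} with its last sentence deleted.
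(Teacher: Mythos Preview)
Your proposal is correct and is exactly the approach the paper takes: the paper's own proof is literally ``By the same argument as above,'' and you have carried out that transcription, correctly observing that the $\alpha$-property of $f_2$ was used only in the last line of Theorem~\ref{combine_alpha_alpha} to upgrade the conclusion to an $\alpha$-labeling. Your detailed verification of the vertex and edge label sets is in fact cleaner than the paper's sketch (which contains an off-by-one typo in the edge-label ranges).
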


\begin{proof}
By the same argument as above.
\end{proof}

\begin{corollary} \label{attach_caterpillar}
{\normalfont (See Hrn\u{c}iar \& Haviar \cite{diameter5})}  Let a tree $T$ have a graceful labeling $f$, and let $u$ be a vertex of $T$ with $f(u) = 0$.  Let $H$ be a caterpillar disjoint from $T$, and let $v$ be a vertex of $H$ either of maximum eccentricity or adjacent to a vertex of maximum eccentricity.  Then the tree obtained from $T, H$ by identifying $u$ and $v$ is graceful.
\end{corollary}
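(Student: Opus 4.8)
The plan is to obtain this corollary immediately by combining two results already available: Theorem~\ref{caterpillar}, which produces an $\alpha$-labeling of a caterpillar with a prescribed vertex labeled $0$, and Theorem~\ref{combine_graceful}, which glues a tree with such an $\alpha$-labeling to a tree with a graceful labeling along their respective $0$-vertices.

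Concretely, I would first invoke Theorem~\ref{caterpillar} on the caterpillar $H$: since $v$ either has maximum eccentricity in $H$ or is adjacent to a vertex of maximum eccentricity, there is an $\alpha$-labeling $g$ of $H$ with $g(v) = 0$. Next I would observe that $T$ already comes equipped with a graceful labeling $f$ satisfying $f(u) = 0$, which is exactly the other hypothesis needed. Then I would apply Theorem~\ref{combine_graceful} with $T_1 = H$, $v_1 = v$, $f_1 = g$ (the $\alpha$-labeling) and $T_2 = T$, $v_2 = u$, $f_2 = f$ (the graceful labeling). The conclusion of that theorem is precisely that the tree obtained by identifying $v_1 = v$ with $v_2 = u$ is graceful, which is the statement to be proved.

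There is essentially no obstacle here; the only thing to check is that the roles of $T$ and $H$ are assigned correctly, namely that it is the caterpillar (which always has an $\alpha$-labeling) that plays the role of the $\alpha$-labeled tree $T_1$, while the arbitrary graceful tree $T$ plays the role of $T_2$, for which only a graceful labeling is assumed. Once that matching is made, both hypotheses of Theorem~\ref{combine_graceful} are verified — one by Theorem~\ref{caterpillar}, the other by assumption — and the corollary follows directly.
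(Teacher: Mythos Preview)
Your proposal is correct and matches the paper's proof exactly: invoke Theorem~\ref{caterpillar} to get an $\alpha$-labeling of $H$ with $v$ labeled $0$, then apply Theorem~\ref{combine_graceful} with $H$ playing the role of the $\alpha$-labeled tree and $T$ the graceful one. Your care in assigning the roles correctly is appropriate, and there is nothing more to add.
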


\begin{proof}
By Theorem \ref{caterpillar}, there exists an $\alpha$-labeling of $H$ with $v$ labeled 0, so the tree obtained from $T, H$ by identifying $u$ and $v$ is graceful.
\end{proof}

Now we apply these theorems to several classes of trees formed from caterpillars.

\begin{definition}
Let $T$ be a tree, let $v$ be a vertex of $T$, and let $T_{1}, \ldots , T_{k}$ be the connected components of $T\backslash v$.  Then a \emph{branch} of $T$ at $v$ is a subtree of $T$ induced by $V(T_{i})\cup\{v\}$ for some $i$.
\end{definition}

\begin{definition}
(Rosa \cite{rosa}) Let $\mathcal{F}$ be the class of trees $T$ satisfying the following properties, for some vertex $v$ of $T$:
\begin{itemize}
\item All branches of $T$ at $v$ are caterpillars, with $v$ a vertex of maximum eccentricity in each caterpillar.
\item All branches of $T$ at $v$ are isomorphic, except possibly one, where each isomorphism maps $v$ to itself.
\end{itemize}
\end{definition}

\begin{theorem}
{\normalfont (Rosa \cite{rosa})} All trees in $\mathcal{F}$ are graceful.
\end{theorem}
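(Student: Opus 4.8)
Here is the plan. We separate $T$ into a ``symmetric part'' built from the isomorphic branches at $v$ together with one remaining branch, construct a graceful labeling of the symmetric part via the $\Delta_{+1}$-construction, and attach the remaining branch with Theorem~\ref{combine_graceful}. Write $v$ for the distinguished vertex of $T$ and let $B_{1},\dots,B_{k}$ be its branches at $v$, reindexed so that $B_{2},\dots,B_{k}$ are mutually isomorphic as trees rooted at $v$ and $B_{1}$ is either isomorphic to them as well or is the lone exceptional branch. If $k=1$ then $T$ is itself a caterpillar and we are done by Theorem~\ref{caterpillar} (indeed every caterpillar has an $\alpha$-labeling), so assume $k\ge 2$. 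Set $B:=B_{k}$, let $m$ be the number of branches among $B_{1},\dots,B_{k}$ that are copies of $B$ (so $m=k$ when there is no exceptional branch and $m=k-1$ otherwise), and let $U$ be the union of these $m$ copies of $B$ at $v$. The first thing I would record is a general fact: \emph{in any tree, a vertex of maximum eccentricity is a leaf}, since if such a vertex $x$ had two neighbors then one of them, say $u'$, would not lie on the path from $x$ to a vertex $w$ at maximal distance, whence $d(u',w)=1+d(x,w)$ would exceed the diameter. In particular $v$ is a leaf of $B$; let $v'$ be its unique neighbor in $B$.

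Next I would produce the graceful labeling needed for the repeated gadget. Since $B$ is a caterpillar in which $v$ has maximum eccentricity, Theorem~\ref{caterpillar} gives an $\alpha$-labeling $\psi$ of $B$ with $\psi(v)=0$; moreover, in the construction underlying that theorem one labels $v$ by $0$ and then — because $v$ is a leaf and so has no further neighbors to label first — immediately gives $v'$ the largest available label, namely $n_{B}:=|E(B)|$, so $\psi(v')=n_{B}$. Passing to the complementary labeling $\psi^{c}$ yields an $\alpha$-labeling of $B$ with $\psi^{c}(v)=n_{B}$ and $\psi^{c}(v')=0$. Deleting the leaf $v$ from $B$ removes exactly the vertex labeled $n_{B}$ and the edge $vv'$ (whose induced label is $n_{B}$), so the restriction of $\psi^{c}$ to $B\setminus v$ is a graceful labeling of the caterpillar $B\setminus v$ with $v'\mapsto 0$. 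Now, because $v$ is a leaf of $B$, the tree $U$ is precisely $S\,\Delta_{+1}\,(B\setminus v)$, where $S=K_{1,m}$, the distinguished vertex $u$ of $S$ is its center, and each copy of $B\setminus v$ is attached at its vertex $v'$: identifying a leaf of $S$ with the $v'$-vertex of a copy of $B\setminus v$ and joining it to the center reconstitutes the edge $vv'$ together with the branch $B\setminus v$, with the center of $S$ playing the role of $v$. The star $K_{1,m}$ has a graceful labeling $f$ with $f(u)=m=|V(S)|-1$, and $B\setminus v$ has a graceful labeling with $v'\mapsto 0$, so the theorem on the $\Delta_{+1}$-construction shows that $U$ is graceful. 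Furthermore, in the labeling it produces the center $u$ receives the label $(|V(S)|-1)\,|V(B\setminus v)|=m\,|V(B\setminus v)|=|E(U)|$, the largest label used; hence the complementary labeling of $U$ is a graceful labeling with $v\mapsto 0$.

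Finally I would assemble $T$. If there is no exceptional branch, then $T=U$ is graceful and we are done. Otherwise $T$ is obtained by identifying the vertex $v$ of the exceptional branch $B_{1}$ with the vertex $v$ of $U$; since $B_{1}$ is a caterpillar in which $v$ has maximum eccentricity it has an $\alpha$-labeling with $v\mapsto 0$ by Theorem~\ref{caterpillar}, while $U$ has a graceful labeling with $v\mapsto 0$ by the previous step, so Theorem~\ref{combine_graceful} furnishes a graceful labeling of $T$. I expect the crux of the argument to be the two observations in the middle paragraph — that the symmetric part $U$ is exactly a $\Delta_{+1}$-product with the star as base and $B\setminus v$ (rooted at $v'$) as the repeated factor, and that the construction assigns the center of that star, which is the very vertex $v$ we need controlled, the maximum label, so that one complementation brings $v$ to $0$. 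That is what makes the closing application of Theorem~\ref{combine_graceful} possible; the naive alternative of attaching the branches one at a time via Theorem~\ref{combine_alpha_alpha} fails precisely because that construction does not keep $v$ at label $0$.
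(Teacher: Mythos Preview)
Your proof is correct and follows essentially the same route as the paper's: both split $T$ into the union $U$ of the isomorphic branches plus the (possible) exceptional branch, realize $U$ as $K_{1,m}\,\Delta_{+1}\,(B\setminus v)$ to obtain a graceful labeling with $v$ at the top, complement to put $v$ at $0$, and then attach the exceptional caterpillar via the $\alpha$-labeling/graceful combination theorem. The only cosmetic difference is how the graceful labeling of $B\setminus v$ with $v'\mapsto 0$ is obtained---the paper applies Theorem~\ref{caterpillar} directly to $H'=B\setminus v$ (noting $v'$ has, or is adjacent to a vertex of, maximum eccentricity there), whereas you label $B$ first and then delete the leaf $v$; both work.
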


\begin{proof}
Let $T\in\mathcal{F}$, and suppose all branches of $T$ at $v$ are isomorphic to the caterpillar $H$, except possibly one.  The vertex of $H$ corresponding to $v$ is a leaf, so it is adjacent to a unique vertex $u$ of $H$.  We now define two auxiliary trees:
\begin{itemize}
\item Let $H'$ be the tree obtained from $H$ by deleting the vertex corresponding to $v$.  Then either $u$ has maximum eccentricity in $H'$ or $u$ is adjacent to a vertex of maximum eccentricity in $H'$.
\item Let $T'$ be the subtree of $T$ consisting of the branches of $T$ isomorphic to $H$.
\end{itemize}
By Lemma \ref{caterpillar}, $H'$ has a graceful labeling with $u$ labeled 0.  Since we can obtain $T'$ by the $\Delta_{+1}$-construction, by attaching copies of $H'$ to the leaves of a star, the tree $T'$ has a graceful labeling with $v$ labeled with the maximum label.  By taking the complementary labeling, the tree $T'$ has a graceful labeling with $v$ labeled 0.  Then by Corollary \ref{attach_caterpillar}, $T$ is graceful.
\end{proof}


\begin{definition}
(Rosa \cite{rosa}) Let $\mathcal{U}_{3}$ be the class of trees $T$ such that for some vertex $v$ of $T$, two branches of $T$ at $v$ are paths, and the third is a caterpillar.
\end{definition}

\begin{theorem}
{\normalfont (Rosa \cite{rosa})} All trees in $\mathcal{U}_{3}$ are graceful.
\end{theorem}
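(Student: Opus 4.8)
The plan is to realise $T$ as the union of a path and a caterpillar meeting in a single vertex, and then apply the combination machinery for trees formed from caterpillars.

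First I would fix a vertex $v$ witnessing $T\in\mathcal{U}_3$, and write $B_1,B_2$ for the two path branches of $T$ at $v$ and $H$ for the caterpillar branch; in $B_1$ and $B_2$ the vertex $v$ is an endpoint, and in $H$ it is a leaf. Gluing $B_1$ and $B_2$ along $v$ gives a single path $P$ (with $v$ an interior vertex, unless one of the branches is a single vertex), and $T$ is exactly the tree obtained by identifying the copies of $v$ in $P$ and in $H$. Since $P$ is a path, Theorem~\ref{paths_0-rotatable} supplies a graceful labeling of $P$ with $f(v)=0$. If moreover $v$ has maximum eccentricity in $H$ or is adjacent to a vertex of maximum eccentricity there --- the situation in which Theorem~\ref{caterpillar} guarantees an $\alpha$-labeling of $H$ with $v$ labelled $0$ --- then one finishes immediately: Corollary~\ref{attach_caterpillar}, with the graceful path $P$ as host and $H$ as the attached caterpillar, shows that $T$ is graceful (equivalently, combine $P$ and $H$ via Theorem~\ref{combine_graceful}).

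The remaining case is when $v$ is a leaf of $H$ attached to a spine vertex $s$ lying far from both ends of the spine, so that $v$ is badly placed both in $H$ and in $P$; a small example (a caterpillar whose spine has seven vertices, with one extra leaf and with $v$ hung at the middle spine vertex, together with two pendant paths of length three) shows that this case really occurs and cannot always be avoided by a cleverer choice of $v$. Here the idea is to cut $H$ at $s$ into two caterpillars $H_L$ and $H_R$, each having $s$ as an endpoint of its spine, and to build $T$ from the spider formed by $v$, its two pendant paths, and the edge $vs$ by attaching $H_L$ and $H_R$ one at a time at $s$: re-attaching the pendant leaf $v$ to one half, say $H_R$, yields a caterpillar $H_R^{+}$ in which $v$ now \emph{is} an endpoint of a dominating path, so $P$ can be combined with $H_R^{+}$ at $v$ exactly as above, after which $H_L$ --- a caterpillar with $s$ at one end of its spine --- could be attached at $s$ by Corollary~\ref{attach_caterpillar}. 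The main obstacle, and the step I expect to require real care, is controlling the label of the re-attachment vertex $s$ through these operations: each application of Theorem~\ref{combine_graceful} or Theorem~\ref{combine_alpha_alpha} leaves the shared vertex at the \emph{index} of one of the input labelings rather than at $0$, and since the pendant paths can be arbitrarily long, returning $s$ to label $0$ (or, up to complementation, to the maximal label) before the final attachment appears to require either an auxiliary result describing which vertices of a caterpillar can receive label $0$, or a more bespoke simultaneous construction. The few small exceptional configurations --- for instance $P=P_4$ with $v$ its central vertex --- would be handled by a direct check.
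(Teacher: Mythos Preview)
Your first paragraph reproduces the paper's proof exactly: take $T'$ to be the union of the two path branches (a single path containing $v$), use the $0$-rotatability of paths (Theorem~\ref{paths_0-rotatable}) to label $T'$ gracefully with $v$ at $0$, and then invoke Corollary~\ref{attach_caterpillar} to attach the caterpillar branch $H$. That is the entire argument the paper gives.

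The ``remaining case'' you isolate --- $v$ a leaf of $H$ hanging off an interior spine vertex far from both ends, so that $v$ neither has maximum eccentricity in $H$ nor is adjacent to such a vertex --- is simply not treated in the paper: Corollary~\ref{attach_caterpillar} is invoked without checking its eccentricity hypothesis. Comparing with the immediately preceding class $\mathcal{F}$, where the clause ``with $v$ a vertex of maximum eccentricity in each caterpillar'' is stated explicitly, it is very likely that Rosa's $\mathcal{U}_{3}$ carries the analogous restriction and that the thesis's paraphrase dropped it; under that reading your remaining case is vacuous and the short proof is complete. If instead one insists on the definition exactly as written here, you have correctly spotted a gap in the paper's argument, and your proposed repair (split $H$ at the spine vertex $s$ and rebuild in two attachments) stalls at precisely the obstacle you name: after one use of Theorem~\ref{combine_graceful} the vertex $s$ need not carry label $0$, and nothing in the cited toolkit forces it to.
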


\begin{proof}
(see Huang, Kotzig \& Rosa \cite{rosa3})  Let $T'$ be the subtree of $T$ consisting of the two branches of $T$ at $v$ which are paths, and let $H$ be the other branch of $T$.  By Theorem \ref{paths_0-rotatable}, $T'$ is 0-rotatable, so $T'$ has a graceful labeling with $v$ labeled 0.  Then by Corollary \ref{attach_caterpillar}, $T$ is graceful.
\end{proof}

\begin{theorem}
{\normalfont (Sethuraman \& Jesintha \cite{sethuraman1})} Let $T$ be a tree obtained from a collection of caterpillars by identifying a chosen vertex of maximum eccentricity of each caterpillar all together, where the caterpillars satisfy the following conditions:
\begin{itemize}
\item All caterpillars have the same diameter $N\ge 3$.
\item All internal vertices of all caterpillars have odd degree.
\end{itemize}
Then $T$ is graceful.
\end{theorem}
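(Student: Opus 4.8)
The plan is to prove the stronger statement that $T$ admits an $\alpha$-labeling in which the identified vertex $v$ receives label $0$; since every $\alpha$-labeling is graceful, this gives the theorem. Structurally, $T$ is a ``spider of caterpillars'': $v$ is a cut vertex whose branches at $v$ are exactly the given caterpillars $C_1,\dots,C_m$, each of which has $v$ as a leaf of maximum eccentricity (in a tree, any vertex of eccentricity equal to the diameter is an endpoint of a diametral path, hence a leaf). By Theorem~\ref{caterpillar}, each $C_i$ alone has an $\alpha$-labeling $f_i$ with $f_i(v)=0$; write $k_i$ for its index and $A_i\ni v$, $B_i$ for its two parts, so that $f_i(A_i)=\{0,\dots,k_i\}$, $f_i(B_i)=\{k_i+1,\dots,n_i\}$, and $|A_i|+|B_i|=n_i+1$.

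The obvious first attempt --- combining the branches two at a time via Theorem~\ref{combine_alpha_alpha} --- breaks down: that construction puts $v$ at label $k_1\neq 0$ in $C_1\cup_v C_2$, neither the complementary nor the inverse labeling returns $v$ to $0$, and $v$ is no longer a vertex of maximum eccentricity in $C_1\cup_v C_2$, so Theorem~\ref{caterpillar} cannot be reapplied; the same obstruction recurs after any bounded number of pairwise merges. Hence the heart of the proof is a single $m$-fold combination, which I would set up as a block layout. Assign $v$ the label $0$; place each ``low'' set $A_i\setminus\{v\}$ in an interval $L_i\subseteq\{1,\dots,K\}$ where $K:=\sum_i k_i$, with the $L_i$ partitioning $\{1,\dots,K\}$; and place each ``high'' set $B_i$ in an interval $H_i\subseteq\{K+1,\dots,n\}$, with the $H_i$ partitioning $\{K+1,\dots,n\}$. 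Any such layout uses each vertex label once and yields a bipartite candidate labeling of index $K$; the real task is to order the blocks and position the shifted copies of the $f_i$ so that the edge labels contributed by branch $i$ form one contiguous block of length $n_i$, the $m$ blocks partitioning $\{1,\dots,n\}$.

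This is the step that consumes both hypotheses. The requirement that every internal vertex of every caterpillar have odd degree means that, along the spine of $C_i$, the leaf-neighbor counts have tightly constrained parities, and tracking these through the standard bipartition shows $n_i$ is odd for every $i$ and that $|A_i|-|B_i|$ is even. The common diameter $N$ makes every branch the same depth from $v$, so the caterpillar-style ``snake'' that produces each $f_i$ can be normalized uniformly --- for instance so that the unique neighbor of $v$ in $C_i$ gets the top label of $C_i$, making the single edge of $C_i$ at $v$ the largest in that branch. I expect the parity of each $n_i$ together with this normalization is exactly what forces the per-branch edge-label blocks to have compatible lengths and to tile $\{1,\dots,n\}$ with no overlap --- in particular it controls where the $v$-incident edge of each branch (the only edge whose endpoints straddle a low block and the point $0$) lands.

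The main obstacle is therefore the global edge-label bookkeeping: verifying that the chosen layout realizes each of $1,\dots,n$ exactly once. I would attack it by first recording, for a single normalized $\alpha$-labeling of a caterpillar $C_i$ with $v$ at $0$, precisely which edge labels occur and the position of $v$'s neighbor, and then showing that these normalized pieces concatenate --- using the oddness of each $n_i$ and the equal depths --- so that branch $i$ occupies edge labels $\{\,1+\sum_{j<i}n_j,\dots,\sum_{j\le i}n_j\,\}$. If the fully explicit layout proves too fragile, a fallback is induction on $m$: assuming $C_1\cup_v\cdots\cup_v C_{m-1}$ already has an $\alpha$-labeling with $v$ at $0$ whose index is pinned down by the parity facts above, splice in $C_m$ by a version of the argument behind Theorem~\ref{combine_alpha_alpha} engineered to leave $v$ at label $0$ rather than at the index.
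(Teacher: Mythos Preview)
Your approach diverges fundamentally from the paper's, which (following Sethuraman and Jesintha) uses the method of \emph{transfers} introduced in Section~\ref{banana_tree_section}: one starts from a gracefully labeled star, performs the sequence $0\to n\to 1\to n-1\to\cdots$ through the levels, and the odd-degree hypothesis guarantees that at each step an odd number of leaves can be left behind (cf.\ Lemma~\ref{transfer_parity}). The common diameter $N$ is what makes all branches the same depth so that this sequence terminates uniformly. No $\alpha$-labeling is produced or needed.

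Your block-layout plan has a concrete obstruction that the hypotheses do not resolve. With $v$ fixed at $0$, $A_i\setminus\{v\}$ shifted by $a_i$, and $B_i$ shifted by $b_i$, every edge of $C_i$ not incident to $v$ has its label shifted by $b_i-a_i$, while the unique edge $vp_1^{(i)}$ at $v$ has its label shifted by $b_i$ alone (its lower endpoint stays at $0$). Thus the edge labels contributed by branch $i$ form a contiguous block only if $a_i=0$; but the $L_i$ partition $\{1,\dots,K\}$, so $a_i=0$ can hold for at most one branch. Equivalently: in any $\alpha$-labeling with $v$ at $0$ and index $K$, each of the $m$ edges at $v$ has label $>K$, so the branch whose edge-block contains the label $1$ would need $n_i>K=\sum_j k_j$, which fails in general. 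Neither the oddness of the $n_i$ nor the equality of the diameters repairs this mismatch, and your induction fallback meets the same wall---getting $v$ back to $0$ after a Theorem~\ref{combine_alpha_alpha} merge is exactly the missing ingredient, not a technicality. The transfer method sidesteps the issue entirely by never trying to preserve $\alpha$-structure across the merge.
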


\begin{proof}
Using transfers, introduced in the next section.
\end{proof}



\subsection{Banana trees and their generalizations} \label{banana_tree_section}

\begin{definition}
(L\"{u}, Chen \& Yeh \cite{firecrackers}) A \emph{banana tree} is a tree obtained from a collection of stars $K_{1, m_{i}}$ with $m_{i}\ge 1$ by joining a leaf of each star by an edge to a new vertex.  
\end{definition}

\begin{definition}
(Hrn\u{c}iar \& Monoszova \cite{banana}) A \emph{generalized banana tree} is a tree obtained from a collection of stars $K_{1, m_{i}}$ with $m_{i}\ge 1$ by joining a leaf of each star by a path of fixed length $h\ge 0$ to a new vertex, called the \emph{apex}. 
\end{definition}
\noindent We make several observations about this definition:
\begin{itemize}
\item If we consider a generalized banana tree as a rooted tree with apex as root, each leaf is at level $h + 1$ or $h + 2$, depending on whether $m_{i} = 1$ or $m_{i} > 1$.
\item The class of generalized banana trees with $h = 1$ is exactly the class of banana trees.
\end{itemize}

%

In order to prove that generalized banana trees are graceful, we introduce an important 
lemma, which gives rise to the concept of \emph{transfers}.

\begin{lemma} \label{the_classic_lemma}
{\normalfont (Hrn\u{c}iar \& Haviar \cite{diameter5})} Let $T$ be a tree with a graceful labeling $f$, and let $u$ be a vertex adjacent to leaves $u_{1}, u_{2}$.  Let $v$ be a vertex of $T$ other than $u, u_{1}, u_{2}$.
\begin{itemize}
\item Suppose $u_{1}\ne u_{2}$, and let $T'$ be the tree obtained from $T$ by deleting the leaves $u_{1}, u_{2}$ from $u$ and reattaching them at $v$.  If
$$f(u_{1}) + f(u_{2}) = f(u) + f(v)$$
then $f$ is a graceful labeling of $T''$.
\item Suppose $u_{1} = u_{2}$, and let $T'$ be the tree obtained from $T$ by deleting the leaf $u_{1}$ from $u$ and reattaching it at $v$.  If
$$2f(u_{1}) = f(u) + f(v)$$
then $f$ is a graceful labeling of $T'$.
\end{itemize}
\end{lemma}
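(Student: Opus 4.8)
The plan is to observe that relocating leaves changes only the edge set, not the vertices, so the only thing requiring verification is that the induced edge labels are still exactly $\{1, \ldots , n\}$. Note first that $T'$ is again a tree on the same $n + 1$ vertices as $T$: we only delete leaf edges ($uu_1, uu_2$ in the first case, $uu_1$ in the second) and reattach the same leaf vertices to $v$, so connectedness and acyclicity are preserved and the vertex count is unchanged. Hence $f$ is the same injective map into $\{0, 1, \ldots , n\}$, and $V_f = \{0, 1, \ldots , n\}$ for $T'$ just as for $T$. Every edge of $T$ other than the deleted ones is also an edge of $T'$ and retains its induced label, so it suffices to show the relocated edges carry, as a multiset, the same induced labels in $T'$ that the deleted edges carried in $T$.

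For the first case, I would rewrite the hypothesis $f(u_{1}) + f(u_{2}) = f(u) + f(v)$ in the two equivalent forms $f(v) - f(u_{1}) = f(u_{2}) - f(u)$ and $f(v) - f(u_{2}) = f(u_{1}) - f(u)$. Taking absolute values yields
\[ |f(v) - f(u_{1})| = |f(u) - f(u_{2})|, \qquad |f(v) - f(u_{2})| = |f(u) - f(u_{1})|, \]
so the edges $vu_{1}, vu_{2}$ of $T'$ induce precisely the labels that $uu_{2}, uu_{1}$ induced in $T$ (with the two labels swapped). For the second case, the hypothesis $2f(u_{1}) = f(u) + f(v)$ rearranges to $f(v) - f(u_{1}) = f(u_{1}) - f(u)$, whence $|f(v) - f(u_{1})| = |f(u) - f(u_{1})|$ and the single relocated edge keeps its induced label.

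In either case the multiset of induced edge labels under $f$ is the same for $T'$ as for $T$, namely the distinct values $\{1, \ldots , n\}$; together with $V_f = \{0, \ldots , n\}$ this shows $f$ is a graceful labeling of $T'$. I do not expect a real obstacle here: the argument is a direct computation. The only point needing a moment's care is the first case, where one must notice that the two relocated edges exchange their induced labels rather than each keeping its own — a fact that is immediate once the hypothesis is written in the two ``difference'' forms above, but which would be easy to mishandle if one tried to match the edges up the wrong way.
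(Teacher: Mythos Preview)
Your proof is correct. The paper states this lemma without proof (attributing it to Hrn\u{c}iar \& Haviar), so there is no argument in the paper to compare against; your direct verification that the multiset of induced edge labels is preserved---via the two rearrangements of the hypothesis and the observation that the two relocated edges swap their labels---is exactly the standard argument one would give.
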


\noindent We will most often use this lemma as stated, to transfer leaves.  However, the lemma can be generalized as follows, to transfer branches:


\begin{definition}
Suppose $T$ is a tree, and suppose $u, v$ are adjacent vertices of $T$.  Then denote the branch of $T$ at $u$ containing $v$ by $T_{u, v}$.
\end{definition}

\begin{lemma}
{\normalfont (Hrn\u{c}iar \& Haviar \cite{diameter5})} Let $T$ be a tree with a graceful labeling $f$, and let $u$ be a vertex adjacent to vertices $u_{1}, u_{2}$.  Let $T'$ be the subtree of $T$ containing $u$ and the vertices of $T$ not in $T_{u, u_{1}}$ or $T_{u, u_{2}}$.  Let $v$ be a vertex of $T'$ other than $u$.
\begin{itemize}
\item Suppose $u_{1}\ne u_{2}$, and let $T''$ be the tree obtained by identifying $v$ with the vertex $u$ of $T_{u, u_{1}} ,T_{u, u_{2}}$.  If $f(u_{1}) + f(u_{2}) = f(u) + f(v)$, then $f$ is a graceful labeling of $T''$.
\item Suppose $u_{1} = u_{2}$, and let $T''$ be the tree obtained by identifying $v$ with the vertex $u$ of $T_{u, u_{1}}$.  If $2f(u_{1}) = f(u) + f(v)$, then $f$ is a graceful labeling of $T''$.
\end{itemize}
\end{lemma}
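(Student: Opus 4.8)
The plan is to run exactly the argument behind Lemma~\ref{the_classic_lemma}, but to exploit the fact that detaching and reattaching an entire branch changes \emph{no} edge label inside that branch: the only edges that are affected are the single edges that used to join the branches to $u$ and now join them to $v$. So the whole proof collapses to the same arithmetic identity as in the leaf case.

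First I would record that $T''$ is genuinely a tree on the same vertex and edge sets as $T$. Since $V(T_{u,u_1})\cap V(T_{u,u_2}) = \{u\}$, we have $V(T) = V(T')\ \sqcup\ (V(T_{u,u_1})\setminus\{u\})\ \sqcup\ (V(T_{u,u_2})\setminus\{u\})$, and the construction of $T''$ puts copies of $V(T_{u,u_1})\setminus\{u\}$ and $V(T_{u,u_2})\setminus\{u\}$ back onto $T'$ at $v$, so $V(T'')$ is canonically identified with $V(T)$; hence $f$ may be viewed as a labeling of $T''$, and as $f$ is injective with image $\{0,\dots,n\}$ the vertex labels of $T''$ are correct. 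Passing from $T$ to $T''$ deletes the edges $uu_1,uu_2$ (just $uu_1$ in the degenerate case) and adds the edges $vu_1,vu_2$ (just $vu_1$), so $|E(T'')| = |E(T)|$; together with connectedness (which is clear, $T'$ and the branches being connected and glued at $v$) and $|V(T'')| = |V(T)| = |E(T)|+1$ this forces $T''$ to be a tree. Since $v\in T'$ and $v\ne u$, while $u_1,u_2$ lie strictly inside the transferred branches, the added edges are new and create no multi-edge.

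Then I would compare induced edge labels. Every edge of $T$ other than $uu_1,uu_2$ survives in $T''$ with the same endpoints, hence the same induced label; in particular all edges interior to the branches are untouched. In the case $u_1\ne u_2$, the hypothesis $f(u_1)+f(u_2) = f(u)+f(v)$ rearranges to $f(u_1)-f(u) = f(v)-f(u_2)$ and $f(u_2)-f(u) = f(v)-f(u_1)$, giving $|f(u)-f(u_1)| = |f(v)-f(u_2)|$ and $|f(u)-f(u_2)| = |f(v)-f(u_1)|$, so the pair of labels on the attaching edges is preserved as a multiset. Hence the set of induced edge labels of $T''$ equals that of $T$, namely $\{1,\dots,n\}$, and $f$ is graceful on $T''$. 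The degenerate case $u_1 = u_2$ is the same, only simpler: $2f(u_1) = f(u)+f(v)$ gives $|f(u)-f(u_1)| = |f(v)-f(u_1)|$ at once. I do not expect a real obstacle here; the only point needing a line of care is the bookkeeping that a branch transfer disturbs no edge other than the attaching edge and introduces no cycle or parallel edge, and Lemma~\ref{the_classic_lemma} is recovered by taking each branch to be a single leaf.
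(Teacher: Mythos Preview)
Your proof is correct. Note, however, that the paper does not actually prove this lemma: it is stated without proof, attributed to Hrn\u{c}iar \& Haviar \cite{diameter5}, as the natural branch-level generalization of Lemma~\ref{the_classic_lemma} (which is likewise stated without proof). Your argument is exactly the expected one --- the only edges altered are the attaching edges $uu_1,uu_2$, and the arithmetic hypothesis swaps their induced labels --- so there is nothing to compare against.
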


We call any transformation corresponding to the lemmas above a \emph{transfer}.
It is often possible to transfer many leaves $\{u_{i}\}$ from $u$ to $v$ at the same time, and we call this a $u\rightarrow v$ \emph{transfer}.
Two important examples are as follows:



\begin{definition}
(Hrn\u{c}iar \& Haviar \cite{diameter5}) A \emph{transfer of the first type} is a $u\rightarrow v$ transfer with
$$\{u_{i}\} = \{k, k + 1, \ldots , k + m\}$$
where $u + v = k + (k + m)$.
\end{definition}

\begin{definition}
(Hrn\u{c}iar \& Haviar \cite{diameter5}) A \emph{transfer of the second type} is a $u\rightarrow v$ transfer with
$$\{u_{i}\} = \{k, k + 1, \ldots , k + m\}\cup\{l, l + 1, \ldots , l + m\}$$
where $u + v = k + l + m$.
\end{definition}

In general, we will often perform the following sequence of transfers, starting with transfers of the first type, and then switching to transfers of the second type.
$$0\rightarrow n\rightarrow 1\rightarrow n - 1\rightarrow \cdots$$
We now make several observations, which we will further explore in section \ref{preliminary}.
\begin{itemize}
\item A transfer of the first type after another transfer of the first type leaves behind an odd number of leaves.
\item A transfer of the second type after another transfer of the second type leaves behind an even number of leaves.
\item A transfer of the second type after a transfer of the first type can leave behind an even or odd number of leaves.
\end{itemize}
With these observations in place, we are ready to prove that all generalized banana trees are graceful.

\begin{theorem} \label{generalized_banana}
{\normalfont (Hrn\u{c}iar \& Monoszova \cite{banana})} All generalized banana trees are graceful.
\end{theorem}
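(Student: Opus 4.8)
The plan is to produce, by hand, a graceful labeling of a caterpillar that uses the same ``raw material'' as the generalized banana tree $B$, and then to reshape that caterpillar into $B$ by a sequence of transfers, controlling the labels along the way with the schedule $0\rightarrow n\rightarrow 1\rightarrow n-1\rightarrow\cdots$.

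First I would set up notation: view $B$ as rooted at the apex $a$, with $k$ branches $H_1,\dots,H_k$, where $H_i$ is a path $a-p^{(i)}_1-\dots-p^{(i)}_h-c_i$ together with $m_i-1$ leaves attached to the center $c_i$, so that $B$ has $n=k(h+1)+\sum_i(m_i-1)$ edges and $n+1$ vertices. Reorder the stars so that $m_1\ge\dots\ge m_k$, and peel off the degenerate case in which all $m_i=1$, where $B$ is a path or an equal-legged spider and is covered by Theorem~\ref{path} or Theorem~\ref{symmetrical_graceful}. It is worth noting why the obvious approach fails: each $H_i$ is a caterpillar in which $a$ has maximum eccentricity, so by Theorem~\ref{caterpillar} each $H_i$ has an $\alpha$-labeling with $a$ labeled $0$; but Theorem~\ref{combine_alpha_alpha} only lets us glue two such trees at their $0$-vertices, and in the glued tree $a$ is no longer labeled $0$, so the construction cannot be iterated to attach $H_3,\dots,H_k$. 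This is exactly the gap that transfers are meant to bridge.

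Next I would choose the base caterpillar $C$: a path on $k(h+1)+1$ vertices together with $\sum_i(m_i-1)$ pendant leaves, hung near one end of the path, so that $C$ has the same number of vertices as $B$. I would then write down an explicit graceful labeling of $C$ in the style of the path labeling of Theorem~\ref{path} — alternating smallest and largest remaining labels along the spine, inserting the pendant leaves at the appropriate moments — chosen so that its restriction to the spine, and the labels assigned to the pendant leaves, are interval-shaped. With such a labeling in hand, applying Lemma~\ref{the_classic_lemma} and its branch-transfer variant, I would run the schedule $0\rightarrow n\rightarrow 1\rightarrow n-1\rightarrow\cdots$, starting with transfers of the first type and switching to transfers of the second type, to peel pendant leaves and sub-brooms off the spine of $C$ and reattach them so that exactly $m_i-1$ leaves land on each $c_i$ and all $k$ branches $H_i$ emanate from $a$. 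The three parity observations preceding this theorem are what make the schedule go through: they pin down how many leaves remain after each transfer, and hence certify that the multiset of labels still available for transfer stays a union of one or two integer intervals, which is precisely the hypothesis needed for the next sum-condition $f(u_1)+f(u_2)=f(u)+f(v)$ (or $2f(u_1)=f(u)+f(v)$) to hold.

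The main obstacle is this bookkeeping. One must maintain, through an entire schedule of transfers, the invariant that the labels available for transfer form a union of at most two integer intervals, so that every sum-condition is satisfied, while at the same time steering exactly $m_i-1$ leaves onto each center $c_i$ and the correct path-then-star structure onto each branch. Because the parity of the number of leaves left behind flips according to whether consecutive transfers are of the first or second type, this forces a delicate case analysis driven by the multiset $\{m_i\}$ — how many $m_i$ equal $1$, the parities of the $m_i$, and how the sizes interleave — together with the parities of $h$ and $k$; arranging the labeling of $C$ and the order of transfers so that all of these cases close simultaneously, and checking the small boundary configurations by hand, is the substance of the proof.
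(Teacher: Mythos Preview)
Your framework --- transfers along the schedule $0\to n\to 1\to n-1\to\cdots$ --- matches the paper's, but you are missing the two structural ideas that turn the ``delicate case analysis'' you anticipate into a short argument.

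First, the paper does not start from a hand-labeled caterpillar.  It starts from the gracefully labeled star $K_{1,n}$ with center $0$ and runs the first-type schedule $0\to n\to 1\to\cdots$ to build an intermediate tree $S'$: the equal-legged spider $S$ (apex plus $m$ legs of length $h$) with \emph{all} the excess leaves parked at a single leg-tip $v_1$.  Because every step is a first-type transfer, the labels on the leg-tips $v_1,\dots,v_m$ and on the leaf-pile at $v_1$ are explicit consecutive intervals, with no choices to make.  This first stage already requires $m$ odd (each first-type transfer leaves an odd number behind, and the apex must keep exactly $m$ children); the even-$m$ case is not handled inside the transfer argument at all but by deleting one branch, applying the odd case to get a graceful labeling with apex labeled $0$, and reattaching the branch via Corollary~\ref{attach_caterpillar}.

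Second --- and this is the point your size ordering $m_1\ge\cdots\ge m_k$ misses --- the distribution of the leaf-pile among $v_1,\dots,v_m$ is governed by \emph{parity}, not size.  The paper orders the leg-tips so that those needing a positive odd number of leaves come first, then those needing a positive even number, then those needing zero.  With that ordering a single pass $v_1\to v_2\to\cdots\to v_m$ works: first-type transfers leave odd counts, the single switch to second-type can leave either parity, and second-type transfers thereafter leave even counts; trailing zeros are free.  Under your size ordering the parities of the $m_i$ interleave arbitrarily, which is exactly why you foresee casework on ``how many $m_i$ equal $1$, the parities of the $m_i$, and how the sizes interleave''; the paper's ordering makes all of that disappear.
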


\begin{proof}
Let $T$ be a generalized banana tree with $m$ branches at the apex, and fixed path length $h$.  We have two cases, based on the parity of $m$.\newline

\noindent\emph{Case 1: $m$ is odd.}\newline

We define the following auxiliary trees:
\begin{itemize}
\item Let $S$ be the tree obtained from $m$ paths $P_{h}$ by identifying one leaf of each path all together, and let $v_{1}, \ldots , v_{m}$ be the leaves of $S$.
\item Let $S'$ be the tree obtained from $S$ by attaching leaves $\{u_{i}\}$ to a leaf of $S$, such that $S, S'$ have the same numbers of vertices and edges.
\end{itemize}
We can obtain $S'$ from the gracefully labeled star $K_{1, n}$ with central vertex labeled 0 by a sequence of transfers of the first type
$$0\rightarrow n\rightarrow 1\rightarrow n - 1\rightarrow\cdots$$
This sequence of transfers produces a graceful labeling of $S'$ with the following properties:
\begin{itemize}
\item If $h$ is odd, then for some $a, k$,
\begin{itemize}
\item The vertices $v_{1}, \ldots , v_{m}$ are labeled
$$n - a + k,\; a - k,\; n - a + k - 1,\ldots,\; n - a + 1,\; a - 1,\; n - a$$
\item The leaves $\{u_{i}\}$ are labeled
$$a,\; a + 1,\; \ldots\; ,\; n - a - 2,\; n - a - 1$$
\end{itemize}
\item If $h$ is even, then for some $a, k$,
\begin{itemize}
\item The vertices $v_{1}, \ldots , v_{m}$ are labeled
$$a - k, n - a + k - 1,\; a - k + 1,\ldots,\; a - 1,\; n - a,\; a$$
\item The leaves $\{u_{i}\}$ are labeled
$$a + 1,\; a + 2,\; \ldots\; ,\; n - a - 2,\; n - a - 1$$
\end{itemize}
\end{itemize}

We now obtain $T$ from $S'$ by transfers.  We may assume that the branches of $T$ are arranged such that the vertices of $T$ corresponding to $v_{1}, \ldots , v_{m}$ are in the following order:
\begin{enumerate}[(1)]
\item Vertices adjacent to a positive odd number of leaves.
\item Vertices adjacent to a positive even number of leaves.
\item Vertices not adjacent to any leaves.
\end{enumerate}
Then we can obtain $T$ from $S'$ by a sequence of transfers
$$v_{1}\rightarrow v_{2}\rightarrow\cdots\rightarrow v_{m}$$
Therefore, $T$ is graceful.\newline

\noindent\emph{Case 2: The apex has even degree.}\newline

Let $T'$ be the generalized banana tree obtained from $T$ by deleting a branch $H$ at the apex.  Then the apex of $T'$ has odd degree, so $T'$ has a graceful labeling by the case above, with apex labeled 0.  By Lemma \ref{attach_caterpillar}, since $T$ can be formed by attaching the caterpillar $H$ to $T'$ at the apex, $T$ is graceful.
\end{proof}

\begin{corollary}
All banana trees are graceful.
\end{corollary}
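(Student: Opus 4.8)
The plan is to observe that this corollary is an immediate specialization of Theorem \ref{generalized_banana}. Recall the remark following the definition of generalized banana trees: the class of generalized banana trees with fixed path length $h = 1$ is exactly the class of banana trees, since a banana tree is formed from a collection of stars $K_{1, m_i}$ by joining a leaf of each star by a single edge (a path of length $1$) to a new vertex.

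So the argument is a single line: given a banana tree $T$, view it as a generalized banana tree with $h = 1$; by Theorem \ref{generalized_banana}, all generalized banana trees are graceful, hence $T$ is graceful.

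There is essentially no obstacle here — the only thing to be careful about is matching the definitions precisely (in particular, confirming that the case $m_i = 1$, where a "star" $K_{1,1}$ is just an edge, is handled uniformly by both definitions, which it is). No new construction, labeling, or transfer argument is needed beyond what the proof of Theorem \ref{generalized_banana} already supplies.
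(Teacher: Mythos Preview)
Your proposal is correct and matches the paper's approach exactly: the corollary is stated immediately after Theorem \ref{generalized_banana} with no separate proof, relying on the remark that generalized banana trees with $h=1$ are precisely the banana trees.
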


We can also consider replacing the paths of fixed length with caterpillars of fixed diameter.  This consideration motivates the following definition.


\begin{definition}
(Jesintha \& Sethuraman \cite{fixed-banana}) An \emph{arbitrarily fixed generalized banana tree} is a tree obtained from a collection of caterpillars by identifying a chosen vertex of maximum eccentricity of each caterpillar all together, where the caterpillars satisfy the following conditions:
\begin{itemize}
\item All caterpillars have the same diameter $N$.
\item All internal vertices of all caterpillars have fixed even degree $d$, except possibly for the internal vertices with distance $N - 1$ from the vertex chosen for identification.
\end{itemize}
\end{definition}
\noindent We make several observations about this definition:
\begin{itemize}
\item Taking $d = 0$ gives a generalized banana tree.
\item Not all banana trees are arbitrarily fixed generalized banana trees, namely the banana trees with some $m_{i} = 1$.
\end{itemize}
Therefore, the class of arbitrarily fixed generalized banana trees is not a true generalization of generalized banana trees.

\begin{theorem}
{\normalfont (Jesintha \& Sethuraman \cite{fixed-banana})} All arbitrarily fixed generalized banana trees are graceful.
\end{theorem}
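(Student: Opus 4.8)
The plan is to parallel the proof of Theorem~\ref{generalized_banana}, splitting into two cases according to the parity of the number $m$ of caterpillars. Write $n$ for the number of edges of $T$, and observe that the $m$ caterpillars $C_1,\dots,C_m$ out of which $T$ is built share a common core: a spine $v=x_0^{(i)},x_1^{(i)},\dots,x_N^{(i)}$ of length $N$ with each of $x_1^{(i)},\dots,x_{N-2}^{(i)}$ carrying exactly $d-2$ pendant leaves (so as to have degree $d$); the caterpillars differ only in the number $m_i\ge 1$ of pendant leaves attached at the vertex $v_i:=x_{N-1}^{(i)}$, the internal vertex at distance $N-1$ from $v$ (which is the permitted exception to the degree-$d$ condition). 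Note $v$ is itself a leaf of each $C_i$ and hence exempt from that condition.

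\emph{Case 1: $m$ odd.} I would introduce the auxiliary tree $S$ obtained by gluing together, at the apex $v$, one copy of the core caterpillar with $m_i=1$ for each $i$, and then the tree $S'$ obtained from $S$ by attaching all remaining leaves (those needed to bring the edge count up to $n$) to a single vertex $v_{j}$; thus $S'$ and $T$ have the same numbers of vertices and edges, and $S'$ is exactly $T$ with its variable pendant leaves consolidated onto one branch. The crux is to produce a graceful labeling of $S'$ from the gracefully labeled star $K_{1,n}$ with center labeled $0$ by a sequence of transfers of the first type $0\to n\to 1\to n-1\to\cdots$: these transfers first lay down the $m$ spines together with their degree-$d$ pendant leaves, and a final transfer deposits the consolidated block of leaves at $v_j$. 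As in the proof of Theorem~\ref{generalized_banana}, one records the resulting labels of $v_1,\dots,v_m$ (the precise pattern depending on the parities of $N$ and $d$) and checks that the consolidated leaves receive a block of consecutive labels. After ordering the caterpillars so that the $v_i$ with an odd number of pendant leaves come first and those with an even number come second, a sequence of transfers $v_1\to v_2\to\cdots\to v_m$ redistributes the consolidated leaves into the correct $m_i$ at each $v_i$; the resulting graceful labeling of $T$ in fact has $v$ labeled $0$, which is needed in Case~2.

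\emph{Case 2: $m$ even.} Deleting one caterpillar $H$ from $T$ at the apex leaves a tree $T'$ of the same form built from the odd number $m-1$ of caterpillars, which by Case~1 has a graceful labeling with $v$ labeled $0$. Since $v$ is a vertex of maximum eccentricity of $H$ and $T$ is obtained from $T'$ and $H$ by identifying $v$ with this vertex, Corollary~\ref{attach_caterpillar} shows that $T$ is graceful.

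The main obstacle is the transfer construction of Case~1: unlike a generalized banana tree, whose branches are bare paths, here every branch carries $d-2$ pendant leaves at each internal spine vertex, so one must verify that a suitable sequence of first-type transfers from $K_{1,n}$ actually builds this richer skeleton and, crucially, leaves the labels of $v_1,\dots,v_m$ in a form for which each subsequent transfer $v_i\to v_{i+1}$ satisfies the hypothesis $f(u_1)+f(u_2)=f(u)+f(v)$ (or its one-leaf analogue $2f(u_1)=f(u)+f(v)$). This is where the parity observations recorded before Theorem~\ref{generalized_banana} — now applied with $d$ even rather than with the odd internal degrees of the Sethuraman--Jesintha theorem — must be managed carefully; once this is done, Case~2 is immediate.
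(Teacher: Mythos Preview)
Your overall plan matches the paper's: form an auxiliary tree $S'$ that agrees with $T$ except that all the variable leaves at distance $N$ are consolidated onto a single $v_j$, gracefully label $S'$, then redistribute those leaves by a sequence of transfers $v_1\to v_2\to\cdots\to v_m$ after ordering the branches by parity. The Case~2 reduction via Corollary~\ref{attach_caterpillar} is also in the spirit of the proof of Theorem~\ref{generalized_banana}, to which the paper appeals.

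The gap is in how you propose to label $S'$. You suggest the straightforward forward sequence of first-type transfers $0\to n\to 1\to n-1\to\cdots$ from the star. That worked in Theorem~\ref{generalized_banana} because the branches there are bare paths, so the skeleton is an \emph{odd radial} tree: every internal vertex has an odd number of children and all leaves sit at a single depth, which is exactly what Lemma~\ref{auxiliary_radial_tree} needs. Here each spine vertex $x_j^{(i)}$ with $1\le j\le N-2$ carries $d-2$ pendant leaves at depth $j+1$, so $S'$ is not radial. In the forward transfer sequence the pendant vertices at a given level are interleaved with the spine vertices of the next level; to make a pendant vertex a leaf you would have to leave $0$ children there, but consecutive first-type transfers leave an \emph{odd} number of leaves (see the remarks before Theorem~\ref{generalized_banana}, or Lemma~\ref{transfer_parity}). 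So the forward sequence cannot build this ``richer skeleton'' --- you correctly flag this as the main obstacle, but the mechanism you propose does not clear it.

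The paper handles this differently: it does not build $S'$ by transfers at all, but \emph{explicitly constructs} a graceful labeling of $S'$ (following Jesintha--Sethuraman), and then performs the final redistribution step as you describe. It also notes that a transfer-based construction is possible but requires more: one first builds only the bare-path skeleton (which \emph{is} radial), and then uses a back-and-forth sequence of transfers to deposit the even number $d-2$ of pendant leaves at each spine vertex --- this is the technique developed in Sections~\ref{radial_rooted_trees_section} and~\ref{even-caterpillar_section}.
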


\begin{proof}
We adapt the proof above.  Let $S'$ be the tree obtained from $T$ by deleting all leaves with distance $N$ from the apex and attaching them all to a vertex with distance $N - 1$ from the apex.
We continue as outlined below:
\begin{itemize}
\item Explicitly construct a graceful labeling of $S'$.
\item Assume the branches of $T$ are conveniently ordered, as above.  
\item Obtain $T$ from $S'$ by transfers.
\end{itemize}
We omit the details, since the explicit construction of a graceful labeling of $S'$ is not particularly instructive.
\end{proof}
\noindent In section \ref{radial_rooted_trees_section}, we show how to obtain a graceful labeling of a tree like $S'$ by transfers rather than by explicit construction.  In section \ref{even-caterpillar_section}, we prove that a more general class of banana trees is graceful.

\subsection{Trees with small diameter}

We can characterize all trees with diameter at most four as follows:
\begin{itemize}
\item A tree with diameter 0 is a single vertex.
\item A tree with diameter 1 is a single edge.
\item A tree with diameter 2 is a star.
\item A tree with diameter 3 is a caterpillar with two internal vertices.
\item A tree with diameter 4 is a banana tree.
\end{itemize}
Therefore, all trees with diameter at most four are graceful.  Moreover, by Theorem \ref{caterpillar}, all trees with diameter at most three are 0-rotatable.  Here we show:
\begin{itemize}
\item Most diameter-4 trees are 0-rotatable.
\item All diameter-5 trees are graceful.
\end{itemize}
We begin with the following definition:

\begin{definition}
(van Bussel \cite{zero-centered}) Let $T$ be a tree, and let $f$ be a graceful labeling of $T$.
\begin{itemize}
\item If $T$ has even diameter, then $T$ has a unique central vertex $v$, and $f$ is \emph{0-centered} if $f(v) = 0$.
\item If $T$ has odd diameter, then $T$ has a unique central edge $uv$, and $f$ is \emph{0-centered} if $f(u) = 0$ or $f(v) = 0$.
\end{itemize}
The tree $T$ is \emph{0-centered graceful} if it has a $0$-centered graceful labeling.
\end{definition}


\begin{theorem} \label{diameter4_0-centered}
{\normalfont (van Bussel \cite{zero-centered})} Let $T$ be a tree of diameter 4, with $n$ edges and two branches $T_{1}, T_{2}$ at the central vertex $v$.  Let $m_{1}, m_{2}$ be the number of leaves of $T_{1}, T_{2}$ other than $v$, and assume $m_{1}\ge m_{2}$.  Then $T$ is $0$-centered graceful if and only if there exist integers $x$ and $r$ such that
$$m_{1} = (m_{2} + 2 - x)(r - 1) - x$$
where $x, r$ are not both odd, and
\begin{align*}
2 & \le r \le n/2\\
0 & \le x \le \min(r - 1,\; m_{2})
\end{align*}
\end{theorem}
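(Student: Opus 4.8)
The plan is to analyze $0$-centered graceful labelings of $T$ directly, read off the stated numerical conditions, and then reverse that analysis into an explicit construction. Since $v$ has exactly two branches and $T$ has diameter $4$, each branch $T_i$ is a broom: a vertex $u_i$ adjacent to $v$ and to $m_i\ge 1$ leaves, so $T$ is determined by $(m_1,m_2)$ with $n=m_1+m_2+2$, and a $0$-centered graceful labeling is a graceful labeling $f$ with $f(v)=0$. It is also worth recording the equivalent form of the condition: writing $p=m_2+2-x$, the equation $m_1=(m_2+2-x)(r-1)-x$ is the same as $n=rp$, so the claim is that $T$ is $0$-centered graceful exactly when $n$ has a divisor $r$ with $2\le r\le n/2$ for which $x:=m_2+2-n/r$ lies in $[0,\min(r-1,m_2)]$ and $x,r$ are not both odd.

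For necessity, let $f$ be a graceful labeling with $f(v)=0$. The edge with induced label $n$ must join the vertices labeled $0$ and $n$, so the vertex labeled $n$ is a neighbor of $v$; after a short argument (where the hypothesis $m_1\ge m_2$ enters) it suffices to treat the case $f(u_1)=n$. Then the $n$ edges split into the $m_1+1$ edges at $u_1$, which form the set $\{\,n-s:s\in S_1\cup\{0\}\,\}$ where $S_1$ is the set of labels of the leaves at $u_1$, and the $m_2+1$ edges at $u_2$; these two sets must together be $\{1,\dots,n\}$, while $\{0,n\}\cup S_1\cup\{f(u_2)\}\cup S_2=\{0,\dots,n\}$. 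The crux is a rigidity statement: the constraints that $S_1$ be linked to the edges at $u_1$ by the involution $s\mapsto n-s$, that the complementary labels be exactly the distances from $f(u_2)$ to $S_2\cup\{0\}$, and that the two sides not overlap, are rigid enough to force the labels into a two-parameter normal form governed by a ``stride'' $r$ and an offset count $x$; counting the leaves at $u_1$ then yields $m_1=(m_2+2-x)(r-1)-x$, and the ranges $2\le r\le n/2$, $0\le x\le\min(r-1,m_2)$ are precisely the feasibility constraints on the number and sizes of the pieces. The parity restriction should drop out of the identity $\sum_w\deg(w)\,f(w)\equiv n(n+1)/2\pmod 2$, valid for any graceful labeling, which here reduces to $m_1 f(u_1)+m_2 f(u_2)\equiv 0\pmod 2$ and excludes exactly the both-odd case.

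For sufficiency, given such $r$ and $x$ I would write down the labeling suggested by the analysis above: put $f(v)=0$ and $f(u_1)=n$, assign the leaves of $u_1$ the labels dictated by the $r$-stride/$x$-offset pattern, let $f(u_2)$ be the unique remaining label forced by the partition, and give the leaves of $u_2$ the complementary labels. One then checks directly that the vertex labels are $\{0,\dots,n\}$ and the induced edge labels are $\{1,\dots,n\}$; equivalently, the same labeling can be produced from the gracefully labeled star $K_{1,n}$ with centre $v$ by a sequence of transfers of the first and second type in the spirit of the proof of Theorem~\ref{generalized_banana}, with $r$ and $x$ recording how many leaves are moved at each stage.

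The main obstacle is the rigidity claim in the necessity direction: proving that \emph{every} $0$-centered graceful labeling is, up to the symmetries of the problem, of this two-parameter form — in particular that the case $f(u_2)=n$ produces nothing new, and that the interval and parity bounds match the feasibility of the construction exactly. This forces a careful case analysis on the small edge labels $n,n-1,n-2,\dots$ and on the boundary values $x=0$, $x=m_2$, and $r$ near its extremes; once the correct normal form is in hand, the construction side is routine verification.
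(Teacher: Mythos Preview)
Your approach is essentially the same as the paper's. The paper's own proof is only a four-line sketch: label $v$ with $0$, label the two neighbors of $v$ with $n$ and $n-r$, show that this choice of $r$ forces the assignment of all labels below $n-r$, and then analyze the remaining labels to extract the parameter $x$ and the stated constraints; the details are explicitly omitted. Your plan follows exactly this line --- fix $f(v)=0$, note that one neighbor must carry $n$, parametrize by the second neighbor's label, and read off $r$ and $x$ from the forced structure --- so there is no substantive difference in strategy. Your reformulation $n=rp$ with $p=m_2+2-x$ and your parity identity $\sum_w(\deg w)f(w)\equiv n(n+1)/2\pmod 2$ (hence $m_1f(u_1)+m_2f(u_2)\equiv 0$) are useful additions that the paper does not spell out, but they sit inside the same outline rather than constituting a different route. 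The gap you flag --- the rigidity of the necessity direction and the symmetric case $f(u_2)=n$ --- is precisely the ``details'' the paper omits, so you have correctly identified where the real work lies.
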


\begin{proof}
We proceed as follows:
\begin{itemize}
\item Assign the label 0 to the vertex $v$.
\item Assign the labels $n,\; n - r$ to the vertices adjacent to $v$.
\item Show that $r$ determines the assignment of all labels less than $n - r$.
\item Consider the assignment of the remaining labels.
\end{itemize}
We omit the details.  The restraints arise from the last two steps above.
%
%
\end{proof}

\begin{definition}
(van Bussel \cite{zero-centered}) Let $\mathcal{D}$ be the set of trees $T$ such that 
\begin{itemize}
\item $T$ has diameter 4, and $T$ has two branches at the central vertex.
\item $T$ fails the conditions of Theorem \ref{diameter4_0-centered}, so $T$ is not $0$-centered graceful.
\end{itemize}
Let $\mathcal{D}'$ be the set of trees obtained by identifying a leaf of a path with the center of a tree in $\mathcal{D}$, allowing paths of a single vertex, so that $\mathcal{D}\subset\mathcal{D}'$.
\end{definition}

\noindent We present the following two theorems without proof:


\begin{theorem}
{\normalfont (van Bussel \cite{zero-centered})} Let $T$ be a tree of diameter at most 4.  Then $T$ is 0-centered graceful if and only if $T\not\in\mathcal{D}$.
\end{theorem}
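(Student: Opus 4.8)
The plan is to case-split on the diameter of $T$. If $T$ has diameter at most $3$, then $T$ is a single vertex, a single edge, a star, or a caterpillar with two internal vertices, and in every case $T\notin\mathcal D$ (each member of $\mathcal D$ has diameter $4$), so it suffices to exhibit a $0$-centered graceful labeling. The first three cases are trivial, and for a diameter-$3$ caterpillar with central edge $uv$ the vertex $u$ is adjacent to a leaf of maximum eccentricity, so Theorem \ref{caterpillar} produces an $\alpha$-labeling --- in particular a graceful one --- with $f(u)=0$, which is $0$-centered. From now on $T$ has diameter exactly $4$, with unique central vertex $v$.

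If $v$ has exactly two branches, then by the very definition of $\mathcal D$ we have $T\in\mathcal D$ if and only if $T$ fails the numerical condition of Theorem \ref{diameter4_0-centered}, while Theorem \ref{diameter4_0-centered} says $T$ fails that condition if and only if $T$ is not $0$-centered graceful; so the equivalence holds here. The content of the theorem therefore lies entirely in the case where $v$ has $k\ge 3$ branches: such $T$ never lie in $\mathcal D$, so the task is to show that every diameter-$4$ tree whose center has degree $k\ge 3$ is $0$-centered graceful. Write $x_1,\dots,x_k$ for the neighbors of $v$ and $m_i\ge 0$ for the number of leaves hanging at $x_i$, so that $n=k+\sum_i m_i$ and at least two of the $m_i$ are positive.

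I would produce the labeling by a transfer argument in the spirit of the proof of Theorem \ref{generalized_banana} (note that $T$ is exactly the ``$h=0$'' degenerate case of a generalized banana tree with apex $v$). Start from the gracefully labeled star $K_{1,n}$ whose center is labeled $0$; this center will stay the apex $v$. Earmark $k$ of its leaves to become $x_1,\dots,x_k$, and apply a sequence of $v\rightarrow x_i$ transfers --- first-type transfers, supplemented where parity requires it by second-type transfers, and possibly more than one per vertex --- that moves the remaining $n-k$ leaves out so that exactly $m_i$ of them end up at $x_i$. Because a $v\rightarrow x_i$ transfer deletes the edge from $v$ to each moved leaf but leaves the edge $vx_i$ untouched, the outcome is precisely $T$, and since no transfer ever touches the vertex labeled $0$, the resulting graceful labeling has $f(v)=0$, i.e. it is $0$-centered. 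When $k$ is even (so $k\ge 4$), I would first peel off one branch $H$, chosen so that the remaining tree $T'$ still has diameter $4$ and central vertex $v$, now of odd degree $k-1\ge 3$ --- possible since at least two branches carry a leaf at distance $2$ from $v$ and removing one branch destroys at most one of them --- then apply the odd case to $T'$ to obtain a graceful labeling with $f(v)=0$, and finally reattach $H$ (a star, hence a caterpillar in which $v$ is a vertex of maximum eccentricity) by Corollary \ref{attach_caterpillar}, which returns a graceful, still $0$-centered, labeling of $T$.

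The real obstacle is designing the transfers in the $k\ge 3$ case. One must choose the earmarked labels $p_1,\dots,p_k$ of $x_1,\dots,x_k$ together with, for each $i$, the set of leaf-labels transferred to $x_i$, so that these sets are pairwise disjoint, are disjoint from $\{p_1,\dots,p_k\}$, tile $\{1,\dots,n\}$ together with the $p_j$, have the prescribed sizes $m_i$, and satisfy the defining sum-conditions of the transfers (for a single first-type transfer of $m_i$ leaves this forces $p_i$ to equal the sum of the first and last transferred label, hence $p_i\equiv m_i-1\pmod 2$). A scheme using only one first-type transfer per $x_i$ over-constrains this tiling and can fail --- for instance when one $m_i$ is large and the rest are tiny --- so second-type transfers and several transfers per vertex have to be allowed, and matching the parities is exactly where the observations recorded just before Theorem \ref{generalized_banana} come in: organizing $x_1,\dots,x_k$ by the parities of the $m_i$, the trick behind the banana-tree proof, should make a consistent scheme available for every admissible tuple $(m_1,\dots,m_k)$ with $k\ge 3$, and proving that is the crux. (Van Bussel's own proof may instead bypass transfers and write down an explicit labeling, but the combinatorics of fitting the branch sizes together is the same difficulty either way.) Everything else --- the diameter-$\le 3$ cases, the reduction from even $k$ to odd $k$, and the check that Corollary \ref{attach_caterpillar} applies to the removed branch --- is routine.
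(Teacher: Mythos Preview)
The paper presents this theorem without proof (it is one of two results explicitly attributed to van Bussel and ``present[ed] \ldots\ without proof''), so there is nothing in the paper to compare your argument against directly. Your reduction is correct: diameter at most $3$ is routine, the two-branch diameter-$4$ case is exactly the content of Theorem~\ref{diameter4_0-centered} together with the definition of $\mathcal D$, and the substance lies in showing that every diameter-$4$ tree whose center $v$ has degree $k\ge 3$ is $0$-centered graceful.

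The gap you flag in the odd-$k\ge 3$ case is real as stated, but it is closed by machinery the paper develops later. The star $K_{1,k}$ rooted at $v$ is an odd radial rooted tree, so Lemma~\ref{auxiliary_radial_tree} supplies a graceful labeling, with $f(v)=0$, of the tree obtained by hanging all $n-k$ extra leaves at $x_1$, in which $x_1,\dots,x_k$ form an alternating sequence and the hung leaves are a transferable set. Lemma~\ref{BPS_depth_1} then says every depth-$1$ BPS $(m_1,\dots,m_k)$ is attainable, i.e.\ some permutation of the target leaf counts is realized by a well-behaved sequence of type-$1$ transfers among the $x_i$; this yields the required $0$-centered labeling of $T$ with no second-type transfers needed. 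One small repair to your even-$k$ step: the clause ``removing one branch destroys at most one of them'' only guarantees \emph{one} surviving branch with a distance-$2$ leaf, whereas diameter $4$ requires two. The fix is immediate: if exactly two branches have $m_i>0$, remove one of the $k-2\ge 2$ branches with $m_i=0$; otherwise at least three branches have $m_i>0$ and any branch may be removed.
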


\begin{theorem}
{\normalfont (van Bussel \cite{zero-centered})} Let $T$ be a tree of diameter at most 4.  Then $T$ is 0-rotatable if and only if $T\not\in\mathcal{D}'$.
\end{theorem}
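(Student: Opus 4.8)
The plan is to use the structure of a diameter-$4$ tree $T$: it has a unique central vertex $c$, and every branch at $c$ is a \emph{broom} --- an edge $cr_i$ together with $m_i\ge 0$ leaves attached at $r_i$ --- with at least two of the $m_i$ positive. I would also first record a \emph{leaf-removal observation}: if $v$ is a leaf of a tree $T$ with unique neighbour $u$, then $T$ has a graceful labelling with $f(v)=0$ if and only if $T\setminus v$ has a graceful labelling with $f(u)=0$. Indeed, if $f(v)=0$ then $v$ is the vertex labelled $0$, and the edge realising the label $n$ must join the vertices labelled $0$ and $n$, so it is $uv$ and $f(u)=n$; deleting $v$, shifting all labels down by one and complementing gives a graceful labelling of $T\setminus v$ with $f(u)=0$, and the converse reverses this.

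For the direction $T\notin\mathcal{D}'\Rightarrow T$ is $0$-rotatable: diameter $\le 3$ is already known, so take $\operatorname{diam}(T)=4$. Since $\mathcal{D}\subseteq\mathcal{D}'$ we have $T\notin\mathcal{D}$, so by the theorem immediately above $T$ is $0$-centred graceful, which handles the vertex $c$. For any leaf of $T$ the leaf-removal observation reduces $0$-labelling it to $0$-labelling its neighbour in a strictly smaller diameter-$\le 4$ tree, so it remains to $0$-label a non-leaf broom-root $r_i$. Peel off the broom $T_i$ containing $r_i$ and write $T=T_i\cup_c T'$, where $T'$ is the union of the remaining branches at $c$. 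The broom $T_i$ has an $\alpha$-labelling with $f(c)=0$, of index $m_i$, in which $r_i$ receives $m_i+1$; combining this with any graceful labelling of $T'$ having $f(c)=0$ via Theorem \ref{combine_graceful} and taking the complement places $0$ on $r_i$. Hence it suffices to give $T'$ a graceful labelling with $c$ at $0$; $T'$ is a smaller diameter-$\le 4$ tree, so we recurse --- \emph{except} when $T'\in\mathcal{D}$. (When $T$ has only two branches at $c$ it is a caterpillar, $T'$ has at most one branch, this difficulty never arises, and $T$ is $0$-rotatable directly from Theorem \ref{caterpillar}; so all trouble is confined to $T$ with at least three branches.)

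The one genuinely hard case is therefore: $T$ has exactly three branches at $c$, two of which together form a $\mathcal{D}$-tree $T'$, and we must $0$-label a vertex of the remaining broom $T_i$. Here $T\notin\mathcal{D}'$ forces $T_i$ not to be a pendant path of length $\le 2$, i.e.\ $m_i\ge 2$. For this configuration I would abandon the decomposition at $c$ and argue by \emph{transfers}: take a graceful labelling, with $r_i$ at $0$, of the banana-type tree obtained by sliding all leaves of $r_i$ onto $c$ (this has fewer branches at the centre, hence is already settled), and then apply a $c\rightarrow r_i$ transfer (Lemma \ref{the_classic_lemma}) to push the leaves back onto $r_i$ while keeping $f(r_i)=0$. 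Arranging the labels of those leaves so that their sum equals $f(c)+f(r_i)$ is the technical heart, together with verifying the handful of low-parameter cases by hand. I expect this to be the main obstacle, and it is precisely where $\mathcal{D}'$, rather than just $\mathcal{D}$, turns out to be the correct exceptional family.

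For the converse, $T\in\mathcal{D}'\Rightarrow T$ not $0$-rotatable, I would induct on the length $\ell\in\{0,1,2\}$ of the pendant path attached at the centre $c_0$ of the underlying $\mathcal{D}$-tree $T_0$. If $\ell=0$ then $T=T_0\in\mathcal{D}$, so by the theorem above the central vertex of $T$ is not $0$-labellable. If $\ell\ge 1$, let $p$ be the far endpoint of the pendant path. If $T$ had a graceful labelling with $f(p)=0$, then the ``only if'' half of the leaf-removal observation (which forces the neighbour of $p$ to receive the label $n$) would yield a graceful labelling of the tree $T_0$-with-a-pendant-path-of-length-$(\ell-1)$ in which the new far endpoint --- or $c_0$ itself when $\ell=1$ --- is labelled $0$, contradicting the inductive hypothesis (respectively the $\ell=0$ case). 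Hence $p$ is not $0$-labellable, so $T$ is not $0$-rotatable.
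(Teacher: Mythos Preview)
First, note that the paper itself does not prove this theorem; it is quoted from van Bussel and explicitly ``presented without proof,'' so there is no in-paper argument to compare your attempt against.

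Your converse direction is correct and clean. The forward direction, however, has a genuine gap exactly where you flag it. In the hard case---three branches at $c$, two forming $T'\in\mathcal{D}$, and $m_i\ge 2$ in the third broom---your transfer argument does not close. After building the auxiliary labeling of $\tilde T$ (leaves slid to $c$) from a $0$-centered labeling of $\tilde T\setminus r_i$, you obtain $f(r_i)=0$ and necessarily $f(c)=n$, but you have \emph{no control} over the labels the $m_i$ slid leaves receive. Transferring a pair of those leaves from $c$ to $r_i$ via Lemma~\ref{the_classic_lemma} requires their labels to sum to $f(c)+f(r_i)=n$; an arbitrary $0$-centered graceful labeling of $\tilde T\setminus r_i$ gives no such pairing. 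Making this work means constructing a \emph{specific} $0$-centered labeling of $T'$ with $m_i$ extra pendant edges at its center, in which those $m_i$ extra leaves receive a prescribed symmetric block of labels---essentially a sharpening of Theorem~\ref{diameter4_0-centered}---and that is precisely where the substantive work in van Bussel's paper lies. There is also a secondary wrinkle in your leaf reduction: removing a leaf $v$ from $T\notin\mathcal{D}'$ can produce $T\setminus v\in\mathcal{D}'$, and then your inductive hypothesis (stated only as ``$\notin\mathcal{D}'\Rightarrow 0$-rotatable'') says nothing about $0$-labelling the particular neighbour $r_i$ in $T\setminus v$; patching this also routes back through the same hard case.
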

\noindent Moreover, van Bussel conjectures, based on empirical evidence:
\begin{itemize}
\item $\mathcal{D}$ contains all trees that are not 0-centered graceful.
\item $\mathcal{D}'$ contains all trees that are not 0-rotatable.
\end{itemize}
%

Now we sketch a proof that all trees of diameter 5 are graceful.


\begin{theorem} \label{diameter_five}
{\normalfont (Hrn\u{c}iar \& Haviar \cite{diameter5})} All trees of diameter 5 are graceful.
\end{theorem}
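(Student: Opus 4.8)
The plan is to construct a graceful labeling of an arbitrary diameter-$5$ tree $T$ directly, building it up from the standard graceful labeling of a star by means of the transfer machinery of Lemma~\ref{the_classic_lemma}.

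First I would record the shape of a diameter-$5$ tree. Its center is an edge $v_0 u_0$, and deleting this edge leaves two components: the one containing $v_0$ has depth exactly $2$ when rooted at $v_0$, and symmetrically for $u_0$. Thus $v_0$ has neighbors $v_1,\dots,v_p$ besides $u_0$, each $v_i$ carrying $a_i\ge 0$ leaves, while $v_0$ itself carries $a_0\ge 0$ leaves; likewise $u_0$ has neighbors $u_1,\dots,u_q$ with leaf-counts $b_0,b_1,\dots,b_q$, and at least one $a_i$ and at least one $b_j$ with $i,j\ge 1$ is positive. (Trees of diameter at most $4$ are stars, double stars, or banana trees, hence already known to be graceful, so the diameter-$5$ case is genuinely the next one.) Write $n=|E(T)|$.

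The main construction proceeds in two phases. Start from $K_{1,n}$ with its standard graceful labeling --- center $c$ labeled $0$, leaves labeled $1,\dots,n$ --- and identify $c$ with $v_0$. In the first phase, apply the transfer sequence $0\to n\to 1\to n-1\to\cdots$ of the first type; using the same technique as in the proof of Theorem~\ref{generalized_banana}, every vertex this chain visits remains adjacent to $v_0=0$, so the successive visited vertices can be made to play the roles of $v_1,\dots,v_p$, each receiving its prescribed number $a_i$ of leaves, and one further visited vertex is designated $u_0$. In the second phase the chain ``turns the corner'': it continues $u_0\to m_1\to m_2\to\cdots$ among vertices that are at that moment leaves of $u_0$, so these become $u_1,\dots,u_q$ with the prescribed leaf-counts, while the undistributed leaves remain as the direct leaves tallied by $a_0$ and $b_0$. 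The parity remarks preceding Theorem~\ref{generalized_banana} --- a first-type transfer following a first-type transfer leaves behind an odd number of leaves, and so on --- force a case analysis on the parities of $p$, $q$, and the $a_i$, $b_j$: when an even leaf-count is required one passes to a transfer of the second type (which may be regarded as a first-type transfer in disguise), and when the labels landing on the $v_i$ or $u_j$ need correcting one transfers backwards along the same chain. Ordering the $v_i$, and then the $u_j$, so that the vertices with an odd number of leaves come first, then those with an even positive number, then the leaf-free ones --- the ordering already used in the proof of Theorem~\ref{generalized_banana} --- keeps the chain well-behaved, and in each case one checks that the balance conditions of Lemma~\ref{the_classic_lemma} hold at every step.

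The main obstacle is precisely this bookkeeping. Because $T$ has a genuine two-level structure, the transfer chain must turn the corner at $u_0$ and then re-enter the very leaves it just deposited there, so that the $u_j$ wind up adjacent to $u_0$ rather than to $v_0$; making every parity configuration fit --- odd versus even $p$ and $q$, the interleaving of odd- and even-sized leaf bunches among the $v_i$ and the $u_j$, and the small degenerate sub-cases ($a_0$ or $b_0$ nonzero, some $v_i$ or $u_j$ a bare leaf) --- is unavoidably long, which is why only a sketch is given.
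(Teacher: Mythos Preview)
Your plan is workable in outline but takes a different route than the paper. You build the diameter-$5$ tree from a star via a two-phase chain of \emph{leaf} transfers---first distributing to the $v_i$, then turning the corner at $u_0$ and distributing to the $u_j$---which, as you say, forces a lengthy parity case analysis (and you would still need to argue carefully that $u_0$ can be placed last in the phase-1 chain while the $a_0$ direct leaves of $v_0$ are left untouched). The paper instead exploits the \emph{branch} form of the transfer lemma, the one stated immediately after Lemma~\ref{the_classic_lemma}, which moves whole subtrees $T_{u,u_i}$ rather than individual leaves. Concretely: start from a gracefully labeled banana tree with an odd number of branches at its apex $u$, so that $f(u)=0$ and the neighbors $u_1,u_2,\ldots$ carry labels $n,1,n-1,2,\ldots$; attach one new leaf $v$ at $u$ with label $n+1$; since $f(u_{2i-1})+f(u_{2i})=n+1=f(u)+f(v)$ for each $i$, any collection of the branch pairs $(T_{u,u_1},T_{u,u_2}),(T_{u,u_3},T_{u,u_4}),\ldots$ can be detached from $u$ and reattached at $v$. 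After such a move $u$ and $v$ are the two central vertices of a diameter-$5$ tree, and every diameter-$5$ tree arises this way up to residual casework. The gain is that the banana-tree labeling is reused wholesale and a single round of branch transfers replaces your two phases, so most of the parity bookkeeping disappears; the cost is that one must invoke the subtree version of the lemma rather than staying purely within leaf transfers.
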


\begin{proof}
Consider a banana tree $T$ with an odd number of branches at the apex.  By Theorem \ref{generalized_banana}, there exists a graceful labeling of $T$, such that
\begin{itemize}
\item The apex $u$ is labeled 0.
\item The vertices $u_{1}, \ldots, u_{k}$ adjacent to $u$ are labeled $n,\; 1,\; n - 1,\; 2,\ldots$.
\end{itemize}
Let $T'$ be the tree obtained from $T$ by attaching a leaf $v$ at the apex.  Our graceful labeling of $T$ can be extended to $T'$ by assigning the label $n + 1$ to $v$.  Then we can obtain a graceful tree of diameter 5 by transferring any of the following pairs of subtrees from $u$ to $v$:
$$(T_{u, u_{1}}, T_{u, u_{2}}), (T_{u, u_{3}}, T_{u, u_{4}}), \ldots$$
Using this idea, it can be shown that all trees of diameter 5 are graceful.
\end{proof}

\subsection{Lobsters}

\begin{definition}
A \emph{lobster} is a 2-distant tree.
\end{definition}

Since caterpillars have such a simple graceful labeling, many have considered lobsters a natural next step toward the conjecture (see Bermond \cite{survey0}).  However, only some classes of lobsters are known to be graceful.

\begin{definition} 
A \emph{firecracker} is a tree formed from a path $P_{k}$ and $k + 1$ stars by identifying a leaf of each star with a different vertex of the path.
\end{definition}

\begin{theorem}
{\normalfont (Chen, L\"{u} \& Yeh \cite{firecrackers})} All firecrackers are graceful.
\end{theorem}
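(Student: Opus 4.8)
The plan is to argue by induction on the number of stars, peeling a \emph{broom} off one end of the path and reattaching it with Corollary~\ref{attach_caterpillar}. Write the firecracker as $F$ with path $p_0, p_1, \ldots, p_k$, where the star $K_{1, m_i}$ is identified at a leaf with $p_i$, and call $c_i$ the center of that star. Deleting the branch of $F$ at $p_1$ that contains $p_0$ leaves a smaller firecracker $F'$ on the path $p_1, \ldots, p_k$, while the deleted branch together with $p_1$ is the caterpillar $B$ with spine $p_1 - p_0 - c_0$ and $m_0 - 1$ leaves at $c_0$. In $B$ the vertex $p_1$ has maximum eccentricity, so if $F'$ has a graceful labeling with $p_1$ labeled $0$, then Corollary~\ref{attach_caterpillar} gives at once that $F = F' \cup B$, glued at $p_1$, is graceful. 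The base case $k = 0$ is a star $K_{1, m_0}$, which has a graceful labeling with any chosen leaf (in particular $p_0$) labeled $0$, so the induction would close \emph{provided} we can always make the label $0$ land on an end vertex of the path.

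That proviso is the crux. Corollary~\ref{attach_caterpillar} rests on Theorem~\ref{combine_graceful}, and unwinding that construction shows the label $0$ in the glued tree sits at the vertex of $B$ carrying the index of the $\alpha$-labeling used for $B$ --- which for the broom above turns out to be the center $c_0$, not the path vertex $p_0$. Hence the naive induction proves only that $F$ is graceful, not the strengthened statement needed to continue. To repair this I would strengthen the inductive hypothesis to control the position of $0$ (aiming to show that a firecracker is, in a suitable sense, ``path-rotatable'': every path vertex can be labeled $0$), and, where the glue does not place $0$ where we want, apply a transfer (Lemma~\ref{the_classic_lemma}) to slide the pendant leaves between $c_0$ and $p_0$ so as to relocate the label $0$; one also has the option of complementing or re-indexing via Lemma~\ref{inverse}.

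A more robust alternative, closer in spirit to the proof of Theorem~\ref{generalized_banana}, is to work entirely with transfers. Take as ``skeleton'' the caterpillar $S'$ consisting of the path $p_0, \ldots, p_k$ with one pendant vertex $c_i$ at each $p_i$ and all $\sum_i (m_i - 1)$ bundle-leaves attached at a single $c_j$; this is a caterpillar, and as in the banana-tree argument one first produces a structured graceful labeling of $S'$ from the gracefully labeled star $K_{1,n}$ by a sequence of first-type transfers $0 \to n \to 1 \to n - 1 \to \cdots$, recording precisely which labels appear at $c_0, \ldots, c_k$ and among the bundle-leaves. One then recovers $F$ from $S'$ by a chain of transfers $c_0 \to c_1 \to \cdots \to c_k$ (switching from first-type to second-type transfers partway through), each step leaving behind exactly $m_i - 1$ leaves at $c_i$, controlled by the parity observations preceding Theorem~\ref{generalized_banana}.

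On either route the main obstacle is the same kind of bookkeeping: tracking the label $0$ through the gluing, respectively arranging the skeleton's labeling so that every required transfer is legal. Unlike a banana tree, whose branches at the apex may be permuted freely, a firecracker's stars occur in a fixed order along the path and cannot be rearranged, so the transfer sequence must be made to work for an \emph{arbitrary} sequence $m_0, \ldots, m_k$ of bundle sizes --- in particular one must handle the interleaving of stars with odd and with even ``leftovers'' without the freedom to sort them first, and this is the step I expect to demand the most care.
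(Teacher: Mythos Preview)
The paper does not actually prove this theorem: it is stated as a known result with a citation to Chen, L\"{u} \& Yeh \cite{firecrackers} and no argument is given. So there is nothing to compare your proposal against on the paper's side.

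That said, what you have written is not a proof but a sketch of two possible attacks, each with an explicitly acknowledged gap. In the inductive approach you correctly diagnose the failure mode: after applying Theorem~\ref{combine_graceful} the label $0$ migrates to the vertex of the caterpillar carrying the $\alpha$-index rather than to the path endpoint, so the hypothesis you need does not propagate. Your proposed repairs (strengthen to ``path-rotatable'', or slide leaves via Lemma~\ref{the_classic_lemma}) are plausible directions but you have not carried them out, and the transfer suggestion in particular needs an argument that the required leaf-pair condition $f(u_1)+f(u_2)=f(u)+f(v)$ can always be met with the labels that happen to be present.

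In the transfer approach you put your finger on the real obstruction: unlike the banana-tree proof, the stars along a firecracker come in a fixed order and cannot be permuted, so the parity bookkeeping that makes Theorem~\ref{generalized_banana} go through (sorting branches into ``odd leftovers'' then ``even leftovers'') is unavailable. You say this step ``would demand the most care'' --- but it is not just care, it is the whole content of the theorem, and nothing in your outline indicates how to handle an arbitrary interleaving of parities. As written, neither route closes; to turn this into a proof you would need either to exhibit the explicit labeling from the original paper or to actually execute one of your two strategies end to end.
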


The following result is usually attributed to Morgan \cite{lobsters-perfect}, but it follows from the earlier work of Broersma \& Hoede \cite{broersma1999another}.

\begin{theorem}
{\normalfont (Broersma \& Hoede \cite{broersma1999another})} All lobsters with perfect matchings are graceful.
\end{theorem}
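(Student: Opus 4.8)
The plan is to apply Corollary \ref{broersma's_redemption}: it suffices to show that the contree of any lobster $T$ with a perfect matching $M$ is a caterpillar.

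First I would fix a path $P = v_0 v_1 \cdots v_k$ witnessing that $T$ is $2$-distant, and partition $V(T)$ according to distance ($0$, $1$, or $2$) from $P$. Then I would record the basic structural facts: every distance-$2$ vertex is a leaf whose unique neighbor is a distance-$1$ vertex; every non-parent neighbor of a distance-$1$ vertex is such a leaf; and no edge of $T$ joins two distance-$1$ vertices or two distance-$2$ vertices (either would create a cycle through $P$ or violate $2$-distance). Since $T$ has a perfect matching, every leaf is matched to its unique neighbor, which forces each distance-$1$ vertex to have \emph{at most one} distance-$2$ child. Consequently $M$ consists only of edges of three kinds: (i) edges $wx$ with $w$ a distance-$1$ vertex and $x$ its unique distance-$2$ leaf; (ii) edges $v_i w$ with $w$ a distance-$1$ leaf attached to the spine vertex $v_i$; and (iii) spine edges $v_i v_{i+1}$. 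I would also note that each $v_i$ lies in exactly one edge of $M$, of type (ii) or (iii), so every remaining distance-$1$ neighbor of $v_i$ is of the "type (i)" kind.

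Next I would analyze the contree $S$ obtained by contracting $M$. A super-vertex arising from a type-(i) edge $wx$ is adjacent in $S$ only to the super-vertex containing the spine parent $v_i$ of $w$ (since $x$ is a leaf adjacent only to $w$), so it is a leaf of $S$. Every other super-vertex contains at least one spine vertex. Tracing along $P$, the super-vertices containing $v_0, \ldots, v_k$ form a path $Q$ in $S$: consecutive spine vertices lie in the same super-vertex or in $Q$-adjacent ones, and non-consecutive spine vertices span non-adjacent super-vertices because a spine vertex has no edge to a non-consecutive spine vertex. Since every super-vertex of $S$ is either on $Q$ or is one of the type-(i) leaves hanging off a vertex of $Q$, the tree $S$ is $1$-distant, i.e.\ a caterpillar. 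Corollary \ref{broersma's_redemption} then yields that $T$ is graceful.

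I expect the main obstacle to be the structural lemma that each distance-$1$ vertex has at most one distance-$2$ child, together with the resulting clean classification of the matching edges into the three types above; once that is in place, checking that the spine super-vertices form a path of $S$ and that every other super-vertex is a pendant leaf is routine case analysis. A minor point needing care is that the distance classification is taken relative to a fixed witnessing path $P$ and does not rely on $P$ being longest or otherwise canonical — every step uses only that $P$ witnesses $2$-distance.
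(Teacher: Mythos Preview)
Your proposal is correct and follows exactly the paper's approach: the paper's proof is the single line ``By Corollary \ref{broersma's_redemption}, since the contree of a lobster with a perfect matching is a caterpillar,'' and your argument simply supplies the structural details behind that assertion. The classification of matching edges and the verification that the spine super-vertices form a path with all other super-vertices pendant is sound.
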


\begin{proof}
By Corollary \ref{broersma's_redemption}, since the contree of a lobster with a perfect matching is a caterpillar.
\end{proof}

The strongest results on lobsters are those of Mishra \& Panigrahi \cite{mishra2005graceful, mishra2006graceful, mishra2008some, mishra2010some, mishra2011some}, who extend the idea of Theorem \ref{diameter_five} as follows:

\begin{center}
\emph{Graceful lobsters by transfers from banana trees.}
\end{center}

Consider a banana tree $T$ with an odd number of branches at the apex.  By Theorem \ref{generalized_banana}, there exists a graceful labeling of $T$, such that
\begin{itemize}
\item The apex $u$ is labeled 0.
\item The vertices $u_{1}, \ldots, u_{k}$ adjacent to $u$ are labeled $n,\; 1,\; n - 1,\; 2,\ldots$.
\end{itemize}
Let $T'$ be the tree obtained from $T$ by attaching a path at the apex.  Our graceful labeling of $T$ can be extended to $T'$ by assigning the labels
$$n + 1,\; -1,\; n + 2,\; -2, \ldots$$
to the vertices of the path (temporarily allowing negative labels).  Then we can obtain a graceful lobster by 
a sequence of transfers of the second type
$$0\rightarrow n + 1\rightarrow - 1\rightarrow n + 2\rightarrow - 2\rightarrow\cdots$$
Finally, we fix the labels by adding a constant to each label.\newline

\noindent Mishra \& Panigrahi \cite{mishra2005graceful, mishra2006graceful, mishra2008some, mishra2010some, mishra2011some} compile many of the resulting classes of graceful lobsters in tables, which we will not reproduce here.

\subsection{Spiders}

\begin{definition}
(Bahl, Lake \& Wertheim \cite{Bahl}) A \emph{spider} is a tree with at most one vertex of degree greater than two, called the \emph{center}.  A \emph{leg} is a path from the center to one of the leaves.
\end{definition}


In considering spiders with three or four legs, we make use of two earlier results on labelings of paths.  Let $P$ be a path, and let $v$ be a vertex of $P$.  Then
\begin{itemize}
\item $P$ has a graceful labeling $f$ with $f(v) = 0$, by Lemma \ref{paths_0-rotatable}.
\item $P$ has an $\alpha$-labeling $f$ with $f(v) = 0$, by Lemma \ref{zero_alpha}, unless $P$ is $P_{4}$ and $v$ is the central vertex.
\end{itemize}
We use these results to prove the following theorems:




\begin{theorem}
{\normalfont (Huang, Kotzig \& Rosa \cite{rosa3})} Let $T$ be a spider with three legs, of lengths $p, q, r$.  If $(p, q, r)\ne (2, 2, 2)$, then $T$ has an $\alpha$-labeling, and if $(p, q, r) = (2, 2, 2)$, then $T$ has a graceful labeling but not an $\alpha$-labeling.
\end{theorem}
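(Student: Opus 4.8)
The plan is to realize the three-legged spider $T$ as a union of two paths glued at the center, and then to quote the gluing theorems for $\alpha$- and graceful labelings. Merging two of the legs, say those of lengths $p$ and $q$, into a single path turns $T$ into the tree obtained by identifying the vertex $v$ at distance $p$ from one end of $P_{p+q}$ with a leaf of $P_r$. Since we may choose which two legs to merge, the strategy in the generic case is to pick the merge so that $v$ is not forced to be a ``bad'' vertex of a short path, so that Theorem \ref{zero_alpha} applies.

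First I would handle $(p,q,r)\ne(2,2,2)$. As not all three lengths equal $2$, some leg, say the one of length $r$, has $r\ne 2$; merging it with the leg of length $p$ gives a path $P_{p+r}$ in which the designated vertex $v$ is \emph{not} the central vertex of $P_4$, since that would force $p=r=2$. Hence by Theorem \ref{zero_alpha}, $P_{p+r}$ has an $\alpha$-labeling with $f(v)=0$. The remaining leg is a path $P_q$ with $v$ as a leaf, and a leaf is never the central vertex of $P_4$, so Theorem \ref{zero_alpha} again provides an $\alpha$-labeling of $P_q$ with its leaf labeled $0$. Applying Theorem \ref{combine_alpha_alpha} to these two labelings (identifying the two vertices labeled $0$) yields an $\alpha$-labeling of $T$.

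It remains to treat $T=(2,2,2)$, which has $7$ vertices and $n=6$ edges. For gracefulness I would either exhibit a labeling directly (e.g.\ center $0$ and legs $0$--$6$--$1$, $0$--$4$--$3$, $0$--$2$--$5$, whose edge labels are exactly $\{1,\dots,6\}$), or observe that merging two legs gives $P_4$, which is $0$-rotatable by Theorem \ref{paths_0-rotatable} and so has a graceful labeling with its central vertex labeled $0$, while the third leg $P_2$ has an $\alpha$-labeling with its leaf labeled $0$; Theorem \ref{combine_graceful} then gives a graceful labeling of $T$.

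Finally, to show $(2,2,2)$ has no $\alpha$-labeling --- the part I expect to be the real content --- I would use a sum/parity obstruction. The bipartition classes of $T$ are $\{\text{center, three leaves}\}$, of size $4$, and $\{\text{three middle vertices}\}$, of size $3$; in any $\alpha$-labeling one class receives $\{0,1,2,3\}$ and the other $\{4,5,6\}$, and after possibly replacing the labeling by its complement we may assume the three middle vertices carry the labels $\{4,5,6\}$. Then every edge joins a middle vertex (label $\ge 4$) to the center or a leaf (label $\le 3$), so each edge label is just the difference of its ends. Summing all six edge labels gives $2(4+5+6)-3c-\sum_i \ell_i$, where $c$ is the center label and the $\ell_i$ are the leaf labels; using $c+\sum_i \ell_i = 0+1+2+3 = 6$ this equals $24-2c$. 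But the edge labels must sum to $1+2+\cdots+6=21$, forcing $c=3/2\notin\mathbb{Z}$, a contradiction. Hence $T=(2,2,2)$ is graceful but admits no $\alpha$-labeling. The only delicate point in the argument is the bookkeeping of which legs to merge (and the small cases where some $p,q,r$ equal $1$), where one must check that the single exception in Theorem \ref{zero_alpha} is genuinely avoided.
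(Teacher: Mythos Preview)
Your argument is correct and, for the generic case $(p,q,r)\ne(2,2,2)$, is essentially identical to the paper's: both decompose $T$ as two paths glued at the center, choose the merged pair so that the gluing vertex is not the central vertex of $P_4$, and then invoke Theorem~\ref{zero_alpha} together with Theorem~\ref{combine_alpha_alpha}. The only cosmetic difference is that the paper assumes $(p,q)\ne(2,2)$ and merges those two legs, while you pick a leg of length $\ne 2$ and merge it with an arbitrary second leg; these are the same idea.

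For the exceptional case $(p,q,r)=(2,2,2)$ you diverge from the paper. For gracefulness, the paper simply notes that this tree is symmetrical and invokes Theorem~\ref{symmetrical_graceful}; your explicit labeling (or your alternative via Theorem~\ref{combine_graceful}) is equally valid and arguably more concrete. For the non-existence of an $\alpha$-labeling, the paper appeals to the general Theorems~\ref{no_alpha_1} or~\ref{no_alpha_2}, while you give a direct edge-sum argument: with the middle vertices carrying $\{4,5,6\}$ and the center and leaves carrying $\{0,1,2,3\}$, the edge-label sum is $24-2c$, which cannot equal $21$. This self-contained parity computation is a genuine alternative to citing the broader obstruction theorems, and it has the advantage of requiring no external machinery; the paper's route, on the other hand, situates the example within an infinite family of trees lacking $\alpha$-labelings.
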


\begin{proof}

If $(p, q, r)\ne (2, 2, 2)$, then we may assume $(p, q)\ne (2, 2)$.  Then $T$ can be formed from paths $P_{p + q}, P_{r}$ by identifying a vertex $u$ of $P_{p + q}$ with a leaf $v$ of $P_{r}$.  Since $(p, q)\ne (2, 2)$, $u$ is not the central vertex of $P_{4}$, so there exist $\alpha$-labelings of $P_{p + q}, P_{r}$ with $u, v$ labeled 0.  Therefore, $T$ has an $\alpha$-labeling by Theorem \ref{combine_alpha_alpha}.

If $(p, q, r) = (2, 2, 2)$, then $T$ is symmetrical, so $T$ has a graceful labeling with center labeled 0 by Theorem \ref{symmetrical_graceful}.  ($T$ does not have an $\alpha$-labeling by Theorem \ref{no_alpha_1} or \ref{no_alpha_2} below.)
\end{proof}



\begin{theorem} \label{four_leg_spider}
{\normalfont (Huang, Kotzig \& Rosa \cite{rosa3})} Let $T$ be a spider with four legs, of lengths $p, q, r, s$.  If at least two of $p, q, r, s$ are not 2, then $T$ has an $\alpha$-labeling.  Otherwise, $T$ has a graceful labeling but may or may not have an $\alpha$-labeling.
\end{theorem}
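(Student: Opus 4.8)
The plan is to split on how many of $p,q,r,s$ equal $2$, matching the two clauses of the statement, and in each case to realize $T$ by gluing two paths at the center of the spider and invoking the combination theorems of Huang, Kotzig \& Rosa. Concretely, any unordered pairing of the four legs into $\{a,b\}$ and $\{c,d\}$ lets us view $T$ as follows: the two legs of a pair together form a path, and the center $v$ sits at distance $a$ from one end of $P_{a+b}$ and at distance $c$ from one end of $P_{c+d}$; identifying the two copies of $v$ recovers $T$, since $v$ then has degree $2+2=4$ and deleting it leaves four paths of lengths $a,b,c,d$. By Theorem~\ref{zero_alpha}, the path $P_{a+b}$ has an $\alpha$-labeling sending $v$ to $0$ unless $v$ is the central vertex of $P_{4}$, i.e.\ unless $\{a,b\}=\{2,2\}$, and likewise for $P_{c+d}$.

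\emph{Case 1 (at least two legs differ from $2$).} Here I would choose the pairing so that two legs of length $\ne 2$ lie in different pairs; then neither pair equals $\{2,2\}$, so by Theorem~\ref{zero_alpha} each of $P_{a+b}$ and $P_{c+d}$ has an $\alpha$-labeling with $v\mapsto 0$, and Theorem~\ref{combine_alpha_alpha} applied by identifying the two copies of $v$ produces an $\alpha$-labeling of $T$.

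\emph{Case 2 (at most one leg differs from $2$).} If all four legs have length $2$, then $T$ is symmetrical with root the center, so Theorem~\ref{symmetrical_graceful} gives a graceful labeling with the center labeled $0$; Theorems~\ref{no_alpha_1} and~\ref{no_alpha_2} below then show this $T$ has no $\alpha$-labeling, so the ``may or may not'' is genuine. Otherwise exactly one leg, say of length $p\ne 2$, differs from $2$ and the other three have length $2$. Pair the length-$p$ leg with one length-$2$ leg to form $P_{p+2}$, and pair the remaining two length-$2$ legs to form $P_{4}$. Since $p\ne 2$, the center is not the central vertex of $P_{4}$, so $P_{p+2}$ has an $\alpha$-labeling with the center labeled $0$ by Theorem~\ref{zero_alpha}; and $P_{4}$ is $0$-rotatable (Theorem~\ref{paths_0-rotatable}), hence has a graceful labeling with its central vertex labeled $0$. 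Theorem~\ref{combine_graceful}, applied by identifying the two centers, then yields a graceful labeling of $T$.

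The constructions themselves are routine once the combination theorems are available; the only point requiring care is the elementary counting observation that a pairing of the four legs can be chosen to avoid $\{2,2\}$ in \emph{both} pairs precisely when at least two legs differ from $2$, which is exactly the dichotomy in the statement. The genuinely non-elementary ingredient lies outside this argument, namely the non-existence of $\alpha$-labelings for the exceptional spiders (notably $(2,2,2,2)$), which rests on the parity obstructions of Theorems~\ref{no_alpha_1} and~\ref{no_alpha_2}; I would not attempt to characterize exactly which four-leg spiders with three legs of length $2$ admit an $\alpha$-labeling, since it is not needed for the stated result.
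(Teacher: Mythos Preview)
Your proof is correct and follows essentially the same strategy as the paper. Case~1 is identical: pair the legs so that neither pair is $(2,2)$, apply Theorem~\ref{zero_alpha} to each path, and combine via Theorem~\ref{combine_alpha_alpha}. In Case~2 the paper takes a slightly different (and more uniform) route: rather than splitting into subcases, it views $T$ as the three-leg spider $(2,2,2)$ with a path (the fourth leg, of length $p$, $q$, $r$, or $s$) attached at the center, then invokes the graceful labeling of $(2,2,2)$ with center labeled $0$ together with Corollary~\ref{attach_caterpillar}. Your decomposition of the ``exactly one leg $\ne 2$'' subcase into $P_{p+2}$ (with $\alpha$-labeling) and $P_4$ (with graceful labeling), glued via Theorem~\ref{combine_graceful}, is an equally valid variant using the same machinery. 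One small correction: for the $(2,2,2,2)$ spider only Theorem~\ref{no_alpha_2} applies, since $K_{1,4}$ has a vertex of even degree and so Theorem~\ref{no_alpha_1} does not cover it.
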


\begin{proof}

If at least two of $p, q, r, s$ are not 2, then we may assume $(p, q), (r, s)\ne (2, 2)$.  Then $T$ can be formed from paths $P_{p + q}, P_{r + s}$ by identifying a vertex $u$ of $P_{p + q}$ with a vertex $v$ of $P_{r + s}$.  Since $(p, q), (r, s)\ne (2, 2)$, neither of $u, v$ is the central vertex of $P_{4}$, so there exist $\alpha$-labelings of $P_{p + q}, P_{r + s}$ with $u, v$ labeled 0.  Therefore, $T$ has an $\alpha$-labeling by Theorem \ref{combine_alpha_alpha}.

If at most one of $p, q, r, s$ are not 2, then we can obtain $T$ from the spider $S$ with three legs, all of length 2, by attaching a path to the root.  Since $S$ has a graceful labeling with center labeled 0 as noted above, $T$ is graceful by Theorem \ref{attach_caterpillar}.
%
\end{proof}

We now present several other classes of graceful spiders.


\begin{theorem}
{\normalfont (Bahl, Lake \& Wertheim \cite{Bahl})} Let $T$ be a spider, such that the lengths of any two legs of $T$ differ by at most one.  Then $T$ is graceful.
\end{theorem}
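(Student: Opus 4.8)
The plan is to reduce the "near-equal legs" spider to the combination theorems (Theorems \ref{combine_alpha_alpha} and \ref{combine_graceful}) by splitting the legs into two groups, and to handle small exceptional cases separately. Suppose $T$ has $k$ legs of lengths $\ell_1,\dots,\ell_k$, where all $\ell_i\in\{m,m+1\}$ for some $m$. If $k\le 4$, the result follows from the three- and four-leg spider theorems above, together with the fact that paths and caterpillars are graceful; so assume $k\ge 5$. First I would split the legs into two nonempty sets $A,B$, each containing at least two legs, with $T_A,T_B$ the corresponding sub-spiders sharing the center $v$. The idea is to give $T_A$ an $\alpha$-labeling with $f(v)=0$ and $T_B$ a graceful labeling with $f(v)=0$, and then apply Theorem \ref{combine_graceful} to glue them at $v$.

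The key sub-step is therefore: a spider whose legs all have length in $\{m,m+1\}$, with at least two legs, has an $\alpha$-labeling (or at least a graceful labeling) with the center labeled $0$. For a spider with $j\ge 2$ legs we can iterate Theorem \ref{combine_alpha_alpha}: pair up legs two at a time into paths through the center, obtaining paths $P_{\ell_i+\ell_{i'}}$, and glue them successively at $v$. Each such path has an $\alpha$-labeling with the gluing vertex labeled $0$ by Theorem \ref{zero_alpha}, \emph{provided} the relevant vertex is not the central vertex of $P_4$ — which can only happen when two paired legs both have length $2$, i.e. $m=2$. So for $m\ne 2$ this goes through cleanly, and for $m=2$ I would pair legs so as to avoid creating a $P_4$ centered at $v$ whenever possible (e.g. pair a length-$2$ leg with a length-$3$ leg, giving $P_5$), falling back on the explicit three-/four-leg constructions for the few residual configurations. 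When $k$ is odd one leg is left unpaired, contributing a pendant path at $v$; attaching it via Theorem \ref{combine_graceful} (a path being a caterpillar, with $v$ at an end, hence of appropriate eccentricity) still yields a graceful labeling of $T$.

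The main obstacle is the small-parameter bookkeeping around $m=2$ and $m=1$: when $m=1$ the spider is a caterpillar (all legs length $1$ or $2$ means every vertex is within distance $1$ of the center–vertex path through two longest legs) and is covered by Theorem \ref{caterpillar}; when $m=2$ one must check that the pairings can always be chosen to dodge the $P_4$-center obstruction, and that the handful of spiders where this fails (few legs, all of length $2$) are exactly those already handled by the three- and four-leg theorems. I expect the argument to come down to a short finite case analysis for $m\le 2$ plus the clean inductive gluing for $m\ge 3$. Once the center can be labeled $0$ in each piece, Theorem \ref{combine_graceful} finishes the proof; when every leg has the \emph{same} length, $T$ is symmetrical and Theorem \ref{symmetrical_graceful} gives the center-$0$ graceful labeling directly, which is a convenient special case to dispatch first.
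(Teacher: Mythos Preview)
Your approach is workable in spirit but far more elaborate than what the paper does, and it has a concrete gap. The paper's proof is a single line: observe that a spider whose leg lengths lie in $\{m,m+1\}$ is a generalized banana tree (take $h=m-1$; legs of length $m$ come from stars $K_{1,1}$ and legs of length $m+1$ from stars $K_{1,2}$), so Theorem~\ref{generalized_banana} applies directly. No splitting, no $\alpha$-labeling machinery, no case analysis is needed.

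Your gluing-via-$\alpha$-labelings strategy would likely succeed for $m\ge 3$, but your handling of the small cases is flawed. The claim that $m=1$ forces $T$ to be a caterpillar is false: if $T$ has three or more legs of length $2$ (and possibly some of length $1$), then any path through the center and two of the length-$2$ legs leaves the tip of a third length-$2$ leg at distance $2$ from that path, so $T$ is not $1$-distant. Likewise, your $m=2$ sketch does not actually resolve configurations such as four legs of length $2$ together with two of length $3$, where every pairing either produces a $P_4$ centered at $v$ or leaves you needing a $0$-centered graceful labeling of a sub-spider that your cited theorems do not immediately supply. These holes are patchable with more ad hoc work, but the point is that the paper sidesteps all of it by recognizing the tree as a generalized banana tree.
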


\begin{proof}
$T$ is a generalized banana tree, so $T$ is graceful by Theorem \ref{generalized_banana}.
\end{proof}


\begin{definition} 
An \emph{olive tree} is a spider with $n$ legs of lengths $1, \ldots , n$.
\end{definition}

\begin{theorem}
{\normalfont (Pastel \& Raynaud \cite{olive2}, see Abhyankar \& Bhat-Nayak \cite{olive1}, Edwards \& Howard \cite{survey2})} All olive trees are graceful.
\end{theorem}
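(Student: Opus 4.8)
The plan is to prove, by induction on $n$, the stronger statement that the olive tree with legs $1, 2, \ldots, n$ — call it $O_n$, with center $c$ and $N := \binom{n+1}{2}$ edges — has a graceful labeling in which $c$ is labeled $0$. (One would like an $\alpha$-labeling with $c\mapsto 0$; the small cases $O_1,O_2,O_3$ do admit one, and it is worth trying to carry that through, but ``graceful with $c\mapsto 0$'' is the invariant that actually has to propagate.) The base cases are immediate: $O_1 = P_1$ and $O_2 = P_3$ are paths, hence $0$-rotatable by Theorem~\ref{paths_0-rotatable}, so in particular graceful with the center labeled $0$; a couple of further small cases can be checked by hand to get the induction running cleanly.

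For the inductive step, fix a graceful labeling $g$ of $O_{n-1}$ with $g(c) = 0$, using the labels $\{0,1,\ldots,M\}$ where $M = \binom{n}{2} = N - n$, so the edge labels of $O_{n-1}$ under $g$ are exactly $\{1,\ldots,M\}$. Form $O_n$ from $O_{n-1}$ by attaching a new path $c - v_1 - v_2 - \cdots - v_n$ (the leg of length $n$) at $c$. The construction has two ingredients: (i) relabel the vertices of $O_{n-1}$ so that $c$ stays at $0$ but the remaining labels are spread out to free a block of $n$ values, pushing the old edge labels onto a prescribed subinterval of $\{1,\ldots,N\}$; and (ii) label $v_1,\ldots,v_n$ with the freed block using the alternating ``path'' pattern of Theorem~\ref{path}, so that the $n$ new edges $cv_1, v_1v_2, \ldots, v_{n-1}v_n$ realize precisely the labels not used by the relabeled old edges. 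Checking that the vertex labels form a permutation of $\{0,\ldots,N\}$, that the edge labels form a permutation of $\{1,\ldots,N\}$, and that $c$ is still $0$, then closes the step.

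The crux is ingredient (i): a single uniform shift of all nonzero old labels fails, because such a shift changes only the $n-1$ old edge labels incident to $c$ while leaving every other old edge label fixed, whereas a leg of length $n$ needs a block of $n$ edge labels cleared out of the way — an off-by-one discrepancy. Overcoming it forces the induction to carry extra structural data about $g$ (for instance, prescribed labels on the neighbours of $c$, or a distinguished leg whose endpoint carries a controlled label) so that the relabeling in (i) can be done non-uniformly and the clash between old and new edge labels is avoided; arranging for these auxiliary invariants to reproduce themselves is where the real work lies. One can sidestep all of this and get \emph{bare} gracefulness quickly — $O_n$ is obtained by identifying a leaf of $P_n$ with the center of $O_{n-1}$, and $P_n$ has an $\alpha$-labeling with a leaf at $0$ by Theorem~\ref{zero_alpha}, so Theorem~\ref{combine_graceful} applies — but the resulting labeling places the center of $O_n$ at the \emph{index} of the path's $\alpha$-labeling rather than at $0$, so it does not by itself feed the induction, and olive trees are not known (by these methods) to admit $\alpha$-labelings for all $n$, which rules out the cleanest version of the combining argument.
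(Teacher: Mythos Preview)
Your proposal is an outline, not a proof: you correctly identify that the naive induction on the invariant ``graceful with center at $0$'' runs into an off-by-one obstruction, and you correctly diagnose that a stronger inductive hypothesis is needed --- but you never actually supply that stronger invariant or verify that it propagates. Writing ``arranging for these auxiliary invariants to reproduce themselves is where the real work lies'' is an accurate summary of the difficulty, not a resolution of it. Your sidestep via Theorem~\ref{combine_graceful} is likewise correctly analyzed and likewise incomplete: it shows $O_n$ graceful \emph{given} that $O_{n-1}$ is graceful with center labeled $0$, but the resulting labeling of $O_n$ places the center at the path's index $\lfloor n/2\rfloor$, so the hypothesis does not reproduce and the induction stalls after one step. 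As written, nothing here establishes the theorem.

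The paper's proof is simply ``by direct construction'': the cited references (Pastel--Raynaud; Abhyankar--Bhat-Nayak) write down an explicit closed-form labeling of $O_n$ and check by hand that the vertex labels are $\{0,\ldots,N\}$ and the edge labels are $\{1,\ldots,N\}$. This bypasses the inductive bookkeeping entirely --- there is no invariant to propagate, just a formula to verify. If you want to salvage an inductive argument, the practical route is to compute the explicit labelings for $n\le 5$ or so, spot the pattern, and then either prove the closed form directly or read off from it the extra structural data (which vertex carries which label along each leg) that your step~(i) needs.
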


\begin{proof}
By direct construction.
\end{proof}



\begin{theorem}
{\normalfont (Kanetkar \& Sane \cite{kanetkar2007graceful})} Let $k, d$ be non-negative integers with $3\le k\le 6$.  Then the spider with $k$ legs, such that the $i$th leg has length $1 + (i - 1)d$, is graceful.
\end{theorem}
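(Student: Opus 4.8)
The plan is to treat the four values $k\in\{3,4,5,6\}$ separately, reducing the first two to results already in hand and handling $k=5,6$ by cutting the spider at its center $c$ and regluing two $\alpha$-labeled pieces. For $k=3$ and $k=4$ there is nothing to prove: every three-legged spider and every four-legged spider is graceful, by the theorems of Huang, Kotzig \& Rosa stated above (for four legs, Theorem \ref{four_leg_spider}). So assume $k\in\{5,6\}$ and let $c$ be the center of $T$. If $d=0$ then $T=K_{1,k}$ is a star, hence graceful; and if $k=6$, $d=1$, then $T$ is exactly the olive tree with six legs, graceful by Pastel and Raynaud's theorem. Hence we may assume $d\ge 1$ when $k=5$ and $d\ge 2$ when $k=6$.

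For $k=5$ I would split $T$ at $c$ into: the path $P$ obtained by joining the two longest legs (lengths $1+3d$ and $1+4d$) through $c$, so that $P$ has $2+7d>4$ edges and $c$ is an interior vertex of $P$; and the three-legged sub-spider $S$ with legs of lengths $1$, $1+d$, $1+2d$ rooted at $c$. Since $P\neq P_4$, Theorem \ref{zero_alpha} gives an $\alpha$-labeling of $P$ with $f(c)=0$. For $S$, observe that $S$ is a path on $(1+d)+(1+2d)=2+3d$ edges with one pendant edge attached at its interior vertex $c$; this is precisely the Huang--Kotzig--Rosa decomposition of a three-legged spider with trivial third leg, and because the two paired legs have lengths $1+d$ and $1+2d$ (not both $2$), combining a suitable $\alpha$-labeling of $P_{2+3d}$ having $f(c)=0$ (which exists since $P_{2+3d}\neq P_4$) with the index-$0$ $\alpha$-labeling of the pendant edge via Theorem \ref{combine_alpha_alpha} produces an $\alpha$-labeling of $S$ in which $c$ receives the index of the labeling; applying the inverse-labeling operation of Lemma \ref{inverse} then moves $c$ to $0$. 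Now $P$ and $S$ both carry $\alpha$-labelings with $c$ labeled $0$, so Theorem \ref{combine_alpha_alpha} applied to them at $c$ gives an $\alpha$-labeling of $T$, and in particular $T$ is graceful.

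For $k=6$ I would again cut at $c$, taking $P$ to be the path through the two longest legs and $S'$ the four-legged sub-spider on the legs of lengths $1$, $1+d$, $1+2d$, $1+3d$; as before $P$ has a center-rooted $\alpha$-labeling, so by Theorem \ref{combine_alpha_alpha} it suffices to equip $S'$ with an $\alpha$-labeling in which $c$ is labeled $0$. This step is the heart of the matter and is genuinely harder than for $k=5$: the ``pendant edge plus inverse-labeling'' device fails, since none of the three-legged spiders obtained from $S'$ by deleting a leg has a leg of length $1$. I expect this to be the main obstacle, and I would overcome it by constructing the desired $\alpha$-labeling of $S'$ directly --- writing the vertex labels as explicit functions of $d$ (the arithmetic-progression structure of the leg lengths keeps this tractable), then checking that the induced edge labels are exactly $\{1,\dots,n'\}$ and that the bipartition condition for an $\alpha$-labeling holds --- or, alternatively, by a longer sequence of combine/inverse-labeling steps tailored to four legs.

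More broadly, the reason the argument halts at $k=6$ is exactly this: the combination lemmas available to us (Theorems \ref{combine_alpha_alpha}, \ref{combine_graceful}, Corollary \ref{attach_caterpillar}) always place the shared vertex at the index of one factor, and only the inverse-labeling lemma can return it to $0$, doing so cleanly only when one factor is essentially a pendant edge. That is available for the three-legged sub-spider needed when $k=5$, but not for the four-legged sub-spider needed when $k=6$, so the latter requires an ad hoc construction, and a $k$-legged spider for $k\ge 7$ would leave behind a sub-spider with five or more legs for which no such construction is on offer.
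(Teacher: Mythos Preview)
The paper does not give a proof here at all; it simply records ``By direct construction,'' deferring entirely to Kanetkar \& Sane. Your proposal, by contrast, tries to derive the result from the combination machinery already developed in the survey (Theorems \ref{zero_alpha}, \ref{combine_alpha_alpha}, Lemma \ref{inverse}), which is a genuinely different route.

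For $k=3,4$ you are correct that the general three- and four-leg results subsume the statement. For $k=5$ your argument is sound: the key observation that taking the pendant edge as the index-$0$ factor in Theorem \ref{combine_alpha_alpha} forces the combined index to coincide with the label of $c$, so that Lemma \ref{inverse} returns $c$ to $0$, is exactly right. (In fact you can shortcut even this: if you let the pendant edge play the role of $T_1$ rather than $T_2$, then $k_1=0$ and $c$ lands at $0$ immediately, with no inverse step needed.)

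The gap is the $k=6$ case. You diagnose it correctly --- the inverse-labeling trick needs one factor of index $0$, which forces that factor to be a star, unavailable once all legs exceed $1$ --- but you do not close it. Saying you ``would'' write down explicit labels as functions of $d$ is not a proof; it is precisely the direct construction that Kanetkar \& Sane carry out and that the paper cites. So your proposal succeeds in replacing the external reference for $k\le 5$ by internal tools, but for $k=6$ with $d\ge 2$ it collapses back onto the very method you set out to avoid, and that step remains unexecuted.
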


\begin{proof}
By direct construction.
\end{proof}

\subsection{Trees with few leaves}

We can characterize all trees with at most three leaves as follows: 
\begin{itemize}
\item A tree with at most 2 leaves is a path.
\item A tree with 3 leaves is a spider with three legs.
\end{itemize}
Therefore, all trees with at most three leaves are graceful.  For trees with four leaves, we present the following theorem, and we give an outline of the proof.



\begin{theorem}
{\normalfont (Huang, Kotzig \& Rosa \cite{rosa3})} All trees with exactly four leaves are graceful.
\end{theorem}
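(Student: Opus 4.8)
The plan is to reduce a tree $T$ with exactly four leaves to one of the constructions already available, by a case analysis on the "shape" of $T$. A tree with four leaves has exactly two or three vertices of degree at least $3$ (counting with multiplicity appropriately). Concretely, either (i) $T$ has a single branch vertex $v$ of degree $4$ — so $T$ is a spider with four legs, handled by Theorem \ref{four_leg_spider}; or (ii) $T$ has two branch vertices $v_1, v_2$, each of degree $3$, joined by a path, with two legs hanging off each. So the only genuinely new case is (ii): $T$ consists of a central path from $v_1$ to $v_2$, with pendant paths (legs) of lengths $p, q$ at $v_1$ and lengths $r, s$ at $v_2$, plus the connecting path of some length $\ell \ge 1$.

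First I would dispose of degenerate subcases where the connecting structure makes $T$ a caterpillar or a known family: if all four legs have length $1$, then $T$ is a caterpillar, hence graceful; more generally if at most one leg on each side has length $> 1$ we can absorb those legs into a longer path and apply the spider or path results together with Corollary \ref{attach_caterpillar}. For the main case I would try to split $T$ at a vertex into two pieces each of which is a spider with at most four legs or a path. The natural cut is at $v_1$ (or $v_2$): one side is the two-legged spider at $v_1$ (i.e.\ a path $P_{p+q}$ with $v_1$ an internal vertex), and the other side is $v_1$ together with the connecting path and the two legs at $v_2$ — which is itself a spider with three legs (lengths $r$, $s$, and $\ell + (\text{path to }v_1)$), hence has an $\alpha$-labeling with $v_1$ labeled $0$ by the three-leg spider theorem, unless it is the $(2,2,2)$ spider. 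Since $P_{p+q}$ is $0$-rotatable (Theorem \ref{paths_0-rotatable}), it has a graceful labeling with $v_1$ labeled $0$; then Theorem \ref{combine_graceful} glues the two pieces along $v_1$ to give a graceful labeling of $T$.

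The main obstacle is the exceptional configurations where the three-leg spider piece is exactly $(2,2,2)$ — it is then only graceful, not $\alpha$ — and, symmetrically, the case where the piece we need to be $\alpha$ is $P_4$ with the cut vertex central. These force a small finite set of "bad" shapes (essentially a few trees built from legs of length $\le 2$), and I would handle each by an ad hoc argument: re-cut $T$ at the other branch vertex, or at an interior vertex of the connecting path, to make at least one side a non-exceptional path or spider; if no cut works, the tree is small enough to exhibit a graceful labeling directly or to recognize it as a caterpillar or a member of $\mathcal{U}_3$ (two branches that are paths, one branch a caterpillar), which is graceful by Rosa's theorem on $\mathcal{U}_3$. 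So the structure of the proof is: classify four-leaf trees into spiders (done) versus double-branch trees; for double-branch trees, cut at a branch vertex and combine a $0$-rotatable path with a three-leg-spider $\alpha$-labeling via Theorem \ref{combine_graceful}; and finally clear the finitely many exceptional shapes by alternative cuts or by inspection.
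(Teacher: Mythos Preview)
Your overall architecture matches the paper's exactly: split into the degree-$4$ spider case (Theorem \ref{four_leg_spider}) and the two-degree-$3$-vertices case; in the latter, cut at a branch vertex into a path through that vertex and a three-leg spider on the other side, then glue via Theorem \ref{combine_graceful}; clean up the exceptional configurations at the end.

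There is one concrete gap, though, and it is exactly where you and the paper diverge. You assert that the three-leg spider piece ``has an $\alpha$-labeling with $v_1$ labeled $0$ by the three-leg spider theorem, unless it is the $(2,2,2)$ spider.'' The three-leg spider theorem only says an $\alpha$-labeling \emph{exists}; it says nothing about placing $0$ at a prescribed leaf. (Indeed the proof of that theorem glues two paths at the \emph{center}, and in the combined labeling of Theorem \ref{combine_alpha_alpha} the identified vertex gets label $k_1$, not $0$.) So this step does not follow from the cited result.

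The paper's remedy is simply to swap the roles of the two pieces: make the \emph{path} $P_{p+q}$ the $\alpha$-piece and the spider the graceful piece. Theorem \ref{zero_alpha} guarantees an $\alpha$-labeling of $P_{p+q}$ with the branch vertex labeled $0$ \emph{unless} $P_{p+q}$ is $P_4$ with that vertex central, i.e.\ unless $p=q=2$. That is precisely the source of the paper's hypothesis ``if one of $p,q$ is not $2$.'' Trying the cut at the other branch vertex handles the symmetric case ``one of $s,t$ is not $2$.'' The only configuration escaping both cuts is $p=q=s=t=2$ (a one-parameter family indexed by the length $r$ of the connecting path), which the paper disposes of by an explicit graceful labeling. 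Your list of ``bad shapes'' would be replaced by just this family.

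One small slip: a tree with four leaves cannot have three vertices of degree $\ge 3$; summing $(\deg(v)-2)$ over internal vertices gives $2$, so it is exactly one vertex of degree $4$ or exactly two of degree $3$, as you then correctly enumerate.
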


\begin{proof}
Let $T$ be a tree with exactly four leaves.  We have two cases:\newline

\noindent\emph{Case 1: $T$ has a vertex $v$ of degree four.}\newline

Then $T$ is a spider with four legs, so $T$ is graceful by Theorem \ref{four_leg_spider}.\newline

\noindent\emph{Case 2: $T$ has two vertices $v$ of degree three.}\newline

Then $T$ consists of a path of length $r$, with paths of lengths $p, q$ attached to one leaf, and paths of lengths $s, t$ attached to the other leaf.  The proof can be outlined as follows:
\begin{itemize}
\item If one of $p, q$ is not 2, form $T$ from a path $P_{p + q}$ and a spider.
\item If one of $s, t$ is not 2, form $T$ from a path $P_{s + t}$ and a spider.
\item If all of $p, q, s, t$ are 2, give an explicit graceful labeling of $T$.
\end{itemize}
\end{proof}

%
%

\noindent Therefore, all trees with at most four leaves are graceful.

\subsection{Trees with few vertices of one color}
Since all trees are bipartite, we can 2-color any tree $T$.  If $T$ has at most four vertices of one color, we have the following result:

\begin{theorem}
{\normalfont (Huang, Kotzig \& Rosa \cite{rosa3})}  Let $T$ be a tree, and let $(A, B)$ be a bipartition of $T$.  If $|A| \le 4$ or $|B| \le 4$, then $T$ is graceful.
\end{theorem}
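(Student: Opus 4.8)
The plan is to normalize so that $|A| \le 4$ and then reduce $T$ to a short list of possible shapes, recognizing almost all of them as members of classes already proved graceful. The structural key is this: let $S$ be the minimal subtree of $T$ spanning $A$. Every leaf of $S$ lies in $A$, so $S$ has at most $|A| \le 4$ leaves; moreover every vertex of $B$ not in $S$ is a leaf of $T$ whose neighbour lies in $A$, so $T$ is obtained from $S$ merely by attaching pendant $B$-vertices (in arbitrary numbers, possibly none) to the $A$-vertices of $S$. Since the colour classes alternate along every path in $S$ and $S$ has at most four leaves, $S$ is either a path with both endpoints in $A$, a star $K_{1,k}$ whose centre lies in $B$, or a spider with exactly three legs; and for $|A| \le 3$ only the first two can occur.

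First I would dispatch the easy shapes. If $S$ is a path, then $T$ is a caterpillar, hence has an $\alpha$-labeling (Rosa \cite{rosa}) and in particular is graceful. If $S = K_{1,k}$ with centre $b^{*} \in B$, then each $a_i$ together with its pendant $B$-leaves is a star having $b^{*}$ among its leaves, so $T$ is exactly a generalized banana tree with path length $h = 0$ and is graceful by Theorem \ref{generalized_banana}. This already settles every tree with $|A| \le 3$, as well as $|A| = 4$ with $S$ a path or $S = K_{1,4}$. What remains is $|A| = 4$ with $S$ a spider on three legs; then the fourth $A$-vertex is an internal vertex of $S$, lying either at the centre of $S$ (so $S$ has three legs of length $2$ from a centre in $A$) or on one leg (so $S$ has legs of lengths $3, 1, 1$ from a centre in $B$).

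In the first of these subcases $T$ consists of a banana tree $G$ with three branches at an apex $a_0$, together with some pendant leaves at $a_0$. Since $G$ has an odd number of branches, Theorem \ref{generalized_banana} gives a graceful labeling of $G$ with $a_0$ labeled $0$; the pendant leaves at $a_0$ form a star, a caterpillar whose centre has maximum eccentricity or is adjacent to a vertex of maximum eccentricity, so Corollary \ref{attach_caterpillar} reattaches them and $T$ is graceful. In the second subcase write $b^{*}$ for the centre of $S$ and $b^{*} a_4 b_1 a_1$ for the long leg. The natural attempt is to cut $T$ at $b^{*}$ into the subtree carrying $a_2, a_3$ and their leaves, and the subtree carrying $a_4, b_1, a_1$ and their leaves, produce in each a labeling with $b^{*}$ labeled $0$, and splice with Theorem \ref{combine_graceful} or \ref{combine_alpha_alpha}. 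The second piece is a caterpillar with $b^{*}$ an endpoint of its spine, so it has an $\alpha$-labeling with $b^{*}$ labeled $0$ by Theorem \ref{caterpillar}; and when one of $a_2, a_3$ carries no pendant leaf the first piece is likewise handled by Theorem \ref{caterpillar} (or is just $P_2$).

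The main obstacle is the remaining configuration: when both $a_2$ and $a_3$ carry pendant leaves, the first piece is a double broom with its \emph{central} vertex being $b^{*}$, and a graceful labeling of a double broom with the central vertex labeled $0$ does not always exist — by the classification of Theorem \ref{diameter4_0-centered} (van Bussel \cite{zero-centered}) certain double brooms, e.g. the one with pendant-leaf counts $2$ and $1$, are not $0$-centered graceful. In exactly these configurations the clean split breaks down, and I would instead construct a graceful labeling of $T$ from scratch: assign $0$ to $b^{*}$, organise the remaining labels around the four $A$-vertices (or proceed from a caterpillar by transfers in the style of Theorem \ref{generalized_banana}), and verify that the induced edge labels exhaust $\{1, \dots, n\}$, splitting into a few cases according to the parities of the pendant-leaf counts at $a_1, a_2, a_3, a_4$. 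This direct construction, with its routine but fiddly edge-label bookkeeping, is where essentially all of the real work lies; the rest of the argument is just the structural reduction above.
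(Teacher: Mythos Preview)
Your approach matches the paper's in spirit: the paper's proof is a one-line pointer, ``By casework on the structure of $T$; most cases are handled above,'' deferring the details to the original source. Your structural reduction via the Steiner subtree $S$ of $A$ and the resulting short case list is exactly the right way to organise that casework, and your identification of the shapes of $S$ (path, $K_{1,k}$ with $B$-centre, or a three-legged spider with centre either in $A$ or in $B$) is correct and complete. The handling of the first three shapes via caterpillars, generalized banana trees with $h=0$, and a banana tree plus Corollary~\ref{attach_caterpillar} is clean and right.

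You are also correct that the legs-$3,1,1$ case with both $a_2,a_3$ carrying pendant leaves is the genuine sticking point: the double-broom piece centred at $b^{*}$ can land in the class $\mathcal{D}$ of Theorem~\ref{diameter4_0-centered}, so the splice at $b^{*}$ via Theorem~\ref{combine_graceful} can fail. This is precisely the residual case the paper's ``most cases are handled above'' leaves to the cited reference, and your proposal to handle it by a direct labeling (or transfers) is what is required. You have not carried out that construction, but you have correctly isolated where the real work sits; the paper does no more at this point than you do.
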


\begin{proof}
By casework on the structure of $T$; most cases are handled above.
\end{proof}

\subsection{Trees with few vertices}

The following results, proved by computer, have added significant credibility to the Graceful Tree Conjecture:

\begin{theorem}
{\normalfont (Aldred \& McKay \cite{27vertices})} All trees with at most 27 vertices are graceful.
\end{theorem}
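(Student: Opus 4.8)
The plan is to prove the statement by exhaustive computer search: generate every tree on at most $27$ vertices, and for each one find a graceful labeling. Since a tree on $m$ vertices has exactly $m - 1$ edges, a graceful labeling is a bijection from its vertices to $\{0, 1, \ldots, m - 1\}$ whose $m - 1$ induced edge labels are precisely $\{1, \ldots, m - 1\}$; because there are exactly $m - 1$ edges and only $m - 1$ possible nonzero differences, it is enough to demand that no two edges receive equal induced labels. This reformulation as a pure ``all-different'' constraint on edge labels is what makes backtracking search practical.

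\emph{Step 1: generating the trees.} For each $m$ from $1$ to $27$ I would run an orderly (canonical-augmentation) generator that outputs each isomorphism class of free tree on $m$ vertices exactly once, doing only a bounded amount of work per tree beyond the output itself. As a safeguard against bugs, the count of trees emitted for each $m$ is compared with the known values of the free-tree counting sequence (which for $m = 27$ is on the order of $7.5 \times 10^{8}$).

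\emph{Step 2: searching for a graceful labeling.} For a fixed tree $T$ I would order its vertices $v_1, \ldots, v_m$ so that each $v_i$ with $i \ge 2$ has a neighbor among $v_1, \ldots, v_{i-1}$ (any BFS or DFS order rooted anywhere), then perform depth-first search assigning labels to $v_1, v_2, \ldots$ in that order. As soon as $v_i$ receives a label, the induced labels of its edges to already-labeled vertices can be checked, so partial labelings with a repeated vertex label or a repeated edge label are pruned immediately; further pruning (most-constrained branching, propagation of forced differences) speeds this up dramatically. The automorphism group of $T$ lets one restrict, without loss of generality, the vertex that receives label $0$ to one representative per orbit, and the complementary-labeling involution $f \mapsto (m - 1) - f$ halves the remaining search.

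\emph{Step 3: conclusion.} If for every generated tree the search returns a labeling, then every tree on at most $27$ vertices is graceful; should the search ever terminate without success, that tree would be an explicit non-graceful counterexample, so the procedure is a decision algorithm and the theorem is exactly the assertion that it answers ``graceful'' throughout this range. For reproducibility one records the labeling found for each tree and rechecks it with an independent, trivial verifier. The step I expect to be the main obstacle is scale: although the overwhelming majority of the roughly $7.5 \times 10^{8}$ trees on $27$ vertices admit a graceful labeling almost instantly, a thin tail of ``hard'' instances --- typically highly symmetric trees, or ones that come close to having no graceful labeling --- can demand heavy backtracking, so taming the worst case requires carefully engineered pruning and, realistically, a distributed computation. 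A secondary concern is trust: one must be confident the generator is genuinely exhaustive and duplicate-free, which is why the cross-check against the tree-count sequence and the independent reverification of each output labeling matter.
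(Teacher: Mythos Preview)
Your proposal is correct in spirit and matches what the paper indicates: the paper does not give its own proof of this theorem but simply cites it as a result ``proved by computer,'' and your outline of an exhaustive generate-and-verify search is exactly the kind of computation being referenced. There is nothing further to compare, since the paper offers no argument beyond the citation.
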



\begin{theorem}
{\normalfont (Fang \cite{A_computational_approach_to_the_graceful_tree_conjecture})}  All trees with at most 35 vertices are graceful.
\end{theorem}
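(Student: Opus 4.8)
The plan is computational: enumerate every tree on at most $35$ vertices up to isomorphism, and for each one exhibit a graceful labeling found by search. Since a graceful labeling of a tree with $n$ edges is a bijection from its vertices to $\{0,1,\ldots,n\}$ that induces every element of $\{1,\ldots,n\}$ as an edge label, \emph{verifying} a candidate labeling is trivial and fast; essentially all the difficulty is in (a) generating the trees without duplication and (b) finding a labeling for each one quickly enough that the whole computation terminates.

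For generation I would use a canonical-construction-path scheme that produces each free tree on $k$ vertices exactly once, sweeping $k = 1, \ldots, 35$. To shrink the work, I would first filter out every tree already known to be graceful from the results surveyed above — paths, caterpillars, the spiders covered here, symmetrical and $c$-symmetrical trees, trees obtainable via the $\Delta$- and $\Delta_{+1}$-constructions or their generalizations, generalized banana trees, trees with a perfect matching whose contree is graceful, trees of diameter at most $5$, trees with at most four leaves, and so on — so that the expensive labeling search is invoked only on the residual ``hard'' trees.

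For the search itself I would run backtracking that assigns labels to vertices one at a time while maintaining the sets of still-available vertex labels and edge labels, pruning aggressively: fix $f(v)=0$ and $f(w)=n$ for a diametral pair by symmetry, propagate forced assignments (for instance the unique vertex that must realize edge label $n$), and cut partial assignments by parity and degree arguments. Because naive backtracking cannot cope with the scale, I would precede it with fast randomized/heuristic passes — repeated randomized greedy completion, or local search / simulated annealing over bijections — and fall back to exhaustive backtracking only on the small fraction of trees the heuristics miss; that exhaustive pass is what actually certifies gracefulness (or, in principle, would surface a counterexample).

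The main obstacle is raw scale: the number of free trees on $35$ vertices is on the order of $10^{12}$, so even microseconds per tree is far too slow on a single machine. Feasibility demands (i) a per-instance solver that succeeds in well under a millisecond on the overwhelming majority of trees, (ii) dependable pruning via the known graceful classes so only a thin tail ever reaches the solver, and (iii) massively parallel execution with the enumeration itself distributed, each worker handling a disjoint block of canonical trees. A secondary but genuine subtlety is soundness: a heuristic that merely \emph{fails} to find a labeling proves nothing, so every tree must in the end be either matched to a known graceful class or cleared by a \emph{complete} search, and the bookkeeping that guarantees no tree is skipped or double-counted is itself a nontrivial part of the argument.
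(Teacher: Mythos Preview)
The paper itself gives no proof of this theorem; it simply cites Fang's computational paper and moves on. So there is no ``paper's own proof'' to compare against beyond the bare fact that the result was obtained by computer.

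Your proposal is the right shape for such a result: canonical enumeration of free trees, filtering by known graceful classes, and a fast heuristic search with exhaustive backtracking as a fallback. That is indeed the genre of argument Fang's paper belongs to, and your identification of the scale bottleneck (order $10^{12}$ free trees at $n=35$) is accurate and is the real crux.

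One small clarification on your soundness paragraph: you write that every tree ``must in the end be either matched to a known graceful class or cleared by a \emph{complete} search.'' That overstates the requirement. If a heuristic \emph{finds} a graceful labeling, that labeling is itself a certificate and nothing further is needed; exhaustive search is required only for the (hopefully empty) residue on which every heuristic attempt fails. You seem to understand this earlier in the proposal, so this is just a wording issue, but it matters for feasibility: if complete search were truly needed on every tree, the computation would be hopeless. The actual workflow is: enumerate; for each tree, run a fast stochastic search; record the labeling it finds; and only on the rare stubborn cases escalate to something exhaustive. The correctness of the theorem then rests on (i) the enumeration being exhaustive and duplicate-free, and (ii) every recorded labeling actually being graceful --- both of which are independently checkable.
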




\subsection{Trees with no $\alpha$-labeling}

The following result shows that, in a sense, most trees have an $\alpha$-labeling:

\begin{theorem} \label{subdivide_Kotzig}
{\normalfont (Kotzig \cite{depth})} Let $T$ be a tree, and let $e$ be an edge of $T$.  The class of trees obtained by replacing $e$ with a path of arbitrary length contains only finitely many trees without $\alpha$-labelings.
\end{theorem}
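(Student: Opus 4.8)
The plan is to prove the statement in its equivalent positive form: all but finitely many members of the family have an $\alpha$-labeling. Concretely, let $T_{k}$ be the tree obtained from $T$ by replacing $e$ with a path of length $k$; since there are only finitely many $T_{k}$ with $k$ below any given threshold, it suffices to show that $T_{k}$ has an $\alpha$-labeling whenever $k$ is large enough. Write $e = uv$ and let $A$, $B$ be the components of $T - e$ with $u \in A$ and $v \in B$, so that $T_{k}$ consists of $A$ and $B$ joined by a long path $u = p_{0}, p_{1}, \ldots, p_{k} = v$. The guiding intuition is that a long subdivided path supplies a long run of consecutive labels that can be locally rearranged --- in the spirit of the transfers of the previous section --- and that this slack is enough to accommodate whatever $A$ and $B$ look like, even though neither of them need be graceful, let alone have an $\alpha$-labeling.

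I would organize everything around the following lemma: \emph{for every tree $R$ and every vertex $w$ of $R$, there is an integer $J_{0}(R,w)$ such that, for all $j \ge J_{0}(R,w)$, the tree $R^{(j)}$ formed by attaching a pendant path of length $j$ at $w$ admits an $\alpha$-labeling in which the leaf at the far end of that path receives the label $0$.} Granting the lemma, the theorem follows quickly: pick an interior vertex $p_{i}$ of the long path of $T_{k}$ with both $i$ and $k - i$ at least $\max(J_{0}(A,u), J_{0}(B,v))$; then $T_{k}$ is obtained by identifying the distinguished far leaves of $A^{(i)}$ and $B^{(k-i)}$, each of which has an $\alpha$-labeling sending that leaf to $0$, so Theorem \ref{combine_alpha_alpha} produces an $\alpha$-labeling of $T_{k}$.

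The lemma I would attack by induction on $|V(R)|$. The base case $|V(R)| = 1$ is immediate from Theorem \ref{zero_alpha}, since then $R^{(j)} = P_{j}$ and the required leaf is an endpoint, so the one exceptional case (the center of $P_{4}$) never arises. For the inductive step, choose a leaf $x \ne w$ of $R$ with neighbor $x'$, delete it to obtain $R^{-} = R - x$, apply the inductive hypothesis to $R^{-}$, and then re-dock $x$ at $x'$. Re-attaching a single leaf perturbs an $\alpha$-labeling in a controlled way: the bipartition class not containing $x'$ can keep its block of labels, the other class's block is stretched by one, and the induced label of $xx'$ must occupy the single freed value --- which, as a short computation shows, works precisely when $x'$ sits at an extreme position within its class. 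So the genuine task is to strengthen the inductive statement so that it also guarantees control of the positions of a bounded set of ``docking'' vertices (the future neighbors of the leaves still to be reattached), and to use the surplus length of the pendant path --- which leaves real freedom in how its labels are threaded --- to re-establish that control after each reattachment, all the while respecting the parity constraints coming from the alternation of classes along the path.

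The main obstacle I anticipate is exactly this bookkeeping: isolating a strengthened invariant --- roughly, ``there is an $\alpha$-labeling with the far leaf at $0$ in which every docking vertex sits at a prescribed extreme of its class and the path still retains an untouched run of at least $\ell$ consecutive labels'' --- that is at once strong enough to drive the induction and robust enough to survive a single-leaf reattachment, together with choosing the order in which $R$'s leaves are peeled off so that the docking vertex demanded at each stage is the one available. Once that invariant is pinned down, each inductive step is a finite, explicit relabeling, and a usable value of $J_{0}(R,w)$ can be read off from the number of reattachments, i.e., from $|V(R)|$; the theorem then follows from the reduction above.
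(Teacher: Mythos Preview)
The paper does not prove this theorem; it is stated as a known result of Kotzig and cited without proof, so there is no ``paper's own proof'' to compare against. I will therefore evaluate your proposal on its own terms.

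Your reduction is clean and correct: splitting $T_{k}$ at an interior vertex $p_{i}$ of the subdivided edge expresses $T_{k}$ as the identification, at a common vertex labeled $0$, of two trees each of the form ``fixed tree with a long pendant path attached,'' and Theorem~\ref{combine_alpha_alpha} then does the gluing. So the whole weight falls on your lemma, that $R^{(j)}$ admits an $\alpha$-labeling with the far leaf at $0$ once $j$ is large. This lemma is in fact exactly the content of Kotzig's result (indeed a mild strengthening, since you also pin down where the $0$ goes), so the reduction is not buying you a genuinely easier subproblem --- it is a reformulation.

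The gap is in the inductive proof of the lemma. The step ``delete a leaf $x$, apply induction to $R^{-}$, then reattach $x$'' does not go through with only the invariant you wrote down. When you reattach $x$ at $x'$, you need to insert a new vertex label and a new edge label simultaneously and compatibly with the $\alpha$-structure; as you note, this forces $x'$ to sit at an extremal position in its bipartition class, but your inductive hypothesis gives you no control over where $x'$ lands. You acknowledge this and propose to strengthen the invariant to also pin down the positions of a bounded set of ``docking'' vertices, but you do not state the strengthened invariant precisely, and the phrase ``the path still retains an untouched run of at least $\ell$ consecutive labels'' is doing a lot of unspecified work: you would need to say exactly which labels are reserved, how they interact with the bipartition index $k$, and why a single reattachment consumes only a bounded amount of this reserve while restoring the docking constraints for the \emph{next} leaf. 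Without that, the induction does not close. Kotzig's original argument proceeds via his depth-set machinery (Definition preceding Theorem~\ref{zero_alpha} in this paper) rather than by peeling leaves; if you want to push your approach through, the honest next step is to write down the strengthened invariant in full and check one complete inductive step against a small example such as $R = K_{1,3}$.
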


In particular, by choosing an edge incident with a leaf, this result shows that any tree is a subgraph of a tree with an $\alpha$-labeling (Gallian \cite{survey4}).  However, there exist infinite classes of trees with no $\alpha$-labeling, as the following result demonstrates:

\begin{theorem}
{\normalfont (Huang, Kotzig \& Rosa \cite{rosa3})} Let $T$ be a tree with diameter three with internal vertices $u, v$ of degree $r + 1$, $s + 1$.  Let $T'$ be the tree obtained from $T$ by subdividing each edge once.  Then
\begin{itemize}
\item $T'$ has an $\alpha$-labeling if and only if $|r - s|\le 1$.
\item $T$ has a graceful labeling.
\end{itemize}
\end{theorem}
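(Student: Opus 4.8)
The plan is to dispose of the second assertion immediately and then prove the $\alpha$-labeling characterization in its two directions separately. A tree of diameter three has exactly two non-leaf vertices, so $T$ is the double star in which $u$ is joined to leaves $u_1,\dots,u_r$ and $v$ to leaves $v_1,\dots,v_s$; in particular $T$ is a caterpillar, hence has an $\alpha$-labeling and a fortiori a graceful labeling. (Since $T$ is graceful, Theorem~\ref{subdivide} also gives at once that $T'$ is graceful, which is what one really wants alongside the first bullet.) For the rest I would write $x_i$, $y_j$, $w$ for the vertices subdividing $uu_i$, $vv_j$, $uv$ respectively, put $n:=2(r+s+1)$ and $m:=r+s$, and record the unique bipartition of $T'$: $A=\{u,v,u_1,\dots,u_r,v_1,\dots,v_s\}$ and $B=\{w,x_1,\dots,x_r,y_1,\dots,y_s\}$, where $|A|=m+2$, $|B|=m+1$, every vertex of $B$ has degree exactly $2$, $\deg(u)=r+1$, $\deg(v)=s+1$, and each $u_i,v_j$ is a leaf.

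For necessity, suppose $T'$ has an $\alpha$-labeling $f$. Under an $\alpha$-labeling the two colour classes of a bipartite graph receive an initial interval and the complementary final interval of $\{0,\dots,n\}$, and passing to the complementary $\alpha$-labeling interchanges the two possibilities, so I may assume $f(B)=\{0,\dots,m\}$ and $f(A)=\{m+1,\dots,n\}$; using the symmetry that swaps $u$ with $v$, I may also assume $r\ge s$. Summing the $n$ induced edge labels once as $1+2+\cdots+n$ and once as $\sum_{a\in A}\deg(a)f(a)-\sum_{b\in B}\deg(b)f(b)$ (each edge runs from $A$, carrying the larger endpoint label, to $B$), the contributions of the degree-$2$ vertices and of the leaves collapse and one is left with the single relation
\[
  r\,f(u)+s\,f(v)=\tfrac{3m(m+1)}{2}.
\]
Since $f(v)\ge m+1$, this forces $f(u)\le\tfrac{(m+1)(3r+s)}{2r}$; as $2r(2m+1)-(m+1)(3r+s)=m(r-s-1)$, when $r-s\ge 2$ this bound is strictly below $2m+1$, so $f(u)\le 2m$ and neither of the two largest labels $n,\,n-1$ can be the label of $u$ (and, in the subcases where one is forced to take $f(v)=n$, the relation likewise keeps $f(u)$ well below $n-1$). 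Hence $n$ and $n-1$ sit on $v$ or on the degree-one vertices $u_i,v_j$. Now I would run the placement argument, working downward from the top: the edge of label $n$ forces the vertex labelled $n$ and the vertex labelled $0$ to be adjacent, and continuing with $n-1,n-2,\dots$, using that a leaf has a single incident edge, that $x_i$ (resp.\ $y_j$) is adjacent only to $u,u_i$ (resp.\ $v,v_j$), and the displayed relation to bound $f(u)$, one shows that some top label cannot be realized --- a contradiction, so $|r-s|\le 1$. (In a good many pairs with $r-s\ge 2$ the displayed relation already has no admissible solution with $f(u)\ne f(v)$ in $\{m+1,\dots,n\}$, which trims the remaining casework.)

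For sufficiency, assume $|r-s|\le 1$ and relabel so that $s\le r\le s+1$. If $r=s$, then $T'$ is the tree symmetric about $w$, and I would check directly that $f(w)=0$, $f(u)=2r+1$, $f(v)=4r+2$, $f(x_i)=r+i$, $f(y_j)=j$, $f(u_i)=2r+2i$, $f(v_j)=2r+1+2j$ uses vertex labels $\{0,\dots,n\}$ and induced edge labels $\{1,\dots,n\}$; it is an $\alpha$-labeling because $B$ receives $\{0,\dots,2r\}$ and $A$ receives $\{2r+1,\dots,4r+2\}$. If $r=s+1$, note that $T'$ is obtained from the symmetric $(s,s)$-tree $T''$ by attaching one more length-$2$ leg --- a path $P_2$ --- at a centre of $T''$; in the labeling just given that centre is labelled $n$, so its complementary $\alpha$-labeling labels it $0$, and $P_2$ has an $\alpha$-labeling with a leaf labelled $0$ by Theorem~\ref{zero_alpha}, so Theorem~\ref{combine_alpha_alpha} yields an $\alpha$-labeling of $T'$. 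That finishes the characterization.

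The hard part will be the necessity direction: making the "some top label cannot be placed" step uniform instead of a hand check over several adjacency configurations. A clean argument should isolate a single invariant --- a sharpening of the degree-weighted edge-label sum above, or a count of how many edges can carry the top few labels given that every vertex of $B$ has degree $2$ while the relation pushes the largest labels of $A$ onto the leaves --- that handles the divisibility/parity obstruction and the extremal-placement obstruction at once. The sufficiency direction is routine once the symmetric labeling is written down.
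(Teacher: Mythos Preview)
The paper states this theorem without proof, citing Huang, Kotzig \& Rosa~\cite{rosa3}, so there is no in-paper argument to compare against. Your second bullet and the sufficiency half of the first bullet are fine: the explicit $\alpha$-labeling in the $r=s$ case checks out (the $B$-class receives $\{0,\dots,2r\}$, the $A$-class receives $\{2r+1,\dots,4r+2\}$, and the induced edge labels are exactly $\{1,\dots,4r+2\}$), and the $|r-s|=1$ case is correctly handled via the complementary labeling and Theorem~\ref{combine_alpha_alpha}.

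The necessity direction, however, is a genuine gap --- and you flag it yourself. The degree-weighted identity
\[
r\,f(u)+s\,f(v)=\tfrac{3m(m+1)}{2}
\]
is correct and does force $f(u)\le 2m$ when $r-s\ge 2$, but that conclusion alone is far from a contradiction: it only says the two top labels $n,n-1$ avoid $u$. Your ``placement argument, working downward from the top'' is not carried out; you indicate it would require tracking several adjacency configurations, and the parenthetical that ``in a good many pairs'' the relation already has no admissible solution concedes that it does have solutions in the remaining pairs. As written, nothing prevents an $\alpha$-labeling with $f(u)$ somewhere in the middle of $\{m+1,\dots,2m\}$, $f(v)$ determined by the identity, and the top labels distributed among the leaves $u_i,v_j$ --- you still need a structural reason why the resulting edge-label multiset must have a repeat or a hole. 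Until that step is actually executed (or replaced by the sharper invariant you allude to at the end), the necessity direction remains a sketch rather than a proof.
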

\noindent Moreover, this result can be generalized as follows:

\begin{theorem} \label{no_alpha_1}
{\normalfont (Huang, Kotzig \& Rosa \cite{rosa3})} Let $T$ be a tree, such that all vertices of $T$ have odd degree, and such that $|V(T)|\equiv 0\pmod{4}$.  Let $T'$ be the tree obtained from $T$ by subdividing each edge once.  Then $T'$ does not have an $\alpha$-labeling.
\end{theorem}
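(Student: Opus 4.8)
The plan is to argue by contradiction: assume $T'$ has an $\alpha$-labeling $f$, then extract a parity obstruction from a double-counting identity involving vertex degrees and labels.

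First I would fix notation and the bipartition. Write $m = |V(T)|$, so $m \equiv 0 \pmod 4$ and $T$ has $m - 1$ edges; subdividing each edge once, $T'$ has $2m - 1$ vertices and $n := 2m - 2$ edges. Let $A$ be the set of original vertices of $T$ and $B$ the set of new subdivision vertices, so $|A| = m$, $|B| = m - 1$. Every subdivision vertex is adjacent only to original vertices, and vice versa, so $(A,B)$ is the (unique) bipartition of $T'$; moreover each $w \in B$ has $\deg_{T'}(w) = 2$, while each $v \in A$ has $\deg_{T'}(v) = \deg_T(v)$, which is odd by hypothesis.

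Next I would invoke the structure of $\alpha$-labelings. Since $T'$ is bipartite, an $\alpha$-labeling gives one part the labels $\{0,1,\dots,|A|-1\}$ (or $\{0,\dots,|B|-1\}$) and the other part the top block; replacing $f$ by its complementary $\alpha$-labeling if necessary, I may assume $A$ receives $\{0,1,\dots,m-1\}$ and $B$ receives $\{m,m+1,\dots,2m-2\}$. Then on every edge the $B$-endpoint carries the larger label, so for an edge $xy$ with $x\in A$, $y\in B$ the induced label is $f(y)-f(x)$. Summing over all edges and using $\deg_{T'}(w)=2$ on $B$:
\[
\sum_{xy \in E(T')}\bigl(f(y)-f(x)\bigr) \;=\; 2\sum_{w\in B} f(w) \;-\; \sum_{v\in A}\deg_T(v)\,f(v).
\]
The left side is $1+2+\cdots+n = (m-1)(2m-1)$, and $2\sum_{w\in B} f(w) = 2\bigl(m+\cdots+(2m-2)\bigr) = (m-1)(3m-2)$, so
\[
\sum_{v\in A}\deg_T(v)\,f(v) \;=\; (m-1)(3m-2) - (m-1)(2m-1) \;=\; (m-1)^2.
\]
Finally I would reduce mod $2$: each $\deg_T(v)$ is odd, so $\sum_{v\in A}\deg_T(v)f(v) \equiv \sum_{v\in A} f(v) = 0+1+\cdots+(m-1) = m(m-1)/2 \pmod 2$, and since $m\equiv 0\pmod 4$ this is even; but $(m-1)^2$ is odd because $m-1$ is odd — a contradiction.

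I expect the only real subtlety to be bookkeeping rather than conceptual: correctly identifying that the bipartition of $T'$ is exactly (original vertices, subdivision vertices) with sizes $m$ and $m-1$, and justifying via complementation that it suffices to treat the case in which $A$ receives the bottom block of labels. Once the identity $\sum_{v\in A}\deg_T(v)f(v)=(m-1)^2$ is established, the parity contradiction — driven precisely by the two hypotheses, "all degrees odd" and "$|V(T)|\equiv 0\pmod 4$" — is immediate. (As a sanity check, the smallest case $m=4$ forces $T=K_{1,3}$ and $T'$ the $(2,2,2)$-spider, which indeed has no $\alpha$-labeling, consistent with the earlier three-leg spider theorem.)
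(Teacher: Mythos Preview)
Your argument is correct: the double-counting identity $\sum_{v\in A}\deg_T(v)f(v)=(m-1)^2$ is derived cleanly, and the parity reduction using the two hypotheses yields the contradiction exactly as you describe. The paper itself does not prove this theorem; it only states the result with attribution to Huang, Kotzig and Rosa, so there is no in-paper argument to compare against. Your proof is self-contained and fits the paper's conventions (the complementation step is justified by Lemma~1.13, and the observation that an $\alpha$-labeling assigns a full block of labels to each bipartition class is spelled out in the discussion preceding the depth-set definition in \S2.1).
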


\noindent Finally, we present one additional class of trees with no $\alpha$-labeling.

\begin{theorem} \label{no_alpha_2}
{\normalfont (Rosa \cite{rosa})} Let $T$ be a diameter-4 tree, such that the base of the base of $T$ has exactly one vertex, and such that $T$ is not a caterpillar.  Then $T$ does not have an $\alpha$-labeling.
\end{theorem}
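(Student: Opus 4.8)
The plan is to assume $T$ admits an $\alpha$-labeling $f$ and force a contradiction by counting the edge labels that lie outside the star at the center. First I would read off the structure of $T$: since $\operatorname{diam}(T)=4$, there is a unique central vertex $v$ of eccentricity $2$, every vertex lies within distance $2$ of $v$, and the non-leaves of $T$ are exactly $v$ together with those neighbors $a_1,\dots,a_k$ of $v$ that carry further neighbors (the \emph{necks}); so deleting the leaves of $T$ (its base) yields the star $K_{1,k}$ centered at $v$, and deleting leaves once more (the base of the base) yields just $v$, the single vertex required by the hypothesis. Say $a_i$ carries $m_i\ge 1$ leaves and $v$ carries $p\ge 0$ leaves directly; since $T$ is not a caterpillar, $K_{1,k}$ is not a path, so $k\ge 3$. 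The bipartition is $(A,B)$ with $A=\{v\}\cup\{\text{leaves on the necks}\}$ and $B=\{a_1,\dots,a_k\}\cup\{\text{the $p$ leaves at }v\}$; put $a:=|A|=1+\sum_i m_i$, $q:=|B|=k+p$, so $n=a+q-1$.

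Next I would normalize $f$. In an $\alpha$-labeling the two bipartition classes are $\{0,\dots,\lambda\}$ and $\{\lambda+1,\dots,n\}$, so, replacing $f$ by its complementary $\alpha$-labeling if needed, I may assume $A$ maps bijectively onto $\{0,\dots,a-1\}$ and $B$ onto $\{a,\dots,n\}$, with $\lambda=a-1$. Write $s:=f(v)$ and $\beta_i:=f(a_i)$, so the $\beta_i$ are distinct and lie in $\{a,\dots,n\}$. The $q$ edges at $v$ receive precisely the labels $\{a-s,\dots,n-s\}$, hence the remaining $\sum_i m_i$ edges — each joining a neck $a_i$ to one of its leaves — receive precisely $\{1,\dots,a-s-1\}\cup\{n-s+1,\dots,n\}$. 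Summing these neck-to-leaf labels in two ways (directly, and as $\sum_i m_i\beta_i$ minus the sum $\binom{a}{2}-s$ of the labels of all leaves on necks) and simplifying with $n=a+q-1$, I would obtain the key identity
\[
\sum_{i=1}^{k} m_i\beta_i \;=\; a(a-1)+s(q-1).
\]

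If $s=0$, the identity reads $\sum_i m_i\beta_i=a\sum_i m_i$ with every $\beta_i\ge a$, forcing all $\beta_i=a$, which is impossible since the $\beta_i$ are distinct and $k\ge 2$. If $s\ge 1$, I would first locate the label $n$: the edge labeled $n$ joins the vertices labeled $0$ and $n$, and since $f(v)=s\ne 0$ the vertex labeled $0$ is a leaf on some neck, so the vertex labeled $n$ is that neck — call it $a_1$, so $\beta_1=n$. Then, by induction on $j=0,1,\dots,s-1$, I would show that the vertex labeled $j$ is a leaf of $a_1$ joined to it by the edge labeled $n-j$: the edge labeled $n-j$ joins labels $i$ and $i+(n-j)$ with $0\le i\le j$, the endpoint labeled $i$ is a leaf on a neck since $i<s$, and if $i<j$ this leaf already carries the edge labeled $n-i\ne n-j$, so $i=j$ and the other endpoint is the vertex labeled $n$, namely $a_1$. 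Hence $m_1\ge s$; writing $m_1=s+t$ with $t\ge 0$ and $A':=\sum_{i\ge 2}m_i=a-1-s-t$ (which is $\ge k-1\ge 2$, as each of $a_2,\dots,a_k$ carries a leaf), I would subtract $m_1\beta_1=(s+t)(a+q-1)$ from the key identity to get, after a short simplification,
\[
\sum_{i=2}^{k} m_i\beta_i \;=\; aA'-t(q-1)\;\le\;aA'.
\]
But $\beta_2,\dots,\beta_k$ are $k-1\ge 2$ distinct integers, each at least $a$, so at least one exceeds $a$, and since every $m_i\ge 1$ this gives $\sum_{i\ge 2}m_i\beta_i\ge aA'+1$, a contradiction. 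Therefore $T$ has no $\alpha$-labeling.

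The step I expect to be the main obstacle is the case $s\ge 1$. The identity alone only pins down the weighted sum $\sum_i m_i\beta_i$ and is consistent with the $\beta_i$ being spread throughout $\{a,\dots,n\}$ — consistent, in fact, with the diameter-$4$ caterpillars ($k=2$), which do have $\alpha$-labelings — so it yields nothing on its own. What breaks the symmetry is the inductive fact that all of the small labels $0,1,\dots,s-1$ are forced onto the \emph{single} neck $a_1$ that carries the label $n$; removing that neck from the identity then leaves the remaining $k-1\ge 2$ necks with a $\beta$-weight too small for their labels to be distinct. Carrying out that induction cleanly and getting the "subtract off $a_1$" arithmetic exactly right is the delicate part; everything else is routine.
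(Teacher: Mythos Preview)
Your proof is correct. The key identity $\sum_i m_i\beta_i = a(a-1)+s(q-1)$ is derived correctly, the case $s=0$ is immediate, and in the case $s\ge 1$ your induction showing that the labels $0,1,\dots,s-1$ are all forced onto leaves of the single neck $a_1$ carrying label $n$ is clean and accurate. Subtracting off $m_1\beta_1=(s+t)n$ indeed yields $\sum_{i\ge 2}m_i\beta_i = aA'-t(q-1)\le aA'$, and since $k-1\ge 2$ distinct $\beta_i\ge a$ force $\sum_{i\ge 2}m_i\beta_i\ge aA'+1$, the contradiction follows.

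There is nothing to compare against: the paper states this theorem as a cited result of Rosa and gives no proof of its own. Your argument is self-contained and elementary, relying only on the bipartite structure of an $\alpha$-labeling and a counting of edge labels; the ``main obstacle'' you flag is exactly the right place to focus, and you handle it correctly. One very minor stylistic point: you could note at the outset that the hypothesis ``the base of the base of $T$ has exactly one vertex'' is automatically satisfied for any diameter-$4$ tree, so the operative hypotheses are just $\operatorname{diam}(T)=4$ and $T$ not a caterpillar (equivalently $k\ge 3$).
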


\subsection{Graceful graphs}


Not all graphs are graceful.  Here we give a necessary condition for a graph to be graceful, as well as some simple classes of graceful graphs.  We give proofs for a few of these cases, to give the reader a sense of the relevant proof techniques.  For a more complete survey of graceful graphs, see Gallian's comprehensive survey \cite{survey4}.

\begin{theorem}
{\normalfont (Rosa \cite{rosa})}  Suppose $G$ is a graceful graph, and suppose each vertex of $G$ has even degree.  Then $n \equiv 0, 3\pmod{4}$.
\end{theorem}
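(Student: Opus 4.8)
The plan is to exploit a parity argument on the sum of the edge labels. Let $f$ be a graceful labeling of $G$, so that $V_f\subseteq\{0,1,\ldots,n\}$ and $E_f=\{1,\ldots,n\}$. First I would record the elementary observation that for any integers $a,b$ we have $|a-b|\equiv a+b\pmod 2$, since $|a-b|$ and $a+b$ differ by $2\min(a,b)$ (or by $2b$, depending on sign). Applying this to each edge $uv$ of $G$, the induced label $|f(u)-f(v)|$ has the same parity as $f(u)+f(v)$.

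Next I would sum over all edges. On one side, since $f$ is graceful, $\sum_{uv\in E(G)}|f(u)-f(v)| = 1+2+\cdots+n = n(n+1)/2$. On the other side, by the parity observation this sum is congruent modulo $2$ to $\sum_{uv\in E(G)}\bigl(f(u)+f(v)\bigr) = \sum_{v\in V(G)}\deg(v)\,f(v)$, where the last equality just counts, for each vertex $v$, how many edges are incident to it. Since every vertex of $G$ has even degree, each term $\deg(v)f(v)$ is even, so $\sum_{v}\deg(v)f(v)$ is even. Hence $n(n+1)/2$ is even.

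Finally I would translate "$n(n+1)/2$ even" into a congruence on $n$. This says $n(n+1)\equiv 0\pmod 4$. Exactly one of the consecutive integers $n,n+1$ is even, so this forces $4\mid n$ or $4\mid n+1$, i.e.\ $n\equiv 0\pmod 4$ or $n\equiv 3\pmod 4$, as claimed. There is no real obstacle here; the only point requiring care is the parity identity $|a-b|\equiv a+b\pmod 2$ and the bookkeeping that rewrites the edge sum $\sum_{uv}(f(u)+f(v))$ as the degree-weighted vertex sum $\sum_v\deg(v)f(v)$, after which the even-degree hypothesis and a short case check on $n\bmod 4$ finish the argument.
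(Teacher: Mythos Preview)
Your proof is correct and follows essentially the same approach as the paper: both compute $\sum_{uv\in E(G)}|f(u)-f(v)|$ in two ways, use the parity identity $|a-b|\equiv a+b\pmod 2$, and invoke the even-degree hypothesis to conclude that $n(n+1)/2$ is even. The only cosmetic difference is that you explicitly rewrite the edge sum as $\sum_v \deg(v)f(v)$, whereas the paper phrases the same step as ``each vertex label occurs an even number of times in the sum.''
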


\begin{proof}
Let $f$ be a graceful labeling of $G$.  We calculate the sum of the induced edge labels in two ways.  On one hand, the sum is
$$\sum_{uv\in E(G)}|f(u) - f(v)|\equiv\sum_{uv\in E(G)}(f(u) + f(v))\pmod{2}$$
Since each vertex of $G$ has even degree, each vertex label occurs an even number of times in the sum above, and hence the sum is even.

On the other hand, the sum is
$$1 + 2 + \cdots + n = \frac{n(n + 1)}{2}$$
Therefore, $n(n + 1)/2$ is even, so we have $n\equiv 0, 3\pmod{4}$.
\end{proof}

\begin{theorem}
{\normalfont (Rosa \cite{rosa})} Let $C_{n}$ be the cycle with $n$ vertices.
\begin{enumerate}[(a)]
\item $C_{n}$ is graceful if and only if $n\equiv 0, 3\pmod{4}$.
\item $C_{n}$ has an $\alpha$-labeling if and only if $n\equiv 0\pmod{4}$.
\end{enumerate}
\end{theorem}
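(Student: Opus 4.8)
The plan is to prove necessity and sufficiency separately, splitting on the value of $n\pmod 4$ in each direction.

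\emph{Necessity.} Every vertex of $C_n$ has degree $2$, hence even degree, so the preceding theorem on graceful graphs all of whose vertices have even degree shows at once that $C_n$ graceful forces $n\equiv 0,3\pmod 4$; this gives the forward implication of (a). For (b), an $\alpha$-labeling is in particular graceful, so the same theorem gives $n\equiv 0,3\pmod 4$ again; moreover an $\alpha$-labeling requires the graph to be bipartite, and $C_n$ is bipartite only when $n$ is even, so the two conditions together force $n\equiv 0\pmod 4$, which is the forward implication of (b).

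\emph{Sufficiency for (b).} Assume $n=4k$; I would exhibit an explicit $\alpha$-labeling. A short preliminary observation organizes the construction: if $K$ is the index of such a labeling, then (the cycle being connected and $2$-regular) one bipartition class must receive the $2k$ ``low'' labels $\le K$ and the other the $2k$ ``high'' labels $>K$, and since $2k$ distinct values must fit into $\{0,\dots,K\}$ while $2k$ must fit into $\{K+1,\dots,4k\}$, necessarily $K\in\{2k-1,2k\}$; the two options are interchanged by the complementary labeling, so we may take $K=2k-1$, forcing the low class to be exactly $\{0,1,\dots,2k-1\}$ and the high class to be a $2k$-element subset of $\{2k,\dots,4k\}$ with one value omitted. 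It then remains to arrange these labels in cyclic order so that the $4k$ consecutive absolute differences, including the wrap-around difference, are precisely $1,2,\dots,4k$. A zigzag between the extreme labels spliced with a zigzag through the middle labels does this: for example $0,4,1,2$ works for $C_4$ (omitting $3$, index $1$), and $0,8,3,4,2,5,1,7$ works for $C_8$ (omitting $6$, index $3$).

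\emph{Sufficiency for (a), and the main obstacle.} If $n\equiv 0\pmod 4$ this is already done by (b), since an $\alpha$-labeling is graceful. If $n\equiv 3\pmod 4$ then $C_n$ is an odd cycle, not bipartite, so no $\alpha$-labeling is available and a graceful labeling must be written down directly; the same recipe applies, namely a zigzag $0,n,1,n-1,2,\dots$ truncated partway through and closed with a small local modification so that the wrap-around edge receives the single remaining label — for instance $0,1,3$ for $C_3$ and $0,7,1,6,2,4,3$ for $C_7$ (omitting $5$). The only genuine difficulty is assembling these explicit labelings into a uniform family — one formula, or at most two split by the parity of $k$, for each residue class — and checking that the wrap-around edge never collides with an interior edge label and, for (b), that the threshold $2k-1$ separates the ends of every edge; the rest is bookkeeping, and both necessity halves follow immediately from results already established.
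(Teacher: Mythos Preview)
Your necessity arguments match the paper's exactly, and your sufficiency strategy --- an alternating ``zigzag'' through the labels with one value omitted so that the wrap-around edge gets a fresh label --- is precisely the paper's approach. The paper, however, actually writes down the uniform families you defer as ``bookkeeping'': for $C_{4m}$ it starts at $2m$ and alternates least/greatest remaining labels while skipping $3m$, giving
\[
2m,\;0,\;4m,\;1,\;4m-1,\;\ldots,\;3m+1,\;m,\;3m-1,\;\ldots,\;2m-1,
\]
an $\alpha$-labeling with index $2m-1$; for $C_{4m-1}$ it starts at $2m$ and alternates while skipping $m$, giving
\[
2m,\;0,\;4m-1,\;1,\;\ldots,\;m-1,\;3m,\;m+1,\;\ldots,\;2m+1.
\]
Your small examples are correct but do not obviously extrapolate (your $C_8$ sequence $0,8,3,4,2,5,1,7$ is a valid $\alpha$-labeling, yet its pattern is not the same as your $C_4$ sequence), and you yourself identify the uniform construction as ``the only genuine difficulty.'' That is the part the paper supplies and you do not; until you exhibit a single formula per residue class and verify the wrap-around edge and the threshold condition in general, the sufficiency half remains a sketch rather than a proof.
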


\begin{proof}
By the theorem above, the condition in (a) is necessary.  Moreover, since any graph with an $\alpha$-labeling is bipartite, the condition in (b) is also necessary.  It remains to show that $C_{4m}$ has an $\alpha$-labeling and $C_{4m - 1}$ has a graceful labeling, for each $m\ge 1$.


Label $C_{4m}$ by starting with the label $2m$ and then alternating between the least and greatest remaining labels as we proceed around the cycle, skipping $3m$ to avoid repeating the edge label $2m$, so that the labels are
$$2m, \; 0, \; 4m, \; 1, \; 4m - 1, \; \ldots \; , \; 3m + 1, \; m, \; 3m - 1, \; \ldots \; , \; 2m - 1$$
This gives an $\alpha$-labeling of $C_{4m}$ with index $2m - 1$.

Label $C_{4m - 1}$ by starting with the label $2m$ and then alternating between the least and greatest remaining labels as we proceed around the cycle, skipping $m$ to avoid repeating the edge label $2m$, so that the labels are
$$2m, \; 0, \; 4m - 1, \; 1, \; \ldots \; , \; m - 1, \; 3m, \; m + 1, \; \ldots \; , \; 2m + 1$$
This gives an graceful labeling of $C_{n}$, and hence the theorem is proved.
\end{proof}

\begin{theorem}
{\normalfont (Golomb \cite{Golomb72})} The graph $K_{n}$ is graceful if and only if $n\le 4$.
\end{theorem}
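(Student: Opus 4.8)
The plan is to treat the two implications separately. For ``$n\le 4$'' I would simply exhibit graceful labelings: take the vertex label sets $\{0\}$, $\{0,1\}$, $\{0,1,3\}$, and $\{0,1,4,6\}$ for $K_1,K_2,K_3,K_4$ respectively, and check directly that the pairwise differences are exactly $\{1,\dots,\binom{n}{2}\}$ in each case. All the content is in the ``only if'' direction, so the rest of the argument would show that $K_n$ is not graceful for $n\ge 5$.

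For that direction, suppose $f$ is a graceful labeling of $K_n$ and write the vertex labels as $a_1<a_2<\dots<a_n$, so $\{a_i\}\subseteq\{0,\dots,N\}$ with $N=\binom{n}{2}$ and the $\binom{n}{2}$ pairwise differences $a_j-a_i$ are exactly $\{1,\dots,N\}$, each once. Since the largest difference is $a_n-a_1$, we must have $a_1=0$ and $a_n=N$. Now consider the consecutive gaps $d_i=a_{i+1}-a_i$ for $1\le i\le n-1$: these are distinct (being themselves among the pairwise differences) positive integers summing to $a_n-a_1=N=1+2+\dots+(n-1)$, and $n-1$ distinct positive integers can sum to $1+\dots+(n-1)$ only if they are exactly $\{1,\dots,n-1\}$. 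So the gap sequence $(d_1,\dots,d_{n-1})$ is a permutation of $\{1,\dots,n-1\}$, and every difference $a_j-a_i$ equals a sum of consecutive gaps $d_i+d_{i+1}+\dots+d_{j-1}$.

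The key observation is that the one-term blocks $\{d_i\}$ already realize every value in $\{1,\dots,n-1\}$ exactly once, so no block of two or more consecutive gaps may have a sum in $\{1,\dots,n-1\}$; in particular $d_i+d_{i+1}\ge n$ for every $i$. From this: the gap equal to $1$ cannot be interior (an interior $d_k=1$ would force both neighbours to be $\ge n-1$, hence both equal to $n-1$, impossible since the gaps are distinct), so it sits at an end. Reflecting via $a_i\mapsto N-a_i$ if necessary (which reverses the gap sequence but preserves gracefulness), I may assume $d_1=1$, which forces $d_2=n-1$ and hence $d_1+d_2=n$. Next I would locate the gap equal to $2$, say $d_j=2$ with $j\notin\{1,2\}$; the inequalities $d_{j-1}+d_j\ge n$ and $d_j+d_{j+1}\ge n$ confine the neighbours of $d_j$ to $\{n-2,n-1\}$, and since $n-1=d_2$ already and the gaps are distinct, the only possibilities are $j=3$ with $d_4=n-2$, or $j=n-1$ with $d_{n-2}=n-2$. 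In either case (here is where $n\ge 5$ is used, so that the relevant two positions are disjoint from $\{1,2\}$) we obtain a second pair of consecutive gaps summing to exactly $n$, so the difference $n$ is produced by two distinct edges of $K_n$, contradicting gracefulness. Hence $K_n$ is not graceful for $n\ge 5$.

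The main obstacle is finding the right structural handle for the ``only if'' direction: a pure parity argument in the spirit of Rosa's theorem that a graceful graph with all degrees even satisfies $n\equiv 0,3\pmod 4$ only disposes of some cases (for $K_n$ with $n$ odd it requires $\binom{n}{2}\equiv 0,3\pmod 4$, which fails for $n=5,7$ but holds for $n=9$), so it cannot handle all $n\ge 5$ uniformly. The insight that unlocks a uniform proof is that the consecutive gaps of any graceful labeling of $K_n$ must be a permutation of $\{1,\dots,n-1\}$; once that is in hand, the short casework on where the gaps $1$ and $2$ sit is routine.
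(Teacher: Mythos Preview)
Your proof is correct, and for the ``only if'' direction it takes a genuinely different route from the paper's.

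For $n\le 4$ you and the paper do exactly the same thing: exhibit the label sets $\{0\}$, $\{0,1\}$, $\{0,1,3\}$, $\{0,1,4,6\}$.

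For $n\ge 5$, the paper sketches a greedy forcing argument: the largest edge label forces $0$ and $N$ to be used; the next forces $1$ (up to complement); the next forces $N-2$ (since $2$ or $N-1$ would duplicate the edge label $1$); and then it simply says ``continuing systematically in this way, we soon reach a contradiction,'' leaving the reader to finish the case analysis. Your argument instead extracts a clean structural fact first---that the consecutive gaps $d_i=a_{i+1}-a_i$ are $n-1$ distinct positive integers summing to $\binom{n}{2}=1+\cdots+(n-1)$, hence are exactly a permutation of $\{1,\dots,n-1\}$---and then uses the consequence $d_i+d_{i+1}\ge n$ to pin down where $1$ and $2$ sit, producing two disjoint consecutive pairs each summing to $n$ and hence a repeated edge label. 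What this buys you is a proof that is actually complete and uniform in $n$, whereas the paper's outline leaves the termination of the forcing process implicit. The paper's approach has the virtue of being the obvious thing to try and requiring no preliminary insight, but your gap-permutation observation is the cleaner structural handle.
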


\begin{proof}
For $n\le 4$, we have the following graceful labelings:
\begin{itemize}
\item If $n = 1$, use the label $0$.
\item If $n = 2$, use the labels $0, 1$.
\item If $n = 3$, use the labels $0, 1, 3$.
\item If $n = 4$, use the labels $0, 1, 4, 6$.
\end{itemize}
For $n > 4$, suppose $K_{n}$ has a graceful labeling $f$.  We gradually narrow the possibilities for $f$ by considering the edge labels, starting from the largest, as outlined below:
\begin{itemize}
\item Some edge must be labeled $n$, so we must use $0, n$ as vertex labels.
\item Some edge must be labeled $n - 1$, so we must use 1 or $n - 1$ as a vertex label.  Without loss of generality, assume we use 1.
\item Some edge must be labeled $n - 2$, so we must use 2, $n - 2$, or $n - 1$ as a vertex label.  Either 2 or $n - 1$ produce a second edge labeled 1, so we must use $n - 2$ as a vertex label.

\end{itemize}
Continuing systematically in this way, we soon reach a contradiction.
\end{proof}

\begin{theorem}
{\normalfont (Rosa \cite{rosa})} Every complete bipartite graph has an $\alpha$-labeling.
\end{theorem}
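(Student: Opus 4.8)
The plan is to give an explicit $\alpha$-labeling of $K_{m,n}$, where $K_{m,n}$ has parts $A$ and $B$ with $|A| = m$ and $|B| = n$, so that the number of edges is $mn$. The idea is to use the two parts as the two ``blocks'' of an $\alpha$-labeling: label the vertices of $A$ with the small consecutive integers and the vertices of $B$ with the multiples of $m$, chosen so that the induced edge labels sweep out $\{1, \ldots, mn\}$ exactly.

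Concretely, I would label the vertices of $A$ with $0, 1, \ldots, m-1$ and the vertices of $B$ with $m, 2m, \ldots, nm$. First I would check injectivity and the range condition: the vertex labels are the distinct integers in $\{0, \ldots, m-1\} \cup \{m, 2m, \ldots, nm\}$, all lying in $\{0, 1, \ldots, nm\}$, which is contained in $\{0, 1, \ldots, 2mn\}$ as required. Next I would compute the induced edge labels: the edge joining the vertex of $A$ labeled $i$ (with $0 \le i \le m-1$) to the vertex of $B$ labeled $jm$ (with $1 \le j \le n$) has induced label $jm - i$. For each fixed $j$, as $i$ runs over $0, 1, \ldots, m-1$ this produces exactly the block $\{(j-1)m + 1, \ldots, jm\}$; letting $j$ run over $1, \ldots, n$, these blocks partition $\{1, \ldots, mn\}$. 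Hence $E_f = \{1, \ldots, mn\}$ and $f$ is a graceful labeling.

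Finally I would exhibit the index. Taking $k = m - 1$, every vertex of $A$ has label at most $m - 1 = k$ and every vertex of $B$ has label at least $m > k$; since every edge of $K_{m,n}$ joins $A$ to $B$, each edge has one end $\le k$ and the other $> k$. Therefore $f$ is an $\alpha$-labeling of $K_{m,n}$ with index $m - 1$, completing the proof. There is no real obstacle here; the only thing to be careful about is verifying that the edge labels tile $\{1, \ldots, mn\}$ without gaps or repetition, which is exactly the block computation above.
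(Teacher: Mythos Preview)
Your proof is correct and uses exactly the same labeling as the paper: vertices of the part of order $m$ get labels $0,1,\ldots,m-1$, vertices of the part of order $n$ get labels $m,2m,\ldots,nm$, and the index is $m-1$. Your write-up is simply more explicit about verifying the edge labels and the index than the paper's one-line proof. One small remark: the relevant range condition is $V_f \subseteq \{0,1,\ldots,mn\}$ (since an $\alpha$-labeling is in particular graceful), which you do verify; the aside about $\{0,1,\ldots,2mn\}$ is unnecessary.
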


\begin{proof}
Label $K_{m, n}$ by assigning the labels $0, 1, \ldots , m - 1$ to the vertices in the part of order $m$, and the labels $m, 2m, \ldots , nm$ to the vertices in the part of order $n$.  This gives an $\alpha$-labeling with index $m - 1$.
\end{proof}

We present several other classes of graceful graphs without proof.

\begin{definition}
A \emph{wheel} is a graph obtained from a cycle by adding a vertex and connecting it to each vertex of the cycle by an edge.
\end{definition}

\begin{theorem}
{\normalfont (Hoede \& Kuiper \cite{hoede1987all})} All wheels are graceful.
\end{theorem}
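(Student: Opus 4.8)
The plan is to reduce gracefulness of a wheel to a statement about a single cyclic sequence of integers, and then to construct that sequence explicitly, in the spirit of the zigzag labelings of paths and cycles used earlier. Write $W_{n}$ for the wheel whose rim is a cycle $c_{1}c_{2}\cdots c_{n}c_{1}$ and whose hub $h$ is joined to every $c_{i}$, so that $W_{n}$ has $n+1$ vertices and $2n$ edges; a graceful labeling must therefore use $n+1$ of the labels $\{0,1,\ldots,2n\}$ and induce the edge labels $\{1,\ldots,2n\}$. First I would set $f(h)=0$. With this choice the spoke $hc_{i}$ gets induced label $f(c_{i})$, so a graceful labeling of $W_{n}$ with hub $0$ is exactly the choice of distinct values $a_{1},\ldots,a_{n}\in\{1,\ldots,2n\}$ for $c_{1},\ldots,c_{n}$ such that the set $S=\{a_{1},\ldots,a_{n}\}$ of rim labels and the set $D=\{\,|a_{1}-a_{2}|,\,|a_{2}-a_{3}|,\,\ldots,\,|a_{n}-a_{1}|\,\}$ of the $n$ consecutive rim differences satisfy $S\cup D=\{1,\ldots,2n\}$. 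Since $|S|=|D|=n$, this union condition automatically forces $S$ and $D$ to be disjoint and forces the $a_{i}$ and the differences each to be distinct, so it is the only thing that needs to be checked.

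Next I would exhibit such a cyclic sequence. Since the rim cycle $C_{n}$ is itself graceful only for $n\equiv 0,3\pmod 4$ (and a parity check gives $\sum_{i}|a_{i}-a_{i+1}|\equiv 2\sum_{i}a_{i}\equiv 0\pmod 2$), I expect the right sequence to depend on $n\bmod 4$, so I would split into the four residue classes, handling the few smallest wheels in each class by hand --- for instance $W_{3}=K_{4}$ via the labels $0,1,4,6$, the wheel $W_{4}$ via hub $0$ and rim $8,6,7,3$, and $W_{5}$ via hub $0$ and rim $1,10,8,3,7$. For each residue class I would then write down a closed formula for $a_{i}$: a zigzag between the currently largest and currently smallest unused labels, as in the graceful labelings of paths and of $C_{4m}$ and $C_{4m-1}$, but phase-shifted by a fixed amount so that precisely the $n$ labels missed by the rim reappear as the $n$ rim differences. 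Finally I would verify, by direct computation with that formula, that $S\cup D=\{1,\ldots,2n\}$.

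The main obstacle is choosing the correct phase shift for each residue class and checking the partition property uniformly in $n$. For a path the zigzag labeling $0,m,1,m-1,\ldots$ makes the differences run cleanly through $m,m-1,\ldots,1$; but the rim is a closed cycle, so the closing edge $c_{n}c_{1}$ creates one extra difference that must land in precisely the slot left over, and arranging this while keeping all $n$ rim differences distinct from one another and from all $n$ rim labels is exactly what forces the split into cases. Once the sequences are pinned down, the remainder --- that the $a_{i}$ are distinct, the differences are distinct, and no difference equals any $a_{j}$ --- is routine but tedious arithmetic, which I would carry out residue class by residue class.
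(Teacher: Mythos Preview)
The paper does not prove this theorem: it is listed in the subsection on graceful graphs under the sentence ``We present several other classes of graceful graphs without proof,'' and the statement is simply cited to Hoede and Kuiper with no argument supplied. So there is no proof in the paper to compare your proposal against.

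As for the proposal itself, your reduction is correct and is exactly the standard one: labeling the hub $0$ makes the spoke labels equal to the rim vertex labels, so gracefulness is equivalent to finding $a_{1},\ldots,a_{n}\in\{1,\ldots,2n\}$ with $\{a_{i}\}\cup\{|a_{i}-a_{i+1}|\}=\{1,\ldots,2n\}$. Your small cases $W_{3},W_{4},W_{5}$ check out, and your plan to split by $n\bmod 4$ and use phase-shifted zigzag sequences is precisely the route taken in the original Hoede--Kuiper paper (and in Frucht's earlier partial result). What you have written, however, is a plan rather than a proof: you have not actually produced the four closed formulas nor carried out the verification that $S\cup D=\{1,\ldots,2n\}$ in each case. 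You correctly identify this as the main remaining work, and it is genuinely the bulk of the argument --- the case analysis is not deep but it is where all the content lies, and until those formulas are written down and checked there is no proof.
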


\begin{definition}
A \emph{gear} is a graph obtained from a wheel by subdividing each edge of the original cycle once.
\end{definition}

\begin{theorem}
{\normalfont (Ma \& Feng \cite{ma1984gracefulness})} All gears are graceful.
\end{theorem}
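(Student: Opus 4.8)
The plan is to exhibit an explicit graceful labeling of the gear graph $G_n$. Fix notation: let $h$ be the hub, let $v_1,\dots,v_n$ be the vertices of the original rim (so $hv_i$ is a spoke for each $i$), and let $w_i$ be the vertex inserted when the rim edge $v_iv_{i+1}$ is subdivided, indices taken modulo $n$. Then $G_n$ has $2n+1$ vertices and $3n$ edges, so a graceful labeling must use $2n+1$ of the labels $\{0,1,\dots,3n\}$ and realize edge labels exactly $\{1,\dots,3n\}$. The guiding idea is to place the hub at an extreme label so that the $n$ spokes $|f(h)-f(v_i)|$ occupy a block of large edge labels, and to run a small/large zig-zag around the $2n$-cycle $v_1w_1v_2w_2\cdots v_nw_n$ — the device behind the graceful labelings of paths and cycles (cf.\ Theorem~\ref{path}) — so that the $2n$ rim edges cover the complementary block of small labels.

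For $n$ even I would take $f(h)=3n$ and $f(v_i)=i-1$, so the spokes are exactly $\{2n+1,2n+2,\dots,3n\}$, and it remains to give the $w_i$ distinct labels in $\{n,n+1,\dots,3n-1\}$ so that the rim edges are exactly $\{1,2,\dots,2n\}$. Since $w_i$ is adjacent to $v_i$ and $v_{i+1}$ and is labeled above both, its two rim edges differ by $|f(v_i)-f(v_{i+1})|$, which is $1$ for $i<n$ and $n-1$ for $i=n$; thus each of $w_1,\dots,w_{n-1}$ owns a pair of consecutive rim labels, $w_n$ owns a pair of gap $n-1$, and $f(w_i)$ equals the smaller member of its pair plus $\max(f(v_i),f(v_{i+1}))$. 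The task reduces to a finite combinatorial claim: partition $\{1,\dots,2n\}$ into $n-1$ consecutive pairs and one pair of gap $n-1$, and assign the pairs to the $w_i$ so that the resulting $f(w_i)$ are distinct and lie in range. I would verify this by taking $\{1,n\}$ for the wide pair — leaving $\{2,\dots,n-1\}$ and $\{n+1,\dots,2n\}$, both of even size precisely because $n$ is even, to be cut into consecutive pairs — and then matching the remaining pairs to the indices $1,\dots,n-1$ by a short Hall-type argument that respects the range and distinctness requirements.

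For $n$ odd this clean recipe is impossible, and I would point out why and adapt. Walking around the $2n$-cycle, each rim vertex meets exactly two rim edges, so $\sum_{\text{rim}}|f(x)-f(y)|\equiv 2\sum_{\text{rim vertices}}f\equiv 0\pmod 2$, while $1+2+\cdots+2n=n(2n+1)$ is odd for $n$ odd; hence the spokes cannot receive the block $\{2n+1,\dots,3n\}$. The fix is to let one spoke carry a small \emph{even} label in place of one top-block label: e.g.\ $f(h)=0$ with $\{f(v_i)\}=\{2\}\cup\{2n+2,2n+3,\dots,3n\}$, so the spokes are $\{2\}\cup\{2n+2,\dots,3n\}$ and the rim must realize $\{1,3,4,\dots,2n+1\}$, now a set of even sum. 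One then chooses the cyclic order of the $v_i$ (with $2$ placed between the two largest labels) and the $w_i$, reducing once more to a finite pairing-and-parity check; it may be cleanest to split this case into $n\equiv 1$ and $n\equiv 3\pmod 4$. The main obstacle is exactly this odd case: after the parity argument eliminates the block labeling, one has to pin down the right off-block assignment and then check simultaneously that the rim edges cover the intended set, that the $w_i$ labels avoid both the $v_i$ labels and the hub, and that no vertex label repeats — fiddly bookkeeping rather than a conceptual hurdle, and I expect the even case and the reductions to finite checks to go through comfortably.
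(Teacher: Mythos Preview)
The paper does not actually prove this theorem; it is listed among ``several other classes of graceful graphs'' that the author presents \emph{without proof}, citing Ma and Feng. So there is no argument in the paper to compare your proposal against.

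As for the proposal itself: your overall strategy --- put the hub at an extreme label so the spokes occupy a contiguous block, then zig-zag the outer $2n$-cycle --- is the standard shape of such constructions, and your parity observation for odd $n$ (that the rim edges cannot be exactly $\{1,\dots,2n\}$ because their sum must be even while $n(2n+1)$ is odd) is correct and is the right diagnosis of why the even-case recipe breaks. But the proposal is a sketch, not a proof. In the even case you have reduced to assigning the $(n-1)$ consecutive pairs to indices $1,\dots,n-1$ so that the values $f(w_i)=a+i$ are distinct and land in $\{n+1,\dots,3n-1\}$; your ``short Hall-type argument'' is not given, and while a greedy assignment (smallest $a$'s to largest $i$'s, then the large $a$'s to the remaining $i$'s) does seem to work, it needs to be written out and the distinctness checked. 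In the odd case you have only proposed the spoke set $\{2\}\cup\{2n+2,\dots,3n\}$ and noted that the parity obstruction disappears; you have not specified the cyclic order of the $v_i$, the values $f(w_i)$, or verified any of the three simultaneous constraints you yourself list. Calling this ``fiddly bookkeeping'' is fair, but it is precisely the content of the theorem --- the original Ma--Feng paper is an explicit construction, and until you produce one your odd case is an outline, not an argument.
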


\begin{theorem}
{\normalfont (Liu \cite{liu1996crowns})} Let $G$ be a graph obtained from a wheel by subdividing each edge of the original cycle $k$ times, for some fixed $k$.  Then $G$ is graceful.
\end{theorem}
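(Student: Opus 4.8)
The plan is to write down an explicit graceful labeling, and it is natural to look for one with the hub labeled $0$. Fix notation: let $c$ be the hub of the original wheel, let $v_1, \ldots, v_n$ be the rim vertices in cyclic order, and let the path replacing the rim edge $v_j v_{j+1}$ (indices read mod $n$) be $v_j = x_{j,0}, x_{j,1}, \ldots, x_{j,k}, x_{j,k+1} = v_{j+1}$; then $G$ has $N := n(k+2)$ edges and $n(k+1)+1$ vertices. Setting $f(c) = 0$, each spoke $cv_j$ gets the label $f(v_j)$, so the problem becomes: choose the $n$ labels $f(v_1), \ldots, f(v_n)$ and the $nk$ labels of the subdivision vertices, all distinct and lying in $\{1, \ldots, N\}$, so that the $n$ spoke labels $\{f(v_j)\}$ together with the $n(k+1)$ rim-edge labels form exactly $\{1, \ldots, N\}$.

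The construction I would use gives the rim vertices a block of $n$ ``anchor'' labels near the top of the range, arranged in a banana-tree-style order; the pattern $N,\, N-n,\, N-2n+1,\, N-3n+3,\, \ldots$, whose successive gaps are $n, n-1, n-2, \ldots$, is exactly the kind already used in the proof of Theorem \ref{generalized_banana}, and it fixes which $n$ labels the spokes occupy. It then labels each rim path $v_j \to v_{j+1}$ by the alternating ``high, low, high, low, $\ldots$'' pattern of Theorem \ref{path}, drawn from a suitable interval of labels, so that the $k+1$ edges of that path realize a block of $k+1$ consecutive edge-label values. If the intervals are chosen so that the successive blocks tile the complement of the spoke labels inside $\{1, \ldots, N\}$, and the cyclic order of the anchors and the orientation of each rim path are chosen so that the labeling closes up consistently after going once around, one is done: all vertex labels are then distinct and lie in $\{0, \ldots, N\}$ because the hub, the $n$ anchors, and the per-path intervals are pairwise disjoint, and the induced edge labels are exactly $\{1, \ldots, N\}$ because the spokes contribute the anchors and the rim contributes the tiling blocks. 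For $k = 1$ this specializes to the gear case (Ma \& Feng), in which the $n$ subdivision vertices simply receive the $n$ smallest labels $1, \ldots, n$.

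The main obstacle is the bookkeeping in the second paragraph: making the anchor labels, the per-path intervals, and the spoke labels interlock so that $\{1, \ldots, N\}$ is covered exactly once, and — harder — arranging the orientations of the rim paths and the placement of the anchors so that the pattern is globally consistent around the cycle. Because a rim path of length $k+1$ that starts at a ``high'' anchor returns to a ``high'' anchor precisely when $k$ is even (when $k$ is odd one must alternate the orientation of consecutive rim paths, which introduces a further dependence on the parity of $n$), the construction splits into cases according to the parities of $k$ and $n$; each case is a finite computation of the same shape, but verifying that the endpoints of the $n$ rim paths chain together correctly is where essentially all the work lies. The standard zig-zag graceful labeling of a cycle with one ``skip'' (as in the labelings of $C_{4m}$ and $C_{4m-1}$ above) is the model for the rim, and the banana-tree labelings are the model for the anchors.
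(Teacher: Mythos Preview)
The paper does not prove this theorem; it is one of several results in the ``Graceful graphs'' subsection that are explicitly ``present[ed]\ldots\ without proof'' and simply attributed to Liu.  So there is no paper proof to compare against.

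Your outline is the right \emph{shape} for this kind of result --- set the hub to $0$, choose anchor labels for the rim vertices so the spokes realize one designated subset of $\{1,\ldots,N\}$, and label each subdivided rim arc by a zig-zag so that its $k+1$ edges realize a prescribed block of the remaining values --- and this is almost certainly the style of Liu's original argument as well.  But as written it is a plan, not a proof: you yourself say that ``essentially all the work lies'' in the unperformed bookkeeping of making the anchors, the per-arc intervals, and the arc orientations close up consistently around the cycle.  Two concrete cautions.  First, the specific anchor pattern you write down, with successive gaps $n,\,n-1,\,n-2,\ldots$, is not the pattern produced in Theorem~\ref{generalized_banana} (those labels \emph{alternate} high/low rather than decrease monotonically), and there is no reason to expect those particular gaps to make the rim-arc edge-blocks tile the complement of the anchors; you should expect instead that the anchors must be spaced so that the complement of $\{f(v_j)\}$ in $\{1,\ldots,N\}$ breaks into $n$ intervals of length $k+1$, which forces something closer to an arithmetic progression with common difference $k+2$.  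Second, a zig-zag labeling of the path $v_j\to v_{j+1}$ whose edge labels form a block $\{b,\ldots,b+k\}$ forces the endpoint labels to differ by roughly $b+k$ (the largest gap), so the vertex labels along that arc are \emph{not} confined to a single short interval; keeping those $nk$ subdivision labels pairwise distinct and disjoint from the anchors is a genuine constraint that your sketch does not address.  Until those two pieces are pinned down and the parity cases actually carried through, this remains a plausible strategy rather than a proof.
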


\begin{definition}
Let $G_{1}, G_{2}$ be simple graphs.  The \emph{graph product} $G_{1}\times G_{2}$ is the simple graph $H$ with vertex set $V(G_{1})\times V(G_{2})$, such that two vertices $(u_{1}, u_{2}), (v_{1}, v_{2})$ of $H$ are connected by an edge if and only if one of the following holds:
\begin{itemize}
\item $u_{2}$ is adjacent to $v_{2}$ in $G_{2}$, and $u_{1} = v_{1}$.
\item $u_{1}$ is adjacent to $v_{1}$ in $G_{1}$, and $u_{2} = v_{2}$.
\end{itemize}
\end{definition}

\begin{definition}
A \emph{planar grid} is a graph of the form $P_{m}\times P_{n}$.
\end{definition}

\begin{theorem}
{\normalfont (Acharya \& Gill \cite{acharya1981index}, see Maheo \cite{maheo1980strongly})} All planar grids are graceful.
\end{theorem}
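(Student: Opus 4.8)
The plan is to produce an explicit graceful labeling of the grid $G = P_m \times P_n$; in fact we aim for an $\alpha$-labeling, which is the natural target since $G$ is bipartite and which additionally yields, via the decomposition theorems of the introduction, cyclic decompositions of complete graphs into copies of $G$. First I would fix coordinates, writing $V(G) = \{v_{i,j} : 0 \le i \le m,\ 0 \le j \le n\}$, so that $G$ is the union of $n+1$ \emph{rows} $R_j = \{v_{0,j}, \ldots, v_{m,j}\}$, each a copy of $P_m$, with consecutive rows joined by the perfect matching $\{v_{i,j}v_{i,j+1}\}$. Then $G$ has $(m+1)(n+1)$ vertices and $N = 2mn + m + n$ edges, and $|V(G)| = N + 1 - mn \le N+1$, consistent with gracefulness.

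The labeling would be constructed by induction on $n$. The base case $n = 0$ is the path $P_m$, which has an $\alpha$-labeling by Theorem \ref{path}. For the inductive step, passing from $P_m \times P_n$ to $P_m \times P_{n+1}$ adjoins one new row: $m+1$ new vertices, $m$ new horizontal edges and $m+1$ new matching edges, hence $2m+1$ new edges, and $N$ increases by exactly $2m+1$. So the content of the step is: given an $\alpha$-labeling of $P_m \times P_n$ using vertex labels $\{0,\dots,N'\}$ and edge labels $\{1,\dots,N'\}$, re-scale or shift the existing labels and then label the new row so that (i) the $m+1$ new vertices receive the $m+1$ labels freed at the top of the enlarged range, and (ii) the $2m+1$ new edge labels are precisely the $2m+1$ largest values in $\{1,\dots,N\}$ not already used. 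To make this feasible I would strengthen the inductive hypothesis to record exactly the pattern of labels on the boundary row $R_n$ — for instance, that $R_n$ is labeled by an arithmetic-progression-type pattern sitting at one end of the current range — since that boundary data is what controls whether a fresh copy of $P_m$ can be attached with the prescribed horizontal and matching differences. The re-scaling in the step is of the "make room at the top" kind already exploited in this paper, e.g.\ the inverse-labeling transformation of Koh--Rogers--Tan (Lemma \ref{inverse}) and the shift arguments used in the $\Delta$-constructions.

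I expect the main obstacle to be twofold and essentially a matter of delicate bookkeeping. First, one must isolate the correct strengthened invariant on the boundary row: weak enough to be maintained through every step, yet strong enough to let the next row be glued while keeping all $2m+1$ new differences distinct and in the required range; getting this right forces a modest case analysis on the parities of $m$ and $n$. Second, one must verify across the whole induction that the set of horizontal edge labels and the set of vertical (matching) edge labels stay disjoint and together exhaust $\{1,\dots,N\}$ — and this genuinely cannot be done naively, since for non-tree bipartite graphs the attainable set of differences need not be an interval and need not be large enough (the $\alpha$-labeling of $K_{m,n}$ noted earlier, which uses the non-interval label set $\{m,2m,\dots,nm\}$ on one side, is a reminder that the placement of labels must be chosen with care). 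Once the invariant and the parity cases are set up correctly, the verification in each step is a routine computation, and it follows that every planar grid $P_m \times P_n$ is graceful.
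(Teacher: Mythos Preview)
The paper presents this theorem \emph{without proof}, citing Acharya--Gill and Maheo; it appears in a list of results stated for context only. So there is no argument in the paper to compare against, and what follows is an assessment of your proposal on its own terms.

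Your outline is a plausible strategy, but as written it is not a proof: the entire difficulty is concentrated in the unspecified ``strengthened invariant on the boundary row,'' and you acknowledge as much. Without that invariant stated and verified, the inductive step is a promissory note. In particular, your description of the step --- shift the old labels, put the $m+1$ new vertex labels ``at the top,'' and realize the $2m+1$ largest edge labels on the new row and the new matching --- already runs into tension: the new matching edges join the new row to the \emph{old} boundary row, so their labels depend on both the new vertex labels and the (possibly shifted) old boundary labels, and there is no a priori reason these $m+1$ differences land in the top block disjointly from the $m$ new horizontal differences. Making this work forces the boundary row to carry a very specific pattern, and that pattern must survive the shift you apply to the rest of the grid; you have not exhibited either the pattern or the shift.

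For comparison, the original proofs (Acharya--Gill, Maheo) give a closed-form $\alpha$-labeling of $P_m \times P_n$ directly --- essentially a snake-like assignment that one writes down and then checks --- rather than building the grid row by row. An inductive argument of your shape can be extracted from such a formula after the fact, but the formula is doing the real work; discovering the invariant blind, as you propose, is harder than simply guessing the labeling. If you want to complete your approach, the cleanest route is to write down the explicit labeling (say, label $v_{i,j}$ by a simple expression in $i,j,m,n$ depending on the parity of $i+j$), verify it is an $\alpha$-labeling, and only then, if desired, recast the verification inductively.
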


\begin{definition}
A \emph{prism} is a graph of the form $C_{m}\times P_{n}$.
\end{definition}

\begin{theorem}
{\normalfont (Frucht \& Gallian \cite{frucht1988labeling})} All prisms are graceful.
\end{theorem}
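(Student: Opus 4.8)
The plan is to give an explicit construction. The prism $C_m \times P_n$ is not a tree, so the tree-specific machinery of this paper (transfers, the $\Delta$-constructions) does not apply; instead I would imitate the \emph{serpentine} idea behind the graceful labelings of paths and cycles, combined with the layer-shifting idea from the proof of Theorem~\ref{delta}. Write the vertices of $C_m \times P_n$ as $v_{i,j}$ with $i \in \{0,1,\ldots,n\}$ indexing the $P_n$-layers (so there are $n+1$ copies of $C_m$) and $j \in \mathbb{Z}_m$ indexing position around the cycle; the graph has $N := m(2n+1)$ edges, namely the $m$ ``horizontal'' edges $v_{i,j}v_{i,j+1}$ in each of the $n+1$ layers, together with the $m$ ``vertical'' edges $v_{i,j}v_{i+1,j}$ in each of the $n$ matchings. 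I would prescribe $f(v_{i,j})$ by a formula of the form (base label of vertex $j$ on the cycle) $+$ (a shift depending on $i$, and, when $m$ is even, on the bipartition class of $j$), choosing the shifts exactly so that the $m(n+1)$ vertex labels are distinct and lie in $\{0,\ldots,N\}$, and so that the horizontal edge labels over all layers together with the vertical edge labels over all matchings are precisely $\{1,\ldots,N\}$.

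The construction splits according to $m \bmod 4$. When $m \equiv 0 \pmod 4$, the cycle $C_m$ has an $\alpha$-labeling, so $C_m \times P_n$ is bipartite with a natural ``small part / large part'' structure in each layer; here I expect the layered-shift construction to yield not merely a graceful labeling but in fact an $\alpha$-labeling, essentially by running the serpentine pattern $0, N, 1, N-1, \ldots$ along an ordering of the vertices that respects the layers. When $m \equiv 2 \pmod 4$ the prism is still bipartite but $C_m$ has only a graceful (non-$\alpha$) labeling, so I would start from the ``skip one value'' graceful labeling of $C_m$ and choose the layer-shifts more carefully; the target is still a graceful labeling of the prism (and an $\alpha$-labeling in some sub-cases). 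The remaining case, $m$ odd, is the genuinely hard one: $C_m \times P_n$ then contains an odd cycle in every layer, so it has no $\alpha$-labeling, and the bipartite ``two-block shift'' trick is unavailable. Here I would use the graceful labelings of $C_{4k-1}$ and $C_{4k+3}$ that omit a single intermediate value, arrange the omitted values and the vertical-edge differences to interlock, and probably subdivide further on the parity of $n$, handling one distinguished layer (or one distinguished vertex of the cycle) by hand.

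In every case the verification follows the same routine: partition $\{1,\ldots,N\}$ into the blocks ``differences realized by the $i$th layer's horizontal edges'' for $i=0,\ldots,n$ and ``differences realized by the $i$th matching'' for $i=1,\ldots,n$, and check that the proposed formula hits each block bijectively; distinctness of the vertex labels is checked similarly by tracking which range of values each layer occupies. The main obstacle is the odd-$m$ case, where there is no bipartite scaffolding on which to hang the shifts, so the construction needs an ad hoc adjustment and the bookkeeping of the edge-difference multiset is heaviest; a secondary, purely organizational obstacle is keeping the residue cases of $m$ and the parity cases of $n$ from proliferating into an unwieldy number of sub-constructions.
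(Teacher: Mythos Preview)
The paper does not prove this theorem. It appears in the survey section on graceful graphs, where the author explicitly writes ``We present several other classes of graceful graphs without proof,'' and the prism result is one of those. So there is no proof in the paper to compare your attempt against.

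As a standalone assessment: your outline is broadly in the spirit of how Frucht and Gallian actually proved the result---explicit labeling formulas, case-split on the residue of $m$ modulo $4$, with the odd-$m$ case requiring the most work. But what you have written is a plan rather than a proof: you describe the \emph{shape} of the construction (serpentine base labeling plus layer shifts) and the \emph{shape} of the verification (partition $\{1,\ldots,N\}$ into blocks and check each block is hit bijectively), but you never actually write down the formulas or carry out any of the bookkeeping. You yourself flag the odd-$m$ case as needing ``an ad hoc adjustment'' that you do not supply. So there is no specific error to point to; rather, the proposal stops at the level of strategy and does not contain a proof.
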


%
%
%


\section{Preliminary Results} \label{preliminary}

\subsection{Transfers are independent of context}

We now take up the task of extending the method of transfers to prove our main results.  In order to do this, we present a series of lemmas, along with new proofs of each lemma.  Some of these lemmas have been observed and applied without proof in previous papers, but for our purposes it is necessary to more formally prove them, so that we can extend the method of transfers.

In this section, we define the concepts of an \emph{alternating} sequence of vertices and a \emph{transferable} set of leaves, which together provide the context in which transfers can be performed.  Then we prove that in any two such contexts, where the transferable sets of leaves are of the same size, we can perform corresponding transfers.

\begin{definition}
Let $T$ be a gracefully labeled tree.  An \emph{alternating} sequence of vertices is a finite sequence of distinct vertices $v_{1}, \ldots , v_{m}$ of $T$, such that the corresponding sequence of labels is
$$a,\; b - 1,\; a + 1,\; b - 2,\ldots\qquad\text{or}\qquad a,\; b + 1,\; a - 1,\; b + 2,\ldots$$
The \emph{form} of an alternating sequence is its corresponding general sequence above, and the \emph{parameters} of an alternating sequence are the labels $a, b$.
\end{definition}

\begin{definition}
Let $T$ be a gracefully labeled tree, and let $v_{1}, \ldots , v_{m}$ be an alternating sequence of $T$ with parameters $a, b$.  A set of leaves adjacent to $v_{1}$ is \emph{transferable} if the leaves have consecutive labels
$$c,\; c + 1, \ldots ,\; d$$
where $c + d = a + b$.
\end{definition}

\begin{lemma} \label{independent_of_context}
Let $T, T'$ be trees, let $f, f'$ be graceful labelings of $T, T'$, and let $v_{1}, \ldots , v_{m}$, $v_{1}', \ldots , v_{m}'$ be alternating sequences of vertices of $T, T'$, such that $v_{1}, v_{1}'$ are each adjacent to a set of transferable leaves of the same size.  Then for any sequence of transfers $v_{i}\rightarrow v_{j}$ with $i, j$ of different parity, there exists a corresponding sequence of transfers $v_{i}'\rightarrow v_{j}'$, transferring corresponding leaves at each step.
\end{lemma}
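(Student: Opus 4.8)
The key observation is that a single transfer $v_i \to v_j$ (with $i,j$ of different parity) is governed entirely by the arithmetic relation in Lemma~\ref{the_classic_lemma}: the leaves being transferred have labels summing in pairs to $f(v_i) + f(v_j)$, and the transfer produces a valid graceful labeling exactly because of that sum condition. So the strategy is to show, by induction on the length of the sequence of transfers, that (i) the transferred leaves at $v_1$ (resp.\ $v_1'$) always form a transferable set in the sense of the definition, and hence the next transfer is legal in both $T$ and $T'$ simultaneously, and (ii) after performing corresponding transfers, the remaining leaves at $v_1$ and $v_1'$ still have equal size and both still form transferable sets, so the induction can continue.

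\textbf{Setup and base case.} First I would fix notation: write the form of the alternating sequence $v_1,\dots,v_m$ as (say) $a,\, b-1,\, a+1,\, b-2,\dots$ with parameters $a,b$ (the other form is symmetric, or is handled by passing to a complementary labeling), so $f(v_i)$ is determined by $i$ and $(a,b)$; similarly $f'(v_i')$ is determined by $i$ and the parameters $(a',b')$ of the primed sequence. The transferable set at $v_1$ has labels $c, c+1,\dots,d$ with $c+d = a+b$, and the set at $v_1'$ has labels $c', c'+1,\dots,d'$ with $c'+d' = a'+b'$, and by hypothesis $d-c = d'-c'$. The base case (empty sequence of transfers) is immediate.

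\textbf{Inductive step.} Suppose we are about to perform a transfer $v_i \to v_j$. The crucial arithmetic fact is that because $i$ and $j$ have different parity, $f(v_i) + f(v_j)$ equals $a + b$ (plus or minus some fixed shift depending on $i,j$ — I would compute this once, it is a routine consequence of the alternating form), and this value coincides with $c + d$ where $c,\dots,d$ are the current leaf labels at $v_1$; the same identity holds on the primed side with $a',b',c',d'$. Since the current transferable sets have equal size, we can transfer the same \emph{number} of leaves from $v_i$ to $v_j$ in both trees — specifically, split off the sub-interval of leaves whose labels pair up to sum to $f(v_i)+f(v_j)$ (resp.\ the primed sum), which is a contiguous block at one end of the current interval because of the sum condition. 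Lemma~\ref{the_classic_lemma} (or its branch version) guarantees that each such transfer yields a graceful labeling. After the transfer, the new set of leaves remaining at $v_1$ is again a contiguous interval with sum still equal to $a+b$ (the moved leaves came in a symmetric block), hence still transferable, and the two remaining sets still have equal size; this closes the induction.

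\textbf{Main obstacle.} The delicate point is \emph{bookkeeping of which leaves get transferred at each step}: after several transfers of possibly different "types," the interval of remaining leaves at $v_1$ shrinks from both ends in a pattern dictated by the successive values $f(v_i)+f(v_j)$, and I need to verify that at every stage the block to be moved is genuinely a set of leaves currently sitting at $v_1$ (not ones already moved away) and that the sum condition $c + d = f(v_i) + f(v_j)$ is maintained verbatim. The cleanest way to handle this is to prove a small invariant: after $t$ transfers, the leaves at $v_1$ in $T$ have labels forming an interval $[c_t, d_t]$ with $c_t + d_t = a+b$, and the leaves at $v_1'$ form $[c_t', d_t']$ with $c_t' + d_t' = a'+b'$ and $d_t - c_t = d_t' - c_t'$ — this invariant is preserved because each transfer peels a symmetric (equal-count) block off one side, and the parity-difference hypothesis on $(i,j)$ is exactly what forces the peeled block's label-sum to match the pivot sum. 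Once this invariant is isolated, the rest is a direct application of Lemma~\ref{the_classic_lemma} at each step, and the ``corresponding'' transfer in $T'$ is forced to move the analogously-indexed block of leaves.
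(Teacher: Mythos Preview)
Your approach has two conceptual gaps that prevent it from proving the lemma as stated.

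First, you repeatedly track ``the leaves at $v_1$'' and an invariant that these form an interval $[c_t, d_t]$. But a sequence of transfers $v_i \to v_j$ moves leaves among all the $v_k$'s: after a transfer $v_1 \to v_2$, some leaves sit at $v_2$, and the next transfer in the sequence may well be $v_2 \to v_5$, say. The set you need to track is whatever currently sits at the \emph{source} vertex of the next transfer, not a fixed $v_1$. Once you reformulate the invariant this way, you will find (as the paper's later definition of a \emph{well-behaved} sequence makes explicit) that contiguity only survives under extra hypotheses the lemma does not assume.

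Second, and more seriously, the lemma covers \emph{arbitrary} transfers $v_i \to v_j$, not only transfers of the first type. A general transfer may move any collection of pairs $\{c+r,\, d-s\}$ satisfying $(c+r)+(d-s) = f(v_i)+f(v_j)$, together with possibly one central leaf; it need not ``peel a symmetric block off one side.'' So your interval invariant is simply false at this level of generality, and your induction breaks.

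The paper sidesteps both problems by working one pair (or one leaf) at a time. It fixes the bijection $c+r \leftrightarrow c'+r$ when the two alternating sequences have the same form, and $c+r \leftrightarrow d'-r$ when they have opposite form (a case you must treat separately, since the correspondence genuinely changes), and checks directly that if $\{c+r,\, d-s\}$ satisfies the sum condition for $v_i \to v_j$, then the corresponding pair satisfies it for $v_i' \to v_j'$. Since every transfer decomposes into such pairs and singletons, the correspondence lifts to arbitrary transfers with no structural hypothesis on the set moved; that corresponding leaves remain at corresponding vertices then follows trivially by induction on the sequence length.
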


\begin{proof}  Suppose the leaves have labels $c,\; c + 1,\;\ldots ,\; d$ and $c',\; c' + 1,\;\ldots ,\; d'$, respectively.  The sequences $v_{1}, \ldots , v_{m}$ and $v_{1}', \ldots , v_{m}'$ are each of the form
$$a,\; b - 1,\; a + 1,\; b - 2,\; \ldots\qquad\text{or}\qquad a,\; b + 1,\; a - 1,\; b + 2,\; \ldots$$

First suppose the two sequences are of the same form.  Since any transfer can be decomposed into transfers of single vertices and pairs of vertices, it suffices to prove the following two claims:\\

\noindent\emph{Claim: If $c + r$, $d - s$ can be transferred from $v_{i}$ to $v_{j}$, then $c' + r$, $d' - s$ can be transferred from $v_{i}'$ to $v_{j}'$.}\\

\noindent\emph{Proof of Claim.}  Suppose $c + r$, $d - s$ can be transferred from $v_{i}$ to $v_{j}$.  Since $i, j$ have different parity, one of $v_{i}, v_{j}$ corresponds to each of $a, b$, so we can write $v_{i} + v_{j} = a + b + f(i, j)$, where $f(i, j)$ depends only on $i, j$ and the form of $v_{1}, \ldots , v_{m}$.  Then
\begin{align*}
(c + r) + (d - s) &= a + b + f(i, j)\\
a + b + r - s &= a + b + f(i, j)\\
r - s &= f(i, j)
\end{align*}
Therefore,
$$(c' + r) + (d' - s) = a' + b' + f(i, j) = v_{i}' + v_{j}'$$
Hence $c' + r$, $d' - s$ can be transferred from $v_{i}'$ to $v_{j}'$, as desired.\\

\noindent\emph{Claim: If $c + r$ can be transferred from $v_{i}$ to $v_{j}$, then $c' + r$ can be transferred from $v_{i}'$ to $v_{j}'$.}\\

\noindent\emph{Proof of Claim.} Define $s$ such that $c + r = d - s$.  Then $c' + r = d' - s$, and the conditions for transfer can be written as
\begin{align*}
(c + r) + (d - s) &= v_{i} + v_{j}\\
(c' + r) + (d' - s) &= v_{i}' + v_{j}'
\end{align*}
These are exactly the conditions for transfer in the claim above, so by the same argument as above, the claim is proved.\\

Now suppose the two sequences are not of the same form.  It suffices to prove the following two claims:\\

\noindent\emph{Claim: If $c + r$, $d - s$ can be transferred from $v_{i}$ to $v_{j}$, then $c' + s$, $d' - r$ can be transferred from $v_{i}'$ to $v_{j}'$.}\\

\noindent\emph{Proof of Claim.}  Suppose $c + r$, $d - s$ can be transferred from $v_{i}$ to $v_{j}$.  Since $i, j$ have different parity, one of $v_{i}, v_{j}$ corresponds to each of $a, b$, so we can write $v_{i} + v_{j} = a + b + f(i, j)$, where $f(i, j)$ depends only on $i, j$ and the form of $v_{1}, \ldots , v_{m}$.  Then
\begin{align*}
(c + r) + (d - s) &= a + b + f(i, j)\\
a + b + r - s &= a + b + f(i, j)\\
r - s &= f(i, j)
\end{align*}
Since $v_{1}, \ldots , v_{m}$ is of the other form, we have $v_{i}' + v_{j}' = a' + b' - f(i, j)$, so
$$(c' + s) + (d' - r) = a' + b' - f(i, j) = v_{i}' + v_{j}'$$
Hence $c' + s$, $d' - r$ can be transferred from $v_{i}'$ to $v_{j}'$, as desired.\\

\noindent\emph{Claim: If $c + r$ can be transferred from $v_{i}$ to $v_{j}$, then $d' - r$ can be transferred from $v_{i}'$ to $v_{j}'$.}\\

\noindent\emph{Proof of Claim.} Define $s$ such that $c + r = d - s$.  Then $c' + s = d' - r$, and the conditions for transfer can be written as
\begin{align*}
(c + r) + (d - s) &= v_{i} + v_{j}\\
(c' + s) + (d' - r) &= v_{i}' + v_{j}'
\end{align*}
These are exactly the conditions for transfer in the claim above, so by the same argument as above, the claim is proved.\\

This completes the proof of the lemma.
\end{proof}

\subsection{Attainable sequences of integers}

We now restrict our attention to certain sequences of transfers of the first type, since, as we will later find, ignoring transfers of the second type does not limit the power of the transfer method.  

\begin{definition}
Let $T$ be a gracefully labeled tree, and let $v_{1}, \ldots , v_{m}$ be an alternating sequence of vertices of $T$, such that $v_{1}$ is adjacent to a transferable set of leaves.  A \emph{well-behaved} sequence of transfers is a finite sequence of transfers $v_{i}\rightarrow v_{j}$, with $i, j$ of different parity, of these leaves, such that
\begin{itemize}
\item All transfers are of the first type.
\item If one transfer is to $v_{i}$, then the next transfer is from $v_{i}$.
\item The set of leaves transferred at each step is a subset of the set of leaves transferred at the previous step. 
\end{itemize}
The \emph{result} of a well-behaved sequence of transfers is a sequence $n_{1}, \ldots , n_{m}$ of non-negative integers, such that after performing the transfers, $v_{k}$ is adjacent to $n_{k}$ of the original transferable leaves, for each $k$.

\end{definition}

\begin{definition}
Let $T$ be a tree, let $f$ be a graceful labeling of $T$, and let $v_{1}, \ldots , v_{m}$ be an alternating sequence of vertices of $T$, such that $v_{1}$ is adjacent to a transferable set of leaves with labels $c,\ldots , d$.  A sequence of non-negative integers $n_{1}, \ldots , n_{m}$ with
$$n_{1} + \cdots + n_{m} = |\{c, \ldots , d\}|$$
is \emph{attainable} with respect to $T, f, v_{1}, \ldots , v_{m}, c, d$ if there exists a well-behaved sequence of transfers $v_{i}\rightarrow v_{j}$ with result $n_{1}, \ldots , n_{m}$.
\end{definition}

The following lemma shows that we can discuss whether a sequence of non-negative integers $n_{1}, \ldots , n_{m}$ is attainable without reference to a particular $T, f, v_{1}, \ldots , v_{m}, c, d$.

\begin{lemma} \label{attainable_lemma}
Let $T, T'$ be trees, let $f, f'$ be graceful labelings of $T, T'$, and let $v_{1}, \ldots , v_{m}$, $v_{1}', \ldots , v_{m}'$ be alternating sequences of vertices of $T, T'$, such that $v_{1}, v_{1}'$ are each adjacent to a set of transferable leaves of the same size.  Then the sequence $n_{1}, \ldots , n_{m}$ is attainable with respect to $T, f, v_{1}, \ldots , v_{m}, c, d$ if and only if it is attainable with respect to $T', f', v_{1}', \ldots , v_{m}', c', d'$.
\end{lemma}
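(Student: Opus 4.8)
The plan is to reduce everything to Lemma \ref{independent_of_context}. Suppose $n_1, \ldots, n_m$ is attainable with respect to $T, f, v_1, \ldots, v_m, c, d$; by symmetry in $T$ and $T'$ it suffices to show it is then attainable with respect to $T', f', v_1', \ldots, v_m', c', d'$. By hypothesis there is a well-behaved sequence of transfers $v_i \rightarrow v_j$ (with $i, j$ of different parity) whose result is $n_1, \ldots, n_m$. Applying Lemma \ref{independent_of_context} gives a corresponding sequence of transfers $v_i' \rightarrow v_j'$ in $T'$ transferring corresponding leaves at each step, and the claim to establish is that this corresponding sequence is again well-behaved and has result $n_1, \ldots, n_m$.

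To verify this I would first make explicit the leaf correspondence supplied by the proof of Lemma \ref{independent_of_context}: it is the single bijection $\phi$ between $\{c, \ldots, d\}$ and $\{c', \ldots, d'\}$ (well-defined since the two transferable sets have the same size) given by $\phi(c + r) = c' + r$ when the two alternating sequences have the same form, and $\phi(c + r) = d' - r$ when they have opposite forms. In either case $\phi$ maps a block of consecutive labels to a block of consecutive labels (order-preserving or order-reversing), so a transfer of the first type maps to a transfer of the first type. Since $\phi$ is a bijection, it also carries nested sets of transferred leaves to nested sets, so the "subset of the previous step" condition is preserved; and the condition "if one transfer is to $v_i$, then the next is from $v_i$" is purely a statement about the sequence of index pairs $(i, j)$, which is identical for the two sequences. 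Hence the corresponding sequence of transfers is well-behaved.

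Finally I would check that the result is unchanged. After all transfers are performed, the set of original transferable leaves still adjacent to $v_k$ in $T$ corresponds under $\phi$ to the set of original transferable leaves still adjacent to $v_k'$ in $T'$, since each transfer step moves $\phi$-corresponding leaves between $\phi$-corresponding vertices. As $\phi$ is a bijection, these two sets have the same cardinality $n_k$ for every $k$, so the result of the corresponding sequence is indeed $n_1, \ldots, n_m$. This shows $n_1, \ldots, n_m$ is attainable with respect to $T', f', v_1', \ldots, v_m', c', d'$, and the symmetric argument gives the converse, completing the proof.

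The main obstacle I anticipate is bookkeeping rather than a genuine difficulty: Lemma \ref{independent_of_context} only asserts the existence of a corresponding sequence "transferring corresponding leaves at each step," and to deduce well-behavedness and equality of results one must pin "corresponding leaves" down to the single bijection $\phi$ above and confirm that the same $\phi$ governs every step simultaneously — which it does, because $\phi$ depends only on the forms and parameters $a, b, c, d$ (and $a', b', c', d'$) of the two alternating sequences, not on the individual transfers. Once that is in place, preservation of the three defining properties of a well-behaved sequence and of the result are each immediate.
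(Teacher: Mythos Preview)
Your proposal is correct and follows essentially the same approach as the paper: apply Lemma \ref{independent_of_context} to the well-behaved sequence $\mathcal{S}$ to obtain a corresponding sequence $\mathcal{S}'$, then verify that $\mathcal{S}'$ is well-behaved and has the same result. The paper states these two verifications as brief observations, whereas you spell out the explicit bijection $\phi$ and check each condition of well-behavedness; this extra detail is sound but not a different method.
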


\begin{proof}
Suppose $n_{1}, \ldots , n_{m}$ is attainable with respect to $T, f, v_{1}, \ldots , v_{m}, c, d$, so that there is a well-behaved sequence $\mathcal{S}$ of transfers with result $n_{1}, \ldots , n_{m}$.  Consider the corresponding sequence $\mathcal{S}'$ of transfers given by Lemma \ref{independent_of_context}.  We make the following observations:
\begin{itemize}
\item If $\mathcal{S}$ satisfies the conditions for a well-behaved sequence, $\mathcal{S}'$ does also.
\item The sequences $\mathcal{S}$ and $\mathcal{S}'$ transfer the same number of leaves at each step, so $\mathcal{S}$ and $\mathcal{S}'$ have the same result.
\end{itemize}
Therefore, $\mathcal{S}'$ is also a well-behaved sequence of transfers with result $n_{1}, \ldots , n_{m}$, so the sequence of integers $n_{1}, \ldots , n_{m}$ is also attainable with respect to $T', f', v_{1}', \ldots , v_{m}', c', d'$.  The converse is analogous, so the lemma is proved.
\end{proof}


\subsection{Nicely attainable sequences of integers}

In order to analyze which sequences of integers are attainable, we introduce the concept of a nicely attainable sequence, which will allow us to combine attainable sequences to obtain larger attainable sequences.  We introduce this concept by the following three-step process:

\begin{itemize}
\item Define the concept of a nicely attainable sequence $n_{1}, \ldots , n_{m}, [n_{m + 1}]$.
\item Show that the value of $n_{m + 1}$ is insignificant.
\item Define the concept of a nicely attainable sequence $n_{1}, \ldots , n_{m}$, in terms of the concept of a nicely attainable sequence $n_{1}, \ldots , n_{m}, [n_{m + 1}]$.
\end{itemize}

\begin{definition}
Let $T$ be a tree, let $f$ be a graceful labeling of $T$, and let $v_{1}, \ldots , v_{m + 1}$ be an alternating sequence of vertices of $T$, such that $v_{1}$ is adjacent to a transferable set of leaves with labels $c,\ldots , d$.  A sequence $n_{1}, \ldots , n_{m}, [n_{m + 1}]$ of non-negative integers with
$$n_{1} + \cdots + n_{m + 1} = |\{c, \ldots , d\}|$$
is \emph{nicely attainable} with respect to $T, f, v_{1}, \ldots , v_{m + 1}, c, d$ if there exists a well-behaved sequence of transfers with result $n_{1}, \ldots , n_{m + 1}$, where the last transfer is $v_{m}\rightarrow v_{m + 1}$, and $v_{m + 1}$ occurs in no other transfer.
\end{definition}

The following lemma shows that, as with attainable sequences, we can discuss whether a sequence $n_{1}, \ldots , n_{m}, [n_{m + 1}]$ is nicely attainable without reference to a particular $T, f, v_{1}, \ldots , v_{m + 1}, c, d$.

\begin{lemma}
Let $T, T'$ be trees, let $f, f'$ be graceful labelings of $T, T'$, and let $v_{1}, \ldots , v_{m + 1}$, $v_{1}', \ldots , v_{m + 1}'$ be alternating sequences of vertices of $T, T'$, such that $v_{1}, v_{1}'$ are each adjacent to a set of transferable leaves of the same size.  The sequence of integers $n_{1}, \ldots , n_{m}, [n_{m + 1}]$ is nicely attainable with respect to $T, f, v_{1}, \ldots , v_{m + 1}, c, d$ if and only if it is nicely attainable with respect to $T', f', v_{1}', \ldots, v_{m + 1}', c', d'$.
\end{lemma}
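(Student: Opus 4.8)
The plan is to reduce this statement to the already-established Lemma~\ref{independent_of_context}, exactly as was done in the proof of Lemma~\ref{attainable_lemma}, and then to check that the one extra structural requirement in the definition of \emph{nicely attainable}---namely that the final transfer is $v_{m}\rightarrow v_{m+1}$ and that $v_{m+1}$ appears in no other transfer---is respected by the correspondence. Concretely, suppose $n_{1},\ldots,n_{m},[n_{m+1}]$ is nicely attainable with respect to $T, f, v_{1},\ldots,v_{m+1}, c, d$, witnessed by a well-behaved sequence $\mathcal{S}$ of transfers $v_{i}\rightarrow v_{j}$ with result $n_{1},\ldots,n_{m+1}$, whose last transfer is $v_{m}\rightarrow v_{m+1}$ and in which $v_{m+1}$ occurs only in that final transfer.

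First I would apply Lemma~\ref{independent_of_context} to the alternating sequences $v_{1},\ldots,v_{m+1}$ and $v_{1}',\ldots,v_{m+1}'$ (both of length $m+1$, with $v_{1},v_{1}'$ adjacent to transferable sets of the same size), obtaining a corresponding sequence $\mathcal{S}'$ of transfers $v_{i}'\rightarrow v_{j}'$ that transfers corresponding leaves at each step. Then I would record the three routine verifications: (i) $\mathcal{S}'$ is again well-behaved---this is the observation already used for Lemma~\ref{attainable_lemma}, since the well-behaved conditions (all transfers of the first type; alternation between ``to $v_i$'' and ``from $v_i$''; nested leaf sets) are conditions on the combinatorial pattern of which indices are used and how many leaves move, all of which are preserved by the correspondence; (ii) $\mathcal{S}$ and $\mathcal{S}'$ move the same number of leaves at each step, hence have the same result $n_{1},\ldots,n_{m+1}$; and (iii) $\mathcal{S}'$ uses exactly the same sequence of index pairs $(i,j)$ as $\mathcal{S}$, so in particular its last transfer is $v_{m}'\rightarrow v_{m+1}'$ and the index $m+1$ occurs in no earlier transfer of $\mathcal{S}'$. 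Together, (i)--(iii) show that $\mathcal{S}'$ witnesses that $n_{1},\ldots,n_{m},[n_{m+1}]$ is nicely attainable with respect to $T', f', v_{1}',\ldots,v_{m+1}', c', d'$. The converse direction is symmetric, since Lemma~\ref{independent_of_context} is itself symmetric in the two trees.

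I do not expect a genuine obstacle here: the content is entirely carried by Lemma~\ref{independent_of_context}, and the only new point is the bookkeeping observation that the correspondence of Lemma~\ref{independent_of_context} preserves the underlying sequence of index pairs $(i,j)$, so any condition phrased purely in terms of those indices---such as ``the last transfer is $v_{m}\rightarrow v_{m+1}$'' and ``$v_{m+1}$ occurs in no other transfer''---transfers automatically. If anything needs a sentence of care, it is making explicit that this index-pattern-preservation is implicit in the statement of Lemma~\ref{independent_of_context} (``a corresponding sequence of transfers $v_{i}'\rightarrow v_{j}'$''), so that no further argument about the structure of $\mathcal{S}'$ is required.
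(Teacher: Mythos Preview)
Your proposal is correct and matches the paper's proof essentially line for line: the paper also invokes Lemma~\ref{independent_of_context} to produce the corresponding sequence $\mathcal{S}'$ and then records exactly your three observations (well-behavedness is preserved, the number of leaves moved at each step is the same so the results agree, and the index pattern is preserved so the last transfer is $v_{m}'\rightarrow v_{m+1}'$ with $v_{m+1}'$ appearing nowhere else), finishing with ``the converse is analogous.''
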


\begin{proof}
Suppose the sequence $n_{1}, \ldots , n_{m}, [n_{m + 1}]$ is nicely attainable with respect to $T, f, v_{1}, \ldots , v_{m + 1}, c, d$, so that there is a well-behaved sequence $\mathcal{S}$ of transfers with result $n_{1}, \ldots , n_{m + 1}$, where the last transfer is $v_{m}\rightarrow v_{m + 1}$, and $v_{m + 1}$ occurs in no other transfer.  Consider the corresponding sequence $\mathcal{S}'$ of transfers given by Lemma \ref{independent_of_context}.  We make the following observations:
\begin{itemize}
\item If $\mathcal{S}$ satisfies the conditions for a well-behaved sequence, $\mathcal{S}'$ does also.
\item The sequences $\mathcal{S}$ and $\mathcal{S}'$ transfer the same number of leaves at each step, so $\mathcal{S}$ and $\mathcal{S}'$ have the same result.
\item By the correspondence with $\mathcal{S}$, the last transfer of $\mathcal{S'}$ is $v_{m}'\rightarrow v_{m + 1}'$, and $v_{m + 1}'$ occurs in no other transfer.
\end{itemize}

Therefore, $\mathcal{S}'$ is also a well-behaved sequence of transfers with result $n_{1}, \ldots , n_{m + 1}$, where the last transfer is $v_{m}'\rightarrow v_{m + 1}'$, and $v_{m + 1}'$ occurs in no other transfer, so $n_{1}, \ldots , n_{m}, [n_{m + 1}]$ is also nicely attainable with respect to $T', f', v_{1}', \ldots , v_{m + 1}', c', d'$.  The converse is analogous, so the lemma is proved.
\end{proof}


\begin{lemma} \label{change_last}
Let $n_{1}, \ldots , n_{m}$ be non-negative integers, and let $n_{m + 1}, n_{m + 1}'$ be positive integers.  Then $n_{1}, \ldots , n_{m}, [n_{m + 1}]$ is nicely attainable if and only if $n_{1}, \ldots , n_{m}, [n_{m + 1}']$ is nicely attainable.
\end{lemma}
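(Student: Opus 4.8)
The plan is to combine the preceding lemma — that nicely attainability depends only on the integer sequence, not on the particular tree, labeling, alternating sequence, or transferable set — with a direct surgery on a realizing well-behaved sequence of transfers.

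Suppose $n_1, \ldots, n_m, [n_{m+1}]$ is nicely attainable, witnessed by a well-behaved sequence $T_1, \ldots, T_r$ of transfers on some tree $T$ with graceful labeling $f$, alternating sequence $v_1, \ldots, v_{m+1}$ with parameters $a, b$, and transferable leaves labeled $c, \ldots, d$, where the last transfer is $T_r = (v_m \to v_{m+1})$ and $v_{m+1}$ appears in no other $T_i$. Write $S_i$ for the consecutive block of leaves moved by $T_i$; the well-behaved condition gives $S_1 \supseteq S_2 \supseteq \cdots \supseteq S_r$, and since $v_{m+1}$ is empty before $T_r$ we have $|S_r| = n_{m+1}$. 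The key bookkeeping fact is that each count $n_k$ for $k \le m$ is a function of the successive differences $|S_i| - |S_{i+1}|$ together with $|\{c,\ldots,d\}| - |S_1|$, whereas $n_{m+1}$ equals $|S_r|$ by itself; so a modification that shifts every $|S_i|$ and $|\{c,\ldots,d\}|$ by the same amount preserves $n_1, \ldots, n_m$ while changing only $n_{m+1}$.

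Next I would perform the surgery. It suffices to treat $n_{m+1}' = n_{m+1} + 2$ and $n_{m+1}' = n_{m+1} - 1$ (the latter when $n_{m+1} \ge 2$), since iterating these moves connects any two positive values. For $n_{m+1}' = n_{m+1} + 2$, enlarge the transferable set by one leaf at each end (labels $c - 1$ and $d + 1$) — by context-independence the host tree may be taken large enough for this, e.g. the corresponding enlargement of the star from which such trees are built — and re-run the \emph{same} sequence of transfers with each $S_i$ replaced by $S_i' := S_i \cup \{\min S_i - 1,\ \max S_i + 1\}$. A short computation shows each $S_i'$ is again a consecutive block with exactly the endpoint sum that $T_i$ requires (enlarging a block by one element at each end leaves its endpoint sum unchanged, and $v_{a_i} + v_{b_i}$ has not changed), that $S_1' \supseteq \cdots \supseteq S_r'$ and $S_1' \subseteq \{c - 1, \ldots, d + 1\}$ still hold because stretching every interval by one element at each end preserves all these inclusions, and that each $S_i'$ still lies inside the current leaf set at the source of $T_i$. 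Since every difference $|S_i'| - |S_{i+1}'|$ equals $|S_i| - |S_{i+1}|$ and $|S_r'| = n_{m+1} + 2$, the new sequence is well-behaved with result $n_1, \ldots, n_m, n_{m+1} + 2$, and $v_{m+1}$ still appears only in its last transfer, so $n_1, \ldots, n_m, [n_{m+1} + 2]$ is nicely attainable. For $n_{m+1}' = n_{m+1} - 1$ I would instead delete the top element of every $S_i$ and of $\{c,\ldots,d\}$; this changes each block's endpoint sum by one, so to keep each $T_i$ a legitimate transfer of the first type I would simultaneously pass, again by context-independence, to a realization whose alternating sequence has parameter sum $a + b$ decreased by one, which shifts every $v_{a_i} + v_{b_i}$ by one in lockstep and exactly compensates; the inclusions and "current leaf set" conditions survive for the same interval reasons.

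The step I expect to be the real work is the verification that, after the uniform modification of all transferred blocks, every $T_i$ is still a valid transfer at the moment it is performed — concretely, that the new boundary leaves are present at the source of $T_i$, and that the "subset of the previously transferred set" condition is not broken for any vertex that reappears later in the sequence. This reduces to re-running the inductive construction of a well-behaved sequence with each block stretched or shrunk uniformly, which is where I would concentrate the care; the endpoint-sum parity bookkeeping and the appeal to context-independence are then routine given the preceding lemmas.
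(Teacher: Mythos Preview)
Your core insight is exactly the paper's: the counts $n_1,\ldots,n_m$ depend only on the offsets $r_k, s_k$ (equivalently, on the differences $|S_i|-|S_{i+1}|$), while $n_{m+1}=|S_r|$ alone absorbs any change in the total size of the transferable set. The paper, however, executes this in a single stroke rather than via your $+2$ and $-1$ increments. It simply fixes any context $T'$ whose transferable set has size $n_1+\cdots+n_m+n_{m+1}'$ (with alternating sequence of the same form), and takes the $k$th transfer of $\mathcal{S}'$ to move the leaves labeled $c'+r_k,\ldots,d'-s_k$. The two verifications you flag as ``the real work'' --- that these leaves are present at the source, and that the endpoint-sum condition holds --- are then just the observations that $r_k\ge r_{k-1}$, $s_k\ge s_{k-1}$ (inherited from $\mathcal{S}$) and that $r_k-s_k=f(i_k,j_k)$ depends only on indices and form. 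Your incremental scheme is correct, and your $+2$ and $-1$ constructions are literally special cases of this (with $(c',d')=(c-1,d+1)$ and $(c',d')=(c,d-1)$ respectively), but the iteration and the separate parameter-shift in the $-1$ case are unnecessary once you see that the same offsets work for any target size directly.
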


\begin{proof}
Let $T, T'$ be trees, let $f, f'$ be graceful labelings of $T, T'$, and let $v_{1}, \ldots , v_{m + 1}$, $v_{1}', \ldots , v_{m + 1}'$ be alternating sequences of vertices of $T, T'$ of the same form with parameters $a, b$ and $a', b'$, respectively, such that 
\begin{itemize}
\item $v_{1}$ is adjacent to a transferable set of $n_{1} + \cdots + n_{m} + n_{m + 1}$ leaves with labels $c, \ldots , d$.
\item $v_{1}'$ is adjacent to a transferable set of $n_{1} + \cdots + n_{m} + n_{m + 1}'$ leaves with labels $c', \ldots , d'$.
\end{itemize}
(These sets of leaves do not generally have the same size.)

Suppose $n_{1}, \ldots , n_{m}, [n_{m + 1}]$ is nicely attainable, so that there exists a well-behaved sequence $\mathcal{S}$ of transfers with result $n_{1}, \ldots , n_{m}, [n_{m + 1}]$, where the last transfer is $v_{m}\rightarrow v_{m + 1}$, and $v_{m + 1}$ occurs in no other transfer.  We claim $n_{1}, \ldots , n_{m}, [n_{m + 1}']$ is also nicely attainable.

Since each transfer in $\mathcal{S}$ is of the first type, the leaves transferred by the $k$th transfer $v_{i_{k}}\rightarrow v_{j_{k}}$ have consecutive labels
$$c + r_{k},\ldots ,\; d - s_{k}$$

We claim there exists a corresponding sequence $\mathcal{S}'$ of transfers in $T'$, such that the leaves transferred by the $k$th transfer $v_{i_{k}}'\rightarrow v_{j_{k}}'$ have corresponding labels
$$c' + r_{k},\ldots ,\; d' - s_{k}$$
To prove this claim, we must prove the following statements:
\begin{enumerate}[(1)]
\item The leaves with the labels above are adjacent to $v_{i_{k}}'$ after performing the first $k - 1$ transfers in $\mathcal{S}'$.
\item The leaves with the labels above satisfy the conditions for transfer.
\end{enumerate}

To prove (1), we use the subset property of a well-behaved sequence of transfers.  Since $\mathcal{S}$ is well-behaved, we have
$$\{c + r_{k},\ldots ,\; d - s_{k}\}\subseteq\{c + r_{k - 1},\ldots ,\; d - s_{k - 1}\}$$
Hence $r_{k}\ge r_{k - 1}$ and $s_{k}\ge s_{k - 1}$, so we have
$$\{c' + r_{k},\ldots ,\; d' - s_{k}\}\subseteq\{c' + r_{k - 1},\ldots ,\; d' - s_{k - 1}\}$$
Therefore, the leaves with labels $c' + r_{k},\ldots ,\; d' - s_{k}$ are adjacent to $v_{j_{k - 1}}'$ after performing the first $k - 1$ transfers in $\mathcal{S}'$.  Since $\mathcal{S}$ is well-behaved, we have $i_{k} = j_{k - 1}$, so these leaves are adjacent to $v_{i_{k}}'$, as desired.

To prove (2), we note that the transfers in $\mathcal{S}$ satisfy the conditions for transfer.  Since $i_{k}, j_{k}$ have different parity, one of $v_{i_{k}}, v_{j_{k}}$ corresponds to each of $a, b$, so we can write $v_{i_{k}} + v_{j_{k}} = a + b + f(i_{k}, j_{k})$, where $f(i_{k}, j_{k})$ depends only on $i_{k}, j_{k}$ and the form of $v_{1}, \ldots , v_{m + 1}$.  Then
\begin{align*}
(c + r_{k}) + (d - s_{k}) &= a + b + f(i_{k}, j_{k})\\
a + b + r_{k} - s_{k} &= a + b + f(i_{k}, j_{k})\\
r_{k} - s_{k} &= f(i_{k}, j_{k})
\end{align*}
Since $v_{1}',\ldots , v_{m}'$ is of the same form, we have $v_{i_{k}}' + v_{j_{k}}' = a' + b' + f(i_{k}, j_{k})$, so
$$(c' + r_{k}) + (d' - s_{k}) = a' + b' + f(i_{k}, j_{k}) = v_{i_{k}}' + v_{j_{k}}'$$
Therefore, the transfers in $\mathcal{S}'$ satisfy the conditions for transfer, as desired.

Therefore, the sequence $\mathcal{S}'$ exists.  We make the following observations:
\begin{itemize}
\item If $\mathcal{S}$ satisfies the conditions for a well-behaved sequence, $\mathcal{S}'$ does also.
\item By the correspondence with $\mathcal{S}$, the last transfer of $\mathcal{S'}$ is $v_{m}'\rightarrow v_{m + 1}'$, and $v_{m + 1}'$ occurs in no other transfer.
\item The sequences $\mathcal{S}$ and $\mathcal{S}'$ leave behind the same number of leaves at each step, so $\mathcal{S}$ and $\mathcal{S}'$ have the same result, except for the last term.
\end{itemize}
Therefore, the result of $\mathcal{S}'$ is $n_{1}, \ldots , n_{m}, n_{m + 1}'$, so $n_{1}, \ldots , n_{m}, [n_{m + 1}']$ is nicely attainable.  The converse is analogous, so the lemma is proved.
\end{proof}


\begin{definition}
A sequence $n_{1}, \ldots , n_{m}$ of non-negative integers is \emph{nicely attainable} if $n_{1}, \ldots , n_{m}, [n_{m + 1}]$ is nicely attainable for all positive integers $n_{m + 1}$.
\end{definition}



\begin{lemma} \label{nicely_attainable_to_attainable}
If $n_{1}, \ldots , n_{m}$ is nicely attainable, then $n_{1}, \ldots , n_{m}$ is attainable.
\end{lemma}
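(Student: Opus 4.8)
The plan is to feed the hypothesis into the definition of \emph{nicely attainable} at its cheapest value, $n_{m+1}=1$, and then strip away the extra structure. By hypothesis $n_1,\ldots,n_m,[1]$ is nicely attainable, so I would fix a witness: a context $(T,f,v_1,\ldots,v_{m+1},c,d)$ in which $v_1$ carries a transferable set of $N+1$ leaves, where $N=n_1+\cdots+n_m$, together with a well-behaved sequence $\mathcal{S}=(\tau_1,\ldots,\tau_L)$ whose result is $n_1,\ldots,n_m,1$, whose last transfer is $v_m\to v_{m+1}$, and in which $v_{m+1}$ occurs in no earlier transfer. The idea is to reproduce $\mathcal{S}$ in a fresh context that has only $N$ leaves, deleting exactly those transfers that shuttle the single ``leftover'' leaf to $v_{m+1}$ and shrinking every surviving block by one leaf; the resulting sequence will use only $v_1,\ldots,v_m$ and will have result $n_1,\ldots,n_m$, so $n_1,\ldots,n_m$ is attainable by Lemma~\ref{attainable_lemma}.

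In more detail, I would first analyse $\mathcal{S}$ exactly as in the proof of Lemma~\ref{change_last}: each $\tau_j$, being a transfer of the first type, moves a block of leaves with consecutive labels $c+r_j,\ldots,d-s_j$, and the subset clause of well-behavedness forces $r_1\le r_2\le\cdots$ and $s_1\le s_2\le\cdots$. The block of $\tau_j$ has size $(N+1)-r_j-s_j$, and since $\tau_L$ moves a single leaf, $r_L+s_L=N$. The key structural point is that as soon as some $\tau_j$ has $r_j+s_j=N$, nesting forces $r_{j'}=r_j$ and $s_{j'}=s_j$ for all $j'>j$, so \emph{every} later transfer moves the very same single leaf $\ell$ (the one with label $c+r_L=d-s_L$). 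Hence there is an index $k$ with $0\le k\le L-1$ such that $\tau_1,\ldots,\tau_k$ each move at least two leaves while $\tau_{k+1},\ldots,\tau_L$ each move only $\ell$, ending with $\ell$ at $v_{m+1}$. Consequently, right after $\tau_1,\ldots,\tau_k$ the configuration is the final one $n_1,\ldots,n_m,1$ with $\ell$ pulled back from $v_{m+1}$ to wherever $\tau_k$ deposited it, say $v_p$ with $p\le m$: that is, $v_p$ holds $n_p+1$ leaves, $v_i$ holds $n_i$ for the other $i\le m$, and $v_{m+1}$ holds none.

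Next I would fix a context $(T',f',v_1',\ldots,v_m',c',d')$ in which $v_1'$ carries a transferable set of $N$ leaves along an alternating sequence of the \emph{same form} as $v_1,\ldots,v_m$, with parameters $a',b'$, so that $c'+d'=a'+b'$ and $d'-c'=N-1$; such a context is available (for instance from transfers of the first type applied to a sufficiently large star, exactly as in the proof of Theorem~\ref{generalized_banana}), and by Lemma~\ref{attainable_lemma} it suffices to exhibit the desired sequence here. In $(T',\ldots)$ I perform $\tau_1',\ldots,\tau_k'$, where $\tau_j'$ moves the block $c'+r_j,\ldots,d'-s_j$ between the corresponding vertices. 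Copying the verification in the proof of Lemma~\ref{change_last}: each such block is nonempty because for $j\le k$ its size is $N-r_j-s_j\ge 1$; each is present at the right vertex after the preceding transfers by the nesting $r_j\ge r_{j-1}$, $s_j\ge s_{j-1}$; and each is a valid transfer of the first type because $(c'+r_j)+(d'-s_j)=c'+d'+r_j-s_j=a'+b'+f(i_j,j_j)=v_{i_j}'+v_{j_j}'$, using $c+d=a+b$, $c'+d'=a'+b'$, and $r_j-s_j=f(i_j,j_j)$ read off from $\tau_j$ (with $f(i_j,j_j)$ depending only on the indices and the common form, as in that proof). So $\mathcal{S}'=(\tau_1',\ldots,\tau_k')$ is a well-behaved sequence on $v_1',\ldots,v_m'$. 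Finally, the label-correspondence that keeps the offsets of leaves with label below $\ell$'s, decreases by one the offsets of leaves with label above $\ell$'s, and sends $\ell$ to nothing carries each block of $\mathcal{S}$ onto the corresponding block of $\mathcal{S}'$; hence $\mathcal{S}'$ reaches exactly the configuration above with $\ell$ deleted, i.e.\ its result is $n_1,\ldots,n_m$, as required.

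The part I expect to be delicate — and where the argument genuinely extends Lemma~\ref{change_last} rather than merely quoting it — is pushing that lemma's block-shrinking construction to the boundary value ``$n_{m+1}=0$'': one must check that deleting the single-leaf tail $\tau_{k+1},\ldots,\tau_L$ while shrinking every surviving block by one leaf never produces a transfer that moves zero leaves or that splits a consecutive run into two, and that the ``vanished'' leaf is consistently $\ell$, so that the final tally is precisely $n_1,\ldots,n_m$ (with the degenerate case $k=0$ forcing $n_1=N$, $n_2=\cdots=n_m=0$, handled by the empty sequence). The structural observation that a single-leaf transfer forces all later transfers to move that same leaf is exactly what makes this work: it guarantees the surviving prefix $\tau_1,\ldots,\tau_k$ consists entirely of blocks of size at least two, so shrinking each by one keeps them nonempty, and it pins down which leaf disappears.
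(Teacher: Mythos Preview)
Your approach is correct and is essentially the paper's own argument: push the construction from Lemma~\ref{change_last} to the boundary value $n_{m+1}'=0$ and discard the resulting empty transfers. The paper does this in two lines, simply saying ``the last transfer would transfer $0$ leaves, so deleting it gives a well-behaved sequence with result $n_1,\ldots,n_m$''; you are more careful, explicitly choosing $n_{m+1}=1$, identifying the entire tail of single-leaf transfers that collapse to nothing (not just the final one), and verifying via your label-correspondence that the truncated, shrunk sequence lands exactly on $n_1,\ldots,n_m$. This extra care is warranted---the paper's phrasing glosses over the possibility that several trailing transfers in $\mathcal{S}$ already have the minimal block size and hence all become empty in $\mathcal{S}'$---but the underlying mechanism is identical.
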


\begin{proof}
Choose a positive integer $n_{m + 1}$, and let $\mathcal{S}$ be a well-behaved sequence of transfers with result $n_{1}, \ldots , n_{m}, n_{m + 1}$, where the last transfer is $v_{m}\rightarrow v_{m + 1}$, and $v_{m + 1}$ occurs in no other transfer.  Consider applying the argument from the proof of Lemma \ref{change_last} to $\mathcal{S}$ with $n_{m + 1}' = 0$.  The last transfer in the corresponding sequence $\mathcal{S}'$ would transfer 0 leaves, so it is not actually a transfer.  However, deleting it gives a well-behaved sequence of transfers with result $n_{1}, \ldots , n_{m}$, so $n_{1}, \ldots , n_{m}$ is attainable.
\end{proof}

The converse of the lemma above is not true, since a well-behaved sequence with result $n_{1}, \ldots , n_{m}$ does not necessarily end at $v_{m}$.

\begin{lemma} \label{combine_attainable}
If $n_{1}, \ldots , n_{m}$ is nicely attainable, and $n_{1}', \ldots , n_{m'}'$ is attainable, then $n_{1}, \ldots , n_{m}, n_{1}', \ldots , n_{m'}'$ is attainable.
\end{lemma}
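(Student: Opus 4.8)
The plan is to concatenate a well-behaved sequence of transfers witnessing the nice attainability of $n_{1},\ldots,n_{m}$ with one witnessing the attainability of $n_{1}',\ldots,n_{m'}'$, arranged so that the second sequence acts on precisely the leaves that the first sequence parks at its final vertex. Before starting, dispose of the degenerate case $n_{1}'=\cdots=n_{m'}'=0$: take a well-behaved sequence realizing $n_{1},\ldots,n_{m}$ (which exists by Lemma \ref{nicely_attainable_to_attainable}) on an alternating sequence $v_{1},\ldots,v_{m}$, extend the ambient alternating sequence to $v_{1},\ldots,v_{m+m'}$ (legitimate by the context-independence of attainability, Lemma \ref{attainable_lemma}), and run the same transfers; the new vertices are never touched, so the result is $n_{1},\ldots,n_{m},0,\ldots,0$. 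So assume $N':=n_{1}'+\cdots+n_{m'}'>0$.

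Since $n_{1},\ldots,n_{m}$ is nicely attainable, $n_{1},\ldots,n_{m},[N']$ is nicely attainable, and by the context-independence of nicely attainable sequences we may realize this in a gracefully labeled tree carrying an alternating sequence $v_{1},\ldots,v_{m+m'}$ whose first vertex $v_{1}$ is adjacent to a transferable set of $n_{1}+\cdots+n_{m}+N'=n_{1}+\cdots+n_{m}+n_{1}'+\cdots+n_{m'}'$ leaves: there is a well-behaved sequence $\mathcal{S}_{1}$ of transfers of these leaves, using only $v_{1},\ldots,v_{m+1}$, with result $n_{1},\ldots,n_{m},N'$, whose last transfer is $v_{m}\rightarrow v_{m+1}$ and in which $v_{m+1}$ appears in no other transfer.

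After running $\mathcal{S}_{1}$, the vertex $v_{k}$ is adjacent to $n_{k}$ leaves for $1\le k\le m$, and $v_{m+1}$ is adjacent to exactly the $N'$ leaves deposited by the final transfer; since that transfer is of the first type, these leaves have consecutive labels $c'',\ldots,d''$. I claim $\{c'',\ldots,d''\}$ is a transferable set with respect to the alternating sequence $v_{m+1},v_{m+2},\ldots,v_{m+m'}$. Indeed, a subsequence of an alternating sequence taken from any starting index is again alternating (possibly of the other form), and a short computation in the spirit of the $f(i,j)$ calculation in the proof of Lemma \ref{independent_of_context} shows that the sum of the parameters of $v_{m+1},\ldots,v_{m+m'}$ is $v_{m+1}+v_{m+2}+1$ or $v_{m+1}+v_{m+2}-1$ according to its form, and that in either case this equals $v_{m}+v_{m+1}$, using that the labels of $v_{m}$ and $v_{m+2}$ differ by exactly $1$ in an alternating sequence. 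Since the first-type transfer condition gives $c''+d''=v_{m}+v_{m+1}$, the set $\{c'',\ldots,d''\}$ is transferable for $v_{m+1},\ldots,v_{m+m'}$, as claimed. I expect this matching of parameters (together with the seam checks below) to be the only genuine technical point; the rest is bookkeeping through the context-independence lemmas already in place.

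Finally, because $n_{1}',\ldots,n_{m'}'$ is attainable, Lemma \ref{attainable_lemma} transports a witnessing well-behaved sequence into the context of $v_{m+1},\ldots,v_{m+m'}$ with its transferable set of $N'$ leaves; call it $\mathcal{S}_{2}$, with result $n_{1}',\ldots,n_{m'}'$ measured at $v_{m+1},\ldots,v_{m+m'}$. Form the concatenation $\mathcal{S}_{1}$ followed by $\mathcal{S}_{2}$ as a sequence of transfers on $v_{1},\ldots,v_{m+m'}$. It is well-behaved: all transfers in each piece are of the first type; the condition ``a transfer to $v_{i}$ is followed by a transfer from $v_{i}$'' holds inside each piece and at the seam, since $\mathcal{S}_{1}$ ends with a transfer to $v_{m+1}$ and (if nonempty) $\mathcal{S}_{2}$ begins with a transfer from $v_{m+1}$; and the subset condition holds inside each piece and at the seam, since the first transfer of $\mathcal{S}_{2}$ moves a subset of the leaves then adjacent to $v_{m+1}$, which are exactly the leaves moved by the last transfer of $\mathcal{S}_{1}$. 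Since $\mathcal{S}_{2}$ never involves $v_{1},\ldots,v_{m}$, their counts stay $n_{1},\ldots,n_{m}$, while $v_{m+1},\ldots,v_{m+m'}$ end with counts $n_{1}',\ldots,n_{m'}'$; hence the concatenation has result $n_{1},\ldots,n_{m},n_{1}',\ldots,n_{m'}'$, so this sequence is attainable.
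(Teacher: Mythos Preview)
Your proof is correct and follows essentially the same route as the paper's: realize the nicely attainable sequence $n_{1},\ldots,n_{m},[N']$ on $v_{1},\ldots,v_{m+1}$, verify that the $N'$ leaves deposited at $v_{m+1}$ by the final first-type transfer form a transferable set for the tail $v_{m+1},\ldots,v_{m+m'}$, and then run the second well-behaved sequence there. A small presentational difference: the paper observes directly that the parameters of the tail alternating sequence are $v_{m+1},v_{m}$ (regardless of form), which immediately gives $c''+d''=v_{m}+v_{m+1}$ without passing through $v_{m+2}$; this avoids your implicit assumption that $m'\ge 2$. On the other hand, your treatment is more careful than the paper's in two respects: you explicitly dispose of the degenerate case $N'=0$ (where one cannot invoke $n_{1},\ldots,n_{m},[N']$ since the bracketed entry must be positive), and you verify the well-behavedness conditions at the seam between $\mathcal{S}_{1}$ and $\mathcal{S}_{2}$.
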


\begin{proof}
Let $T$ be a tree, let $f$ be a graceful labeling of $T$, and let $v_{1}, \ldots , v_{m + m'}$ be an alternating sequence of vertices of $T$, such that $v_{1}$ is adjacent to a transferable set of $n_{1} + \cdots + n_{m} + n_{1}' + \cdots + n_{m'}'$ leaves.  Since $n_{1}, \ldots , n_{m}$ is nicely attainable, in particular $n_{1}, \ldots , n_{m}, [n_{1}' + \cdots + n_{m'}']$ is nicely attainable.  We claim it is possible to perform the following transfers:
\begin{itemize}
\item Perform the transfers associated with the nicely attainable sequence $n_{1}, \ldots , n_{m}, [n_{1}' + \cdots + n_{m'}']$ on the alternating sequence of vertices $v_{1}, \ldots , v_{m + 1}$.
\item Perform the transfers associated with the attainable sequence $n_{1}', \ldots , n_{m'}'$ on the alternating sequence of vertices $v_{m + 1}, \ldots , v_{m'}$.
\end{itemize}

It suffices to prove that the $n_{1}' + \cdots + n_{m'}'$ leaves adjacent to $v_{m + 1}$ after the first step are a transferable set of leaves with respect to the alternating sequence of vertices $v_{m + 1}, \ldots , v_{m}$.  The leaves adjacent to $v_{m + 1}$ after the first step are all transferred from $v_{m}$ by the last transfer of the first step, so they have consecutive labels $c, \ldots , d$ with $c + d = v_{m} + v_{m + 1}$.

Note that the alternating sequence of vertices $v_{m + 1}, \ldots , v_{m'}$, regardless of its form, has parameters $v_{m + 1}, v_{m}$.  Therefore, the leaves adjacent to $v_{m + 1}$ after the first step are a transferable set of leaves with respect to the alternating sequence of vertices $v_{m + 1}, \ldots , v_{m}$, and hence it is possible to perform the transfers above.

Therefore, $n_{1}, \ldots , n_{m}, n_{1}', \ldots , n_{m'}'$ is attainable.
\end{proof}

\begin{lemma} \label{combine_nicely_attainable}
If $n_{1}, \ldots , n_{m}$ is nicely attainable, and $n_{1}', \ldots , n_{m'}'$ is nicely attainable, then $n_{1}, \ldots , n_{m}, n_{1}', \ldots , n_{m'}'$ is nicely attainable.
\end{lemma}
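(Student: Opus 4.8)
The plan is to follow the proof of Lemma~\ref{combine_attainable} almost verbatim, but to keep an eye on the \emph{last} transfer so that the resulting sequence satisfies the extra condition built into nice attainability. To prove $n_{1}, \ldots, n_{m}, n_{1}', \ldots, n_{m'}'$ is nicely attainable, I fix an arbitrary positive integer $n_{m' + 1}$ and produce a well-behaved sequence of transfers, on an alternating sequence $v_{1}, \ldots, v_{m + m' + 1}$, with result $n_{1}, \ldots, n_{m}, n_{1}', \ldots, n_{m'}', n_{m' + 1}$, whose last transfer is $v_{m + m'}\rightarrow v_{m + m' + 1}$ and in which $v_{m + m' + 1}$ occurs in no other transfer.

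First I set up the context as in Lemma~\ref{combine_attainable}: let $T$ be a tree with a graceful labeling $f$, let $v_{1}, \ldots, v_{m + m' + 1}$ be an alternating sequence of vertices of $T$, and let $v_{1}$ be adjacent to a transferable set of $n_{1} + \cdots + n_{m} + n_{1}' + \cdots + n_{m'}' + n_{m' + 1}$ leaves. Write $N = n_{1}' + \cdots + n_{m'}' + n_{m' + 1}$, a positive integer. Since $n_{1}, \ldots, n_{m}$ is nicely attainable, the sequence $n_{1}, \ldots, n_{m}, [N]$ is nicely attainable (directly from the definition, or via Lemma~\ref{change_last}), so I perform on $v_{1}, \ldots, v_{m + 1}$ a well-behaved sequence of transfers with result $n_{1}, \ldots, n_{m}, N$ whose last transfer is $v_{m}\rightarrow v_{m + 1}$ and in which $v_{m + 1}$ occurs only in that transfer. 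Afterwards $v_{m + 1}$ carries exactly $N$ leaves, all delivered by the single first-type transfer $v_{m}\rightarrow v_{m + 1}$, so they have consecutive labels $c, \ldots, d$ with $c + d = v_{m} + v_{m + 1}$. Exactly as in Lemma~\ref{combine_attainable}, the alternating sequence $v_{m + 1}, \ldots, v_{m + m' + 1}$ has parameters $v_{m + 1}, v_{m}$ regardless of its form, so these $N$ leaves form a transferable set with respect to it.

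Now I invoke nice attainability of the second sequence: since $n_{1}', \ldots, n_{m'}'$ is nicely attainable, so is $n_{1}', \ldots, n_{m'}', [n_{m' + 1}]$, so I can perform on $v_{m + 1}, \ldots, v_{m + m' + 1}$ a well-behaved sequence of transfers of those $N$ leaves with result $n_{1}', \ldots, n_{m'}', n_{m' + 1}$, whose last transfer is $v_{m + m'}\rightarrow v_{m + m' + 1}$ and in which $v_{m + m' + 1}$ occurs only in that transfer. Because these transfers move only leaves currently sitting at $v_{m + 1}, \ldots, v_{m + m' + 1}$, the $n_{1}, \ldots, n_{m}$ leaves already parked at $v_{1}, \ldots, v_{m}$ are undisturbed, so the concatenated sequence has result $n_{1}, \ldots, n_{m}, n_{1}', \ldots, n_{m'}', n_{m' + 1}$. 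It remains to check that the concatenation is well-behaved: every transfer is of the first type; the ``to $v_{i}$ then from $v_{i}$'' condition and the subset condition hold within each of the two stages by hypothesis; and at the seam the last transfer of stage one is $v_{m}\rightarrow v_{m + 1}$ while the first transfer of stage two is \emph{from} $v_{m + 1}$ and moves a subset of the $N$ leaves just delivered there, so both conditions persist. Since $v_{m + m' + 1}$ appears only in the final transfer $v_{m + m'}\rightarrow v_{m + m' + 1}$, this witnesses that $n_{1}, \ldots, n_{m}, n_{1}', \ldots, n_{m'}', [n_{m' + 1}]$ is nicely attainable; as $n_{m' + 1}$ was arbitrary, the lemma follows.

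The main obstacle is the bookkeeping at the seam between the two stages: one has to be sure that the first transfer of the second stage really is from $v_{m + 1}$ --- which holds because the transferable leaves of a well-behaved sequence start at the first vertex of its alternating sequence --- and that this transfer moves a subset of precisely the leaves the last transfer of the first stage deposited on $v_{m + 1}$, so that the global subset condition is not broken. Everything else is a direct adaptation of the proof of Lemma~\ref{combine_attainable}, now using the ``nice'' realization of the second sequence so that its distinguished final transfer becomes the distinguished final transfer of the whole sequence.
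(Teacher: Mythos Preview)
Your proof is correct and follows essentially the same approach as the paper: fix an arbitrary positive integer for the final slot, use nice attainability of the first sequence with bracket $n_{1}' + \cdots + n_{m'}' + n_{m'+1}$, use nice attainability of the second sequence with bracket $n_{m'+1}$, and concatenate as in Lemma~\ref{combine_attainable}. The paper's proof is terser, simply writing ``Combining the associated sequences of transfers as in the proof above,'' whereas you spell out the seam bookkeeping explicitly; but the underlying argument is the same.
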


\begin{proof}
Choose an integer $n_{m' + 1}'$.  Then $n_{1}, \ldots , n_{m}, [n_{1}' + \cdots + n_{m' + 1}']$ and $n_{1}', \ldots , n_{m'}', [n_{m' + 1}']$ are both nicely attainable.  Combining the associated sequences of transfers as in the proof above, we obtain a well-behaved sequence of transfers with result $n_{1}, \ldots , n_{m} , n_{1}' , \ldots , n_{m' + 1}'$, where the last transfer is $v_{m + m'}\rightarrow v_{m + m' + 1}$, and $v_{m + m' + 1}$ occurs in no other transfer.

Therefore, $n_{1}, \ldots , n_{m}, n_{1}', \ldots , n_{m'}', [n_{m' + 1}']$ is nicely attainable for each $n_{m' + 1}'$, so $n_{1}, \ldots , n_{m}, n_{1}', \ldots , n_{m'}'$ is nicely attainable.
\end{proof}

\begin{lemma}
If $n_{1}, \ldots , n_{m}$ is attainable, then $n_{1}, \ldots , n_{m}, 0$ is attainable.
\end{lemma}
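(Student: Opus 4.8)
The plan is to prove this by padding the underlying alternating sequence with one extra vertex that is never touched. By Lemma~\ref{attainable_lemma}, attainability of a sequence of integers does not depend on the particular configuration $T, f, v_{1}, \ldots, v_{m}, c, d$, so I am free to choose a convenient one. I would take a tree $T$, a graceful labeling $f$, and an alternating sequence $v_{1}, \ldots, v_{m+1}$ of vertices of $T$ (one vertex longer than strictly needed), such that $v_{1}$ is adjacent to a transferable set of $n_{1} + \cdots + n_{m}$ leaves with labels $c, \ldots, d$. Such configurations are exactly the ones produced, for instance, by starting from a sufficiently large star with central vertex labeled $0$ and performing first-type transfers, as in the proof of Theorem~\ref{generalized_banana}.

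Next, since $v_{1}, \ldots, v_{m}$ is itself an alternating sequence with $v_{1}$ adjacent to a transferable set of the right size, and since $n_{1}, \ldots, n_{m}$ is attainable, Lemma~\ref{attainable_lemma} gives a well-behaved sequence $\mathcal{S}$ of transfers $v_{i} \rightarrow v_{j}$ (with $i, j$ of different parity) whose result is $n_{1}, \ldots, n_{m}$. I would then simply reinterpret $\mathcal{S}$ as a sequence of transfers on the longer alternating sequence $v_{1}, \ldots, v_{m+1}$. The three defining conditions of a well-behaved sequence (all transfers of the first type; a transfer to $v_{i}$ is followed by a transfer from $v_{i}$; the leaves transferred at each step form a subset of those transferred at the previous step) refer only to the transfers themselves, not to the length of the ambient alternating sequence, so $\mathcal{S}$ remains well-behaved. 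Because $v_{m+1}$ never appears in any transfer of $\mathcal{S}$, after performing $\mathcal{S}$ it is adjacent to none of the original transferable leaves; hence the result of $\mathcal{S}$, read off the sequence $v_{1}, \ldots, v_{m+1}$, is $n_{1}, \ldots, n_{m}, 0$, and so $n_{1}, \ldots, n_{m}, 0$ is attainable.

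I expect the only point requiring any care is the existence of the padded configuration, namely that one can always realize $v_{1}, \ldots, v_{m+1}$ as an alternating sequence with $v_{1}$ adjacent to a transferable set of the prescribed size. This is not deep: it is the same existence fact that underlies every application of the transfer method in this paper, so I would state it explicitly once rather than reprove it here, and everything else is bookkeeping. It is worth noting that this lemma is a natural companion to Lemma~\ref{change_last} and Lemma~\ref{nicely_attainable_to_attainable}: together they let us freely append harmless trailing terms to attainable and nicely attainable sequences.
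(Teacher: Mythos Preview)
Your proposal is correct and follows essentially the same approach as the paper. The paper's proof is the one-liner ``Use the same well-behaved sequence of transfers,'' which is precisely your argument of reinterpreting the well-behaved sequence $\mathcal{S}$ on the padded alternating sequence $v_1,\ldots,v_{m+1}$ so that the untouched vertex $v_{m+1}$ contributes the trailing $0$; you have simply spelled out the details the paper leaves implicit.
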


\begin{proof}
Use the same well-behaved sequence of transfers.
\end{proof}

\subsection{Some important attainable and nicely attainable sequences}


We now give conditions for when it is possible for a transfer to leave behind a certain number of leaves.  Roughly, given a sequence of transfers $v_{i}\rightarrow v_{j}\rightarrow v_{k}$, the number of leaves left at $v_{j}$ is restricted by the parity and size of $|k - i|/2$.  The following lemma makes this notion precise:

\begin{lemma} \label{transfer_parity}
Let $T$ be a tree, let $f$ be a graceful labeling of $T$, and let $v_{1}, \ldots , v_{m}$ be an alternating sequence of vertices of $T$.  Then
\begin{enumerate}[(1)]
\item If $v_{1}$ is adjacent to a transferable set of leaves, then for each even $k$, and for each $l$ of the same parity as $k/2$ and at least $k/2$, there exists a transfer $v_{1}\rightarrow v_{k}$ of the first type, leaving behind $l$ of the leaves at $v_{1}$.
\item If $v_{j}$ is adjacent to a set of leaves with consecutive labels transferred from $v_{i}$, then for each $k$ of the same parity as $j$, and for each $l$ of the same parity as $|k - i|/2$ and at least $|k - i|/2$, there exists a transfer $v_{j}\rightarrow v_{k}$ of the first type, leaving behind $l$ of the leaves at $v_{j}$.
\end{enumerate}
\end{lemma}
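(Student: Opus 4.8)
The plan is to handle both parts with a single computation, by parametrizing a first-type transfer according to how far the block it removes is indented from the two ends of the run of leaves it starts from. The first ingredient is the list of labels along an alternating sequence: with parameters $a,b$, and writing $\ell$ for the graceful label, the first form has $\ell(v_{2t-1}) = a + t - 1$ and $\ell(v_{2t}) = b - t$, while the second form has $\ell(v_{2t-1}) = a - t + 1$ and $\ell(v_{2t}) = b + t$. Two consequences I would extract: for even $k$ one has $|\ell(v_k) - b| = k/2$ in either form; and whenever $i \equiv k \pmod 2$ one has $|\ell(v_k) - \ell(v_i)| = |k-i|/2$ in either form (a short case check on the four parity/form combinations).

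For the core step, suppose a vertex $w$ carries leaves with consecutive labels $c, \ldots , d$ of endpoint sum $c + d = \Sigma$, where in case (1) $w = v_1$ and $\Sigma = a + b$ (this is exactly the definition of a transferable set), and in case (2) $w = v_j$ and $\Sigma = \ell(v_i) + \ell(v_j)$ (since those leaves were obtained by a first-type transfer out of $v_i$, so their endpoints sum to $\ell(v_i)+\ell(v_j)$). A first-type transfer from $w$ to $v_k$ removes a sub-run $\{c + x, \ldots , d - y\}$ for some $x, y \ge 0$ with $(c+x)+(d-y) = \ell(w) + \ell(v_k)$; subtracting from $c + d = \Sigma$ gives $x - y = \ell(w) + \ell(v_k) - \Sigma$, while the number of leaves left at $w$ is $x + y$. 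In case (1) this reads $x - y = \ell(v_k) - b$, of absolute value $k/2$; in case (2) it reads $x - y = \ell(v_k) - \ell(v_i)$, of absolute value $|k-i|/2$.

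To finish, write $\delta$ for the relevant quantity ($k/2$ in part (1), $|k-i|/2$ in part (2)). For a prescribed leftover $l$ we must choose integers $x, y \ge 0$ with $x + y = l$ and $x - y$ equal to the signed value forced above (which is $\pm\delta$); such $x, y$ exist precisely when $l \ge \delta$ and $l \equiv \delta \pmod 2$, which are exactly the stated hypotheses on $l$. The resulting block $\{c+x, \ldots , d-y\}$ is then a genuine first-type transfer: pairing $c+x$ with $d-y$, then $c+x+1$ with $d-y-1$, and so on inward, every pair sums to $\ell(w) + \ell(v_k)$ (and the middle leaf, if the block has odd length, equals half that sum), so Lemma~\ref{the_classic_lemma} applies term by term. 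The one implicit proviso is that the run have more than $l$ leaves, so that this sub-run is nonempty; this is automatic in every application, so I would record it and not dwell on it.

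The only real obstacle is bookkeeping: keeping the two forms of the alternating sequence and the parities of $i$ and $k$ aligned so that the sign of $x - y$, hence $\delta$, is correctly pinned down; and observing at the outset of part (2) that the hypotheses force $i \equiv k \pmod 2$, both indices having parity opposite to $j$, since a first-type transfer always links vertices of opposite index-parity — so that $|k-i|/2$ is genuinely an integer to begin with.
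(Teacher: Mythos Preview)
Your proof is correct and follows essentially the same approach as the paper's: compute the target sum $\ell(w)+\ell(v_k)$, compare it to the endpoint sum $c+d$ of the current run, and read off which contiguous sub-runs are transferable. The paper phrases this as ``take the maximal transferable block and then trim the same number from each end,'' whereas you parametrize directly by the offsets $x,y$ and solve $x+y=l$, $x-y=\pm\delta$; these are the same computation, and your version has the mild advantage of handling parts (1) and (2) uniformly. Your closing remark about the implicit size proviso and the parity of $i,k$ (needed so that $|k-i|/2$ is an integer) matches exactly what the paper's proof uses.
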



\begin{proof}
Consider (1).  Suppose the sequence $v_{1}, \ldots , v_{m}$ has parameters $a, b$, and suppose the leaves have labels $c, \ldots , d$, so that $c + d = a + b$.  Depending on the form of $v_{1}, \ldots , v_{m}$, we have
\begin{alignat*}{2}
v_{1} + v_{k} &= a + b + k/2\qquad\text{or}\qquad v_{1} + v_{k}\;& = a + b - k/2\\
& = c + d + k/2 & = c + d - k/2
\end{alignat*}
Therefore, the largest set of leaves we can transfer by a transfer $v_{1}\rightarrow v_{k}$ of the first type is
$$\{c + k/2, \ldots , d\}\qquad\text{or}\qquad\{c, \ldots , d - k/2\}$$

This transfer leaves behind $k/2$ of the leaves at $v_{1}$.  By removing the same number of elements from both ends of the set above, we can also leave behind any larger number of leaves of the same parity as $k/2$, and hence (1) is proved.

Consider (2).  Since $v_{1}, \ldots , v_{m}$ is an alternating sequence of vertices, we have $|v_{k} - v_{i}| = |k - i|/2$.  Suppose the leaves have labels $c, \ldots , d$, so that $c + d = v_{i} + v_{j}$.  We have
$$v_{j} + v_{k} = \left\{
     \begin{array}{lr}
       c + d + |k - i|/2 & \text{if } v_{k}\ge v_{i}\\
       c + d - |k - i|/2 & \text{if } v_{k} < v_{i}
     \end{array}
   \right.
$$
Therefore, the largest set of leaves we can transfer by a transfer $v_{j}\rightarrow v_{k}$ of the first type is
\begin{align*}
\{c + |k - i|/2, \ldots , d\} & \qquad\text{if } v_{k}\ge v_{i}\\
\{c, \ldots , d - |k - i|/2\} & \qquad\text{if } v_{k} < v_{i}
\end{align*}

This transfer leaves behind $|k - i|/2$ of the leaves at $v_{j}$.  By removing the same number of elements from both ends of the set above, we can also leave behind any larger number of leaves of the same parity as $|k - i|/2$, and hence (2) is proved.
\end{proof}

The first result in the lemma above, when viewed from the right perspective, is a special case of the second result.  Under the hypothesis of the first result, we can view the leaves adjacent to $v_{1}$ as a set of leaves with consecutive labels transferred from an imaginary vertex $v_{0}$.  Then the first result follows from the second.

Motivated by the role of parity in the lemma above, we use the symbols $o$, $e$, $0$, and $e/0$ to represent certain integers, where
\begin{itemize}
\item $o$ represents a positive odd integer.
\item $e$ represents a positive even integer.
\item $0$ represents the integer $0$.
\item $e/0$ represents a non-negative even integer.
\end{itemize}


\begin{definition}

Let $a_{1}, \ldots , a_{m}$ be a sequence of $o$'s, $e$'s, $0$'s, and $e/0$'s.
\begin{itemize}
\item The sequence $a_{1}, \ldots , a_{m}$ is \emph{attainable} if all sequences of integers $n_{1}, \ldots , n_{m}$ represented by $a_{1}, \ldots , a_{m}$ are attainable.
\item The sequence $a_{1}, \ldots , a_{m}$ is \emph{nicely attainable} if all sequences of integers $n_{1}, \ldots , n_{m}$ represented by $a_{1}, \ldots , a_{m}$ are nicely attainable.
\end{itemize}

\end{definition} 

We now list some important attainable and nicely attainable sequences of $o$'s, $e$'s, $0$'s, and $e/0$'s.  All are proved below.

\begin{center}
\begin{tabular}{l|l}
\emph{Nicely attainable sequences} & \emph{Attainable sequences}\\
\hline
$o$ & $e, \ldots , e$\\
$e, o, o, o, e$ & $e, e/0, e, o$\\
$e, o, e, e, o, e$ & \\
$e, e/0, \underbrace{o, \ldots , o}_{\clap{\parbox{2.5 cm}{\centering\footnotesize non-negative even number of $o$'s}}}, e/0, e$ & $e, e/0, \underbrace{o, \ldots , o}_{\clap{\parbox{2.5 cm}{\centering\footnotesize non-negative even number of $o$'s}}}$
\end{tabular}
\end{center}

Hrn\u{c}iar \& Haviar \cite{diameter5} originally observed that we can perform an arbitrary number of transfers leaving an odd number of leaves at each step, which is equivalent to the fact that $o$ is nicely attainable.  Hrn\u{c}iar \& Haviar \cite{diameter5} also introduced the \emph{backwards double-8 transfer}, a sequence of transfers with result $e, e, e, e$, which shows that $e, e, e, e$ is nicely attainable, as in Lemma \ref{backwards_double_8} below.  Using a different sequence of transfers, we show that the more general sequence $e, e/0, e/0, e$ is nicely attainable, as well as the even more general sequence $e, e/0, o, \ldots , o, e/0, e$ with a non-negative even number of $o$'s.  We also introduce two other new nicely attainable sequences, $e, o, o, o, e$ and $e, o, e, e, o, e$.

Our result that $e, \ldots , e$ is attainable deserves special attention, and suggests a shift in thinking about transfers.  Since Hrn\u{c}iar \& Haviar \cite{diameter5}, transfers of the second type have been used to leave even numbers of leaves at the end of a sequence of transfers.  However, in showing that $e, \ldots , e$ is attainable, we show that a clever sequence of transfers of the first type accomplishes the same task, and even results in the same final configuration of leaves.  As a result, we suggest that transfers of the second type be removed from further work on the conjecture.

\begin{lemma}
$o$ is nicely attainable.
\end{lemma}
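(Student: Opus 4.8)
The plan is to unwind the definitions until the statement becomes a direct application of Lemma \ref{transfer_parity}(1). By definition, saying that $o$ is nicely attainable means: for every positive odd integer $n_{1}$, the one-term sequence $n_{1}$ is nicely attainable, i.e.\ $n_{1}, [n_{2}]$ is nicely attainable for every positive integer $n_{2}$. So I would fix such $n_{1}$ and $n_{2}$, pick any admissible context $T, f, v_{1}, v_{2}, c, d$ (an alternating sequence $v_{1}, v_{2}$ with $v_{1}$ adjacent to a transferable set of $n_{1} + n_{2}$ leaves with labels $c, \ldots, d$), and observe that, since there are only two vertices $v_{1}, v_{2}$ and the witnessing well-behaved sequence must have $v_{2}$ appear in no transfer other than its last transfer $v_{1} \to v_{2}$, the entire sequence must consist of the single transfer $v_{1} \to v_{2}$.

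Next I would produce that single transfer. Apply Lemma \ref{transfer_parity}(1) with $k = 2$: here $k/2 = 1$, and $n_{1}$ is a positive odd integer, hence of the same parity as $k/2$ and at least $k/2$, so there is a transfer $v_{1} \to v_{2}$ of the first type leaving behind exactly $n_{1}$ of the leaves at $v_{1}$. This transfer carries the remaining $(n_{1} + n_{2}) - n_{1} = n_{2}$ leaves to $v_{2}$, so its result is $n_{1}, n_{2}$. Finally I would check that this one-transfer sequence genuinely witnesses nice attainability: every transfer in it is of the first type; the ``if one transfer is to $v_{i}$, then the next is from $v_{i}$'' condition and the subset condition are vacuous (there is no previous step, and the lone step transfers a subset of the full transferable set); its last (and only) transfer is $v_{1} \to v_{2}$; and $v_{2}$ occurs in no other transfer. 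Hence $n_{1}, [n_{2}]$ is nicely attainable, and since $n_{2}$ and then $n_{1}$ were arbitrary, $o$ is nicely attainable.

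There is essentially no hard part: the entire content is Lemma \ref{transfer_parity}(1) specialized to $k = 2$, which is exactly the assertion that from a transferable set one can peel off any positive odd number of leaves onto the next vertex of the alternating sequence. The only points requiring a moment's care are purely bookkeeping — confirming that a length-one sequence of transfers qualifies as ``well-behaved'' (the quantified conditions are vacuous), and noting that $n_{2} \ge 1$ ensures the transfer actually moves at least one leaf and so is a bona fide transfer. I expect the write-up to be only a few lines.
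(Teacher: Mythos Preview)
Your proposal is correct and follows exactly the paper's approach: the paper's proof is the one-line ``By Lemma \ref{transfer_parity}, for each odd $l$, there exists a transfer $v_{1}\rightarrow v_{2}$ of the first type, leaving behind $l$ of the leaves at $v_{1}$.'' Your version simply unpacks the definitions of ``nicely attainable'' and verifies the well-behavedness conditions explicitly, which is fine but more than the paper spells out.
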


\begin{proof}
By Lemma \ref{transfer_parity}, for each odd $l$, there exists a transfer $v_{1}\rightarrow v_{2}$ of the first type, leaving behind $l$ of the leaves at $v_{1}$.  Therefore, $o$ is nicely attainable.
\end{proof}

\begin{lemma} \label{backwards_double_8}
$e, e, e, e$ is nicely attainable.
\end{lemma}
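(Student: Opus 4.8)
The plan is to prove the lemma by exhibiting an explicit well-behaved sequence of first-type transfers --- a \emph{backwards double-8} --- whose result is $e,e,e,e$. Unwinding the definitions, it suffices to show that for all positive even integers $n_1,n_2,n_3,n_4$ and all positive integers $n_5$ the sequence $n_1,n_2,n_3,n_4,[n_5]$ is nicely attainable (by Lemma~\ref{change_last} one could even fix $n_5$, but this is not needed). So fix a gracefully labeled tree with an alternating sequence $v_1,\ldots,v_5$, say of the form with labels $a,\,b-1,\,a+1,\,b-2,\,a+2$ --- the other form being symmetric --- and with $v_1$ adjacent to a transferable set of $N=n_1+\cdots+n_5$ leaves labeled $c,\ldots,d$, so that $c+d=a+b=:s$. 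The edge-sums we will need are $v_1+v_4=s-2$, $v_4+v_3=s-1$, $v_3+v_2=s$, $v_3+v_4=s-1$, and $v_4+v_5=s$.

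I would use the sequence of transfers
$$v_1\rightarrow v_4\rightarrow v_3\rightarrow v_2\rightarrow v_3\rightarrow v_4\rightarrow v_5,$$
transferring at each step a suitable consecutive block of the leaves moved at the previous step. Its effect is as follows. Since $v_1\rightarrow v_4$ is a ``jump by $4$'', Lemma~\ref{transfer_parity}(1) lets it leave behind any prescribed positive even number $n_1$ of leaves at $v_1$; these take no further part. By Lemma~\ref{transfer_parity}(2) the next two transfers $v_4\rightarrow v_3$ and $v_3\rightarrow v_2$ each leave behind an \emph{odd} number of leaves (at $v_4$ and at $v_3$ respectively), which we make as small as convenient. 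The transfer $v_2\rightarrow v_3$ completes a ``bounce'' $v_3\rightarrow v_2\rightarrow v_3$, so by Lemma~\ref{transfer_parity}(2) it leaves behind any prescribed positive even $n_2$ at $v_2$, finishing $v_2$. In $v_3\rightarrow v_4$ the leaves at $v_3$ are no longer consecutively labeled --- the odd leftover from before sits at the two ends, the returned block in the middle --- so Lemma~\ref{transfer_parity} does not apply verbatim; but the returned block has endpoint-sum $s$ while $v_3+v_4=s-1$, so it must shrink by an odd number, whence $v_3$ keeps $(\text{odd})+(\text{odd})=n_3$ leaves, an arbitrary positive even. The identical bookkeeping at the final transfer $v_4\rightarrow v_5$ (endpoint-sum $s$ against the block's $s-1$) leaves $(\text{odd})+(\text{odd})=n_4$ leaves at $v_4$, arbitrary positive even, and sends the remaining $n_5$ leaves to $v_5$.

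What then remains is verification. One checks this is a well-behaved sequence: each transfer is of the first type with endpoints of opposite parity; each transfer is ``to $v_i$'' followed by ``from $v_i$''; and the block transferred at each step is a consecutive subset of the previous block (at the two steps where $v_3$ and $v_4$ receive leaves a second time this compels us to transfer out of the interior block --- precisely what the parity argument above exploits). Tracking the leaf counts confirms the result is $n_1,\ldots,n_5$, that the last transfer is $v_4\rightarrow v_5$, and that $v_5$ occurs nowhere else, so $n_1,n_2,n_3,n_4,[n_5]$ is nicely attainable and hence $e,e,e,e$ is nicely attainable. One also checks that every intermediate block is non-empty; this reduces to a few inequalities that hold because $N=n_1+\cdots+n_5$ with each $n_i\ge 1$.

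The step I expect to be the main obstacle is exactly the pair of transfers $v_3\rightarrow v_4$ and $v_4\rightarrow v_5$, where Lemma~\ref{transfer_parity} is unavailable because the relevant vertex no longer carries a consecutively labeled set of leaves. There one must argue directly from the endpoint-sum constraint that the block being transferred out shrinks \emph{asymmetrically}, so that the count left behind has the parity complementary to the earlier odd leftover and the two add to an even number. Getting this parity accounting correct --- in both forms of the alternating sequence --- and choosing the auxiliary odd quantities so that no intermediate block becomes empty, is the only genuinely delicate part; the rest is routine checking against the definitions.
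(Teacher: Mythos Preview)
Your argument is correct, but it is not the paper's proof, and it is not the sequence the literature calls the \emph{backwards double-8}. The paper uses
\[
v_{1}\rightarrow v_{2}\rightarrow v_{3}\rightarrow v_{4}\rightarrow v_{1}\rightarrow v_{2}\rightarrow v_{3}\rightarrow v_{4}\rightarrow v_{5},
\]
i.e.\ two full forward passes through $v_1,\ldots,v_4$ before exiting to $v_5$. Every one of the eight steps has $|k-i|/2=1$, so Lemma~\ref{transfer_parity} gives an odd leftover at each step uniformly; since each of $v_1,\ldots,v_4$ is hit exactly twice, the total left at each is $\text{odd}+\text{odd}$, an arbitrary positive even. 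That is the entire proof --- two lines. Your sequence $v_1\to v_4\to v_3\to v_2\to v_3\to v_4\to v_5$ also works (and uses fewer transfers), but it mixes step types: one ``jump by $4$'' and several ``bounces'', forcing the case analysis you carry out. Incidentally, your worry that Lemma~\ref{transfer_parity} ``does not apply verbatim'' at the bounce steps is slightly overcautious: part~(2) is stated for the consecutive block just received, and the $l$ it produces is the count left from \emph{that} block; the total at the vertex is then $l$ plus the earlier leftover, exactly as you compute. The paper's own double-8 has the same situation on the second lap and handles it without comment. What the paper's route buys is uniformity and brevity; what yours buys is a shorter transfer sequence and a template closer to the paper's later proof of $e,e/0,e/0,e$ (Lemma~\ref{double_8_new}).
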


\begin{proof}
Consider the sequence of transfers
$$v_{1}\rightarrow v_{2}\rightarrow v_{3}\rightarrow v_{4}\rightarrow v_{1}\rightarrow v_{2}\rightarrow v_{3}\rightarrow v_{4}\rightarrow v_{5}$$
By Lemma \ref{transfer_parity}, this sequence can leave behind any odd number of leaves at each step.  Since each of $v_{1}, v_{2}, v_{3}, v_{4}$ occurs twice, this sequence can leave behind any positive even number of leaves at each.  Therefore, $e, e, e, e$ is nicely attainable.
\end{proof}

\begin{lemma} \label{double_8_new}
$e, e/0, e/0, e$ is nicely attainable.
\end{lemma}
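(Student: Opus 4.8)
The plan is to prove this by exhibiting one explicit \emph{well-behaved} sequence of transfers whose result is $n_1,n_2,n_3,n_4,[n_5]$, for prescribed positive even $n_1,n_4$, non-negative even $n_2,n_3$, and — by Lemma \ref{change_last} — an arbitrary positive $n_5$. The idea is a refinement of the backwards double-8 (Lemma \ref{backwards_double_8}): there each of $v_1,\dots,v_4$ is a source twice, so each accumulates an odd-plus-odd, hence \emph{positive} even, number of leaves, which is exactly why that construction only gives $e,e,e,e$. To let $n_2$ or $n_3$ be $0$, I would instead push a whole consecutive block of leaves out to $v_2$ (resp.\ $v_3$) and immediately pull (almost) all of it back — a ``bounce'' $v_1\rightarrow v_2\rightarrow v_1$ and later $v_4\rightarrow v_3\rightarrow v_4$. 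Concretely, I would use
$$v_1\rightarrow v_2\rightarrow v_1\rightarrow v_4\rightarrow v_3\rightarrow v_4\rightarrow v_5,$$
which is well-behaved (consecutive arrows share a vertex, consecutive indices have opposite parity, and the transferred leaf-sets may be taken nested), ends with the transfer $v_4\rightarrow v_5$, and involves $v_5$ only in that last step.

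Next I would read off, using Lemma \ref{transfer_parity}, how many of the original transferable leaves are dropped at each vertex at each step. The transfer $v_1\rightarrow v_2$ drops an odd number at $v_1$ (part (1) with $k=2$), and the transfer $v_1\rightarrow v_4$ again drops an odd number at $v_1$ ($v_1$ then holds a consecutive block received from $v_2$, and $|4-2|/2=1$), so $v_1$ accumulates any prescribed positive even $n_1$. Symmetrically $v_4\rightarrow v_3$ and $v_4\rightarrow v_5$ each drop an odd number at $v_4$ (since $|3-1|/2=|5-3|/2=1$), giving any positive even $n_4$. The crux is the two bounces: the transfer $v_2\rightarrow v_1$ has $|1-1|/2=0$, so by Lemma \ref{transfer_parity}(2) it can leave behind \emph{any} non-negative even number of leaves at $v_2$ — in particular $0$ — and likewise $v_3\rightarrow v_4$ can leave any non-negative even number at $v_3$. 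Whatever remains is carried into $v_5$ by the last transfer; choosing the size of the starting transferable set appropriately makes this any desired positive $n_5$. Since $n_1,n_4\ge 2$ and $n_2,n_3\ge 0$, the four forced minimums ($1,0,1,1,0,1$ across the six steps) can always be met while the six drops sum to $n_1+n_2+n_3+n_4$, so $n_1,\dots,n_4$ is nicely attainable.

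The step I expect to require the most care is the bookkeeping around the bounces. After $v_1\rightarrow v_2\rightarrow v_1$ the leaves at $v_1$ form \emph{two} blocks — the piece kept by the first transfer and the piece returned by the second — so I must check that the returned piece still has consecutive labels summing to $v_1+v_2$, so that Lemma \ref{transfer_parity}(2) genuinely applies to the subsequent transfer $v_1\rightarrow v_4$; the kept piece simply remains at $v_1$ forever, contributing its part of $n_1$. The identical check is needed at $v_4$ after the second bounce. Everything else — the parity arithmetic above, and the existence of a gracefully labelled tree carrying an alternating sequence $v_1,\dots,v_5$ with a transferable set of the required size at $v_1$ (for instance obtained from a large star as in the proof of Theorem \ref{generalized_banana}) — is routine. (One could instead split into the four cases according to which of $n_2,n_3$ vanish, but the single sequence above handles all of them uniformly.)
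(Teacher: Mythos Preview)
Your proposal is correct and takes essentially the same approach as the paper: the paper's proof uses the identical sequence of transfers
\[
v_{1}\rightarrow v_{2}\rightarrow v_{1}\rightarrow v_{4}\rightarrow v_{3}\rightarrow v_{4}\rightarrow v_{5}
\]
and the same application of Lemma~\ref{transfer_parity}, simply stated more tersely. Your additional bookkeeping about the ``two blocks'' at $v_1$ after the bounce is correct (the well-behaved subset condition forces the next transfer to act only on the returned block), but the paper does not spell this out.
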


\begin{proof}
Consider the sequence of transfers
$$v_{1}\rightarrow v_{2}\rightarrow v_{1}\rightarrow v_{4}\rightarrow v_{3}\rightarrow v_{4}\rightarrow v_{5}$$
By Lemma \ref{transfer_parity}, this sequence leaves behind odd numbers of leaves twice at each of $v_{1}, v_{4}$, and non-negative even numbers of leaves once at each of $v_{2}, v_{3}$.  Therefore, $e, e/0, e/0, e$ is nicely attainable.
\end{proof}

\begin{lemma}
Any sequence of the form $e, e/0, o, \ldots , o, e/0, e$, with a non-negative even number of $o's$, is nicely attainable.
\end{lemma}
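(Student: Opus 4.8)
The plan is to produce, just as in Lemmas~\ref{backwards_double_8} and~\ref{double_8_new}, an explicit well-behaved sequence of first-type transfers realizing the required leaf counts. Write the sequence as $e,\,e/0,\,o,\,\ldots,\,o,\,e/0,\,e$ with $m=2t+4$ terms, the $2t$ copies of $o$ occupying positions $3,\dots,2t+2$; the case $t=0$ is exactly Lemma~\ref{double_8_new}, so assume $t\ge 1$. By Lemma~\ref{change_last} it suffices, given target integers $n_1,\dots,n_m$ of the prescribed parities (with $n_1,n_m$ even and $\ge 2$, with $n_2,n_{m-1}$ even and $\ge 0$, and with $n_3,\dots,n_{m-2}$ odd and $\ge 1$) and one positive integer $n_{m+1}$, to exhibit a well-behaved sequence of transfers on an alternating sequence $v_1,\dots,v_{m+1}$ whose result is $n_1,\dots,n_{m+1}$, whose last transfer is $v_m\to v_{m+1}$, and in which $v_{m+1}$ appears in no other transfer.

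The sequence I would use is the ``zigzag''
$$v_1\to v_2\to v_1\to v_4\to v_3\to v_6\to v_5\to\cdots\to v_{2t+2}\to v_{2t+1}\to v_{2t+4}\to v_{2t+3}\to v_{2t+4}\to v_{2t+5},$$
namely: the opening $v_1\to v_2\to v_1\to v_4$; then the zigzag $v_4\to v_3\to v_6\to v_5\to\cdots\to v_{2t+2}\to v_{2t+1}\to v_{2t+4}$, which alternately steps ``down one, then up three'' until it reaches $v_m=v_{2t+4}$; then the closing $v_{2t+4}\to v_{2t+3}\to v_{2t+4}\to v_{2t+5}$. (For $t=1$ this reads $v_1\to v_2\to v_1\to v_4\to v_3\to v_6\to v_5\to v_6\to v_7$.) In this sequence consecutive indices always have opposite parity, each transfer into a vertex is immediately followed by a transfer out of it, and the set of leaves moved at each step can be taken as a subinterval of the set moved at the previous step; so, once the leaf counts are fixed, it is a legitimate well-behaved sequence of first-type transfers.

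To fix those counts I apply Lemma~\ref{transfer_parity} step by step. The two endpoints $v_1$ and $v_m=v_{2t+4}$ are each visited twice, by transfers whose index-gap halves to $1$ (namely $v_1\to v_2$ and $v_1\to v_4$ for $v_1$, and $v_{2t+4}\to v_{2t+3}$ and $v_{2t+4}\to v_{2t+5}$ for $v_m$), so each can be left with the sum of two positive odd numbers, i.e.\ with any even number $\ge 2$, giving $n_1$ and $n_m$. The vertices $v_2$ and $v_{m-1}$ receive the ``return'' transfers $v_2\to v_1$ and $v_{2t+3}\to v_{2t+4}$, of index-gap $0$, hence can be left with any non-negative even number, giving $n_2$ and $n_{m-1}$. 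Every remaining vertex $v_3,\dots,v_{2t+2}$ is visited exactly once, by a transfer of index-gap $2$, hence can be left with any positive odd number, giving $n_3,\dots,n_{m-2}$; and the final transfer $v_m\to v_{m+1}$ pushes the leftover leaves onto $v_{m+1}$. Starting $v_1$ with a sufficiently large transferable set makes all the ``leave behind at least this many'' constraints simultaneously satisfiable. The main point requiring care --- exactly as in Lemmas~\ref{backwards_double_8} and~\ref{double_8_new} --- is checking that the two-odds sums at the two endpoints really sweep out all even numbers $\ge 2$, and that the subinterval (well-behaved) condition is preserved along the whole sequence; both are handled by trimming equal numbers of labels from the two ends of the current interval of leaf-labels at each step.
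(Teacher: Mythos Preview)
Your proof is correct and uses exactly the same transfer sequence as the paper: $v_1\to v_2\to v_1\to v_4\to v_3\to v_6\to v_5\to\cdots\to v_{m}\to v_{m-1}\to v_m\to v_{m+1}$ (the paper writes it with brackets as $v_1\to[v_2\to v_1]\to[v_4\to v_3]\to\cdots\to[v_m\to v_{m-1}]\to v_m\to v_{m+1}$). Your verification of the index-gap parities via Lemma~\ref{transfer_parity} is actually more explicit than the paper's, which simply asserts that the pattern ``leaves the appropriate numbers of leaves at each vertex.''
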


\begin{proof}
We generalize the proof above.  For the first three sequences, consider the following transfers, where we use brackets to make the pattern evident:
$$v_{1}\rightarrow [v_{2}\rightarrow v_{1}]\rightarrow [v_{4}\rightarrow v_{3}]\rightarrow v_{4}\rightarrow v_{5}$$
$$v_{1}\rightarrow [v_{2}\rightarrow v_{1}]\rightarrow [v_{4}\rightarrow v_{3}]\rightarrow [v_{6}\rightarrow v_{5}]\rightarrow v_{6}\rightarrow v_{7}$$
$$v_{1}\rightarrow [v_{2}\rightarrow v_{1}]\rightarrow [v_{4}\rightarrow v_{3}]\rightarrow [v_{6}\rightarrow v_{5}]\rightarrow [v_{8}\rightarrow v_{7}]\rightarrow v_{8}\rightarrow v_{9}$$
This pattern extends to all sequences of the desired form, and, by Lemma \ref{transfer_parity}, leaves the appropriate numbers of leaves at each vertex.  Therefore, any sequence of the form $e, e/0, o, \ldots , o, e/0, e$, with a non-negative even number of $o's$, is nicely attainable.
\end{proof}

\begin{lemma}
$e, o, o, o, e$ is nicely attainable.
\end{lemma}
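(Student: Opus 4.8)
The plan is to follow the template of the preceding lemmas (in particular Lemma~\ref{double_8_new}): I will exhibit one explicit well-behaved sequence of transfers on an alternating sequence $v_1, \ldots , v_6$ whose last transfer is $v_5 \rightarrow v_6$ and in which $v_6$ occurs in no other transfer, and then read off from Lemma~\ref{transfer_parity} that the numbers of leaves left behind at $v_1, \ldots , v_5$ can independently realize the pattern $e, o, o, o, e$, with $v_6$ absorbing whatever remains.

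The sequence I would try is
$$v_1 \rightarrow v_4 \rightarrow v_3 \rightarrow v_2 \rightarrow v_5 \rightarrow v_6 .$$
Each consecutive pair of indices occurring here --- namely $(1,4)$, $(4,3)$, $(3,2)$, $(2,5)$, $(5,6)$ --- consists of one odd and one even index, so every transfer is between vertices of different parity, and the requirement of a well-behaved sequence that a transfer to $v_i$ be immediately followed by a transfer from $v_i$ holds by construction. Viewing the leaves initially adjacent to $v_1$ as having been transferred from an imaginary vertex $v_0$ (as in the remark following Lemma~\ref{transfer_parity}), the number of leaves left behind at the source of each transfer $v_j \rightarrow v_k$ has the parity of $|k - i|/2$ and is at least $|k - i|/2$, where $v_i$ is the vertex from which $v_j$'s current leaves came: the jump $v_1 \rightarrow v_4$ has $|4 - 0|/2 = 2$, so it can leave any positive even number at $v_1$; the three unit steps $v_4 \rightarrow v_3$, $v_3 \rightarrow v_2$, $v_2 \rightarrow v_5$ each have $|k - i|/2 = 1$, so each can leave any positive odd number at its source; and the final jump $v_5 \rightarrow v_6$ has $|6 - 2|/2 = 2$, so it can leave any positive even number at $v_5$. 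Choosing the transferred sets to form a nested chain of intervals, which Lemma~\ref{transfer_parity} allows, the sequence is well-behaved; its result is therefore $n_1, n_2, n_3, n_4, n_5, [n_6]$ with $n_1$ and $n_5$ arbitrary positive even numbers, $n_2, n_3, n_4$ arbitrary positive odd numbers, and $n_6$ an arbitrary positive integer fixed by the size of the starting transferable set. Hence $e, o, o, o, e$ is nicely attainable.

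The substantive part is the combinatorics of the walk rather than any computation: the two outer positions $1$ and $5$ must each be reached by a step with $|k - i|/2$ even (forcing an even leftover) while the three inner positions must be reached by unit steps (forcing odd leftovers), all while keeping the chain of transferred intervals nested so that the subset clause of ``well-behaved'' is met. I would double-check that no index is reused in a way that breaks the parity pattern --- it is not, since $v_1, \ldots , v_4$ occur exactly once each as a source, $v_5$ occurs once as a source and once as a target, and $v_6$ only as the final target --- and that the left-behind counts can be chosen independently, which is exactly the content of Lemma~\ref{transfer_parity}. If this walk ran into a nesting obstruction, the fallback would be to double up the outer vertices in the style of the backwards double-8 of Lemma~\ref{backwards_double_8} (leave an odd number at $v_1$, route the rest away and back, leave an odd number again, so the total at $v_1$ is even), but I expect the walk above to go through directly.
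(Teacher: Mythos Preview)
Your proposal is correct and matches the paper's proof essentially line for line: the paper uses the identical transfer walk $v_1 \rightarrow v_4 \rightarrow v_3 \rightarrow v_2 \rightarrow v_5 \rightarrow v_6$ and invokes Lemma~\ref{transfer_parity} in the same way to read off the pattern $e, o, o, o, e$. One cosmetic remark: calling $v_2 \rightarrow v_5$ a ``unit step'' is a slight misnomer (the index jump is $3$), but your computation of $|k-i|/2 = |5-3|/2 = 1$ is what matters and is right.
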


\begin{proof}
Consider the sequence of transfers
$$v_{1}\rightarrow v_{4}\rightarrow v_{3}\rightarrow v_{2}\rightarrow v_{5}\rightarrow v_{6}$$
By Lemma \ref{transfer_parity}, this sequence can leave behind any odd number of leaves at each of $v_{2}, v_{3}, v_{4}$, and any positive even number of leaves at each of $v_{1}, v_{5}$.  Therefore, $e, o, o, o, e$ is nicely attainable.
\end{proof}

\begin{lemma}
$e, o, e, e, o, e$ is nicely attainable.
\end{lemma}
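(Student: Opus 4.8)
The plan is to follow the template of Lemmas \ref{backwards_double_8} and \ref{double_8_new} and the subsequent lemmas in this section: I would write down one explicit well-behaved sequence of transfers along an alternating sequence $v_{1}, \ldots , v_{7}$, and then check, using Lemma \ref{transfer_parity}, that it can be carried out so as to leave a positive even number of leaves at each of $v_{1}, v_{3}, v_{4}, v_{6}$ and a positive odd number at each of $v_{2}, v_{5}$, with the final transfer going into a fresh vertex $v_{7}$. By definition this makes the first six entries of the result equal to any prescribed sequence $n_{1}, \ldots , n_{6}$ of the right parities, and, since the value of $n_{7}$ is irrelevant by Lemma \ref{change_last}, it shows that $e, o, e, e, o, e$ is nicely attainable.

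The sequence I would use is
$$v_{1}\rightarrow v_{4}\rightarrow v_{5}\rightarrow v_{2}\rightarrow v_{3}\rightarrow v_{6}\rightarrow v_{7}.$$
The design principle is that once the leaves reach a vertex $v_{j}$ from a vertex $v_{i}$, transferring them on to $v_{k}$ leaves behind at $v_{j}$, by Lemma \ref{transfer_parity}, a number of leaves whose parity is that of $|k-i|/2$; so I want the jump length $|k-i|/2$ to be even exactly when that vertex should carry an even count. The opening transfer $v_{1}\rightarrow v_{4}$ jumps two positions, so $v_{1}$ retains an even (hence $\ge 2$) number. Thereafter: $v_{4}\rightarrow v_{5}$ moves leaves that came from $v_{1}$ and $|5-1|/2 = 2$ is even, so $v_{4}$ retains an even number; $v_{5}\rightarrow v_{2}$ moves leaves from $v_{4}$ and $|2-4|/2 = 1$ is odd, so $v_{5}$ retains an odd number; $v_{2}\rightarrow v_{3}$ moves leaves from $v_{5}$ and $|3-5|/2 = 1$ is odd, so $v_{2}$ retains an odd number; $v_{3}\rightarrow v_{6}$ moves leaves from $v_{2}$ and $|6-2|/2 = 2$ is even, so $v_{3}$ retains an even number; and $v_{6}\rightarrow v_{7}$ moves leaves from $v_{3}$ and $|7-3|/2 = 2$ is even, so $v_{6}$ retains an even number, with the remainder landing on $v_{7}$. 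In each case Lemma \ref{transfer_parity} lets the retained count be any integer of the prescribed parity that is at least $|k-i|/2 \ge 1$, so every count can be made positive and equal to a prescribed value.

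It then remains to confirm the defining conditions of a well-behaved sequence, which is routine: consecutive transfers use index pairs of opposite parity, namely $\{1,4\}, \{4,5\}, \{5,2\}, \{2,3\}, \{3,6\}, \{6,7\}$; every transfer is of the first type, which is exactly what Lemma \ref{transfer_parity} produces; each transfer into a vertex is immediately followed by the transfer out of it, since the sequence is a directed path; the sets of transferred leaves decrease along the sequence automatically, because at each step we move a subset of the consecutive-labelled set that has just arrived; and the last transfer is $v_{6}\rightarrow v_{7}$ with $v_{7}$ occurring in no other transfer, as required for nice attainability. I do not expect a genuine obstacle; the only point needing care is matching the parities of the jump lengths $|k-i|/2$ to the desired pattern $e, o, e, e, o, e$, and the displayed sequence was chosen precisely so that this works out.
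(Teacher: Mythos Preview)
Your proposal is correct and uses exactly the same sequence of transfers $v_{1}\rightarrow v_{4}\rightarrow v_{5}\rightarrow v_{2}\rightarrow v_{3}\rightarrow v_{6}\rightarrow v_{7}$ as the paper, with the same appeal to Lemma~\ref{transfer_parity}. Your write-up is in fact more detailed than the paper's, spelling out each parity computation $|k-i|/2$ explicitly rather than simply asserting the outcome.
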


\begin{proof}
Consider the sequence of transfers
$$v_{1}\rightarrow v_{4}\rightarrow v_{5}\rightarrow v_{2}\rightarrow v_{3}\rightarrow v_{6}\rightarrow v_{7}$$
By Lemma \ref{transfer_parity}, this sequence can leave behind any odd number of leaves at each of $v_{2}, v_{5}$, and any positive even number of leaves at each of $v_{1}, v_{3}, v_{4}, v_{6}$.  Therefore, $e, o, e, e, o, e$ is nicely attainable.
\end{proof}

\begin{lemma}
$e, \ldots , e$ is attainable.
\end{lemma}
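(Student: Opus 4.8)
The plan is to induct on the number $m$ of $e$'s, handling $m \le 3$ by hand and reducing every larger case to the backwards double-8 (Lemma \ref{backwards_double_8}) through the combining lemmas. Throughout, Lemma \ref{attainable_lemma} lets us work with any convenient gracefully labeled tree and alternating sequence, and Lemma \ref{transfer_parity} provides, at each step, a transfer of the first type leaving behind any prescribed number of leaves of the forced parity. The key phenomenon for the base cases is that when a block of leaves is transferred off a vertex and then its middle portion is transferred straight back, the number left behind at the intermediate vertex may be any nonnegative \emph{even} integer; this is exactly how a single pair of first-type transfers imitates a transfer of the second type.

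For $m = 1$, a single positive even integer is attained by doing nothing. For $m = 2$, I would use $v_1 \rightarrow v_2 \rightarrow v_1$: by Lemma \ref{transfer_parity} the first transfer leaves an odd number at $v_1$, the return transfer leaves any nonnegative even number $n_2$ at $v_2$, and the rest of the leaves return to $v_1$; taking the transferable set of even size then forces $n_1$ (the total minus $n_2$) to be even, and every pair of positive evens is realized. For $m = 3$, I would use $v_1 \rightarrow v_2 \rightarrow v_3 \rightarrow v_2 \rightarrow v_1$: by Lemma \ref{transfer_parity} the first transfer leaves an odd number at $v_1$, the transfers $v_2 \rightarrow v_3$ and $v_2 \rightarrow v_1$ each leave an odd number at $v_2$, and $v_3 \rightarrow v_2$ leaves any nonnegative even $n_3$ at $v_3$. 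Then $n_2$ is a sum of two odd numbers and hence even, while $n_1$, the total minus $n_2$ and $n_3$, is even for an even transferable set. In both base cases one checks that the intermediate leaf counts never fall below what a transfer requires, which holds for transferable sets of sufficient size; every pair, respectively triple, of positive even integers is then attainable.

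For $m \ge 4$, write $m = 4q + r$ with $0 \le r \le 3$. The sequence $e,e,e,e$ is nicely attainable (Lemma \ref{backwards_double_8}), so the concatenation of $q$ copies is nicely attainable by Lemma \ref{combine_nicely_attainable}, hence attainable by Lemma \ref{nicely_attainable_to_attainable}; if $r = 0$ this is $e, \ldots, e$ of length $m$. If $r \in \{1, 2, 3\}$, the length-$r$ all-$e$ sequence is attainable by the base cases, so Lemma \ref{combine_attainable} (a nicely attainable sequence followed by an attainable one) shows that the concatenation, namely $e, \ldots, e$ of length $m$, is attainable; when $q = 0$ this is just the base case. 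Hence $e, \ldots, e$ is attainable in every length.

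The main obstacle is bookkeeping rather than any new idea: in the base cases one must track which vertex the currently active block of leaves ``came from'', so that the parity assertions of Lemma \ref{transfer_parity} apply at each step, and confirm that the counts stay large enough. It is worth noting why the induction must bottom out at $m \le 3$ rather than at $m = 1$: the length-one sequence $e$ is not nicely attainable, since the single transfer out of the first vertex into a fresh vertex leaves an odd number behind, and likewise $e,e$ and $e,e,e$ are not nicely attainable, because before the final transfer into the new vertex one is restricted to a path-shaped transfer graph on which the first vertex can be revisited only with even retention. The length-four block is the smallest all-$e$ building block exactly because the transfer graph on the four vertices preceding the new one is a $4$-cycle, around which the first vertex is revisited with odd retention --- this is the mechanism of the backwards double-8.
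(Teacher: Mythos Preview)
Your argument is correct, but it takes a different route from the paper's. The paper handles all $m$ at once with the single out-and-back sequence
\[
v_{1}\rightarrow v_{2}\rightarrow\cdots\rightarrow v_{m-1}\rightarrow v_{m}\rightarrow v_{m-1}\rightarrow\cdots\rightarrow v_{2}\rightarrow v_{1},
\]
observing (via Lemma~\ref{transfer_parity}) that each of $v_{2},\ldots,v_{m-1}$ receives two odd deposits (hence any positive even total), $v_{m}$ receives one even deposit at the turn-around, and $v_{1}$ receives an odd deposit at the start plus whatever returns at the end. Your base cases $m=2$ and $m=3$ are exactly this construction specialized to those lengths; you simply did not notice that the same pattern extends verbatim to every $m$, which would have removed the need for the backwards double-8 and the combining lemmas entirely.

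What each approach buys: the paper's construction is self-contained and uniform, requiring nothing beyond Lemma~\ref{transfer_parity}. Your inductive reduction is a legitimate alternative that leans on already-established machinery (Lemmas~\ref{backwards_double_8}, \ref{combine_nicely_attainable}, \ref{combine_attainable}, \ref{nicely_attainable_to_attainable}); it is longer but illustrates how attainable sequences assemble from nicely attainable blocks. Two minor remarks on your write-up: the phrase ``which holds for transferable sets of sufficient size'' is vague---for the record, taking $a_{1}=a_{2}=1$ in your $m=3$ case makes every intermediate count at least $1$ whenever $n_{1},n_{2},n_{3}\ge 2$, so no extra size hypothesis is needed; and your closing paragraph about why $e$, $e,e$, $e,e,e$ fail to be \emph{nicely} attainable is commentary rather than proof, and the justification there (``restricted to a path-shaped transfer graph on which the first vertex can be revisited only with even retention'') is imprecise, since the real obstruction is the subset condition forcing each return to $v_{1}$ to leave an even number \emph{of the freshly transferred batch}, so the running total at $v_{1}$ stays odd.
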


\begin{proof}
Suppose this sequence has $k$ $e$'s, and consider the sequence of transfers
$$v_{1}\rightarrow v_{2}\rightarrow\cdots\rightarrow v_{k - 1}\rightarrow v_{k}\rightarrow v_{k - 1}\rightarrow\cdots\rightarrow v_{2}\rightarrow v_{1}$$
By Lemma \ref{transfer_parity}, this sequence leaves behind odd numbers of leaves twice at each of $v_{2}, \ldots , v_{k - 1}$, and a non-negative even number of leaves once at $v_{k}$.  
At $v_{1}$, this sequence leaves behind an odd number of leaves at the beginning, and any number of leaves at the end.  Therefore, $e, \ldots , e$ is attainable.
\end{proof}

\begin{lemma} \label{special_attainable_sequence}
$e, e/0, e, o$ is attainable.
\end{lemma}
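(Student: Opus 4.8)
The plan is to produce a single well-behaved sequence of transfers on an alternating sequence of four vertices $v_{1}, v_{2}, v_{3}, v_{4}$ that realizes the required result for every admissible integer sequence, i.e.\ for all positive even $n_{1}$, non-negative even $n_{2}$, positive even $n_{3}$, and positive odd $n_{4}$. By Lemma \ref{attainable_lemma} it is enough to work with any convenient gracefully labeled tree carrying such a configuration, where the transferable set of leaves at $v_{1}$ has exactly $n_{1}+n_{2}+n_{3}+n_{4}$ leaves. The sequence I would use is the truncation of the one in the proof of Lemma \ref{double_8_new}, namely
$$v_{1}\rightarrow v_{2}\rightarrow v_{1}\rightarrow v_{4}\rightarrow v_{3},$$
stopping one transfer early. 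This is well-behaved: each step is a transfer of the first type by Lemma \ref{transfer_parity}, consecutive transfers share a vertex, and the transferred blocks of leaves are nested by construction.

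Next I would read off, via Lemma \ref{transfer_parity}, what each step is allowed to leave behind. The opening transfer $v_{1}\rightarrow v_{2}$ (part (1), $k=2$) can leave any positive odd number of leaves at $v_{1}$; the transfer $v_{2}\rightarrow v_{1}$ sends leaves back to the vertex they came from, so here the relevant quantity $|k-i|/2$ is $0$ and the transfer can leave any non-negative even number of leaves at $v_{2}$; the transfer $v_{1}\rightarrow v_{4}$ can leave any positive odd number of the returned block at $v_{1}$; and the transfer $v_{4}\rightarrow v_{3}$ can leave any positive odd number of leaves at $v_{4}$, after which all remaining leaves — whose total can be made any positive number — sit at $v_{3}$. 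Adding up, $v_{1}$ ends with an odd number plus an odd number, i.e.\ any positive even number; $v_{2}$ with any non-negative even number; $v_{4}$ with any positive odd number; and $v_{3}$ with any positive integer, in particular any positive even number. Concretely I would leave $1$ leaf at $v_{1}$ in the first transfer, $n_{2}$ at $v_{2}$, $n_{1}-1$ at $v_{1}$ in the third transfer, $n_{4}$ at $v_{4}$, and the remaining $n_{3}$ at $v_{3}$; since $n_{1}-1$ is a positive odd number and the other prescribed values already have the needed parities and sizes, the result of the sequence is exactly $(n_{1},n_{2},n_{3},n_{4})$, and the leaf counts sum to $n_{1}+n_{2}+n_{3}+n_{4}$ as required.

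I expect the main obstacle to be getting the last coordinate right. The ``double-$8$''-style sequences used for the other new lemmas naturally leave even numbers of leaves at both endpoints of the alternating segment, so a naive adaptation would yield $e, e/0, e, e$ rather than $e, e/0, e, o$; the key adjustment is to truncate the sequence so that $v_{4}$ is visited only once and merely keeps the (necessarily odd) leftover of the $v_{4}\rightarrow v_{3}$ transfer, instead of being revisited and picking up a second odd contribution. A secondary point to verify is that the case $n_{4}=1$ is covered: it is, precisely because in this truncated sequence the count at $v_{4}$ equals the single quantity $|S_{3}\setminus S_{4}|$, which Lemma \ref{transfer_parity} permits to be any positive odd number, rather than a sum of two positive terms that would force $n_{4}\ge 3$.
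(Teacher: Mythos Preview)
Your proof is correct and uses exactly the same sequence of transfers as the paper, namely $v_{1}\rightarrow v_{2}\rightarrow v_{1}\rightarrow v_{4}\rightarrow v_{3}$, with the same application of Lemma~\ref{transfer_parity} at each step. Your write-up is simply more explicit than the paper's (which records only the transfer sequence and invokes Lemma~\ref{transfer_parity}), and your observation that this sequence is the truncation of the one in Lemma~\ref{double_8_new} is precisely the right way to see why the final coordinate comes out odd rather than even.
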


\begin{proof}
Consider the sequence of transfers
$$v_{1}\rightarrow v_{2}\rightarrow v_{1}\rightarrow v_{4}\rightarrow v_{3}$$
By Lemma \ref{transfer_parity}, this sequence leaves behind the appropriate number of leaves at each $v_{i}$, so $e, e/0, e, o$ is attainable.
\end{proof}

\begin{lemma}
Any sequence of the form $e, e/0, o, \ldots , o$, with a non-negative even number of $o$'s, is attainable.
\end{lemma}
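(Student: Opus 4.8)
The plan is to extend the constructions of the preceding lemmas: I would write down one explicit well-behaved sequence of transfers of the first type whose result is the desired pattern, and then read off the parities from Lemma~\ref{transfer_parity}, exactly as in the surrounding proofs. Write $2t$ for the number of $o$'s, so the target has length $2t+2$ and concerns an alternating sequence $v_1, \ldots, v_{2t+2}$. The sequence of transfers I would use is the preamble $v_1 \to v_2 \to v_1$ --- the same device used in the proof that $e, e/0, e, o$ is attainable to force the leading term to be even --- followed by the ``snake'' $v_1 \to v_4 \to v_3 \to v_6 \to v_5 \to \cdots \to v_{2t+2} \to v_{2t+1}$ through the consecutive pairs $(v_4, v_3), (v_6, v_5), \ldots, (v_{2t+2}, v_{2t+1})$, terminating at $v_{2t+1}$.

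First I would verify this is well-behaved: every transfer $v_i \to v_j$ has $i, j$ of opposite parity, the target of each transfer is the source of the next, and the carried block only shrinks (so the set of leaves moved at each step is a subset of the set moved at the previous step). Then I would tally deposits via Lemma~\ref{transfer_parity}. The vertex $v_1$ gets two odd deposits --- once as the initial vertex, once on the revisit $v_2 \to v_1 \to v_4$ with $|4-2|/2 = 1$ --- for an even total, matching the leading $e$; $v_2$ gets one deposit from $v_1 \to v_2 \to v_1$ of forced parity $|1-1|/2 = 0$, i.e.\ an arbitrary non-negative even number, matching $e/0$; and each of $v_3, \ldots, v_{2t}$ together with $v_{2t+2}$ gets a single deposit of forced parity $1$ (all the relevant indices differ by $2$), i.e.\ an arbitrary positive odd number, matching the $o$'s in those positions.

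The one step needing genuine thought --- and the only real difference from the nicely-attainable analogue $e, e/0, o, \ldots, o, e/0, e$ of the previous lemma --- is the terminal vertex $v_{2t+1}$: because the walk ends there, $v_{2t+1}$ just keeps the remaining block, so I must check this block has odd size. This is a short parity count. The total number of transferable leaves is $N = n_1 + \cdots + n_{2t+2}$, which is even (two evens and $2t$ odds), whereas $n_1 + n_2 + n_3 + \cdots + n_{2t} + n_{2t+2}$ is odd (it contains $2t-2$ odd terms plus the odd $n_{2t+2}$), so the remainder $n_{2t+1} = N - (n_1 + n_2 + n_3 + \cdots + n_{2t} + n_{2t+2})$ is odd, as required. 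To finish, I would observe that for an arbitrary target $n_1, \ldots, n_{2t+2}$ of the prescribed parities, Lemma~\ref{transfer_parity} lets us leave exactly $n_j$ at each non-terminal vertex (the forced parity always matches and the forced lower bound is met: $0$ at $v_2$, $1$ at each $o$-position, and $n_1$ split as $1 + (n_1 - 1)$ over its two deposits), after which $v_{2t+1}$ automatically receives $n_{2t+1}$; the degenerate cases $t = 0$ and $n_2 = 0$ are covered by reading the same sequence literally, stopping earlier when a carried block would be empty.
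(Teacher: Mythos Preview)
Your proof is correct and uses exactly the same transfer sequence as the paper, namely $v_{1}\rightarrow v_{2}\rightarrow v_{1}\rightarrow v_{4}\rightarrow v_{3}\rightarrow\cdots\rightarrow v_{2t+2}\rightarrow v_{2t+1}$, with the parities read off from Lemma~\ref{transfer_parity}. The paper's argument is much terser (it simply asserts that the sequence ``leaves behind the appropriate number of leaves at each $v_i$''), whereas you spell out the deposit-by-deposit bookkeeping; note, though, that your terminal parity check is redundant, since once every non-terminal $v_j$ receives exactly $n_j$, the terminal vertex automatically receives $n_{2t+1}$ by conservation, regardless of parity.
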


\begin{proof}
Suppose this sequence has $2k$ $o$'s, and consider the sequence of transfers
$$v_{1}\rightarrow [v_{2}\rightarrow v_{1}]\rightarrow [v_{4}\rightarrow v_{3}]\rightarrow\cdots\rightarrow [v_{2k + 2}\rightarrow v_{2k + 1}]$$
By Lemma \ref{transfer_parity}, this sequence leaves behind the appropriate number of leaves at each $v_{i}$, so any sequence of the form $e, e/0, o, \ldots , o$, with a non-negative even number of $o$'s, is attainable.
\end{proof}

\subsection{Order of leaves}

Until now, we have worked with alternating sequences of vertices $v_{1}, \ldots , v_{m}$ and transferable sets of leaves $c, \ldots , d$, and we have mostly thought of these two sets of vertices as disjoint.  However, to build trees with larger diameter, we need to allow the alternating sequence $v_{1}, \ldots , v_{m}$ to contain some of the leaves $c, \ldots , d$.  
This opens the possibility of transferring leaves to the leaves we have already transferred, so that we can transfer across multiple levels of a tree (see Hrn\u{c}iar \& Monoszova \cite{banana}, Jesintha \& Sethuraman \cite{jesinthageneration}).

\begin{definition}
An alternating sequence of vertices $v_{1}, \ldots , v_{m}$ \emph{converges} if the labels of $v_{2}, \ldots , v_{m}$ are all between the parameters $a, b$.
\end{definition}

\begin{definition}
Let $v_{1}, \ldots , v_{m}$ be a convergent alternating sequence of vertices.  The \emph{closure} of $v_{1}, \ldots , v_{m}$ is the alternating sequence of maximal length beginning with $v_{1}, \ldots , v_{m}$.  The sequence $v_{1}, \ldots , v_{m}$ is \emph{closed} if its closure is itself.
\end{definition}

For example, suppose $v_{1}, \ldots , v_{5}$ is a sequence of vertices with labels
$$2, 10, 3, 9, 4$$
Then $v_{1}, \ldots , v_{5}$ is a convergent alternating sequence of vertices, and its closure has labels
$$2, 10, 3, 9, 4, 8, 5, 7, 6$$


In general, we are interested in the positions of the leaves $c, \ldots , d$ in the closure of $v_{1}, \ldots , v_{m}$, since their positions determine the order in which we will reach them in the transfer process.

\begin{lemma} \label{leaf_ordering}
Let $v_{1}, \ldots , v_{m}$ be a closed convergent alternating sequence of vertices, such that $v_{1}$ is adjacent to a transferable set of leaves with labels $c, \ldots , d$.  Then
\begin{enumerate}[(1)]
\item The leaves $c, \ldots , d$ are the last last $d - c + 1$ terms of $v_{1}, \ldots , v_{m}$.
\item The leaves $c, \ldots , d$ are in the order
\begin{align*}
d, \; c, \; d - 1, \ldots\qquad\text{if $v_{1}, \ldots , v_{m}$ has form $a,\; b - 1,\; a + 1\ldots$}\\
c, \; d, \; c + 1, \ldots\qquad\text{if $v_{1}, \ldots , v_{m}$ has form $a,\; b + 1,\; a - 1\ldots$}
\end{align*}
\item The first vertex with label $c$ or $d$ has even index.
\end{enumerate}
\end{lemma}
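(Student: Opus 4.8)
The plan is to make the closure of $v_1,\dots,v_m$ completely explicit and then read off the three statements by elementary arithmetic on labels and indices. First I would dispose of the form-$2$ case by the complementary labeling $f'(w)=n-f(w)$: it carries a closed convergent alternating sequence of form $a,\,b+1,\,a-1,\dots$ to one of form $(n-a),\,(n-b)-1,\,(n-a)+1,\dots$, i.e.\ to form $1$ with parameters $n-a$ and $n-b$ (and convergence forces $n-a<n-b$); it carries the transferable leaves $c,\dots,d$ to the transferable leaves $n-d,\dots,n-c$; and it fixes every vertex and every index. A direct check then shows that the three conclusions for the form-$2$ sequence are precisely the complementary-labeling images of the three conclusions for the resulting form-$1$ sequence, so it suffices to treat form $1$, where convergence forces $a<b$.

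For form $1$ I would first record the exact shape of a closed convergent alternating sequence of form $a,\,b-1,\,a+1,\,b-2,\dots$: its length is exactly $m=b-a$, the odd-indexed vertices carry the labels $a,a+1,\dots$ in increasing order, the even-indexed vertices carry the labels $b-1,b-2,\dots$ in decreasing order, and together these exhaust $\{a,a+1,\dots,b-1\}$. This holds because the tree is gracefully labeled, so every integer in $\{0,\dots,n\}$ occurs as a label and the sequence can always be continued by the unique next vertex demanded by the alternating pattern; because convergence keeps all of these values inside $[a,b]$; and because maximality (``closed'') forces the process to halt exactly when the increasing ``$a$-side'' and the decreasing ``$b$-side'', each a run of consecutive integers, first collide --- which is precisely when their union is all of $\{a,\dots,b-1\}$.

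Next I would locate $c$ and $d$. Since each leaf is adjacent to $v_1$ (labeled $a$), no leaf is labeled $a$, so $a\notin\{c,\dots,d\}$; combined with $c+d=a+b>2a$ this forces $d>a$, hence $a<c\le d<b$, so every leaf label lies in $\{a+1,\dots,b-1\}$ and therefore the leaves really do occur among $v_1,\dots,v_m$. Now $d\ge\tfrac{a+b}{2}$ exceeds every label on the ``$a$-side'', so the vertex labeled $d$ sits at the even index $2(b-d)$; since $c-a=b-d$, the vertex labeled $c$ sits at the odd index $2(c-a)+1=2(b-d)+1$; and more generally the vertex labeled $d-i$ is at index $2(b-d)+2i$ and the vertex labeled $c+i$ at index $2(b-d)+2i+1$. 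Reading these off in increasing index order gives the order $d,c,d-1,c+1,\dots$ of statement (2); the last leaf sits at index $2(b-d)+(d-c)=2b-c-d=b-a=m$, so the leaves occupy exactly the block of indices $\{2(b-d),\dots,m\}$, i.e.\ the last $d-c+1$ terms, which is statement (1); and the first leaf encountered is the one labeled $d$, at the even index $2(b-d)$ (positive since $d<b$), which is statement (3).

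I expect the one genuinely fiddly point to be the second paragraph: proving that a closed convergent form-$1$ sequence has length exactly $b-a$ and uses exactly the labels $\{a,\dots,b-1\}$, which requires a small case analysis on the parity of $b-a$ to pin down the collision point. The other point worth care is the deduction $a\notin\{c,\dots,d\}\Rightarrow a<c\le d<b$ in the third paragraph, since without it the leaves need not all lie in the closure; everything past that is substitution using $c+d=a+b$.
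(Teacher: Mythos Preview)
Your argument is correct. The paper takes a different, shorter route: rather than writing out the full closure and computing indices, it observes the consecutive-sum invariants
\[
v_{2k-1}+v_{2k}=a+b\pm 1,\qquad v_{2k}+v_{2k+1}=a+b=c+d,
\]
and uses these directly. For (3), if the first $v_i$ with label in $\{c,d\}$ had $i$ odd (and $i\ge 3$), then $v_{i-1}+v_i=c+d$ would force $v_{i-1}$ to carry the other of $c,d$, contradicting minimality of $i$; hence $i$ is even. Then $v_i+v_{i+1}=c+d$ identifies $\{v_i,v_{i+1}\}=\{c,d\}$ and the tail $v_i,v_{i+1},\dots$ is itself alternating with parameters $c,d$ (or $d,c$), giving (1) and (2) in one stroke, with the form distinguishing which of $c,d$ is $v_i$.

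What you do instead is pin down the closure completely (length $b-a$, labels $\{a,\dots,b-1\}$, explicit index-to-label bijection) and then locate $c$ and $d$ by arithmetic. This yields exact index formulas, which is slightly more than the lemma asks, at the price of the parity case split you flag and the explicit check that $a\notin\{c,\dots,d\}$. Your complementary-labeling reduction to form~1 is a legitimate shortcut; the paper instead handles both forms in parallel since the sum invariants above are form-independent. Either approach is fine; the paper's avoids computing the length of the closure and is a bit cleaner, while yours produces the concrete formulas $d\mapsto 2(b-d)$, $c\mapsto 2(b-d)+1$ that make the structure very explicit.
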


\begin{proof}

We have
\begin{align*}
v_{1} + v_{2} &= a + b \pm 1\\
v_{2} + v_{3} &= a + b\\
v_{3} + v_{4} &= a + b \pm 1\\
v_{4} + v_{5} &= a + b\\
&\;\;\vdots
\end{align*}
Let $v_{i}$ be the first vertex with label $c$ or $d$.  If $i$ is odd, then
$$v_{i - 1} + v_{i} = a + b = c + d$$
Therefore, $v_{i - 1}, v_{i}$ have labels $c, d$, a contradiction.  Therefore, $i$ is even, and hence (3) is proved.

Since $i$ is even, we have
$$v_{i} + v_{i + 1} = a + b = c + d$$
Therefore, $v_{i}, v_{i + 1}$ have labels $c, d$, and hence the remaining terms are the leaves with labels $c, \ldots , d$ in the order $c, \; d, \; c + 1, \ldots$ or $d,\; c,\; d - 1, \ldots$, and hence (1) is proved.

If $v_{1}, \ldots , v_{m}$ has form $a,\; b - 1,\; a + 1\ldots$, then since $v_{1}, \ldots , v_{m}$ converges, we have $a < b$.  Since $v_{1}, \ldots , v_{m}$ alternates between small and large labels, and since $i$ is even, the vertex $v_{i}$ has the large label $d$.  Similarly, if $v_{1}, \ldots , v_{m}$ has form $a,\; b + 1,\; a - 1\ldots$, then $a > b$, and $v_{i}$ has the small label $c$, and hence (2) is proved.
\end{proof}

Therefore, given a convergent alternating sequence of vertices, there is a natural ordering of the leaves with labels $c, \ldots , d$. 
As a result, we can refer to the first $l$ of a set of leaves, as in Corollary \ref{leaf_ordering_odd} below.

Here we prove a simple result that gives the order of leaves for certain well-behaved sequences of transfers.  For the more complicated sequences of transfers, the order of leaves is predictable but difficult to generally describe.

\begin{lemma} \label{jake_the_snake}
Let $v_{1}, \ldots , v_{m}$ be a closed convergent alternating sequence of vertices.  Then any transfer $v_{i}\rightarrow v_{i + 1}$ or $v_{i + 1}\rightarrow v_{i}$ of the first type transfers the leaves $v_{k}, \ldots , v_{m}$ for some $k$.
\end{lemma}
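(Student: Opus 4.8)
The plan is to reduce the claim to a statement about which \emph{labels} get transferred, and then to invoke Lemma~\ref{leaf_ordering} to translate this back into positions in the sequence. Assume, as the context implicitly requires, that $v_{1}$ is adjacent to a transferable set of leaves with labels $c, \ldots , d$, so that $c + d = a + b$, where $a, b$ are the parameters of $v_{1}, \ldots , v_{m}$. By symmetry between the two forms I would treat the case where $v_{1}, \ldots , v_{m}$ has form $a,\; b - 1,\; a + 1,\ldots$; since the sequence converges, $a < b$. By Lemma~\ref{leaf_ordering}, the leaves $c, \ldots , d$ occupy the last $d - c + 1$ positions $v_{k_{0}}, \ldots , v_{m}$, with $k_{0}$ even, in the order $d,\; c,\; d - 1,\; c + 1,\ldots$; consequently, for each $r \ge 0$ the label set $\{c + r, \ldots , d - r\}$ is exactly the suffix $v_{k_{0} + 2r}, \ldots , v_{m}$, and the label set $\{c + r, \ldots , d - r - 1\}$ is exactly the suffix $v_{k_{0} + 2r + 1}, \ldots , v_{m}$.

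The first key step is that $v_{i} + v_{i + 1}$ is essentially determined by the parity of $i$: since the labels run $a,\; b - 1,\; a + 1,\; b - 2,\ldots$, we get $v_{i} + v_{i + 1} = a + b = c + d$ when $i$ is even, and $v_{i} + v_{i + 1} = a + b - 1 = c + d - 1$ when $i$ is odd. The second key step is that a transfer of the first type between $v_{i}$ and $v_{i + 1}$ (in either direction) moves a set of leaves with consecutive labels $\{p, \ldots , q\}$ satisfying $p + q = v_{i} + v_{i + 1}$, and that this set lies inside $\{c, \ldots , d\}$, since the only leaves ever shuffled around in this context are the original transferable ones.

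Combining these: if $i$ is even, then $p + q = c + d$ together with $\{p, \ldots , q\} \subseteq \{c, \ldots , d\}$ forces $\{p, \ldots , q\} = \{c + r, \ldots , d - r\}$ for some $r \ge 0$; if $i$ is odd, then $p + q = c + d - 1$ similarly forces $\{p, \ldots , q\} = \{c + r, \ldots , d - r - 1\}$ for some $r \ge 0$. By the suffix description recorded in the first paragraph, in both cases the transferred leaves are exactly $v_{k}, \ldots , v_{m}$, with $k = k_{0} + 2r$ or $k = k_{0} + 2r + 1$ respectively. The argument for the other form $a,\; b + 1,\; a - 1,\ldots$ is identical after replacing $c + d - 1$ by $c + d + 1$ and using the mirrored leaf order $c,\; d,\; c + 1,\ldots$. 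The only real care needed is the index bookkeeping in this last step — matching a label interval that is symmetric about $(c + d)/2$ (or about $(c + d \mp 1)/2$) with the correct suffix of the closed convergent alternating sequence — together with the easy but essential point that the pool of transferred leaves never escapes $\{c, \ldots , d\}$, which is exactly what pins $\{p, \ldots , q\}$ to one of the two forms above.
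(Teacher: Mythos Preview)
Your argument is correct and follows the same line as the paper's: compute $v_{i}+v_{i+1}$ in terms of $c+d$, deduce that any first-type transfer between consecutive vertices moves a label interval of the form $\{c+r,\ldots,d-r\}$ or $\{c+r,\ldots,d-r-1\}$, and then use Lemma~\ref{leaf_ordering} to identify that interval with a suffix $v_{k},\ldots,v_{m}$. In fact your write-up is more complete than the paper's, which only spells out the case $v_{1}\rightarrow v_{2}$ and leaves the general $i$ (and the reverse direction) implicit; your parity split on $i$ and the explicit suffix bookkeeping make the general case transparent.
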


\begin{proof}
If $v_{1}, \ldots , v_{m}$ has form $a,\; b - 1,\; a + 1\ldots$, then we have
$$v_{1} + v_{2} = a + b - 1 = c + d - 1$$
Therefore, to perform a transfer $v_{1}\rightarrow v_{2}$ of the first type, we must transfer a set of leaves of the form $\{c + i,\ldots ,\; d - i - 1\}$ for some $i\ge 0$.  By Lemma \ref{leaf_ordering}, these are the leaves $v_{k}, \ldots , v_{m}$ for some $k$.

Similarly, if $v_{1}, \ldots , v_{m}$ has form $a,\; b + 1,\; a - 1\ldots$, then we have
$$v_{1} + v_{2} = a + b + 1 = c + d + 1$$
Therefore, to perform a transfer $v_{1}\rightarrow v_{2}$ of the first type, we must transfer a set of leaves of the form $\{c + i + 1,\ldots ,\; d - i\}$ for some $i\ge 0$.  Again, by Lemma \ref{leaf_ordering}, these are the leaves $v_{k}, \ldots , v_{m}$ for some $k$, and hence the lemma is proved.
\end{proof}

\begin{corollary} \label{leaf_ordering_odd}
Let $v_{1}, \ldots , v_{m}$ be a closed convergent alternating sequence of vertices, such that $v_{1}$ is adjacent to a transferable set of leaves.  The well-behaved sequence of transfers corresponding to the nicely attainable sequence $l, *$, for $l$ odd, leaves the first $l$ of these leaves at $v_{1}$.
\end{corollary}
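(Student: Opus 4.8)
The plan is to recognize that the well-behaved sequence attached to $l, *$ is as simple as possible, and then read off the order of the remaining leaves from the two structural lemmas already established. The whole statement will come down to combining Lemma~\ref{jake_the_snake} with Lemma~\ref{leaf_ordering}.

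First I would pin down exactly which sequence of transfers is meant. The nicely attainable sequence $l$ (with the bracketed last term $*$, whose value is irrelevant by Lemma~\ref{change_last}) is realized, as in the proof that $o$ is nicely attainable, by a \emph{single} transfer $v_{1}\rightarrow v_{2}$ of the first type: Lemma~\ref{transfer_parity}(1) with $k=2$ produces such a transfer leaving behind any prescribed odd number $l\ge 1$ of the leaves at $v_{1}$, and since the alternating sequence governing the sequence $l,*$ consists only of the two vertices $v_{1},v_{2}$, no other transfer can occur. So ``the well-behaved sequence of transfers corresponding to $l,*$'' is precisely this one transfer, and from its construction it leaves exactly $l$ leaves at $v_{1}$.

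Next I would apply Lemma~\ref{jake_the_snake}: since $v_{1},\ldots,v_{m}$ is a closed convergent alternating sequence, the transfer $v_{1}\rightarrow v_{2}$ of the first type moves precisely the leaves $v_{k},\ldots,v_{m}$ for some index $k$, so the leaves that remain at $v_{1}$ are exactly the transferable leaves appearing in $v_{1},\ldots,v_{m}$ strictly before position $k$. Then I would invoke Lemma~\ref{leaf_ordering}: parts (1) and (2) say that the transferable leaves $c,\ldots,d$ are the last $d-c+1$ entries of $v_{1},\ldots,v_{m}$ and occur there in the natural order ($d,\,c,\,d-1,\ldots$ or $c,\,d,\,c+1,\ldots$, according to the form of the sequence). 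Hence the leaves remaining at $v_{1}$ form an initial segment, in that natural order, of the transferable leaves; since we already know from the first step that exactly $l$ of them remain, that initial segment has length $l$, which is the assertion.

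I do not expect a genuine obstacle: this is a bookkeeping corollary of Lemmas~\ref{jake_the_snake} and~\ref{leaf_ordering}, which carry the real content. The only point needing a little care is the change of viewpoint: Lemma~\ref{jake_the_snake} describes what is \emph{transferred} as a tail $v_{k},\ldots,v_{m}$ of the closure, whereas the corollary wants what \emph{remains} described as a prefix of the leaf ordering of Lemma~\ref{leaf_ordering}. Reconciling these requires only that the transferable leaves occupy a terminal block of the closure and sit there in the stated order, which is exactly parts (1) and (2) of Lemma~\ref{leaf_ordering}.
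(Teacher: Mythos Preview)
Your proposal is correct and follows essentially the same approach as the paper: identify the well-behaved sequence as the single transfer $v_1\rightarrow v_2$, apply Lemma~\ref{jake_the_snake} to see it moves a tail $v_k,\ldots,v_m$, and conclude the remaining leaves are the first $l$. You are simply more explicit than the paper in invoking Lemma~\ref{leaf_ordering} to justify that ``first'' is well-defined, which the paper's two-line proof leaves implicit.
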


\begin{proof}
The sequence of transfers consists of a single transfer $v_{1}\rightarrow v_{2}$.  By the lemma, this transfers the leaves $v_{k}, \ldots , v_{m}$ for some $k$, so it leaves the first $l$ leaves at $v_{1}$.
\end{proof}


\subsection{Radial rooted trees} \label{radial_rooted_trees_section}

\begin{definition}
Let $T$ be a rooted tree.  A \emph{lexicographical order} on the vertices of $T$ is an order by increasing depth, such that for any two vertices $u, v$ with $u$ before $v$, all children of $u$ occur before all children of $v$.

\end{definition}

\begin{definition}
A rooted tree is \emph{radial} if all leaves are at the same level.
\end{definition}

\begin{definition}
A radial rooted tree is \emph{odd} if each internal vertex has an odd number of children.
\end{definition}

\begin{lemma} \label{auxiliary_radial_tree}
Let $T$ be an odd radial rooted tree with root $v$, and let $v_{1}, \ldots , v_{m}$ be the leaves of $T$, ordered according to some lexicographical order $\mathcal{L}$ on the vertices of $T$.  Let $T'$ be a rooted tree obtained from $T$ by attaching a non-negative number of leaves to $v_{1}$.  Then $T'$ has a graceful labeling $f$ with $f(v) = 0$, such that
\begin{itemize}
\item $v_{1}, \ldots , v_{m}$ is an alternating sequence of vertices under $f$.
\item The attached leaves form a transferable set of leaves under $f$.
\end{itemize}
\end{lemma}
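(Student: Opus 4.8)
The plan is to prove, by induction on the depth $h$ of $T$, the following slightly strengthened statement: there is a graceful labeling $f$ of $T'$ with $f(v)=0$ such that the leaves $v_1,\dots,v_m$ of $T$ (in the lex order $\mathcal L$), followed by the attached leaves in their natural order, form a \emph{closed convergent} alternating sequence of vertices. Since $v_1,\dots,v_m$ is then in particular an alternating sequence and the attached leaves are then (by Lemma~\ref{leaf_ordering}) transferable, this implies the lemma. Two standing observations will be used repeatedly: (i) a tail of a closed convergent alternating sequence is again a closed convergent alternating sequence (this is a routine check from the definition, noting the tail may switch ``form''); and (ii) in an odd radial rooted tree every level has an odd number of vertices, since a sum of an odd number of odd numbers is odd — so $m$ and the size of every intermediate level are odd.

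For the base case $h=1$ we have $T=K_{1,d}$ with $d$ odd, and $T'$ has $n=d+\ell$ edges. Take the standard graceful labeling of $K_{1,n}$ with center $v$ labeled $0$: listing the vertices as $v=u_1,u_2,\dots,u_{n+1}$ with labels $0,n,1,n-1,\dots$ gives a closed convergent alternating sequence (parameters $0,n+1$) whose leaves $u_2,\dots,u_{n+1}$ form a transferable set. Perform a single first-type transfer $u_1\to u_2$ leaving exactly $d$ leaves at $u_1$; this is permitted by Lemma~\ref{transfer_parity}(1) precisely because $d$ is odd. By Lemma~\ref{jake_the_snake} this moves the tail $u_{d+2},\dots,u_{n+1}$ onto $u_2$, leaving $u_2,\dots,u_{d+1}$ at $u_1$; setting $v_1=u_2,\dots,v_d=u_{d+1}$ exhibits $T'$. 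The sequence $(u_2,\dots,u_{n+1})$ is a tail of the original one, hence closed convergent by (i), and by Lemma~\ref{leaf_ordering} it is exactly $(v_1,\dots,v_d)$ followed by the attached leaves in natural order.

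For the inductive step, let $h\ge 2$ and let $T^-$ be $T$ with all of its depth-$h$ vertices deleted; then $T^-$ is an odd radial rooted tree of depth $h-1$, its leaves $p_1,\dots,p_k$ (in lex order) are exactly the depth-$(h-1)$ vertices of $T$, each $p_i$ has an odd number $d_i$ of children in $T$ with $d_1+\cdots+d_k=m$, and $v_1$ is the first child of $p_1$. Apply the strengthened inductive hypothesis to $T^-$ with $m+\ell$ leaves attached at $p_1$: this yields a graceful labeling with $v$ labeled $0$ for which $(p_1,\dots,p_k,g_1,\dots,g_N)$ is a closed convergent alternating sequence, where $g_1,\dots,g_N$ ($N=m+\ell$) are the attached leaves in natural order. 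Now carry out the sequence of first-type transfers
\[
p_1\to p_2\to p_3\to\cdots\to p_k\to g_1 ,
\]
choosing at the $j$-th transfer to leave exactly $d_j$ leaves at $p_j$; this is legitimate by Lemma~\ref{transfer_parity} since consecutive indices of a closed alternating sequence differ in parity and each $d_j$ is odd, and the final transfer leaves $d_k$ leaves at $p_k$ and pushes the remaining $\ell$ leaves onto $g_1$. One checks this is a well-behaved sequence, and by Lemma~\ref{jake_the_snake} each transfer moves a tail, so afterward $p_i$ is adjacent to the consecutive leaves $g_{d_1+\cdots+d_{i-1}+1},\dots,g_{d_1+\cdots+d_i}$ and $g_1$ is adjacent to $g_{m+1},\dots,g_N$. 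The resulting tree is exactly $T'$ (levels $\le h-1$ are untouched; level $h$ is the $g_i$ with $i\le m$, grouped $d_i$ under $p_i$; level $h+1$ is $g_{m+1},\dots,g_N$ under $v_1:=g_1$), with $f(v)=0$; and $(g_1,\dots,g_N)$, a tail of a closed convergent sequence, is closed convergent by (i) and equals $(v_1,\dots,v_m)$ followed by the new attached leaves in natural order.

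The delicate point — and where the ``odd'' hypothesis really bites — is verifying that the $\ell$ leaves $g_{m+1},\dots,g_N$ pushed onto $v_1$ genuinely form a transferable set with respect to the alternating sequence $v_1,\dots,v_m$: one must confirm both that their labels are consecutive integers and that the two endpoints sum to the correct parameter $a+b$ of $(v_1,\dots,v_m)$. This reduces to a short computation with Lemma~\ref{leaf_ordering}: among the labels $\{c,\dots,d\}$ of the attached leaves, the first $m$ (in natural order) use $\lceil m/2\rceil$ values from the top and $\lfloor m/2\rfloor$ from the bottom, so the leftover $\ell$ labels form the central subinterval, whose endpoint sum matches the parameters of $(v_1,\dots,v_m)$ \emph{precisely because $m$ is odd}. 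The other point requiring (mild) care is threading the ``closed convergent'' strengthening through the induction via observation (i); the rest is routine bookkeeping.
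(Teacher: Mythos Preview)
Your proof is correct and follows essentially the same approach as the paper: both start from the gracefully labeled star $K_{1,n}$ and perform the canonical chain of first-type transfers along the internal vertices in lexicographic order, leaving the prescribed odd number of children at each step. The paper does this in one pass $s_1\to s_2\to\cdots\to s_k$ and then checks the two bullet points directly, whereas you reorganize the same chain of transfers as an induction on depth, carrying the strengthened hypothesis that the tail is a \emph{closed convergent} alternating sequence; unwinding your induction recovers exactly the paper's transfer sequence. One small remark: your final paragraph verifies transferability via a label-count through Lemma~\ref{leaf_ordering}, which indeed needs $m$ odd, but the paper's route is more direct---the leaves at $v_1$ arrive via the single first-type transfer $p_k\to g_1$, so their endpoint sum is automatically $f(p_k)+f(g_1)$, which equals the parameter sum of the tail $(g_1,\dots)$ regardless of parity.
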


\begin{proof}
Let the vertices of $T'$ be $t_{1}, \ldots , t_{n + 1}$, ordered according to $\mathcal{L}$, so that $t_{1}$ is the root $v$, and $t_{k}$ is the vertex $v_{1}$.  Since $T$ is radial, the internal vertices of $T'$ are $t_{1}, \ldots , t_{k}$.  Let $n_{i}$ be the number of children of $t_{i}$ in $T'$ for each $i\le k$, so that all $n_{i}$ are odd.

Consider the gracefully labeled star $K_{1, n}$, with vertices $s_{1}, \ldots , s_{n + 1}$, where $s_{1}$ is the central vertex and is labeled 0, and $s_{1}, \ldots , s_{n + 1}$ is a closed convergent alternating sequence.  Consider the sequence of transfers
$$s_{1}\rightarrow s_{2}\rightarrow \cdots \rightarrow s_{k}$$
By Lemma \ref{transfer_parity}, this sequence can leave behind any odd number of leaves at each step.  Let $S$ be the tree obtained from $K_{1, n}$ by performing this sequence of transfers, leaving behind $n_{i}$ leaves at each step.

By Corollary \ref{leaf_ordering_odd}, each transfer leaves behind the leaves with smallest indices in the transferable set.  Therefore, of the initial transferable set of leaves $\{s_{2}, \ldots , s_{k}\}$, the sequence of transfers leaves the first $n_{1}$ at $s_{1}$, the next $n_{2}$ at $s_{2}$, and so on.  Therefore, if we consider $S$ as a rooted tree with root $s_{1}$, then $S, T'$ have corresponding roots $s_{i}, t_{1}$, and $s_{i}, t_{i}$ have corresponding children for all $i$.  Therefore, $S, T'$ are isomorphic.

Since $S$ is obtained by transfers from a gracefully labeled tree, $S$ has a graceful labeling $g$ with $g(s_{1}) = 0$.  Therefore, $T'$ has a corresponding graceful labeling $f$ with $f(v) = 0$.\newline

\noindent\emph{Claim: $v_{1}, \ldots , v_{m}$ is an alternating sequence of vertices under $f$.}\newline

\noindent\emph{Proof of Claim.} Since $s_{1}, \ldots , s_{n + 1}$ is an alternating sequence under $g$, the corresponding sequence $t_{1}, \ldots , t_{n + 1}$, and hence its subsequence $v_{1}, \ldots , v_{m}$, are alternating sequences under $f$, since $v_{1}, \ldots , v_{m}$ are $m$ consecutive terms of $t_{1}, \ldots , t_{n + 1}$.\newline

\noindent\emph{Claim: The attached leaves form a transferable set of leaves under $f$.}\newline

\noindent\emph{Proof of Claim.} The leaves of $T'$ adjacent to $v_{1}$ correspond to the leaves of $S$ transferred by the last transfer $s_{k - 1}\rightarrow s_{k}$, since $v_{1}$ is $t_{k}$.  Therefore, of these leaves, the smallest and largest labels sum to $s_{k - 1} + s_{k}$.  Since the parameters of $v_{1}, \ldots , v_{m}$ are $t_{k}, t_{k - 1}$, the leaves of $T'$ adjacent to $v_{1}$ form a transferable set with respect to the alternating sequence $v_{1}, \ldots , v_{m}$.\newline

Therefore, $f$ is the desired graceful labeling of $T'$.
\end{proof}

\subsection{Blockwise permutable sequences}
\label{blockwise_section}

Let $T''$ be a tree obtained by attaching leaves to some of the leaves of an odd radial rooted tree $T$.  Our general strategy for proving that $T''$ is graceful is as follows:
\begin{itemize}
\item Choose a lexicographical order $\mathcal{L}$ on the vertices of $T$, so that the leaves of $T$ are $v_{1}, \ldots , v_{m}$, ordered according to $\mathcal{L}$.
\item Get a graceful tree $T'$ by attaching leaves to $v_{1}$, by Lemma \ref{auxiliary_radial_tree}.
\item Obtain $T''$ from $T'$ by transfers $v_{i}\rightarrow v_{j}$.
\end{itemize}
The last step requires that the numbers of children of $v_{1}, \ldots , v_{m}$ form an attainable sequence $n_{1}, \ldots , n_{m}$.  Our key idea is that the choice of $\mathcal{L}$ allows us to apply certain permutations to the sequence $n_{1}, \ldots , n_{m}$.  Therefore, it is useful to consider the following question:

\begin{question}
Given $n_{1}, \ldots , n_{m}$ and a set of permutations $S\subseteq S_{m}$, does there exist a permutation $\sigma\in S$ such that $n_{\sigma(1)}, \ldots , n_{\sigma(m)}$ is attainable?
\end{question}

Here we are considering only trees $T''$ formed from odd radial rooted trees, which ensures that the relevant set of permutations exhibits a certain structure.  We call a sequence of integers that can be permuted by such a set of permutations a \emph{blockwise permutable sequence (BPS)}, defined below.  We reserve the term, \emph{permutable sequence}, for the general question above, which we do not consider in this paper.

%
%
%
%
%
%
%
%
%
%
%
%

\begin{definition}
We define a \emph{blockwise permutable sequence (BPS)}, and the \emph{depth} of a BPS, recursively as follows:
\begin{itemize}
\item Each individual non-negative integer $n$ is a BPS of depth 0.

\item If $\mathcal{B}_{1},\ldots , \mathcal{B}_{k}$ are BPS's of maximum depth $d$, then $(\mathcal{B}_{1}, \ldots , \mathcal{B}_{k})$ is a BPS of depth $d + 1$.
\end{itemize}
The $\mathcal{B}_{i}$ in $(\mathcal{B}_{1}, \ldots , \mathcal{B}_{k})$ are unordered.  For example, we consider $(\mathcal{B}_{1}, \mathcal{B}_{2}, \mathcal{B}_{3})$ and $(\mathcal{B}_{2}, \mathcal{B}_{1}, \mathcal{B}_{3})$ to be the same BPS.
\end{definition}

A BPS can be thought of as a set of sequences of non-negative integers, such that any two sequences are permutations of one another.  This thought motivates the following terminology:

\begin{definition}
We say that a BPS \emph{contains} certain sequences of non-negative integers, according to the following recursive rules:
\begin{itemize}
\item The BPS $n$ contains the sequence $n$.
\item Suppose $\mathcal{B}_{1},\ldots , \mathcal{B}_{k}$ are BPS's, and for each $i$, suppose $\mathcal{B}_{i}$ contains a sequence $B_{i}$.  Then for all permutations $\sigma\in S_{k}$, $(\mathcal{B}_{1}, \ldots , \mathcal{B}_{k})$ contains the concatenated sequence $B_{\sigma(1)}, \ldots , B_{\sigma(k)}$.
\end{itemize}
\end{definition}

For example, $((7, 0, 0), 2, 2)$ is a BPS of depth 2 and contains the following sequences.  Each sequence corresponds to a different representation of the BPS, as shown.

\begin{center}
\begin{tabular}{lr}
$7, 0, 0, 2, 2$\qquad&\qquad$((7, 0, 0), 2, 2)$\\
$0, 7, 0, 2, 2$\qquad&\qquad$((0, 7, 0), 2, 2)$\\
$0, 0, 7, 2, 2$\qquad&\qquad$((0, 0, 7), 2, 2)$\\
$2, 7, 0, 0, 2$\qquad&\qquad$(2, (7, 0, 0), 2)$\\
$2, 0, 7, 0, 2$\qquad&\qquad$(2, (0, 7, 0), 2)$\\
$2, 0, 0, 7, 2$\qquad&\qquad$(2, (0, 0, 7), 2)$\\
$2, 2, 7, 0, 0$\qquad&\qquad$(2, 2, (7, 0, 0))$\\
$2, 2, 0, 7, 0$\qquad&\qquad$(2, 2, (0, 7, 0))$\\
$2, 2, 0, 0, 7$\qquad&\qquad$(2, 2, (0, 0, 7))$\\
\end{tabular}
\end{center}


\begin{definition}
A BPS is \emph{attainable} if it contains an attainable sequence.
\end{definition}

\begin{definition}
A BPS is \emph{nicely attainable} if it contains a nicely attainable sequence.
\end{definition}

\begin{lemma}
If a BPS is nicely attainable, then it is also attainable.
\end{lemma}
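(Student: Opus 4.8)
The plan is to unwind the two definitions and then invoke Lemma \ref{nicely_attainable_to_attainable}. By definition, a BPS is nicely attainable precisely when it contains a nicely attainable sequence of non-negative integers; say $\mathcal{B}$ contains the nicely attainable sequence $n_{1}, \ldots , n_{m}$. First I would apply Lemma \ref{nicely_attainable_to_attainable} to this sequence, which immediately gives that $n_{1}, \ldots , n_{m}$ is attainable.

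Then I would simply observe that $\mathcal{B}$ now contains a sequence — namely the very same $n_{1}, \ldots , n_{m}$ — that is attainable, which is exactly the definition of $\mathcal{B}$ being attainable. So the argument is a two-line chain: nicely attainable BPS $\Rightarrow$ contains a nicely attainable sequence $\Rightarrow$ (by Lemma \ref{nicely_attainable_to_attainable}) contains an attainable sequence $\Rightarrow$ attainable BPS. There is no real obstacle here; the statement is a formal consequence of the definitions once Lemma \ref{nicely_attainable_to_attainable} is in hand, and it is recorded mainly to mirror, at the level of BPS's, the implication already established for plain sequences.
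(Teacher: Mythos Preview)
Your proposal is correct and matches the paper's approach exactly; the paper's proof is simply ``By Lemma \ref{nicely_attainable_to_attainable},'' and you have merely spelled out the definitional unpacking that this one-line citation leaves implicit.
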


\begin{proof}
By Lemma \ref{nicely_attainable_to_attainable}.
\end{proof}


As with ordinary sequences of integers, we also use the symbols $o$, $e$, $0$, and $e/0$ to within a BPS.  More specifically, instead of building a BPS from non-negative integers $n$, we can build a BPS from the symbols $o$, $e$, $0$, and $e/0$, where each symbol represents certain non-negative integers.  Then a BPS using the symbols $o, e, 0, e/0$ represents certain BPS's of non-negative integers, and we will speak of such a BPS as being \emph{attainable} or \emph{nicely attainable} as follows:


\begin{definition}
Let $\mathcal{B}$ be a BPS using the symbols
$o, e, 0, e/0$.
\begin{itemize}
\item $\mathcal{B}$ is \emph{attainable} if every BPS represented by $\mathcal{B}$ is attainable.
\item $\mathcal{B}$ is \emph{nicely attainable} if every BPS represented by $\mathcal{B}$ is nicely attainable.
\end{itemize}
\end{definition}

\begin{lemma} \label{combine_permutable_sequences}
If $(\mathcal{B}_{1}, \ldots , \mathcal{B}_{k})$ contains $n_{1}, \ldots , n_{m}$, and $(\mathcal{B}_{1}', \ldots , \mathcal{B}_{k'}')$ contains $n_{1}', \ldots , n_{m'}'$, then $(\mathcal{B}_{1}, \ldots , \mathcal{B}_{k}, \mathcal{B}_{1}', \ldots , \mathcal{B}_{k'}')$ contains
$$n_{1}, \ldots , n_{m}, n_{1}', \ldots , n_{m'}'$$
\end{lemma}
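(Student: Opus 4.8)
The plan is to unwind the recursive definition of ``contains'' for a BPS of the form $(\mathcal{B}_{1}, \ldots , \mathcal{B}_{k})$, extract the data witnessing each hypothesis, and then reassemble that data into a witness for the conclusion. Nothing about transfers, attainability, or graceful labelings is needed; this is purely a formal manipulation of the BPS definition.

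First I would apply the recursive clause of the definition of ``contains'' to the hypothesis that $(\mathcal{B}_{1}, \ldots , \mathcal{B}_{k})$ contains $n_{1}, \ldots , n_{m}$. This yields a permutation $\sigma\in S_{k}$ and, for each $i$, a sequence $B_{i}$ contained in $\mathcal{B}_{i}$, such that the concatenation $B_{\sigma(1)}, \ldots , B_{\sigma(k)}$ equals $n_{1}, \ldots , n_{m}$. Likewise, from the hypothesis that $(\mathcal{B}_{1}', \ldots , \mathcal{B}_{k'}')$ contains $n_{1}', \ldots , n_{m'}'$, I would obtain a permutation $\tau\in S_{k'}$ and sequences $B_{j}'$ contained in $\mathcal{B}_{j}'$ with $B_{\tau(1)}', \ldots , B_{\tau(k')}'$ equal to $n_{1}', \ldots , n_{m'}'$.

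Next I would define a single permutation $\rho\in S_{k + k'}$ that acts as $\sigma$ on $\{1, \ldots , k\}$ and as a shifted copy of $\tau$ on $\{k + 1, \ldots , k + k'\}$; that is, $\rho(i) = \sigma(i)$ for $i\le k$ and $\rho(k + j) = k + \tau(j)$ for $1\le j\le k'$. Since $\sigma$ is a bijection of $\{1, \ldots , k\}$ and $\tau$ a bijection of $\{1, \ldots , k'\}$, the map $\rho$ is a bijection of $\{1, \ldots , k + k'\}$. For the combined BPS $(\mathcal{B}_{1}, \ldots , \mathcal{B}_{k}, \mathcal{B}_{1}', \ldots , \mathcal{B}_{k'}')$, I would choose as the sequence for its $i$th component the sequence $B_{i}$ when $i\le k$ and $B_{i - k}'$ when $i > k$; each such sequence is contained in the corresponding component by what was extracted above. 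Applying the recursive clause of the definition of ``contains'' with the permutation $\rho$ then shows that $(\mathcal{B}_{1}, \ldots , \mathcal{B}_{k}, \mathcal{B}_{1}', \ldots , \mathcal{B}_{k'}')$ contains the concatenation of the chosen sequences in the order prescribed by $\rho$, namely $B_{\sigma(1)}, \ldots , B_{\sigma(k)}, B_{\tau(1)}', \ldots , B_{\tau(k')}'$, which equals $n_{1}, \ldots , n_{m}, n_{1}', \ldots , n_{m'}'$.

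I do not expect a genuine obstacle here: the lemma is essentially immediate from the definitions. The only point requiring care is the bookkeeping — keeping track of the ``chosen sequence for each component'' separately from the ``permutation of the components,'' and verifying that splicing the two component-permutations $\sigma$ and $\tau$ together blockwise genuinely produces one permutation of the combined index set $\{1, \ldots , k + k'\}$. Once that is checked, the conclusion follows directly.
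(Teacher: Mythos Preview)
Your proposal is correct and follows essentially the same approach as the paper: unwind the definition to extract $\sigma$, $\tau$, and the component sequences, then splice them together. The paper's proof is terser (it simply says ``under the properly chosen permutation'' where you explicitly construct $\rho$), but the argument is the same.
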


\begin{proof}
By definition, 
\begin{itemize}
\item Each $\mathcal{B}_{i}$ contains a sequence $B_{i}$, such that, for some permutation $\sigma\in S_{k}$, the concatenated sequence $B_{\sigma(1)}, \ldots , B_{\sigma(k)}$ is $n_{1}, \ldots , n_{m}$.
\item Each $\mathcal{B}_{i}'$ contains a sequence $B_{i}'$, such that, for some permutation $\sigma'\in S_{k'}$, the concatenated sequence $B_{\sigma'(1)}', \ldots , B_{\sigma'(k')}'$ is $n_{1}', \ldots , n_{m'}'$.
\end{itemize}
Then, under the properly chosen permutation, $(\mathcal{B}_{1}, \ldots , \mathcal{B}_{k}, \mathcal{B}_{1}', \ldots , \mathcal{B}_{k'}')$ contains the concatenated sequence $B_{\sigma(1)}, \ldots , B_{\sigma(k)}, B_{\sigma'(1)}', \ldots , B_{\sigma'(k')}'$, which is $n_{1}, \ldots , n_{m}, n_{1}', \ldots , n_{m'}'$, as desired.
\end{proof}
\subsection{Odd blockwise permutable sequences of depth at most 2}

\begin{lemma} \label{BPS_depth_1}
Every BPS of depth 1 is attainable.
\end{lemma}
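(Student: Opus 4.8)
The plan is to first unwind the definitions. A BPS of depth $1$ is a tuple $(\mathcal{B}_1, \ldots, \mathcal{B}_k)$ all of whose entries have depth $0$, i.e.\ are single non-negative integers $n_1, \ldots, n_k$; by the definition of ``contains,'' such a BPS contains exactly the sequences obtained by permuting $n_1, \ldots, n_k$ in all possible ways. Since a BPS is attainable as soon as it contains one attainable sequence, it suffices to exhibit a single permutation of $n_1, \ldots, n_k$ that is attainable. I would choose the permutation that lists the odd entries first, then the positive even entries, then the zeros, schematically $\underbrace{o, \ldots, o}_{p},\, \underbrace{e, \ldots, e}_{q},\, \underbrace{0, \ldots, 0}_{r}$ with $p + q + r = k$, and argue this sequence is attainable.

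Each block is handled by a result already in hand. Because $o$ is nicely attainable, repeated use of Lemma~\ref{combine_nicely_attainable} shows that $\underbrace{o, \ldots, o}_{p}$ is nicely attainable. The block $\underbrace{e, \ldots, e}_{q}$ is attainable by the lemma asserting that $e, \ldots, e$ is attainable, and then appending the $r$ trailing zeros preserves attainability by $r$ applications of the lemma that $n_1, \ldots, n_m$ attainable implies $n_1, \ldots, n_m, 0$ attainable; hence $\underbrace{e, \ldots, e}_{q},\, \underbrace{0, \ldots, 0}_{r}$ is attainable. Finally, Lemma~\ref{combine_attainable}, applied with the nicely attainable prefix $\underbrace{o, \ldots, o}_{p}$ and the attainable suffix $\underbrace{e, \ldots, e}_{q},\, \underbrace{0, \ldots, 0}_{r}$, yields that the whole sequence is attainable, as desired.

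I do not expect a real obstacle; the one thing to be careful about is the degenerate cases in which a block is empty, since the combining lemmas need non-empty pieces. If $q \ge 1$ the argument above goes through verbatim (prepending the odd block via Lemma~\ref{combine_attainable} only when $p \ge 1$). If $q = 0$ but $p \ge 1$, then $\underbrace{o, \ldots, o}_{p}$ is nicely attainable, hence attainable by Lemma~\ref{nicely_attainable_to_attainable}, and appending $r$ zeros again finishes the job. If $p = q = 0$, the sequence is $\underbrace{0, \ldots, 0}_{r}$ with $r = k \ge 1$, which is attainable by appending $r - 1$ zeros to the one-term sequence $0$ (the latter is attainable, witnessed by the empty sequence of transfers on an alternating sequence whose first vertex has an empty transferable set). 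So the proof is a matter of invoking these lemmas in the right order, with only this small amount of case bookkeeping.
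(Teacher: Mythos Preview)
Your proposal is correct and follows essentially the same approach as the paper: permute the entries so that the odd ones come first, then the positive evens, then the zeros, and argue that the resulting sequence $o,\ldots,o,e,\ldots,e,0,\ldots,0$ is attainable using that $o$ is nicely attainable, $e,\ldots,e$ is attainable, and zeros may be appended. Your write-up is in fact more careful than the paper's about the degenerate cases where some blocks are empty.
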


\begin{proof}
It suffices to prove that all BPS's $(a_{1}, \ldots , a_{k})$ are attainable, where each $a_{i}$ is one of $o, e, 0$.  Each $(a_{1}, \ldots , a_{k})$ contains a sequence of the form 
$$o, \ldots , o, e, \ldots , e, 0, \ldots , 0$$
with a non-negative number of each of $o, e, 0$.  Since $o, *$ is nicely attainable and $e, \ldots , e$ is attainable, 
the sequence $o, \ldots , o, e, \ldots , e, 0, \ldots , 0$ is attainable.  Therefore, 
$(a_{1}, \ldots , a_{k})$ is attainable, as desired.
\end{proof}

\begin{definition}
An \emph{odd} BPS is a type of BPS, defined recursively as follows:
\begin{itemize}
\item Each individual non-negative integer $n$ is an odd BPS of depth 0.
\item If $\mathcal{B}_{1},\ldots , \mathcal{B}_{k}$ are odd BPS's with $k$ odd, $(\mathcal{B}_{1}, \ldots , \mathcal{B}_{k})$ is an odd BPS.
\end{itemize}
\end{definition}


\begin{lemma}
Let $\mathcal{B}_{1}, \ldots , \mathcal{B}_{k}$ be odd BPS's of positive integers of depth at most 1.  Then $(\mathcal{B}_{1}, \ldots , \mathcal{B}_{k})$ is attainable.
\end{lemma}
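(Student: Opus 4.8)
The plan is to prove the statement by induction on $k$, splitting the tuple into a prefix with a \emph{nicely} attainable representative and a suffix handled by the induction hypothesis. For the base case $k=1$, the tuple $(\mathcal{B}_1)$ is just a depth-at-most-$1$ BPS of positive integers, hence attainable by Lemma~\ref{BPS_depth_1}. For $k\ge 2$, I reorder the blocks and choose their representatives so that, for some $1\le j<k$, the reordered prefix $(\mathcal{B}_{i_1},\ldots,\mathcal{B}_{i_j})$ has a nicely attainable representative. Since $(\mathcal{B}_1,\ldots,\mathcal{B}_k)$ contains the concatenation of any representatives of its blocks in any order (Lemma~\ref{combine_permutable_sequences}), and the remaining suffix $(\mathcal{B}_{i_{j+1}},\ldots,\mathcal{B}_{i_k})$ is again a tuple of fewer than $k$ odd BPS's of positive integers of depth at most $1$ — hence attainable by the induction hypothesis — Lemma~\ref{combine_attainable} shows that $(\mathcal{B}_1,\ldots,\mathcal{B}_k)$ is attainable. (In the cases below where no such prefix can be split off, I instead exhibit an attainable representative of the whole tuple directly.)

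So the real content is to locate, in any tuple with $k\ge2$, a sub-tuple with a nicely attainable representative. Call a block \emph{nice} if it has one. A block with only odd entries has the representative $o,\ldots,o$, nicely attainable by repeated use of Lemma~\ref{combine_nicely_attainable} on the nicely attainable sequence $o$; a block with exactly two even entries and at least three odd entries may be ordered $e,o,o,o,e,o,\ldots,o$, nicely attainable by combining $e,o,o,o,e$ with $o,\ldots,o$; and a block with exactly four even entries (necessarily with an odd number of odd entries) may be ordered $e,e,e,e,o,\ldots,o$, nicely attainable by combining $e,e,e,e$ (Lemma~\ref{backwards_double_8}) with $o,\ldots,o$. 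If the tuple contains a nice block, peel it off and recurse on the other $k-1$ blocks. If every block is a single even integer, the tuple has the attainable representative $e,\ldots,e$ and we are done.

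The remaining case is the main obstacle: every block is a single even integer or a ``hard'' block whose parity pattern matches none of the nice templates (for instance, an odd number of even entries, or two even entries with only one odd entry). Here I pair the hard blocks two at a time: ordering $\mathcal{B}_i$ (with $s$ odd and $t$ even entries) as evens-then-odds and $\mathcal{B}_j$ (with $s'$ odd and $t'$ even entries) as odds-then-evens gives the pair the representative $e^{t},o^{s+s'},e^{t'}$, which I match against the nicely attainable templates $e,o,o,o,e$, $\ e,o,e,e,o,e$, and $e,e/0,o,\ldots,o,e/0,e$, distributing any leftover $o$'s; even entries that cannot be absorbed this way, together with the single-even blocks and a possible unpaired hard block, form a sub-tuple that is attainable (by the induction hypothesis when the nicely attainable chunk is nonempty, or directly otherwise — for instance a pair $(e,o,o),(e,o,o)$ has the attainable representative $o,o,e,e,o,o$, recognized as $o,o$ followed by the attainable $e,e,o,o$), and Lemma~\ref{combine_attainable} glues the nicely attainable chunk to the attainable tail. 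The only real difficulty is the parity bookkeeping showing that such a partition into a nicely attainable chunk and an attainable chunk always exists; the specialized nicely attainable and attainable sequences established earlier in this section ($e,o,o,o,e$, $e,o,e,e,o,e$, $e,e/0,o,\ldots,o,e/0,e$, $e,\ldots,e$, $e,e/0,o,\ldots,o$, $e,e/0,e,o$) were proved precisely to cover every parity pattern of one or two hard blocks.
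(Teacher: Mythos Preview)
Your outline has the right overall architecture --- decompose into a nicely attainable prefix plus an attainable suffix, then glue with Lemma~\ref{combine_attainable} --- and this is exactly the strategy the paper uses. But the proof is not complete: you yourself flag that ``the only real difficulty is the parity bookkeeping showing that such a partition into a nicely attainable chunk and an attainable chunk always exists,'' and then you do not do that bookkeeping. That bookkeeping \emph{is} the proof. Your classification of ``nice'' blocks covers only $0$, $2$ (with $\ge 3$ odd entries), and exactly $4$ even entries; blocks with $1$, $3$, $5$, $6$, $\ldots$ even entries, or $2$ even entries and a single odd entry, are all ``hard'', and your pairing scheme for two hard blocks is only sketched (``match against the templates \ldots\ distributing any leftover $o$'s''). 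You never verify that every parity pattern of one or two hard blocks actually lands on one of the listed nicely attainable or attainable templates, and this is not obvious --- for example, pairing $3/3$ with $3/3$ gives $e,e,e,e,e,e$, which is attainable but not nicely attainable, so you cannot then peel it off and recurse.

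The paper closes this gap by classifying each block $a/b$ according to $a\bmod 4$ (with the all-even case $a=b$ treated separately), reducing to five representative types $0/1$, $1/3$, $2/3$, $3/5$, $1/1$. It then writes down two explicit finite tables: one listing multisets of these types that are nicely attainable, and one listing all \emph{irreducible} multisets (those containing no nicely attainable sub-multiset) together with an attainable representative of each. The general block $a/b$ is recovered from its representative type by inserting $o$'s and groups of four $e$'s at marked positions. This explicit enumeration is what makes the argument go through; your induction-and-pairing scheme would need the same tables (or an equivalent exhaustive case check) to be a proof.
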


\begin{proof} 
Denote by $a/b$ the BPS of depth 1
$$(\underbrace{\overbrace{e, \ldots, e}^{\text{$a$ $e$'s}}, o, \ldots , o}_{\text{$b$ terms}})$$
We begin with the following claim. \newline

\noindent\emph{Claim:} If each $\mathcal{B}_{i}$ is one of $0/1$, $1/3$, $2/3$, $3/5$, then $(\mathcal{B}_{1}, \ldots , \mathcal{B}_{k})$ contains a sequence $a_{1}, \ldots , a_{m}, e, \ldots , e$, where $a_{1}, \ldots, a_{m}$ is nicely attainable.\newline


\noindent\emph{Proof of Claim.} Since $o, *$ and $e, e, e, e, *$ are nicely attainable, and since nicely attainable sequences can be combined by Lemma \ref{combine_nicely_attainable}, each of the following BPS's are nicely attainable.\newline
\begin{center}
\begin{tabular}{|p{3.1 cm}|p{8.4 cm}|}
\hline
\multicolumn{2}{|c|}{Nicely attainable BPS's}\\
\hline
$(0/1)$ & $((o))$\\
\hline
$(1/3, 1/3)$ & $((\underbrace{e, o, o), (o, e}, o))$\\
\hline
$(1/3, 3/5)$ & $((o, o, \underbrace{e), (e, e, e}, o, o))$\\
\hline
$(2/3, 2/3)$ & $((o, \underbrace{e, e), (e, e}, o))$\\
\hline
$(2/3, 3/5, 3/5)$ & $((o, \underbrace{e, e), (e, e}, o, o, \underbrace{e), (e, e, e}, o, o))$\\
\hline
$(3/5, 3/5, 3/5, 3/5)$ & $((o, o, \underbrace{e, e, e), (e}, o, o, \underbrace{e, e), (e, e}, o, o, \underbrace{e), (e, e, e}, o, o))$\\
\hline
\end{tabular}\newline
\end{center}
We say that $(\mathcal{B}_{1}, \ldots, \mathcal{B}_{k})$ is \emph{irreducible} if no subset of the $\mathcal{B}_{i}$ corresponds to one of the sequences above.  We list all irreducible BPS's below.

\begin{center}
\begin{tabular}{|p{3.1 cm}|p{8.4 cm}|}
\hline
\multicolumn{2}{|c|}{Irreducible BPS's}\\
\hline
$\varnothing$ & n/a\\
\hline
$(1/3)$ & $((o, o, e))$\\
\hline
$(2/3)$ & $((o, e, e))$\\
\hline
$(3/5)$ & $((o, o, e, e, e))$\\
\hline
$(1/3, 2/3)$ & $((\underbrace{e, o, o), (o, e}, e))$\\
\hline
$(2/3, 3/5)$ & $((o, \underbrace{e, e), (e, e}, o, o, e))$\\
\hline
$(3/5, 3/5)$ & $((o, o, \underbrace{e, e, e), (e}, o, o, e, e))$\\
\hline
$(3/5, 3/5, 3/5)$ & $((o, o, \underbrace{e, e, e), (e}, o, o, \underbrace{e, e), (e, e}, o, o, e))$\\
\hline
\end{tabular}\newline
\end{center}
As shown, each of these BPS's contains a sequence with the desired property, that is, a sequence $a_{1}, \ldots , a_{m}, e, \ldots , e$, where $a_{1}, \ldots , a_{m}$ is nicely attainable.

It remains to prove that each $(\mathcal{B}_{1}, \ldots , \mathcal{B}_{k})$ contains a sequence with the desired property.  We can write $(\mathcal{B}_{1}, \ldots , \mathcal{B}_{k})$ as
\begin{align*}
(\mathcal{B}_{1, 1}, &\ldots, \mathcal{B}_{1, k_{1}},\\
\mathcal{B}_{2, 1}, &\ldots , \mathcal{B}_{2, k_{2}},\\
&\;\;\vdots\\
\mathcal{B}_{n, 1}, &\ldots, \mathcal{B}_{n, k_{n}})
\end{align*}
where
\begin{itemize}
\item $(\mathcal{B}_{i, 1}, \ldots , \mathcal{B}_{i, k_{i}})$ is a nicely attainable BPS, and hence contains a nicely attainable sequence, 
for all $i\ne n$.
\item $(\mathcal{B}_{n, 1}, \ldots , \mathcal{B}_{n, k_{n}})$ is an irreducible BPS, and hence contains a sequence with the desired property, by the above.
\end{itemize}
By Lemma \ref{combine_nicely_attainable}, the concatenation of these sequences also has the desired property, and by Lemma \ref{combine_permutable_sequences}, $(\mathcal{B}_{1}, \ldots , \mathcal{B}_{k})$ contains the concatenation of these sequences.  Therefore, the claim is proved.\newline

\noindent\emph{Claim:} If each $\mathcal{B}_{i}$ is one of $0/1$, $1/3$, $2/3$, $3/5$, $1/1$, $(\mathcal{B}_{1}, \ldots , \mathcal{B}_{k})$ is attainable.\newline

\noindent\emph{Proof of Claim.} We can write $(\mathcal{B}_{1}, \ldots , \mathcal{B}_{k})$ as $(\mathcal{B}_{1}', \ldots , \mathcal{B}_{k'}', 1/1, \ldots , 1/1)$, where each $\mathcal{B}_{i}'$ is one of $0/1$, $1/3$, $2/3$, $3/5$.  Then
\begin{itemize}
\item $(\mathcal{B}_{1}', \ldots , \mathcal{B}_{k'}')$ contains a sequence $a_{1}, \ldots , a_{m}, e, \ldots , e$, where $a_{1}, \ldots , a_{m}$ is nicely attainable, by the previous claim.
\item $(1/1, \ldots, 1/1)$ contains a sequence $e, \ldots , e$.
\end{itemize}
Therefore, by Lemma \ref{combine_permutable_sequences}, $(\mathcal{B}_{1}, \ldots , \mathcal{B}_{k})$ also contains $a_{1}, \ldots , a_{m}, e, \ldots, e$, and hence is attainable.\newline

Now we return to the original problem.  We associate each $a/b$ with one of $0/1$, $1/3$, $2/3$, $3/5$, $1/1$, giving five classes of permutable sequences $a/b$:
\begin{itemize}
\item If $a < b$ and $a\equiv 0\pmod{4}$, associate $a/b$ with $0/1$.
\item If $a < b$ and $a\equiv 1\pmod{4}$, associate $a/b$ with $1/3$.
\item If $a < b$ and $a\equiv 2\pmod{4}$, associate $a/b$ with $2/3$.
\item If $a < b$ and $a\equiv 3\pmod{4}$, associate $a/b$ with $3/5$.
\item If $a = b$, associate $a/b$ with $1/1$.
\end{itemize}
Consider repeating the proofs of the claims above, but where each of $0/1$, $1/3$, $2/3$, $3/5$, $1/1$ represents any $a/b$ in its class.  To show that the proofs still hold, it suffices to prove the following three statements:

\begin{enumerate}[(1)]
\item If $(\mathcal{B}_{1}, \ldots , \mathcal{B}_{k})$ is represented by a nicely attainable BPS in the table above, then $(\mathcal{B}_{1}, \ldots , \mathcal{B}_{k})$ is nicely attainable.
\item If $(\mathcal{B}_{1}, \ldots , \mathcal{B}_{k})$ is represented by an irreducible BPS in the table above, then $(\mathcal{B}_{1}, \ldots , \mathcal{B}_{k})$ contains a sequence $a_{1}, \ldots , a_{m}, e, \ldots , e$, where $a_{1}, \ldots , a_{m}$ is nicely attainable.
\item If $a/b$ is represented by $1/1$, then $a/b$ contains a sequence $e, \ldots , e$.
\end{enumerate}

We observe that (3) is obvious, so we turn our attention to (1) and (2), in which the $\mathcal{B}_{i}$ are each represented by $0/1$, $1/3$, $2/3$, $3/5$.  In each case, we can obtain $\mathcal{B}_{i}$ from its representative BPS by adding $o$'s and groups of four $e$'s.  Consider the following modified versions of the tables above:


\begin{center}
\begin{tabular}{|p{3.1 cm}|p{8.4 cm}|}
\hline
\multicolumn{2}{|c|}{Nicely attainable BPS's}\\
\hline
$(0/1)$ & $((\ldots , o))$\\
\hline
$(1/3, 1/3)$ & $((\ldots, \underbrace{e, o, o), (o, e},\ldots , o))$\\
\hline
$(1/3, 3/5)$ & $((\ldots, o, o, \underbrace{e), (e, e, e}, \ldots, o, o))$\\
\hline
$(2/3, 2/3)$ & $((\ldots , o, \underbrace{e, e), (e, e},\ldots , o))$\\
\hline
$(2/3, 3/5, 3/5)$ & $((\ldots , o, \underbrace{e, e), (e, e},\ldots , o, o, \underbrace{e), (e, e, e}, \ldots, o, o))$\\
\hline
$(3/5, 3/5, 3/5, 3/5)$ &
\hangindent=3.8cm
$((\ldots , o, o, \underbrace{e, e, e), (e}, \ldots , o, o, \underbrace{e, e), (e, e},$
$\ldots , o, o, \underbrace{e), (e, e, e}, \ldots , o, o))$\\
\hline
\end{tabular}\newline
\end{center}

\begin{center}
\begin{tabular}{|p{3.1 cm}|p{8.4 cm}|}
\hline
\multicolumn{2}{|c|}{Irreducible BPS's}\\
\hline
$\varnothing$ & $((\ldots))$\\
\hline
$(1/3)$ & $((\ldots , o, o, e))$\\
\hline
$(2/3)$ & $((\ldots , o, e, e))$\\
\hline
$(3/5)$ & $((\ldots , o, o, e, e, e))$\\
\hline
$(1/3, 2/3)$ & $((\ldots , \underbrace{e, o, o), (o, e},\ldots , e))$\\
\hline
$(2/3, 3/5)$ & $((\ldots , o, \underbrace{e, e), (e, e}, \ldots , o, o, e))$\\
\hline
$(3/5, 3/5)$ & $((\ldots , o, o, \underbrace{e, e, e), (e}, \ldots , o, o, e, e))$\\
\hline
$(3/5, 3/5, 3/5)$ & $((\ldots , o, o, \underbrace{e, e, e), (e}, \ldots , o, o, \underbrace{e, e), (e, e}, \ldots , o, o, e))$\\
\hline
\end{tabular}\newline
\end{center}

For each entry in the tables above, we can add $o$'s and groups of four $e$'s wherever there are dots, since $o$ and $e, e, e, e$ are nicely permutable.  This gives the desired sequence for each $(\mathcal{B}_{1}, \ldots , \mathcal{B}_{k})$.

For example, consider $(5/9, 3/7)$, which is represented by $(1/3, 3/5)$.
\begin{itemize}
\item We can obtain $5/9$ from $1/3$ by adding two $o$'s and a group of four $e$'s.
\item We can obtain $3/7$ from $3/5$ by adding two $o$'s.
\end{itemize}
Therefore, we can obtain a nicely attainable sequence contained by $(5/9, 3/7)$ by adding $o$'s and $e$'s to $(1/3, 3/5)$ as follows:
\begin{center}
\begin{tabular}{p{3.1 cm}p{8.4 cm}}
$(1/3, 3/5)$ & $((\ldots, o, o, \underbrace{e), (e, e, e}, \ldots, o, o))$\\
$(5/9, 3/7)$ & $((o, o, \underbrace{e, e, e, e}, o, o, \underbrace{e), (e, e, e}, o, o, o, o)$
\end{tabular}\newline
\end{center}
Applying this process in general, (1) and (2) hold, and the lemma is proved.

%
%
%
%
%
%
%
\end{proof}

\begin{corollary} \label{odd_BPS_depth_2}
All odd BPS's of positive integers of depth 2 are attainable.
\end{corollary}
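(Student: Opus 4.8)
The plan is to observe that this corollary is an immediate unwinding of the relevant definitions applied to the preceding lemma. First I would recall that, by the recursive definition of an odd BPS, any odd BPS of depth $2$ has the form $(\mathcal{B}_{1}, \ldots , \mathcal{B}_{k})$ where $k$ is odd and each $\mathcal{B}_{i}$ is an odd BPS. Since the whole expression has depth $2$, the recursive definition of depth forces the maximum depth among the $\mathcal{B}_{i}$ to be $1$; in particular every $\mathcal{B}_{i}$ is an odd BPS of depth at most $1$. Moreover, if the integers appearing in the original BPS are all positive, then so are the integers appearing in each $\mathcal{B}_{i}$.

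Next I would simply invoke the preceding lemma, which asserts precisely that if $\mathcal{B}_{1}, \ldots , \mathcal{B}_{k}$ are odd BPS's of positive integers of depth at most $1$, then $(\mathcal{B}_{1}, \ldots , \mathcal{B}_{k})$ is attainable. Applying this to the decomposition above yields immediately that $(\mathcal{B}_{1}, \ldots , \mathcal{B}_{k})$ is attainable, which is the claim.

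There is essentially no obstacle here: the entire combinatorial content already resides in the lemma, and the only thing to verify is that its hypotheses are met. The single point worth making explicit is that ``depth $2$'' is exactly what guarantees each constituent $\mathcal{B}_{i}$ has depth at most $1$, so that the lemma applies; this is immediate from the recursive definition of the depth of a BPS. Accordingly, the proof I would give is just one or two sentences tying these definitions together with the previous lemma.
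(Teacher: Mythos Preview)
Your proposal is correct and matches the paper's approach exactly: the paper states this as a corollary with no separate proof, since it follows immediately from the preceding lemma once one unpacks the definition of an odd BPS of depth~2. Your explanation of why the depth-2 hypothesis forces each constituent $\mathcal{B}_i$ to have depth at most~1 is precisely the (trivial) verification needed.
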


\begin{lemma} \label{odd_BPS_depth_2_with_zeros}
Let $\mathcal{B}_{1}, \ldots , \mathcal{B}_{k}$ be odd BPS's of depth at most 1, such that
\begin{itemize}
\item Each $\mathcal{B}_{i}$ has at most one 0.
\item If $\mathcal{B}_{i}$ has a 0, then $\mathcal{B}_{i}$ also has a positive even integer.
\end{itemize}
Then $(\mathcal{B}_{1}, \ldots , \mathcal{B}_{k})$ is attainable.
\end{lemma}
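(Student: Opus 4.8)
The plan is to bootstrap from the preceding lemma, which is exactly this statement for odd BPS's of \emph{positive} integers of depth at most $1$: the only new feature is the zeros, and the idea is that each zero can be forced into an $e/0$-slot of one of the nicely attainable or attainable sequences that involve the symbol $e/0$.

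First I would split $\mathcal{B}_1, \ldots, \mathcal{B}_k$ into the zero-free blocks and the zero-containing blocks. The zero-free blocks are odd BPS's of positive integers of depth at most $1$, so by the preceding lemma — more precisely, by the structure displayed in its proof (nicely attainable BPS's for all but a last ``irreducible'' group, which contains a sequence $a_1,\ldots,a_p,e,\ldots,e$ with $a_1,\ldots,a_p$ nicely attainable, together with the all-even $1/1$-blocks pushed to the end as a run of $e$'s) — their concatenation contains a sequence that is nicely attainable followed by a (possibly empty) run $e,\ldots,e$. So it remains to produce, from each zero-containing block, a concatenation of nicely attainable pieces plus at most one attainable tail, and then to glue everything together by Lemma~\ref{combine_nicely_attainable}, Lemma~\ref{combine_attainable}, and Lemma~\ref{combine_permutable_sequences}. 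For a zero-containing block $\mathcal{B}_i$ the hypothesis supplies a positive even entry, and since we may permute the entries of $\mathcal{B}_i$ freely, we may display $\mathcal{B}_i$ as a concatenation beginning $e,e/0,\ldots$, with its zero sitting in that $e/0$-slot. I would then mimic the block-classification of the preceding lemma, tracking the (at most one) zero: each $\mathcal{B}_i$ decomposes into $o$'s and blocks $e,e,e,e$ (all nicely attainable) together with one piece $e,e/0,e/0,e$ or $e,e/0,o,\ldots,o,e/0,e$ absorbing the zero (both nicely attainable), or, when $\mathcal{B}_i$ has too few positive evens for that, a tail $e,e/0,o,\ldots,o$ or $e,e/0,e,o$ (attainable, but not necessarily nicely attainable). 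The key device for the ``too few positive evens'' case is that two awkward zero-blocks — each consisting of one positive even $2\alpha$, its zero, and an odd number of odd entries — can, after suitable internal reordering, be concatenated into $2\alpha,0,o,\ldots,o,0,2\beta$, matching the nicely attainable sequence $e,e/0,o,\ldots,o,e/0,e$; pairing the awkward blocks this way leaves at most one of them unpaired.

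Finally I would assemble all the pieces with the nicely attainable ones first and the (at most one) unpaired awkward tail together with the trailing run of $e$'s from the zero-free part collected at the very end, arguing that these can be merged into a single attainable tail (so that the hypothesis ``all but the last piece nicely attainable'' of the combination lemmas is met); then Lemma~\ref{combine_nicely_attainable} and Lemma~\ref{combine_attainable} show the concatenation is attainable and Lemma~\ref{combine_permutable_sequences} shows $(\mathcal{B}_1,\ldots,\mathcal{B}_k)$ contains it. The main obstacle is precisely this end-of-sequence bookkeeping: a block with a zero and fewer than two positive evens cannot, by itself, absorb its zero into a pattern flanked by two $e$'s, so one must track the parity of the number of odd entries in such blocks, control how many non-nicely-attainable tails survive (the pairing trick is what caps this), and verify that the surviving tails combine with the zero-free trailing $e$'s — all of which reduces, as in the preceding lemma, to a finite case analysis organized by tables of ``nicely attainable'' and ``irreducible'' block-combinations, now with a zero allowed.
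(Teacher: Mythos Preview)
Your plan is in the right spirit --- adapt the preceding lemma, absorbing each zero into an $e/0$ slot of one of the nicely attainable patterns --- and that is what the paper does. But your specific organization differs from the paper's and leaves a genuine gap.

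The paper does \emph{not} separate zero-free from zero-containing blocks. Instead it redefines $a/b$ so that (for $a>1$) one of the $a$ even-type entries is an $e/0$, and reclassifies every $\mathcal{B}_i$ into five types $0/1,\,1/3,\,2/3,\,3/3,\,1/1$ (note $3/3$ replacing $3/5$). The crucial observation the paper makes, and which your separation strategy obscures, is that the old trick of pushing the all-even $1/1$-blocks to the end no longer works: several of the new irreducible BPS's end in $e/0$ rather than $e$, so one cannot simply append $e,\ldots,e$ after them. The paper therefore folds $1/1$ into the table of nicely attainable combinations --- e.g.\ $(3/3,1/1)$, $(1/3,2/3,1/1)$, $(2/3,1/1,1/1)$, $(1/1,1/1,1/1,1/1)$ --- and one irreducible case, $(2/3,1/1)$, is dispatched only via the special attainable sequence $e,\,e/0,\,e,\,o$ of Lemma~\ref{special_attainable_sequence}.

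Your plan, by contrast, produces \emph{two} potentially non-nicely-attainable tails --- one from the zero-free part (its trailing $e$'s) and one from an unpaired awkward zero-block --- and then asserts they ``can be merged into a single attainable tail''. That merge is exactly where the difficulty lives, and your pairing trick only handles the narrow case of blocks with one positive even, one zero, and an odd number of odds. A block like $(e,e,0)$ --- the paper's $3/3$ --- fits none of your listed decomposition patterns (it has three entries, but your absorbers $e,e/0,e/0,e$ and $e,e/0,o,\ldots,o,e/0,e$ need at least four, and your tails $e,e/0,o,\ldots,o$ and $e,e/0,e,o$ do not match it either); it must be combined with a $1/1$ or with other $3/3$-type blocks, not with another one-even awkward block. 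Your closing line that it all ``reduces to tables'' is correct, but those tables \emph{are} the proof, and they have to be rebuilt with $1/1$ as a first-class participant rather than as a postscript.
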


\begin{proof}
We adapt the proof above.  Denote by $a/b$ the BPS of depth 1
\begin{align*}
(\underbrace{e/0, \overbrace{e, \ldots, e}^{\text{$a$ $e$'s}}, o, \ldots , o}_{\text{$b$ terms}})\qquad & \text{if $a > 1$}\\
(\underbrace{e, o, \ldots , o}_{\text{$b$ terms}})\qquad & \text{if $a = 1$}\\
(\underbrace{o, \ldots , o}_{\text{$b$ terms}})\qquad & \text{if $a = 0$}
\end{align*}
Then each $\mathcal{B}_{i}$ can be expressed as $a/b$.  We now associate each $a/b$ with one of $0/1$, $1/3$, $2/3$, $3/3$, $1/1$, giving five classes of permutable sequences $a/b$:
\begin{itemize}
\item If $a\equiv 0\pmod{4}$, associate $a/b$ with $0/1$.
\item If $a\equiv 1\pmod{4}$ and $a < b$, associate $a/b$ with $1/3$.
\item If $a\equiv 1\pmod{4}$ and $a = b$, associate $a/b$ with $1/1$.
\item If $a\equiv 2\pmod{4}$, associate $a/b$ with $2/3$.
\item If $a\equiv 3\pmod{4}$, associate $a/b$ with $3/3$.
\end{itemize}
Now we list nicely attainable BPS's.  The strategy of putting $1/1$'s at the end no longer works in this context, since some of our irreducible BPS's end with $e/0$.  Therefore, we include BPS's with $1/1$ in the list below.

\begin{center}
\begin{tabular}{|p{3.1 cm}|p{8.4 cm}|}
\hline
\multicolumn{2}{|c|}{Nicely attainable BPS's}\\
\hline
$(0/1)$ & $((\ldots , o))$\\
\hline
$(1/3, 1/3)$ & $((\ldots, \underbrace{e, o, o), (o, e},\ldots , o))$\\
\hline
$(1/3, 3/3)$ & $((\ldots, o, o, \underbrace{e), (e/0, e, e}, \ldots))$\\
\hline
$(2/3, 2/3)$ & $((\ldots , o, \underbrace{e, e/0), (e/0, e},\ldots , o))$\\
\hline
$(3/3, 1/1)$ & $((\ldots, \underbrace{e, e/0, e), (e}, \ldots))$\\
\hline
$(1/3, 2/3, 1/1)$& $((\ldots, o, o, \underbrace{e), (\ldots , e), (e/0, e}, \ldots , o))$\\ 
\hline
$(2/3, 3/3, 3/3)$ & $((\ldots , o, \underbrace{e, e/0), (e/0, e}, \ldots , \underbrace{e), (e/0, e, e}, \ldots))$\\
\hline
$(2/3, 1/1, 1/1)$ & $((\ldots , o, \underbrace{e, e/0), (\ldots , e), (\ldots, e}))$\\
\hline
$(3/3, 3/3, 3/3, 3/3)$ &
\hangindent=4.9cm
$((\ldots, \underbrace{e, e/0, e), (e}, \ldots , \underbrace{e, e/0), (e/0, e},$
$\ldots , \underbrace{e), (e/0, e, e}, \ldots))$\\
\hline
$(1/3, 1/1, 1/1, 1/1)$ & $((\ldots , o, o, \underbrace{e), (\ldots , e), (\ldots , e), (\ldots , e}))$\\
\hline
$(1/1, 1/1, 1/1, 1/1)$ & $((\ldots , \underbrace{e), (\ldots , e), (\ldots , e), (\ldots, e}))$\\
\hline
\end{tabular}\newline
\end{center}

\noindent We say that $(\mathcal{B}_{1}, \ldots, \mathcal{B}_{k})$ is \emph{irreducible} if no subset of the $\mathcal{B}_{i}$ corresponds to one of the sequences above.  We list all irreducible BPS's below.

\begin{center}
\begin{tabular}{|p{3.1 cm}|p{8.4 cm}|}
\hline
\multicolumn{2}{|c|}{Irreducible BPS's}\\
\hline
$\varnothing$ & $((\ldots))$\\
\hline
$(1/3)$ & $((\ldots , o, o, e))$\\
\hline
$(2/3)$ & $((\ldots , o, e, e/0))$\\
\hline
$(3/3)$ & $((\ldots , e, e, e/0))$\\
\hline
$(1/1)$ & $((\ldots, e))$\\
\hline
$(1/3, 2/3)$ & $((\ldots , \underbrace{e, o, o), (o, e},\ldots , e/0))$\\
\hline
$(1/3, 1/1)$ & $((\ldots , o, o, e), (\ldots , e))$\\
\hline
$(2/3, 3/3)$ & $((\ldots , o, \underbrace{e, e/0), (e/0, e}, \ldots , e))$\\
\hline
$(2/3, 1/1)^{*}$ & $((e), (e/0, e, o))$\hfill(by Lemma \ref{special_attainable_sequence})\\
\hline
$(3/3, 3/3)$ & $((\ldots , \underbrace{e, e/0, e), (e}, \ldots , e, e/0))$\\
\hline
$(1/1, 1/1)$ & $((\ldots, e), (\ldots , e))$\\
\hline
$(1/3, 1/1, 1/1)$ & $((\ldots , o, o, e), (\ldots , e), (\ldots , e))$\\
\hline
$(3/3, 3/3, 3/3)$ & $((\ldots , \underbrace{e, e/0, e), (e}, \ldots , \underbrace{e, e/0), (e/0, e}, \ldots , e))$\\
\hline
$(1/1, 1/1, 1/1)$ & $((\ldots , e), (\ldots, e), (\ldots , e))$\\
\hline
\end{tabular}\newline
\end{center}
It now suffices to prove the following two statements:
\begin{enumerate}[(1)]
\item If $(\mathcal{B}_{1}, \ldots , \mathcal{B}_{k})$ is represented by a nicely attainable BPS in the table above, then $(\mathcal{B}_{1}, \ldots , \mathcal{B}_{k})$ is nicely attainable.
\item If $(\mathcal{B}_{1}, \ldots , \mathcal{B}_{k})$ is represented by an irreducible BPS in the table above, then $(\mathcal{B}_{1}, \ldots , \mathcal{B}_{k})$ is attainable.
\end{enumerate}

First consider $\mathcal{B}_{i}$ represented by $0/1, 1/3, 2/3, 3/3$.  We can obtain these $\mathcal{B}_{i}$ from their representative BPS's by adding $o$'s and groups of four $e$'s, where at most one $e$ is replaced by an $e/0$.  For these $\mathcal{B}_{i}$, it is clear that we can add $o$'s and groups of four $e$'s wherever there are dots, as in the proof above.

Now consider $\mathcal{B}_{i}$ represented by $1/1$.  We can obtain these $\mathcal{B}_{i}$ from $1/1$ by adding groups of four $e$'s, where at most one $e$ is replaced by an $e/0$.  For these $\mathcal{B}_{i}$, two cases require special consideration:

\begin{center}
\emph{Dots within a nicely attainable string $e, e/0, e/0, e$}
\end{center}

In this case, adding groups of four $e$'s produces a string of $e$'s and $e/0$'s with length a multiple of four.  If an $e$ in any group is replaced by an $e/0$, we claim that we can choose the location of the $e/0$'s so that the resulting string is a nicely attainable string of the form
$$\underbrace{e, e/0, e/0, e}, \underbrace{e, e/0, e/0, e}, \ldots , \underbrace{e, e/0, e/0, e}$$
Since two of every four consecutive terms of the sequence above are $e/0$'s, we can choose the location of the $e/0$'s so that the resulting string is a nicely attainable string.  Therefore, we can add groups of four $e$'s in this case.

\begin{center}
\emph{Dots within a string of trailing $e$'s}
\end{center}

In this case, adding groups of four $e$'s produces a longer string of trailing $e$'s.  If an $e$ in any group is replaced by an $e/0$, we claim that we can choose the location of the $e/0$'s so that the resulting string is a nicely attainable string of the form
$$\underbrace{e, e/0, e/0, e}, \underbrace{e, e/0, e/0, e}, \ldots , \underbrace{e, e/0, e/0, e}, e, \ldots , e$$
with at most three $e$'s after the last brace.  If the four $e$'s are the last four terms, we can replace the last $e$ with an $e/0$.  Otherwise, we can choose the location of the $e/0$'s according to the sequence above.  Therefore, we can add groups of four $e$'s in this case.\newline

Finally, consider $(\mathcal{B}_{1}, \mathcal{B}_{2})$ represented by the irreducible BPS $(2/3, 1/1)$.  Every such $(\mathcal{B}_{1}, \mathcal{B}_{2})$ is either $(2/3, 1/1)$ itself, or can be obtained from one of the following by adding $o$'s and groups of four $e$'s as above:
\begin{center}
\begin{tabular}{|p{3.1 cm}|p{8.4 cm}|}
\hline
\multicolumn{2}{|c|}{BPS's represented by $(2/3, 1/1)$}\\
\hline
$(2/3, 5/5)$ & $((\ldots , o, \underbrace{e, e/0), (e/0, e}, \ldots , e, e, e))$\\
\hline
$(2/5, 1/1)$ & $((\ldots , \underbrace{e), (o, o, o, e}, \ldots , e/0))$\\
\hline
$(5/6, 1/1)$ & $((\ldots , o, \underbrace{e, e/0, e, e}, e), (\ldots , e))$\\
\hline
\end{tabular}
\end{center}
Therefore, (1) and (2) are proved, and hence the lemma is proved.
\end{proof}

\subsection{Odd blockwise permutable sequences of large depth}

\begin{lemma} \label{large_depth_attainable}
All odd BPS's of positive integers, such that the number of even integers is not $3\pmod{4}$, are attainable.
\end{lemma}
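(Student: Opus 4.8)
The plan is to induct on the depth of the blockwise permutable sequence, strengthening the statement so that the inductive hypothesis records when the sequence is \emph{nicely} attainable, not merely attainable. Concretely, I would prove by induction on $d$: if $\mathcal{B}$ is an odd BPS of positive integers of depth at most $d$ with $j$ even integers, then $\mathcal{B}$ is nicely attainable when $j\equiv 0\pmod 4$, and when $j\equiv 1$ or $2\pmod 4$ the BPS $\mathcal{B}$ contains a sequence $a_{1},\ldots ,a_{m},e,\ldots ,e$ in which $a_{1},\ldots ,a_{m}$ is nicely attainable and the number of trailing $e$'s is congruent to $j$ modulo $4$ (hence one or two of them). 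In every one of these cases $\mathcal{B}$ is then attainable, since a nicely attainable head followed by the attainable tail $e,\ldots ,e$ is attainable by Lemma~\ref{combine_attainable}, and $\mathcal{B}$ contains this concatenation.

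For the base case $d\le 2$, attainability is already supplied by Lemma~\ref{BPS_depth_1} and Corollary~\ref{odd_BPS_depth_2}; the refined statement (nicely attainable, or nicely attainable head plus a short tail of $e$'s) can be read off directly from the proof of Corollary~\ref{odd_BPS_depth_2}, which exhibits every such BPS as a concatenation of nicely attainable pieces followed by a single ``irreducible'' piece of controlled shape.

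For the inductive step, write $\mathcal{B}=(\mathcal{B}_{1},\ldots ,\mathcal{B}_{k})$ with $k$ odd, each $\mathcal{B}_{i}$ an odd BPS of positive integers of depth at most $d-1$ with $j_{i}$ even integers, so $\sum_{i}j_{i}=j$. Applying the inductive hypothesis to each block: a block with $j_{i}\equiv 0\pmod 4$ is nicely attainable, and a block with $j_{i}$ odd contributes a nicely attainable head followed by an odd number of trailing $e$'s, which one may take to be a single $e$ by folding the surplus even integers, together with odd integers of the block, into the nicely attainable groups $e,o,o,o,e$ and $e,o,e,e,o,e$. The number of blocks with $j_{i}$ odd has the parity of $j$; when that number is at least two, I would pair such blocks up and show each pair $(\mathcal{B}_{a},\mathcal{B}_{b})$ contributes a nicely attainable sequence, its two surplus $e$'s being combined --- again via $e,o,o,o,e$ with odd integers drawn from the two blocks, and via $e,e,e,e$ and $e,e/0,o,\ldots ,o,e/0,e$ --- into a nicely attainable group. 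At most one ``bad'' block then remains (none when $j$ is even), and Lemma~\ref{combine_permutable_sequences} together with Lemmas~\ref{combine_nicely_attainable} and~\ref{combine_attainable} shows that the permutation of $\mathcal{B}$ placing that block last produces the desired contained sequence; the hypothesis $j\not\equiv 3\pmod 4$ is exactly what keeps the leftover tail of $e$'s at length at most two, so that it gets absorbed rather than left as an uncombinable block.

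The step I expect to be the main obstacle is precisely this pairing-and-absorption bookkeeping. A block with an odd number of even integers carries one even integer that cannot be grouped into a block of four, so neutralizing a pair of such blocks requires verifying that there are always enough odd integers, distributed acceptably within the blocks, to form the $e,o,o,o,e$- and $e,o,e,e,o,e$-type groups; and one must separately dispose of degenerate small blocks --- a single even integer, or a block such as $(e,e,o)$ --- in which odd integers are scarce, presumably by cataloguing the low-complexity cases much as in the tables in the proof of Corollary~\ref{odd_BPS_depth_2}. Checking all residue classes and all such availability subcases, rather than any single clever sequence of transfers, is where the real work of the proof lies.
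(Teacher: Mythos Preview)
Your plan has a real gap: the strengthened inductive hypothesis you propose covers only the residue classes $j\equiv 0,1,2\pmod 4$, but the blocks $\mathcal{B}_{i}$ inside $\mathcal{B}=(\mathcal{B}_{1},\ldots ,\mathcal{B}_{k})$ can individually have $j_{i}\equiv 3\pmod 4$ even when the total $j$ does not. For instance, take $k=3$ with $j_{1}=j_{2}=3$ and $j_{3}=0$, so $j=6\equiv 2$. Your hypothesis says nothing about $\mathcal{B}_{1}$ or $\mathcal{B}_{2}$, and the induction stalls. A correct strengthening must assert something useful about \emph{every} odd BPS of positive integers, including those in the excluded residue class, even though those are not themselves claimed to be attainable.

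A second, related defect is that your hypothesis records only the \emph{output} shape of a block (nicely attainable head plus a short tail of $e$'s) but not what \emph{input} it can absorb. When you concatenate $\mathcal{B}_{a}$ (ending in a stray $e$) with $\mathcal{B}_{b}$, you need $\mathcal{B}_{b}$ to contain a sequence that begins in a way compatible with that stray $e$; your hypothesis does not guarantee this, and your fix of ``borrowing odd integers from the blocks to form $e,o,o,o,e$'' reaches back into the internal structure of blocks in a way the inductive statement no longer gives you access to. The paper resolves both issues at once by associating to every odd BPS (in all four residue classes) a list of \emph{ending pairs} $(\mathcal{E}_{1},\mathcal{E}_{2})$, meaning the block contains a sequence that transforms any prefix with ending $\mathcal{E}_{1}$ into one with ending $\mathcal{E}_{2}$; endings $\varnothing,E1,E2,E2',E3$ encode the possible short tails. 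The inductive step then reduces to checking that for any three blocks, some permutation composes their ending pairs into the required list --- pure finite casework with no availability-of-odds bookkeeping. Your intuition about what the blocks ``look like'' modulo $4$ is exactly right, but the two-sided ending formalism is what makes the induction close.
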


\begin{proof}
Our key idea is that each odd BPS $\mathcal{B}$ of positive integers can function in certain basic ways, based on the number of even integers in $\mathcal{B}$ modulo 4.
\begin{itemize}
\item If the number of even integers in $\mathcal{B}$ is $0\pmod{4}$, then $\mathcal{B}$ can function as a long string of $o$'s.
\item If the number of even integers in $\mathcal{B}$ is $1\pmod{4}$, then $\mathcal{B}$ can function as a long string of $o$'s, with one $e$ at the beginning or end.
\item If the number of even integers in $\mathcal{B}$ is $2\pmod{4}$, then $\mathcal{B}$ can function as a long string of $o$'s, with two consecutive $e$'s in the middle.
\item If the number of even integers in $\mathcal{B}$ is $3\pmod{4}$, then $\mathcal{B}$ can function as a long string of $o$'s, with two consecutive $e$'s in the middle and one $e$ at the beginning or end.
\end{itemize}
To formalize this idea, we define the \emph{ending} of a sequence of $e$'s and $o$'s:
\begin{itemize}
\item A sequence has ending $\varnothing$ if it is nicely attainable.
\item A sequence has ending $E1$ if it can be expressed as
$$a_{1}, \ldots , a_{m}, e$$
for some nicely attainable sequence $a_{1}, \ldots , a_{m}$ of $e$'s and $o's$.
\item A sequence has ending $E2$ if it can be expressed as
$$a_{1}, \ldots , a_{m}, e, e, \underbrace{o, \ldots , o}_{\clap{\parbox{2.5 cm}{\centering\footnotesize non-negative even number of $o$'s}}}$$
for some nicely attainable sequence $a_{1}, \ldots , a_{m}$ of $e$'s and $o's$.
\item A sequence has ending $E2'$ if it can be expressed as
$$a_{1}, \ldots , a_{m}, e, e, \underbrace{o, \ldots , o}_{\clap{\parbox{2.5 cm}{\centering\footnotesize odd number of $o$'s}}}$$
for some nicely attainable sequence $a_{1}, \ldots , a_{m}$ of $e$'s and $o's$.
\item A sequence has ending $E3$ if it can be expressed as
$$a_{1}, \ldots , a_{m}, e, e, \underbrace{o, \ldots , o}_{\clap{\parbox{2.5 cm}{\centering\footnotesize non-negative even number of $o$'s}}}, e$$
for some nicely attainable sequence $a_{1}, \ldots , a_{m}$ of $e$'s and $o's$.
\end{itemize}
We associate a BPS with a pair of endings $(\mathcal{E}_{1}, \mathcal{E}_{2})$ if the BPS contains a sequence $a_{1}, \ldots , a_{m}$, such that appending the sequence $a_{1}, \ldots , a_{m}$ onto any sequence with ending $\mathcal{E}_{1}$ gives a sequence with ending $\mathcal{E}_{2}$.  Our original idea then takes the form of the following claim:\newline

\noindent\emph{Claim: Let $\mathcal{B}$ be an odd BPS of positive integers.
\begin{itemize}
\item If the number of even integers in $\mathcal{B}$ is $0\pmod{4}$, then $\mathcal{B}$ is associated with $(\varnothing, \varnothing), (E2, E2'), (E2', E2)$.
\item If the number of even integers in $\mathcal{B}$ is $1\pmod{4}$, then $\mathcal{B}$ is associated with $(\varnothing, E1), (E1, E2), (E2, E3), (E3, \varnothing)$.
\item If the number of even integers in $\mathcal{B}$ is $2\pmod{4}$, then $\mathcal{B}$ is associated with $(\varnothing, E2), (\varnothing, E2'), (E2, \varnothing), (E2', \varnothing)$.
\item If the number of even integers in $\mathcal{B}$ is $3\pmod{4}$, then $\mathcal{B}$ is associated with $(\varnothing, E3), (E1, \varnothing), (E2, E1), (E3, E2)$.
\end{itemize}}

\noindent\emph{Proof of Claim.}  We first verify that each of the depth-0 BPS's of positive integers, $e$ and $o$, satisfy the claim:\newline


\begin{center}
\emph{$o$ is associated with $(\varnothing, \varnothing), (E2, E2'), (E2', E2)$}
\end{center}


\begin{itemize}
\item $o$ is associated with $(E2, E2'), (E2', E2)$ by definition.
\item $o$ is associated with $(\varnothing, \varnothing)$ since $o$ is nicely attainable, and combining two nicely attainable sequences gives another, by Lemma \ref{combine_nicely_attainable}.\newline 
\end{itemize}


\begin{center}
\emph{$e$ is associated with $(\varnothing, E1), (E1, E2), (E2, E3), (E3, \varnothing)$}
\end{center}


\begin{itemize}
\item $e$ is associated with $(\varnothing, E1), (E1, E2), (E2, E3)$ by definition.
\item $e$ is associated with $(E3, \varnothing)$ since
$$e, e, \underbrace{o, \ldots , o}_{\clap{\parbox{2.5 cm}{\centering\footnotesize non-negative even number of $o$'s}}}, e, e$$
is nicely attainable, and combining two nicely attainable sequences gives another, by Lemma \ref{combine_nicely_attainable}.\newline
\end{itemize}

\begin{center}
\emph{If $\mathcal{B}_{1}, \mathcal{B}_{2}, \mathcal{B}_{3}$ each satisfy the claim,\\then $(\mathcal{B}_{1}, \mathcal{B}_{2}, \mathcal{B}_{3})$ satisfies the claim.}
\end{center}

Given $(\mathcal{B}_{1}, \mathcal{B}_{2}, \mathcal{B}_{3})$, if we find a permutation $\sigma\in S_{3}$ and ending pairs
\begin{align*}
(\mathcal{E}_{1}, \mathcal{E}_{2})\text{ of }\mathcal{B}_{\sigma(1)}\\
(\mathcal{E}_{2}, \mathcal{E}_{3})\text{ of }\mathcal{B}_{\sigma(2)}\\
(\mathcal{E}_{3}, \mathcal{E}_{4})\text{ of }\mathcal{B}_{\sigma(3)}
\end{align*}
then $(\mathcal{B}_{1}, \mathcal{B}_{2}, \mathcal{B}_{3})$ is associated with $(\mathcal{E}_{1}, \mathcal{E}_{4})$.  We can show that $(\mathcal{B}_{1}, \mathcal{B}_{2}, \mathcal{B}_{3})$ satisfies the claim by doing this for each ending pair listed in the claim.

For example, suppose the number of $e$'s in each $\mathcal{B}_{i}$ is $1\pmod{4}$, so that each $\mathcal{B}_{i}$ is associated with $(\varnothing, E1), (E1, E2), (E2, E3), (E3, \varnothing)$, and the number of $e$'s in $\mathcal{B}$ is $3\pmod{4}$.  Then we have
\begin{center}
\begin{tabular}{l|l}
Ending pairs of $\mathcal{B}_{i}$ & Ending pairs of $\mathcal{B}$\\
\hline
$(\varnothing, E1), (E1, E2), (E2, E3)$ & $(\varnothing, E3)$\\
$(E1, E2), (E2, E3), (E3, \varnothing)$ & $(E1, \varnothing)$\\
$(E2, E3), (E3, \varnothing), (\varnothing, E1)$ & $(E2, E1)$\\
$(E3, \varnothing), (\varnothing, E1), (E1, E2)$ & $(E3, E2)$
\end{tabular}
\end{center}
With this framework in place, proving that all $(\mathcal{B}_{1}, \mathcal{B}_{2}, \mathcal{B}_{3})$ satisfy the claim is a matter of casework and computation, and we omit the details.  It is helpful to structure the casework around the following observations:
\begin{itemize}
\item If $\mathcal{B}_{1}, \mathcal{B}_{2}$ each have an odd number of $e$'s, then $(\mathcal{B}_{1}, \mathcal{B}_{2})$ is associated with $(\varnothing, \varnothing), (E2, E2)$ or $(\varnothing, E2), (E2, \varnothing)$.
\item If $\mathcal{B}_{1}, \mathcal{B}_{2}$ each have an even number of $e$'s, then $(\mathcal{B}_{1}, \mathcal{B}_{2})$ is associated with $(\varnothing, \varnothing), (E2, E2)$ or $(\varnothing, E2), (E2, \varnothing)$.\newline
\end{itemize}
%


Now we prove the claim.  Since $e$ and $o$ satisfy the claim, we may assume $\mathcal{B}$ has depth at least one, so that $\mathcal{B} = (\mathcal{B}_{1}, \ldots , \mathcal{B}_{k})$.  We apply double induction as follows:
\begin{itemize}
\item By induction on the length (the total number of $e$'s and $o$'s) of $\mathcal{B}$, we may assume that every odd BPS of positive integers with smaller length satisfies the claim.
\item By induction on the depth of $\mathcal{B}$, we may assume that every odd BPS of positive integers with the same length and smaller depth satisfies the claim.
\end{itemize}

If $k = 1$, then $\mathcal{B} = (\mathcal{B}_{1})$, where $\mathcal{B}_{1}$ has the same length as $\mathcal{B}$ and smaller depth.  By induction, $\mathcal{B}_{1}$ satisfies the claim, so $\mathcal{B}$ also satisfies the claim.  Therefore, we may assume $k\ge 3$.

By induction, $\mathcal{B}_{1}, \mathcal{B}_{2}, (\mathcal{B}_{3}, \ldots , \mathcal{B}_{k})$ each satisfy the claim, so by the above, $(\mathcal{B}_{1}, \mathcal{B}_{2}, (\mathcal{B}_{3}, \ldots , \mathcal{B}_{k}))$ also satisfies the claim.  Since $\mathcal{B}$ contains all sequences that $(\mathcal{B}_{1}, \mathcal{B}_{2}, (\mathcal{B}_{3}, \ldots , \mathcal{B}_{k}))$ contains, $\mathcal{B}$ also satisfies the claim, as desired.\newline

Now we return to the original problem.  If the number of even integers in $\mathcal{B}$ is not $3\pmod{4}$, then $\mathcal{B}$ is associated with one of the ending pairs $(\varnothing, \varnothing), (\varnothing, E1), (\varnothing, E2)$.  Since the sequences associated with the endings $E1$ and $E2$ are attainable, $\mathcal{B}$ is attainable, as desired.  (In general, the sequence associated with the ending $E3$ is not attainable.)
\end{proof}
%
%
%
%

\begin{lemma} \label{big_tree_last_lemma}
Let $\mathcal{B}$ be an odd BPS, such that the number of even integers is not $3\pmod{4}$, and such that each 0 is in some $\mathcal{B}_{i}$ of depth 1, such that
\begin{itemize}
\item $\mathcal{B}_{i}$ has no other 0's.
\item $\mathcal{B}_{i}$ also has a positive even integer.
\end{itemize}
Then $\mathcal{B}$ is attainable.
\end{lemma}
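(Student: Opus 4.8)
The plan is to mirror the proof of Lemma~\ref{large_depth_attainable} almost verbatim, re-using its five endings $\varnothing, E1, E2, E2', E3$, its notion of a BPS being \emph{associated with} an ending pair $(\mathcal{E}_1,\mathcal{E}_2)$, and its central Claim that an odd BPS $\mathcal{B}$ with $k$ even integers is associated with the pairs dictated by $k\pmod{4}$ (the four lists $(\varnothing,\varnothing),(E2,E2'),(E2',E2)$; $(\varnothing,E1),(E1,E2),(E2,E3),(E3,\varnothing)$; $(\varnothing,E2),(\varnothing,E2'),(E2,\varnothing),(E2',\varnothing)$; and $(\varnothing,E3),(E1,\varnothing),(E2,E1),(E3,E2)$). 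Granting the Claim, the lemma follows exactly as Lemma~\ref{large_depth_attainable} does: when the number of even integers is not $3\pmod{4}$, $\mathcal{B}$ is associated with one of $(\varnothing,\varnothing),(\varnothing,E1),(\varnothing,E2)$, and since the sequences associated with the endings $E1,E2$ are attainable (while the one associated with $E3$ is not), $\mathcal{B}$ is attainable.

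The only genuinely new ingredient is an extra family of base cases. In Lemma~\ref{large_depth_attainable} the induction bottomed out at the depth-$0$ blocks $e$ and $o$; here, because the hypothesis confines every $0$ to a depth-$1$ block $\mathcal{B}_i$ that carries no other $0$ and at least one positive even integer, I would treat each such block $\mathcal{D}$ as an additional (atomic) base case and show directly that $\mathcal{D}$ is associated with the ending pairs on the list for $a\pmod{4}$, where $a$ is the number of positive even integers of $\mathcal{D}$. This is essentially the content of the proof of Lemma~\ref{odd_BPS_depth_2_with_zeros}, recast in the ending-pair language: parametrising $\mathcal{D}$ as an $a/b$-block as done there, the ``nicely attainable'' and ``irreducible'' tables exhibit, for each residue of $a$, sequences that realise the required ending pairs, and the two special-consideration paragraphs (``dots within a nicely attainable string $e,e/0,e/0,e$'' and ``dots within a string of trailing $e$'s'') show that the single $e/0$ can always be positioned compatibly with the target ending. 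I would also re-verify, as there, the unchanged base cases $e$ and $o$.

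With these base cases in hand, the inductive step of Lemma~\ref{large_depth_attainable} applies unchanged. The ``combine three BPS's'' device only chains ending labels and never inspects block contents, so it works as soon as the constituent blocks satisfy the Claim; and the hypotheses of the present lemma are inherited when we split $\mathcal{B}=(\mathcal{B}_1,\ldots,\mathcal{B}_k)$ into $\mathcal{B}_1,\mathcal{B}_2,(\mathcal{B}_3,\ldots,\mathcal{B}_k)$ --- oddness of $k$ is preserved, and every $0$ still lies in a depth-$1$ sub-block with a positive even integer and no companion $0$. The case $k=1$ reduces $\mathcal{B}=(\mathcal{B}_1)$ to a block of strictly smaller depth (or, if $\mathcal{B}$ itself is a zero-carrying depth-$1$ block, to the new base case), so the double induction on length and then on depth closes as before.

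I expect the main obstacle to be exactly the new base case: checking that, for each residue of $a$, every ending pair on the corresponding list is realised by the zero-carrying depth-$1$ blocks while respecting the ``at most one $0$'' constraint --- in particular the pairs involving $E3$ and the trailing-$o$ endings $E2,E2'$, where one must make sure that inserting the lone $e/0=0$ into the relevant string (a block of four written as $e,e/0,e/0,e$, or a run of trailing $e$'s) still leaves a nicely attainable sequence, just as in the special-consideration paragraphs of Lemma~\ref{odd_BPS_depth_2_with_zeros}. Everything else is bookkeeping identical to that of Lemma~\ref{large_depth_attainable}.
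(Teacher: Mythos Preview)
Your overall plan---mirror the inductive scheme of Lemma~\ref{large_depth_attainable} and add base cases for the zero-carrying blocks---is exactly what the paper does. But you have missed the one genuinely new step in the paper's proof: the ending definitions $E2$, $E2'$, $E3$ must first be \emph{relaxed} so that the second $e$ in each is replaced by $e/0$. Without this relaxation your base-case verification cannot succeed. Take $\mathcal{D}=(e,o,0)$, which has two even integers and so must be associated with $(\varnothing,E2)$. Under the unmodified definition, a sequence with ending $E2$ must terminate in $e,e$ followed by an even number of $o$'s; but no ordering of $e,o,0$ produces two consecutive positive evens, so the association fails. The paper fixes this by letting $E2$ end in $e,e/0,\ldots$, after which the sequence $o,e,0$ works. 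The same issue arises for $(e,e,0)$ and the pairs involving $E3$. Your remark about ``inserting the lone $e/0$ into the relevant string'' gestures at the right idea but locates it in the wrong place: the $0$ cannot always be absorbed into the nicely attainable prefix; for the minimal blocks it must sit in the ending tail itself, which forces the redefinition.

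Two smaller points. First, your modular bookkeeping is off by one: the Claim is keyed to the \emph{total} number of even integers, and $0$ is even, so a block with $a$ positive evens and one $0$ belongs to the list for $a+1\pmod 4$, not $a\pmod 4$. Second, the paper does not treat every zero-carrying depth-$1$ block as an atomic base case. It verifies only the two minimal ones, $(e,e,0)$ and $(e,o,0)$, and handles a larger depth-$1$ block $(\mathcal{B}_1,\ldots,\mathcal{B}_k)$ containing a $0$ by first permuting so that the $0$ and a positive even both land among $\mathcal{B}_3,\ldots,\mathcal{B}_k$; then $(\mathcal{B}_3,\ldots,\mathcal{B}_k)$ again satisfies the lemma's hypothesis and the usual three-block split applies. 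This is cleaner than your proposal to verify ending pairs for all such blocks directly, and it sidesteps the translation you would otherwise need between the table-based framework of Lemma~\ref{odd_BPS_depth_2_with_zeros} and the ending-pair language.
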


\begin{proof}
We adapt the proof above.  First, we adjust the definitions of the endings $E2, E2', E3$ as follows to allow for $e/0$'s:
\begin{itemize}
\item A sequence has ending $E2$ if it can be expressed as
$$a_{1}, \ldots , a_{m}, e, e/0, \underbrace{o, \ldots , o}_{\clap{\parbox{2.5 cm}{\centering\footnotesize non-negative even number of $o$'s}}}$$
for some nicely attainable sequence $a_{1}, \ldots , a_{m}$ of $e$'s and $o's$.
\item A sequence has ending $E2'$ if it can be expressed as
$$a_{1}, \ldots , a_{m}, e, e/0, \underbrace{o, \ldots , o}_{\clap{\parbox{2.5 cm}{\centering\footnotesize odd number of $o$'s}}}$$
for some nicely attainable sequence $a_{1}, \ldots , a_{m}$ of $e$'s and $o's$.
\item A sequence has ending $E3$ if it can be expressed as
$$a_{1}, \ldots , a_{m}, e, e/0, \underbrace{o, \ldots , o}_{\clap{\parbox{2.5 cm}{\centering\footnotesize non-negative even number of $o$'s}}}, e/0$$
for some nicely attainable sequence $a_{1}, \ldots , a_{m}$ of $e$'s and $o's$.
\end{itemize}

We adjust the claim only slightly, so that $\mathcal{B}$ is an odd BPS satisfying the conditions of the lemma, rather than an odd BPS of positive integers.  As in the proof above, $e$ and $o$ satisfy the claim.  We also verify that the BPS's $(e, e, 0)$ and $(e, o, 0)$ satisfy the claim:\newline

\begin{center}
\emph{$(e, e, 0)$ is associated with $(\varnothing, E3), (E1, \varnothing), (E2, E1), (E3, E2)$}
\end{center}

\begin{itemize}
\item $(e, e, 0)$ is associated with $(\varnothing, E3)$, using the sequence $e, 0, e$ or $e, e, 0$.
\item $(e, e, 0)$ is associated with $(E1, \varnothing)$, using the sequence $0, e, e$ or $e, 0, e$.
\item $(e, e, 0)$ is associated with $(E2, E1)$, using the sequence $0, e, e$.
\item $(e, e, 0)$ is associated with $(E3, E2)$, using the sequence $e, e, 0$.
\end{itemize}
Here we use the fact that $e, e/0, e/0, e$ is nicely attainable, and that combining two nicely attainable sequences gives another, by Lemma \ref{transfer_parity}.\newline

\begin{center}
\emph{$(e, o, 0)$ is associated with $(\varnothing, E2), (\varnothing, E2'), (E2, \varnothing), (E2', \varnothing)$}
\end{center}

\begin{itemize}
\item $(e, o, 0)$ is associated with $(\varnothing, E2)$, using the sequence $o, e, 0$.
\item $(e, o, 0)$ is associated with $(\varnothing, E2')$, using the sequence $e, 0, o$.
\item $(e, o, 0)$ is associated with $(E2, \varnothing)$, using the sequence $0, e, o$.
\item $(e, o, 0)$ is associated with $(E2', \varnothing)$, using the sequence $o, 0, e$.
\end{itemize}
Here we use the fact that $e, e/0, e/0, e$ is nicely attainable, and that combining two nicely attainable sequences gives another, by Lemma \ref{transfer_parity}.\newline

As in the proof above, if $\mathcal{B}_{1}, \mathcal{B}_{2}, \mathcal{B}_{3}$ each satisfy the claim, then $(\mathcal{B}_{1}, \mathcal{B}_{2}, \mathcal{B}_{3})$ satisfies the claim.  Since $e$ and $o$ satisfy the claim, and since $0$ does not satisfy the conditions of the lemma, we may assume $\mathcal{B}$ has depth at least one, so that $\mathcal{B} = (\mathcal{B}_{1}, \ldots , \mathcal{B}_{k})$.  We apply double induction as above:
\begin{itemize}
\item By induction on the length (the total number of $e$'s and $o$'s) of $\mathcal{B}$, we may assume that every odd BPS satisfying the conditions of the lemma with smaller length satisfies the claim.
\item By induction on the depth of $\mathcal{B}$, we may assume that every odd BPS satisfying the conditions of the lemma with the same length and smaller depth satisfies the claim.
\end{itemize}
If $k = 1$, then $\mathcal{B} = (\mathcal{B}_{1})$, where $\mathcal{B}_{1}$ has the same length as $\mathcal{B}$ and smaller depth.  By induction, $\mathcal{B}_{1}$ satisfies the claim, so $\mathcal{B}$ also satisfies the claim.  Therefore, we may assume $k\ge 3$.

If $\mathcal{B}$ has depth at least 2, then $\mathcal{B}_{1}, \mathcal{B}_{2}, (\mathcal{B}_{3}, \ldots , \mathcal{B}_{k})$ each satisfy the conditions of the lemma.  Then by induction, $\mathcal{B}_{1}, \mathcal{B}_{2}, (\mathcal{B}_{3}, \ldots , \mathcal{B}_{k})$ each satisfy the claim, so $(\mathcal{B}_{1}, \mathcal{B}_{2}, (\mathcal{B}_{3}, \ldots , \mathcal{B}_{k}))$ also satisfies the claim.  Since $\mathcal{B}$ contains all sequences that $(\mathcal{B}_{1}, \mathcal{B}_{2}, (\mathcal{B}_{3}, \ldots , \mathcal{B}_{k}))$ contains, $\mathcal{B}$ also satisfies the claim.  Therefore, we may assume $\mathcal{B}$ has depth 1.

If $\mathcal{B}$ has no 0's, then $\mathcal{B}_{1}, \mathcal{B}_{2}, (\mathcal{B}_{3}, \ldots , \mathcal{B}_{k})$ each satisfy the conditions of the lemma.  Then, as above, $\mathcal{B}$ satisfies the claim.  Therefore, we may assume $\mathcal{B}$ has a 0.  By the conditions of the lemma, $\mathcal{B}$ has an $e$ and no other 0's.

If $k\ge 5$, then rearrange the $\mathcal{B}_{i}$ so that the 0 and an $e$ are among $\mathcal{B}_{3}, \ldots , \mathcal{B}_{k}$.  Then $\mathcal{B}_{1}, \mathcal{B}_{2}, (\mathcal{B}_{3}, \ldots , \mathcal{B}_{k})$ each satisfy the conditions of the lemma.  Then, as above, $\mathcal{B}$ satisfies the claim.  Therefore, we may assume $k = 3$.  Then $\mathcal{B}$ is $(e, e, 0)$ or $(e, o, 0)$, so $\mathcal{B}$ is attainable, as desired.\newline

Therefore, the modified claim holds.  As in the proof above, this implies that $\mathcal{B}$ is associated with one of the ending pairs $(\varnothing, \varnothing), (\varnothing, E1), (\varnothing, E2)$.  Since the sequences associated with the endings $E1$ and $E2$ are attainable, even under the modified definition of $E2$, $\mathcal{B}$ is attainable, as desired.
\end{proof}


\section{Main Results}

\subsection{A class of graceful diameter-6 trees} 

\begin{theorem} \label{first_big_boy}
Let $T$ be a rooted diameter-6 tree, with central vertex and root $v$, with the following properties:
\begin{itemize}
\item The vertex $v$, and all vertices of distance 1 from $v$, have an odd number of children.
\item All leaves have distance 3 from $v$.
\end{itemize}
Then $T$ has a graceful labeling $f$ with $f(v) = 0$.
\end{theorem}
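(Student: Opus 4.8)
The plan is to recognize $T$ as a tree built from an odd radial rooted tree by attaching leaves, and then to invoke the blockwise-permutable-sequence machinery of Section~\ref{blockwise_section}. Stratify $V(T)$ into levels $0,1,2,3$ by distance from $v$; level $0$ is $\{v\}$, and since all leaves lie at distance $3$, every level-$2$ vertex has at least one child. Let $T_{0}$ be the subtree of $T$ induced by levels $0,1,2$. Then $T_{0}$ is an odd radial rooted tree with root $v$: its leaves are exactly the level-$2$ vertices, and $v$ together with the level-$1$ vertices each have an odd number of children. Moreover $T$ is obtained from $T_{0}$ by attaching a \emph{positive} number of leaves to each leaf of $T_{0}$ (positive precisely because no leaf of $T$ sits at distance $2$).

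First I would set up the combinatorial core. Fix a lexicographical order $\mathcal{L}$ on $V(T_{0})$; this lists the leaves of $T_{0}$ as $v_{1},\dots,v_{m}$, and we let $n_{k}$ be the number of children of $v_{k}$ in $T$. Write the children of $v$ as $w_{1},\dots,w_{p}$ ($p$ odd) and the children of $w_{i}$ as $x_{i,1},\dots,x_{i,q_{i}}$ ($q_{i}$ odd); in any lexicographical order the level-$2$ vertices occur in $p$ blocks, one per $w_{i}$, of sizes $q_{1},\dots,q_{p}$, and within each block in an arbitrary order. Hence, as $\mathcal{L}$ varies, the sequences $n_{1},\dots,n_{m}$ that arise are exactly the sequences contained in the blockwise permutable sequence $\mathcal{B}=(\mathcal{B}_{1},\dots,\mathcal{B}_{p})$, where $\mathcal{B}_{i}$ lists the numbers of level-$3$ children of $x_{i,1},\dots,x_{i,q_{i}}$. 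Since $p$ and each $q_{i}$ are odd and every entry is a positive integer, $\mathcal{B}$ is an odd BPS of positive integers of depth $2$, hence attainable by Corollary~\ref{odd_BPS_depth_2}. Choose $\mathcal{L}$ so that the resulting sequence $n_{1},\dots,n_{m}$ is an attainable sequence contained in $\mathcal{B}$.

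Now I would assemble the labeling. Apply Lemma~\ref{auxiliary_radial_tree} to $T_{0}$ with $N=n_{1}+\cdots+n_{m}$ leaves attached to $v_{1}$: this produces a graceful labeling $f$ of the resulting tree $T'$ with $f(v)=0$, such that $v_{1},\dots,v_{m}$ is an alternating sequence under $f$ and the $N$ attached leaves form a transferable set. By Lemma~\ref{attainable_lemma}, attainability does not depend on the ambient context, so $n_{1},\dots,n_{m}$ is attainable with respect to this specific $T',f,v_{1},\dots,v_{m}$ and the labels of the attached leaves. Performing the corresponding well-behaved sequence of transfers yields a graceful tree in which $v_{k}$ is adjacent to $n_{k}$ of the transferred leaves for each $k$; since every transfer in this sequence involves only $v_{1},\dots,v_{m}$ and the attached leaves, $v$ is never touched and $f(v)=0$ is preserved. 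By the choice of $\mathcal{L}$, this tree is isomorphic to $T$ via an isomorphism fixing $v$, so $T$ has a graceful labeling $f$ with $f(v)=0$.

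The substantive content is entirely delegated to Corollary~\ref{odd_BPS_depth_2} and Lemma~\ref{auxiliary_radial_tree}; the main thing to get right here is the bookkeeping of the second paragraph — verifying that lexicographical orders on $T_{0}$ correspond precisely to the block permutations of $\mathcal{B}$, and that the tree produced by the transfers is genuinely a rooted isomorph of $T$. The only other point to track carefully is that $v$ lies above all of $v_{1},\dots,v_{m}$ in $T_{0}$ and so participates in no transfer, which is what keeps the label $0$ pinned to $v$.
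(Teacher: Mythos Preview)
Your proof is correct and follows essentially the same approach as the paper: strip off the level-$3$ leaves to get an odd radial rooted tree, invoke Lemma~\ref{auxiliary_radial_tree} to get a graceful labeling of that tree with an extra transferable set of leaves at $v_{1}$, observe that the desired child-counts form an odd depth-$2$ BPS of positive integers which is attainable by Corollary~\ref{odd_BPS_depth_2}, and choose the lexicographical order accordingly. Your write-up is in fact a bit more careful than the paper's in spelling out the correspondence between lexicographical orders and the sequences contained in $\mathcal{B}$, and in noting explicitly that the transfers never touch $v$ so that $f(v)=0$ is preserved.
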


\begin{proof}
Removing the leaves of $T$ gives an odd radial rooted tree $S$.  Therefore, we can follow the general strategy outlined in section \ref{blockwise_section}:
\begin{itemize}
\item Choose a lexicographical order $\mathcal{L}$ on the vertices of $S$, so that the leaves of $T$ are $v_{1}, \ldots , v_{m}$, ordered according to $\mathcal{L}$.
\item Get a graceful tree $S'$ by attaching leaves to $v_{1}$, by Lemma \ref{auxiliary_radial_tree}.
\item Obtain $T$ from $S'$ by transfers $v_{i}\rightarrow v_{j}$.
\end{itemize}
The last step requires that the numbers of children of $v_{1}, \ldots , v_{m}$ in $T$ form an attainable sequence $n_{1}, \ldots , n_{m}$.  Therefore, it remains to prove that there exists a lexicographical order $\mathcal{L}$, such that the resulting sequence $n_{1}, \ldots , n_{m}$ is attainable.

To prove that $\mathcal{L}$ exists, consider the following correspondence between vertices of $T$ and odd BPS's of positive integers:
\begin{itemize}
\item Each vertex $u$ of distance 2 from $v$ corresponds to an odd BPS $n$ of depth 0, where $n$ is the number of children of $u$.
\item Each vertex $u$ of distance 1 from $v$ corresponds to an odd BPS $(n_{1}, \ldots, n_{k})$ of depth 1, where each $n_{i}$ 
is an odd BPS corresponding to a child of $u$.
\item The vertex $v$ corresponds to an odd BPS $(\mathcal{B}_{1}, \ldots , \mathcal{B}_{k})$ of depth 2, where each $\mathcal{B}_{i}$ is an odd BPS corresponding to a child of $v$.
\end{itemize}
For each vertex $u$ and its corresponding odd BPS $\mathcal{B}$, the sequences $\mathcal{B}$ contains correspond to the possible permutations of the leaves of $T$ that are descendants of $v$ under different lexicographical orders $\mathcal{L}$.

Consider the odd BPS corresponding to $v$, which is attainable by Lemma \ref{odd_BPS_depth_2}.  Therefore, there exists a lexicographical order $\mathcal{L}$ such that the resulting sequence $n_{1}, \ldots , n_{m}$ is attainable.  Therefore, our strategy is valid, and hence $T$ has a graceful labeling $f$ with $f(v) = 0$.
\end{proof}

\begin{theorem} \label{second_big_boy}
Let $T$ be a rooted diameter-6 tree, with central vertex and root $v$, with the following properties:
\begin{itemize}
\item The vertex $v$, and all vertices of distance 1 from $v$, have an odd number of children.
\item All leaves have distance 2 or 3 from $v$, such that
\begin{itemize}
\item No two leaves of distance 2 from $v$ have the same parent.
\item Each leaf of distance 2 from $v$ has a sibling with an even number of children.
\end{itemize}
\end{itemize}
Then $T$ has a graceful labeling $f$ with $f(v) = 0$.
\end{theorem}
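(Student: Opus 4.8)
The plan is to mirror the proof of Theorem \ref{first_big_boy} almost verbatim, with the role of ``removing the leaves'' now played by removing only the leaves of distance $3$ from $v$. First I would observe that deleting the distance-$3$ leaves of $T$ leaves an \emph{odd radial rooted tree} $S$: every leaf of $S$ is at distance $2$ from $v$, and since we removed only distance-$3$ vertices, $v$ and every vertex of distance $1$ from $v$ keep their odd number of children, so each stays internal in $S$. This lets me invoke the general strategy of section \ref{blockwise_section}: fix a lexicographical order $\mathcal{L}$ on $V(S)$, listing the leaves of $S$ as $v_1, \ldots, v_m$; use Lemma \ref{auxiliary_radial_tree} to build a gracefully labeled tree $S'$ with $f(v) = 0$ by attaching to $v_1$ the total number of distance-$3$ leaves of $T$, so that $v_1, \ldots, v_m$ is an alternating sequence and the attached leaves form a transferable set; and then recover $T$ from $S'$ by a sequence of transfers $v_i \to v_j$ that places $n_i$ leaves at $v_i$, where $n_i$ is the number of children of $v_i$ in $T$. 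Here $n_i$ may equal $0$, precisely when $v_i$ is itself a leaf of $T$, which is why we now need attainable sequences that allow zeros.

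The crux, exactly as in Theorem \ref{first_big_boy}, is to exhibit a choice of $\mathcal{L}$ for which $n_1, \ldots, n_m$ is attainable. I would set up the same vertex-to-BPS correspondence: a vertex $u$ of distance $2$ from $v$ corresponds to the depth-$0$ odd BPS equal to its number of children in $T$ (possibly $0$); a vertex $u$ of distance $1$ from $v$ corresponds to an odd BPS of depth at most $1$ whose blocks are the BPS's of the children of $u$; and $v$ corresponds to the depth-$2$ odd BPS $(\mathcal{B}_1, \ldots, \mathcal{B}_k)$ whose blocks are the BPS's of the children of $v$. As before, the sequences contained in the BPS of $v$ are precisely the sequences $n_1, \ldots, n_m$ realizable by varying $\mathcal{L}$.

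Then I would check that the two extra hypotheses of the theorem are exactly the hypotheses of Lemma \ref{odd_BPS_depth_2_with_zeros}: ``no two leaves of distance $2$ from $v$ have the same parent'' says each $\mathcal{B}_i$ contains at most one $0$, and ``each leaf of distance $2$ from $v$ has a sibling with an even number of children'' says that whenever $\mathcal{B}_i$ contains a $0$ it also contains a positive even integer (the sibling cannot be the leaf itself, nor, by the first hypothesis, another $0$, so it must contribute a positive even entry). Hence the depth-$2$ odd BPS of $v$ satisfies the conditions of Lemma \ref{odd_BPS_depth_2_with_zeros} and is attainable, so some lexicographical order $\mathcal{L}$ makes $n_1, \ldots, n_m$ attainable, the strategy goes through, and $T$ has a graceful labeling $f$ with $f(v) = 0$.

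Everything past the structural setup is a transcription of Theorem \ref{first_big_boy}, so I expect the only points needing genuine care to be the bookkeeping that the theorem's conditions on distance-$2$ leaves match Lemma \ref{odd_BPS_depth_2_with_zeros}'s conditions on $0$'s, and the verification that deleting only the distance-$3$ leaves really does yield an \emph{odd} radial rooted tree -- in particular that no vertex of distance $1$ from $v$ degenerates to a leaf, which holds because all its children lie at distance $2$ and hence survive the deletion.
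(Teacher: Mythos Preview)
Your proposal is correct and follows essentially the same approach as the paper's proof, which simply says ``As above, with the following adjustments: remove only the leaves of $T$ with distance $3$ from $v$ to get $S$; the odd BPS corresponding to $v$ is attainable by Lemma \ref{odd_BPS_depth_2_with_zeros}.'' Your write-up is in fact more thorough than the paper's, as you explicitly verify that $S$ is an odd radial rooted tree and that the two hypotheses on distance-$2$ leaves translate precisely into the two bullet-point hypotheses of Lemma \ref{odd_BPS_depth_2_with_zeros}.
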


\begin{proof}
As above, with the following adjustments:
\begin{itemize}
\item Remove only the leaves of $T$ with distance 3 from $v$ to get $S$.
\item The odd BPS corresponding to $v$ is attainable by Lemma \ref{odd_BPS_depth_2_with_zeros}.
\end{itemize}
\end{proof}




\subsection{Generalizations to trees of larger diameter}


We can extend the methods above to trees of larger diameter, but we need to add an additional condition on the number of vertices in the second-to-last level with an even number of children.

\begin{theorem} \label{third_big_boy}
Let $T$ be a rooted diameter-$2r$ tree, with central vertex and root $v$, with the following properties:
\begin{itemize}
\item The vertex $v$, and all vertices of distance at most $r - 2$ from $v$, have an odd number of children.
\item The number of vertices of distance $r - 1$ from $v$, with an even number of children, is not $3\pmod{4}$.
\item All leaves have distance $r$ from $v$.
\end{itemize}
Then $T$ has a graceful labeling $f$ with $f(v) = 0$.
\end{theorem}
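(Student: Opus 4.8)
The plan is to run the same argument as in the proofs of Theorems \ref{first_big_boy} and \ref{second_big_boy}, but with the depth-$2$ blockwise permutable sequence machinery replaced by the large-depth result of Lemma \ref{large_depth_attainable}. First I would remove the leaves of $T$ to obtain a rooted tree $S$. Since every leaf of $T$ lies at distance exactly $r$ from $v$, the leaves of $S$ are precisely the parents of the leaves of $T$, all at distance $r-1$ from $v$, so $S$ is radial; and since deleting the distance-$r$ leaves does not change the number of children of any vertex at distance at most $r-2$, the hypothesis that $v$ and all vertices of distance at most $r-2$ from $v$ have an odd number of children says exactly that $S$ is an \emph{odd} radial rooted tree.

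Next I would follow the general strategy of section \ref{blockwise_section}: choose a lexicographical order $\mathcal{L}$ on the vertices of $S$, listing the leaves of $S$ as $v_{1}, \ldots , v_{m}$; by Lemma \ref{auxiliary_radial_tree}, attaching $n_{1} + \cdots + n_{m}$ leaves to $v_{1}$ (where $n_{i}$ is the number of children of $v_{i}$ in $T$) yields a tree $S'$ with a graceful labeling $f$ such that $f(v) = 0$, the sequence $v_{1}, \ldots , v_{m}$ is alternating, and the attached leaves are transferable. Then $T$ is obtained from $S'$ by a sequence of transfers $v_{i}\rightarrow v_{j}$, provided the sequence $n_{1}, \ldots , n_{m}$ is attainable. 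Since the $v_{i}$ are leaves of $S$ and hence at positive distance from $v$, none of these transfers touches $v$, so the resulting graceful labeling of $T$ still has $f(v) = 0$.

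It remains to choose $\mathcal{L}$ so that $n_{1}, \ldots , n_{m}$ is attainable. As in the earlier proofs, I would set up the correspondence between vertices of $T$ and odd BPS's of positive integers: a vertex of distance $r-1$ from $v$ corresponds to the depth-$0$ BPS given by its number of children; a vertex $u$ of distance $d$ with $1\le d\le r-2$ corresponds to the depth-$(r-1-d)$ odd BPS $(\mathcal{B}_{1}, \ldots , \mathcal{B}_{k})$ whose components are the BPS's of the children of $u$, with $k$ odd by hypothesis; and $v$ corresponds to an odd BPS $\mathcal{B}$ of depth $r-1$. The sequences contained in $\mathcal{B}$ are exactly the sequences $n_{1}, \ldots , n_{m}$ realizable by varying $\mathcal{L}$, and the number of even integers appearing in $\mathcal{B}$ equals the number of vertices of distance $r-1$ from $v$ having an even number of children, which by hypothesis is not $3\pmod{4}$. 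By Lemma \ref{large_depth_attainable}, $\mathcal{B}$ is attainable, hence contains an attainable sequence; choosing $\mathcal{L}$ accordingly makes the strategy valid and completes the argument.

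The step most in need of care is the verification that the BPS correspondence faithfully encodes the permutations available through lexicographical orders, and that the even-integer count matches the stated hypothesis; but this is the same bookkeeping already carried out for Theorem \ref{first_big_boy}, merely with depth $r-1$ in place of depth $2$, so I expect no genuine obstacle — all the combinatorial difficulty has been absorbed into Lemma \ref{large_depth_attainable}. (If one also wished to permit some leaves at distance $r-1$, as in Theorem \ref{second_big_boy}, one would instead invoke Lemma \ref{big_tree_last_lemma}, but the stated theorem does not require this.)
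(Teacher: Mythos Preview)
Your proposal is correct and follows exactly the same approach as the paper's own proof: extend the BPS--vertex correspondence to depth $r-1$ and invoke Lemma \ref{large_depth_attainable} in place of Lemma \ref{odd_BPS_depth_2}. Your write-up is in fact more detailed than the paper's, which simply notes that the correspondence extends naturally and that Lemma \ref{large_depth_attainable} gives attainability.
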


\begin{proof}
The correspondence between odd BPS's and vertices of trees extends naturally to trees of larger diameter.  The odd BPS corresponding to $v$ is attainable by Lemma \ref{large_depth_attainable}.  Therefore, $T$ has a graceful labeling $f$ with $f(v) = 0$.
\end{proof}

\begin{theorem} \label{fourth_big_boy}
Let $T$ be a rooted diameter-$2r$ tree, with central vertex and root $v$, with the following properties:
\begin{itemize}
\item The vertex $v$, and all vertices of distance at most $r - 2$ from $v$, have an odd number of children.
\item The number of vertices of distance $r - 1$ from $v$, with an even number of children, is not $3\pmod{4}$.
\item All leaves have distance $r - 1$ or $r$ from $v$, such that
\begin{itemize}
\item No two leaves of distance $r - 1$ from $v$ have the same parent.
\item Each leaf of distance $r - 1$ from $v$ has a sibling with an even number of children.
\end{itemize}
\end{itemize}
Then $T$ has a graceful labeling $f$ with $f(v) = 0$.
\end{theorem}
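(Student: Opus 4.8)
The plan is to give the common generalization of Theorem~\ref{third_big_boy} and Theorem~\ref{second_big_boy}, reducing to an odd radial rooted tree by deleting only the deepest leaves and then invoking Lemma~\ref{big_tree_last_lemma} in place of Lemma~\ref{large_depth_attainable} (or Lemma~\ref{odd_BPS_depth_2_with_zeros}). Concretely, first delete from $T$ only the leaves at distance $r$ from $v$. Since $v$ and every vertex at distance at most $r-2$ from $v$ has an odd number of children, what remains is an odd radial rooted tree $S$, all of whose leaves lie at distance $r-1$ from $v$. Now I would run the strategy of Section~\ref{blockwise_section}: pick a lexicographical order $\mathcal{L}$ on $V(S)$, listing the leaves of $S$ as $v_{1},\dots,v_{m}$; apply Lemma~\ref{auxiliary_radial_tree} to get a graceful tree $S'$ with $f(v)=0$ by attaching leaves to $v_{1}$, where $v_{1},\dots,v_{m}$ is alternating and the attached leaves are transferable; and then recover $T$ from $S'$ by transfers $v_{i}\rightarrow v_{j}$, which succeeds provided the sequence $n_{1},\dots,n_{m}$ of numbers of children of $v_{1},\dots,v_{m}$ in $T$ is attainable. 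So the whole theorem comes down to producing an $\mathcal{L}$ for which $n_{1},\dots,n_{m}$ is attainable.

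For that I would set up the vertex--BPS correspondence exactly as in the proof of Theorem~\ref{first_big_boy}, now iterated $r-1$ times: a vertex at distance $r-1$ from $v$ corresponds to the depth-$0$ BPS equal to its number of children in $T$ (possibly $0$); a vertex at distance $r-1-j$ corresponds to the depth-$j$ BPS whose blocks are the BPS's of its children; and $v$ corresponds to a depth-$(r-1)$ odd BPS $\mathcal{B}$, whose contained sequences are precisely the leaf orderings realizable by varying $\mathcal{L}$. It then suffices to check that $\mathcal{B}$ satisfies the hypotheses of Lemma~\ref{big_tree_last_lemma}. The even entries of $\mathcal{B}$ (the positive even integers together with the $0$'s) correspond to the distance-$(r-1)$ vertices with an even number of children, so the hypothesis that there are not $3\pmod 4$ of those is exactly the parity hypothesis of the lemma. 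A $0$ entry of $\mathcal{B}$ corresponds to a distance-$(r-1)$ leaf, and it lies inside the depth-$1$ block $\mathcal{B}_{i}$ associated with that leaf's parent (a vertex at distance $r-2$); since no two distance-$(r-1)$ leaves share a parent, $\mathcal{B}_{i}$ has no other $0$, and the sibling with an even number of children guaranteed by hypothesis is not itself a distance-$(r-1)$ leaf (again by the no-shared-parent clause), hence has a \emph{positive} even number of children, so $\mathcal{B}_{i}$ contains a positive even integer. Therefore $\mathcal{B}$ is attainable by Lemma~\ref{big_tree_last_lemma}, some $\mathcal{L}$ makes $n_{1},\dots,n_{m}$ attainable, and $T$ has a graceful labeling $f$ with $f(v)=0$.

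The main obstacle is simply matching the combinatorial hypotheses to those of Lemma~\ref{big_tree_last_lemma}: one must make sure every $0$ of $\mathcal{B}$ lands in a \emph{depth-$1$} block, which forces the parent of each distance-$(r-1)$ leaf to sit at depth at least $1$ in the BPS, i.e.\ $r\ge 3$ (for $r=3$ this is already covered, in fact more sharply, by Theorem~\ref{second_big_boy} via Lemma~\ref{odd_BPS_depth_2_with_zeros}, so one may as well take $r\ge 4$), and one must verify that the mandated even-degree sibling is a genuine positive even rather than $0$ --- this is precisely where ``no two distance-$(r-1)$ leaves have the same parent'' is used. Everything else is routine bookkeeping imported verbatim from the proofs of Theorems~\ref{first_big_boy}, \ref{second_big_boy}, and~\ref{third_big_boy}.
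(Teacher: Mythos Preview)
Your proposal is correct and follows essentially the same approach as the paper. The paper's own proof consists of just two bullet points---remove only the distance-$r$ leaves to get $S$, and invoke Lemma~\ref{big_tree_last_lemma} for attainability---deferring all structure to the proofs of Theorems~\ref{first_big_boy}--\ref{third_big_boy}; you have simply unpacked that deferral and explicitly checked that the hypotheses of Lemma~\ref{big_tree_last_lemma} are met (in particular the observation that the mandated even-degree sibling cannot itself be a distance-$(r-1)$ leaf, hence contributes a \emph{positive} even integer to the depth-$1$ block).
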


\begin{proof}
As above, with the following adjustments:
\begin{itemize}
\item Remove only the leaves of $T$ with distance $r$ from $v$ to get $S$.
\item The odd BPS corresponding to $v$ is attainable by Lemma \ref{big_tree_last_lemma}.
\end{itemize}
\end{proof}

To see why we need the additional condition, consider the diameter-8 tree corresponding to the following BPS:
$$((0/5), (0/5, 0/5, 1/3), (0/5, 0/5, 2/3))$$
A careful consideration shows that the methods of Theorem \ref{first_big_boy} and \ref{second_big_boy} are not sufficient to handle such a tree.


\subsection{A back-and-forth sequence of transfers}

Returning to diameter-6 trees, we now consider trees such that all internal vertices have an odd number of children, which allows us to remove the restrictions on leaves of distance 2 from $v$
To prove that these trees are graceful, we use a back-and-forth sequence of transfers in the first level to manipulate the order of the vertices in the second level.

\begin{theorem} \label{rearrange_those_bad_boys}
Let $T$ be a rooted diameter-6 tree, with central vertex and root $v$, with the following properties:
\begin{itemize}
\item All internal vertices have an odd number of children.
\item All leaves have distance 2 or 3 from $v$.
\end{itemize}
Then $T$ has a graceful labeling $f$ with $f(v) = 0$.
\end{theorem}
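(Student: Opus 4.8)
The plan is to push through the strategy of Theorems~\ref{first_big_boy} and~\ref{second_big_boy}. Recall its shape: deleting suitable leaves of $T$ produces an odd radial rooted tree $S$; Lemma~\ref{auxiliary_radial_tree} turns $S$ into a graceful tree $S'$ with $f(v)=0$ in which the distance-$2$ vertices form an alternating sequence carrying a transferable set of attached leaves; and $T$ is then recovered by transfers, which succeeds exactly when the odd blockwise permutable sequence attached to $v$ is attainable. The new obstruction is that each leaf of distance $2$ from $v$ contributes a $0$ to that sequence, while the hypothesis that every internal vertex has an odd number of children forces the sequence to contain no positive even integer; so Lemma~\ref{odd_BPS_depth_2_with_zeros}, which needs a positive even integer to accompany each $0$, no longer applies.

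The way out is to observe that in Lemma~\ref{auxiliary_radial_tree}'s construction the distance-$1$ vertices also form an alternating sequence---they make up the block of the lexicographical order immediately after $v$---and to use transfers at that level as well. I would first build a graceful auxiliary tree: take $T$, delete the distance-$3$ leaves, and redistribute the distance-$2$ leaves so that each distance-$1$ vertex $u_{i}$ keeps only enough of them (zero or one) to have an odd number of children, with all remaining distance-$2$ leaves gathered at a single distance-$1$ vertex $u_{1}$. A parity count (using only that $v$ and every distance-$1$ vertex of $T$ have an odd number of children) shows this tree is odd radial, so Lemma~\ref{auxiliary_radial_tree} applies: attaching all the distance-$3$ leaves of $T$ to the first distance-$2$ vertex $v_{1}'$ yields a graceful $S'$ with $f(v)=0$. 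I would set things up so that, simultaneously, these distance-$3$ leaves form a transferable set for the alternating sequence of distance-$2$ vertices and the distance-$2$ leaves gathered at $u_{1}$ form a transferable set for the alternating sequence of distance-$1$ vertices. Then I would finish in two stages, which do not interfere because leaf transfers never change the labels of non-leaf vertices: first, distribute the distance-$3$ leaves among the distance-$2$ internal vertices by the usual transfers---there being no $0$'s left among those counts, the relevant sequence is an odd blockwise permutable sequence of positive odd integers, attainable by Lemma~\ref{odd_BPS_depth_2}---and second, redistribute the distance-$2$ leaves among the distance-$1$ vertices to match $T$ by a back-and-forth sequence of transfers $u_{1}\to u_{2}\to\cdots\to u_{t}\to u_{t-1}\to\cdots\to u_{1}$, together with the freedom to permute the distance-$1$ vertices in the lexicographical order. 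The back-and-forth shape is the one used to prove that $e,\dots,e$ is attainable, and since depth-$1$ blockwise permutable sequences are always attainable (Lemma~\ref{BPS_depth_1}) and nicely attainable sequences can be combined (Lemmas~\ref{combine_attainable} and~\ref{combine_nicely_attainable}), this gives enough freedom to realize any distribution of the distance-$2$ leaves with the parity forced by the distance-$2$ internal vertices, which is exactly the distribution occurring in $T$.

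The main obstacle will be making the first stage precise: arranging the auxiliary labeling so that both transferability conditions hold at once, and then verifying that the prescribed back-and-forth transfers at the distance-$1$ level are legitimate at every step---correct transferable sets, correct parities of the leaves left behind---and that they move only the distance-$2$ leaves, leaving the distance-$2$ internal vertices and their attached distance-$3$ subtrees undisturbed. The parity bookkeeping that guarantees odd radiality and that one distance-$1$ vertex can absorb all the displaced distance-$2$ leaves is routine but needs care.
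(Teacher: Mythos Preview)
Your identification of a back-and-forth at the distance-$1$ level as the key tool is correct, but the order in which you deploy it leaves a genuine gap. In your stage~1 the alternating sequence of distance-$2$ vertices in $S'$ contains \emph{every} distance-$2$ vertex of the auxiliary tree, including the excess leaves gathered at $u_{1}$ and the single parity-fixing leaf you left at each $u_{i}$ with $a_{i}$ even. Each of these must receive zero distance-$3$ children, so the depth-$2$ BPS at $v$ has a $0$ in every block with $a_{i}$ even and many $0$'s in the block for $u_{1}$; since the hypothesis forces every nonzero entry to be odd, there is no positive even integer anywhere to pair with a $0$, and neither Lemma~\ref{odd_BPS_depth_2} nor Lemma~\ref{odd_BPS_depth_2_with_zeros} applies. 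Your stage~2 cannot rescue this: transferring a distance-$2$ leaf between $u_{i}$'s changes its parent but not its label, so its position in the distance-$2$ alternating sequence is unchanged and the $0$ stays exactly where it was. There is a second problem with the setup itself: after Lemma~\ref{auxiliary_radial_tree}, the children of $u_{1}$ occupy the first block of the distance-$2$ alternating sequence, not a run of consecutive labels centred appropriately for the distance-$1$ sequence, so the ``transferable set for the alternating sequence of distance-$1$ vertices'' you want does not exist as stated.

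The paper uses the back-and-forth for a different purpose and at a different moment. Rather than first building an odd radial auxiliary and redistributing afterwards, it starts from the star and performs the back-and-forth $v_{1}\to\cdots\to v_{i_{3}-1}\to\cdots\to v_{i_{1}}\to\cdots\to v_{m}$ \emph{while} laying down the distance-$2$ vertices. By Lemma~\ref{jake_the_snake} each step deposits the first few remaining leaves in the $s$-ordering, so---after sorting the branches into the four types---the internal distance-$2$ vertices end up occupying an initial segment of the distance-$2$ alternating sequence and all distance-$2$ leaves come after them. The final run $v_{m}\to v_{m+1}\to\cdots$ then only visits internal distance-$2$ vertices (a string of positive odd counts, handled by $o,\ldots,o$) and simply stops before reaching any leaf. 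In short, the paper's back-and-forth \emph{reorders} the next level's alternating sequence; yours tries to \emph{move} leaves after that order is already fixed, which is too late.
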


\begin{proof}
Each branch of $T$ at $v$ has an odd number of vertices of distance 2 from $v$, and each is an internal vertex or a leaf.  We arrange the branches of $T$ as follows, according to the number of internal vertices and leaves of distance 2 from $v$:
\begin{enumerate}[(1)]
\item positive odd number of internal vertices, no leaves
\item positive odd number of internal vertices, positive even number of leaves
\item positive even number of internal vertices, positive odd number of leaves
\item no internal vertices, positive odd number of leaves
%
\end{enumerate}

Consider the gracefully labeled star $K_{1, n}$, with vertices $v_{0}, \ldots , v_{n}$, where $v_{0}$ is the central vertex and is labeled 0, and $v_{0}, \ldots , v_{n}$ is a closed convergent alternating sequence.  Perform a transfer $v_{0}\rightarrow v_{1}$ of the first type, leaving behind $v_{1}, \ldots , v_{m}$, where $m$ is the number of branches of $T$ at $v$.  We have the following correspondence between $v_{1}, \ldots , v_{m}$ and the branches of $T$ at $v$:
\begin{itemize}
\item $v_{1}, \ldots , v_{i_{1} - 1}$ correspond to branches of type (1).
\item $v_{i_{1}}, \ldots , v_{i_{2} - 1}$ correspond to branches of type (2).
\item $v_{i_{2}}, \ldots , v_{i_{3} - 1}$ correspond to branches of type (3).
\item $v_{i_{3}}, \ldots , v_{m}$ correspond to branches of type (4).
\end{itemize}
Then we perform the sequence of transfers
\begin{center}
\setlength{\tabcolsep}{1.35pt}
\begin{tabular}{lclclcl}
$v_{1}$ & $\rightarrow$ & $v_{2}$ & $\rightarrow\cdots\rightarrow$ & $v_{i_{3} - 2}$ & $\rightarrow$ & $v_{i_{3} - 1}$\\
$v_{i_{3} - 1}$ & $\rightarrow$ & $v_{i_{3} - 2}$ & $\rightarrow\cdots\rightarrow$ & $v_{i_{1} + 1}$ & $\rightarrow$ & $v_{i_{1}}$\\
$v_{i_{1}}$ & $\rightarrow$ & $v_{i_{1} + 1}$ & $\rightarrow\cdots\rightarrow$ & $v_{m - 1}$ & $\rightarrow$ & $v_{m}$
\end{tabular}
\end{center}

By Lemma \ref{transfer_parity}, this sequence of transfers leaves behind an odd number of leaves at each step, except for the turn-arounds at $v_{i_{3} - 1}, v_{i_{1}}$.  We can treat the sequence as leaving behind an odd number of leaves twice at each turn-around, rather than an even number of leaves once.  Then we leave the following numbers of leaves at each vertex:
\begin{center}
\setlength{\tabcolsep}{1.35pt}
\begin{tabular}{rclp{7.9cm}}
$v_{1},$ & $ \ldots ,$ & $v_{i_{1} - 1}$ & \qquad the full number of internal vertices\\
$v_{i_{1}},$ & $ \ldots ,$ & $v_{i_{2} - 1}$ & \qquad the full number of internal vertices\\
$v_{i_{2}},$ & $ \ldots ,$ & $v_{i_{3} - 1}$ & \qquad less than the full number of internal vertices\\
$v_{i_{3} - 1},$ & $ \ldots ,$ & $v_{i_{2}}$ & \qquad the remaining number of internal vertices\\
$v_{i_{2} - 1},$ & $ \ldots ,$ & $v_{i_{1}}$ & \qquad less than the full number of leaves\\
$v_{i_{1}},$ & $ \ldots ,$ & $v_{i_{2} - 1}$ & \qquad the remaining number of leaves\\
$v_{i_{2}},$ & $ \ldots ,$ & $v_{i_{3} - 1}$ & \qquad the full number of leaves\\
$v_{i_{3}},$ & $ \ldots ,$ & $v_{m}$ & \qquad the full number of leaves
\end{tabular}
\end{center}

By Lemma \ref{jake_the_snake}, each transfer leaves behind the first $l$ remaining leaves of the transferable set of leaves, so the order of the vertices in the second level corresponds to the list above.  Then we perform the following sequence of transfers, stopping at the last leaf left at $v_{i_{2}}$ during the second pass:
$$v_{m}\rightarrow v_{m + 1}\rightarrow v_{m + 2}\rightarrow\cdots$$
Then the vertices $v_{m + 1}, v_{m + 2}, \ldots$ correspond to the internal vertices in the second level of $T$, so we can obtain $T$ by this sequence of transfers.  Therefore, $T$ has a graceful labeling $f$ with $f(v) = 0$.
\end{proof}
%
%

\subsection{All even-caterpillar banana trees are graceful} \label{even-caterpillar_section}

Earlier, we introduced three classes of graceful 
trees:
\begin{itemize}
\item Banana trees (Hrn\u{c}iar \& Monoszova \cite{banana}, see Chen, L\"{u} \& Yeh \cite{firecrackers})
\item Generalized banana trees (Hrn\u{c}iar \& Monoszova \cite{banana})
\item Arbitrarily fixed generalized banana trees (Jesintha \& Sethuraman \cite{fixed-banana})
\end{itemize}
Here we introduce a new class of trees, which contains all three classes above.

%

\begin{definition}
Consider a rooted generalized banana tree, with apex as root, and with depth $d$.  Choose a non-negative integer $k < d$, and for each $l$ with $k\le l\le d - 2$, attach a positive even number of leaves to each vertex at level $l$.  The resulting tree is an \emph{even-caterpillar banana tree}.
\end{definition}
\noindent We make a few preliminary observations about this definition:
\begin{itemize}
\item We may add a different even number of leaves to each vertex, unlike in an arbitrarily fixed generalized banana tree.
\item By taking $k = d - 1$, we get a generalized banana tree.  Therefore, the class of even-caterpillar banana trees generalizes the class of generalized banana trees.
\end{itemize}
%
%

%
%

\begin{theorem}
All even-caterpillar banana trees are graceful.
\end{theorem}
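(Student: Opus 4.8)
The plan is to combine the level-by-level transfer technique used for (arbitrarily fixed) generalized banana trees with the radial-rooted-tree and blockwise-permutable-sequence (BPS) machinery of Section~\ref{preliminary}. Write $T$ for the tree, $v$ for its apex, $h$ for the common path length, $d=h+2$ for the depth, $m$ for the number of branches at $v$, and $m_i$ for the number of leaves of the $i$th star; if $k=d-1$ there are no extra leaves and $T$ is a generalized banana tree, handled by Theorem~\ref{generalized_banana}, so I assume $k\le d-2$. As usual, first dispose of the case that $m$ is even: delete one branch $H$ at $v$, observe that what remains is again an even-caterpillar banana tree but with $m-1$ (odd) branches, apply the odd case below to get a graceful labeling with $v$ labeled $0$, and note that $H$ is a caterpillar in which $v$ has maximum eccentricity or is adjacent to a vertex of maximum eccentricity (its spine is the path of $H$ together with the center of its star, and the only vertices that can beat the eccentricity of $v$ are the extra leaves attached at the very top, which neighbor $v$); Corollary~\ref{attach_caterpillar} then finishes this case.

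For odd $m$ I aim for a graceful labeling $f$ with $f(v)=0$. Let $S$ be the spider with $m$ legs of length $h+1$; then $S$ is an odd radial rooted tree (the apex has $m$ children, $m$ odd, and every other internal vertex has exactly one child), with leaves $c_1,\dots,c_m$ at level $h+1$. By Lemma~\ref{auxiliary_radial_tree} I may attach to $c_1$ a block of $\sum_i (m_i-1)+\sum_{k\le l\le h}\sum_i 2a_l^{(i)}$ leaves (where $2a_l^{(i)}\ge 2$ is the number of extra leaves $T$ places at the level-$l$ vertex of branch $i$), producing a graceful tree $S'$ with $f(v)=0$ in which the full vertex list $t_1,\dots,t_{n+1}$ in a lexicographic order is a closed convergent alternating sequence and the attached block is transferable. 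Now I obtain $T$ from $S'$ by one long well-behaved sequence of transfers along this alternating sequence that redistributes the block so that the level-$l$ vertex of branch $i$ receives $2a_l^{(i)}$ leaves for $k\le l\le h$ (these become the extra leaves of $T$) and each $c_i$ receives $m_i-1$ leaves (these become the star-leaves of $T$). Since a lexicographic order processes vertices by increasing depth, these transfers run ``backwards'' in the sense of the introduction, from the deep block at $c_1$ up toward the shallow path vertices; the leaf orderings are controlled by Lemmas~\ref{leaf_ordering} and~\ref{jake_the_snake} and Corollary~\ref{leaf_ordering_odd}.

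The amounts to be realized, read along the alternating sequence, form a BPS: an inner ``odd'' part built from the $o$/$e$/$0$ entries $m_i-1$ at the $c_i$ (an odd number $m$ of them, since $m$ is odd), wrapped in an outer layer of $e$-entries $2a_l^{(i)}$ at the path vertices, and the freedom in choosing the lexicographic order is exactly the freedom to permute this BPS. To conclude I want this BPS attainable, which I would get from Lemma~\ref{BPS_depth_1} for the core $(m_1-1,\dots,m_m-1)$, the attainability of $e,\dots,e$, the combine lemmas \ref{combine_attainable}, \ref{combine_nicely_attainable} and \ref{combine_permutable_sequences}, and, for larger $h$, the large-depth lemmas \ref{large_depth_attainable} and \ref{big_tree_last_lemma}; the transfers then exist and $T$ is graceful with $f(v)=0$, which also closes the even case above.

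The main obstacle is the interaction between the ``caterpillar'' layer — the even blocks $2a_l^{(i)}$ of extra leaves deposited at \emph{internal} (hence non-radial) vertices — and the ``banana'' core $(m_i-1)$ at the $c_i$. In the present framework, several extra leaves attached to one path vertex would appear as several ``$0$'' entries sharing a parent, a configuration \emph{not} covered by the with-zeros lemmas \ref{odd_BPS_depth_2_with_zeros} and \ref{big_tree_last_lemma}; the fix is to realize each block $2a_l^{(i)}$ at its vertex in a single transfer that leaves an even number of leaves there (which Lemma~\ref{transfer_parity} permits for suitable index gaps, arranged through the leg-length and branch-order bookkeeping) rather than as a list of zeros. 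This requires genuinely extending the radial/BPS technology from radial trees to trees whose leaves occur at intermediate levels in even-sized groups, together with the routine but delicate verification that, after each stage of transfers, the remaining leaves still form a transferable set relative to a closed convergent alternating sequence reaching one level deeper — so that the multi-level ``transfer onto leaves just transferred'' steps remain legitimate. Once this bookkeeping is in place, the rest is a direct, if lengthy, application of the transfer lemmas.
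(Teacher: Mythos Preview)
The overall shape of your proposal matches the paper: reduce to odd $m$, build the auxiliary spider $S$, invoke Lemma~\ref{auxiliary_radial_tree} to get a graceful $S'$ with a transferable block at the first deep vertex, redistribute by transfers, and close the even-$m$ case with Corollary~\ref{attach_caterpillar}. Where you diverge is in the redistribution step, and that divergence is the gap.

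You try to encode the entire redistribution---both the even extra leaves at the internal path vertices and the star-leaves at the $c_i$---as one BPS and then appeal to the depth-$\ge 2$ attainability lemmas. As you yourself notice, this does not fit: the internal path vertices carry no nested block structure, expressing them in the BPS framework introduces many $0$'s not covered by Lemmas~\ref{odd_BPS_depth_2_with_zeros} or~\ref{big_tree_last_lemma}, and Lemma~\ref{large_depth_attainable} has a ``number of evens $\not\equiv 3\pmod 4$'' hypothesis you cannot guarantee. You then say the fix ``requires genuinely extending the radial/BPS technology'' and leave the verification as bookkeeping. That is where the argument stops short of a proof.

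The paper's point is that no such extension is needed, because the two kinds of leaves are handled in two strictly separate, elementary phases. First, starting from the labeling of $S'$ given by Lemma~\ref{auxiliary_radial_tree}, run the explicit back-and-forth
\[
v_{n-m+2}\to v_{n-m+1}\to\cdots\to v_i\to v_{i+1}\to\cdots\to v_{n-m+2},
\]
where $v_i$ is the first vertex at level $k$. By Lemma~\ref{transfer_parity} the very first step may leave $0$ at $v_{n-m+2}$ (the block arrived there from $v_{n-m+1}$, so the relevant index gap is $0$), each intermediate $v_j$ receives two odd deposits summing to the prescribed positive even number, the turnaround at $v_i$ leaves an even number, and the remaining leaves return to $v_{n-m+2}$ as a transferable set (they arrive by a single first-type transfer from $v_{n-m+1}$, so their label-sum is again $v_{n-m+1}+v_{n-m+2}$). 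No BPS is invoked in this phase at all. Second, the star-leaves are distributed among the $m$ leaves $v_{n-m+2},\ldots,v_{n+1}$ using only the depth-$1$ result, Lemma~\ref{BPS_depth_1}, which has no parity or zero restrictions. So the whole proof rests on Lemma~\ref{transfer_parity} plus Lemma~\ref{BPS_depth_1}; the higher-depth BPS lemmas play no role, and the ``delicate extension'' you anticipate is unnecessary.
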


\begin{proof}
Let $T$ be an even-caterpillar banana tree with $m$ branches at the apex, and fixed path length $h$.  We have two cases, based on the parity of $m$.\newline

\noindent\emph{Case 1: $m$ is odd.}\newline

We define the following auxiliary trees:
\begin{itemize}
\item Let $S$ be a rooted tree obtained from $m$ paths $P_{h}$ by identifying one leaf of each path all together.
\begin{itemize}
\item Let the identified vertex $v$ be the root of $S$.
\item Let $v_{1}, \ldots , v_{n + 1}$ be the vertices of $S$, ordered according to some lexicographical order $\mathcal{L}$ on the vertices of $S$, so that $v_{n - m + 2}, \ldots , v_{n + 1}$ are the leaves of $S$.
\end{itemize}
\item Let $S'$ be the tree obtained from $S$ by attaching leaves $\{u_{i}\}$ to $v_{n - m + 2}$, such that $S, S'$ have the same numbers of vertices and edges.
\end{itemize}
Since $m$ is odd, $S$ is an odd radial tree, so by Lemma \ref{auxiliary_radial_tree}, $S'$ has a graceful labeling $f$ with $f(v) = 0$, such that
$v_{n - m + 2}, \ldots , v_{n + 1}$ is an alternating sequence of vertices, and
the leaves $\{u_{i}\}$ form a transferable set of leaves.

Recall that $S'$ is obtained by a well-behaved sequence of transfers
$$v_{1}\rightarrow v_{2}\rightarrow\cdots\rightarrow v_{n - m + 1}\rightarrow v_{n - m + 2}$$
Therefore, we can extend this sequence of transfers as follows:
$$v_{n - m + 2}\rightarrow v_{n - m + 1}\rightarrow\cdots\rightarrow v_{i + 1}\rightarrow v_{i}\rightarrow v_{i + 1}\rightarrow\cdots\rightarrow v_{n - m + 2}$$
By Lemma \ref{transfer_parity}, we can leave behind 0 leaves at $v_{n - m + 2}$, an even number of leaves at each of $v_{i}, \ldots , v_{n - m + 1}$, and transfer the remaining leaves back to $v_{n - m + 2}$.  The resulting tree $S''$ has the following properties:
\begin{itemize}
\item By choosing $i$ such that $v_{i}$ is the first vertex in level $k$, this sequence of transfers accounts for all of the attached leaves, so that the resulting tree is identical to $T$ except for the leaves in the last level.
\item $v_{n - m + 2}, \ldots , v_{n + 1}$ is an alternating sequence of vertices, and the leaves at $v_{n - m + 2}$ form a transferable set of leaves with respect to this sequence.
\end{itemize}

We want to obtain $T$ from $S''$ by transfers, which requires that the numbers of children of $v_{n - m + 2}, \ldots , v_{n + 1}$ in $T$ form an attainable sequence $n_{1}, \ldots , n_{m}$.  Since we can carry out the process above for any $\mathcal{L}$, it suffices to prove that the BPS $(n_{1}, \ldots , n_{m})$ is attainable.  By Lemma \ref{BPS_depth_1}, every BPS of depth 1 is attainable, so $(n_{1}, \ldots , n_{m})$ is attainable, and hence $T$ is graceful.\newline

\noindent\emph{Case 2: The apex has even degree.}\newline

Let $T'$ be the even-caterpillar banana tree obtained from $T$ by deleting a branch $H$ at the apex.  Then the apex of $T'$ has odd degree, so $T'$ has a graceful labeling by the case above, with apex labeled 0.  By Lemma \ref{attach_caterpillar}, since $T$ can be formed by attaching the caterpillar $H$ to $T'$ at the apex, $T$ is graceful.
\end{proof}

Using this approach, we can add leaves in the same way to the lower levels of the trees in Theorems \ref{first_big_boy}, \ref{second_big_boy}, \ref{third_big_boy}, \ref{fourth_big_boy}, and \ref{rearrange_those_bad_boys}.


\section{Additional Results}

\subsection{A graceful class of spiders}

\begin{theorem}
Let $T$ be a spider with center $v$, and let the lengths of the legs of $T$ be $m_{1}, \ldots , m_{k}$, where $m_{1} \ge \cdots \ge m_{k}$.  If, for each $i$, we have
$$m_{i}\le\max\left(1,\;\log_{2}\left(\frac{n}{2i - 1}\right) + 1\right)$$
then $T$ has a graceful labeling $f$ with $f(v) = 0$.
\end{theorem}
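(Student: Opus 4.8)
The plan is to build $T$ from a gracefully labeled star by a sequence of transfers, growing one leg at a time in order of decreasing length, and to show that the hypothesis is exactly the condition that makes every required transfer legal. Since transfers never relabel vertices, the center will keep its label $0$ for free throughout, so the only real content is the combinatorics of the transfer sequence.

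First I would fix a graceful labeling of $K_{1,n}$ with the center $v$ labeled $0$, so that the $n$ leaves form a closed convergent alternating sequence $v=v_0,v_1,\dots,v_n$ with $f(v_{2i-1})=n-i+1$ and $f(v_{2i})=i$. A leg of length $1$ needs nothing, as it is already a leaf of $v$. For a leg of length $m\ge 2$ I would use a "down–then–up" block of transfers $v\to a_1\to a_2\to\cdots\to a_m\to a_{m-1}\to\cdots\to a_1\to v$: the downward transfers carry a working batch of leaves (a transferable set) one level deeper at each step, each made to leave behind an odd number of leaves by Lemma~\ref{transfer_parity}(1), and the upward transfers then empty each $a_j$ back into its parent, which Lemma~\ref{transfer_parity}(2) permits since a transfer back toward the vertex a batch came from can leave $0$ behind. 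The net effect of the block is that $a_1,\dots,a_m$ become a path of length $m$ hanging off $v$ with every internal leg-vertex having exactly one child, and all leaves not consumed as leg-vertices are returned to $v$; as $\sum_i m_i=n$, after all $k$ blocks every star-leaf has been used up and the resulting tree is exactly $T$.

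The crux is the label budget. When a working batch is pushed one level down, the relevant alternating sequence converges and the interval of labels available to the batch is essentially halved (Lemmas~\ref{leaf_ordering}, \ref{jake_the_snake}); hence reaching depth $m$ forces the batch to start with at least about $2^{m-1}$ leaves, with a small extra allowance for the parity corrections and for the single leaf that must remain at the bottom. I would maintain, by induction on $i$, the invariant that at the start of block $i$ the configuration is again a star-with-alternating-sequence situation in which leg $i$'s root sits at position about $2i-1$ in the alternating sequence, so that the interval of labels still free for building leg $i$ downward has size at least about $n/(2i-1)$. Matching the halving requirement against this free interval yields exactly $2^{m_i-1}\le n/(2i-1)$, i.e.\ $m_i\le\log_2(n/(2i-1))+1$, and the $\max(1,\cdot)$ merely records that legs of length $1$ impose no constraint. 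I expect the main obstacle to be making this accounting precise and uniform: verifying that the post-block configuration really does match the invariant, and that at each downward step the parity of the "odd number left behind" can be chosen consistently with the target length $m_i$ — this is where the induction does the real work and where the constants $2i-1$ and $+1$ in the bound are pinned down.
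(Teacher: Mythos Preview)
The paper does not use transfers here at all; it gives a one-paragraph direct construction. Set $f(v)=0$, and along leg $i$ (for $m_i\ge 2$) assign the labels
\[
2^{m_i-1}(2i-1),\ 2^{m_i-2}(2i-1),\ \ldots,\ 2(2i-1),\ 2i-1
\]
from the center outward; put the leftover labels in $\{1,\dots,n\}$ on the legs of length~$1$. The hypothesis is literally the inequality $2^{m_i-1}(2i-1)\le n$, so every label lies in $\{0,\dots,n\}$; injectivity is free because each positive integer has a unique factorisation $2^{j}(2i-1)$; and along each leg the induced edge labels equal the vertex labels (the difference of $2a$ and $a$ is $a$), so $E_f=V_f\setminus\{0\}=\{1,\dots,n\}$. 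That is the entire argument.

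Your transfer scheme is a genuinely different route, and as written it has a real gap, not just missing bookkeeping. Transfers of the first type do \emph{not} halve the label interval: a step $v_j\to v_{j+1}$ along a convergent alternating sequence takes a transferable interval of size $N$ to one of size $N-1$ (Lemma~\ref{transfer_parity}, Lemma~\ref{jake_the_snake}), not $N/2$. Likewise the number $2i-1$ in the hypothesis has nothing to do with the position of a leg-root in an alternating sequence; it is the odd part of the labels used on leg~$i$. So the invariant you propose---``leg $i$'s root sits at position about $2i-1$ and the free interval has size about $n/(2i-1)$''---is not what the transfer machinery produces, and the deferred accounting cannot be made to close in the form you describe. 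If you want a transfer interpretation of the paper's labeling, it is a sequence of \emph{single-leaf} transfers via the second case of Lemma~\ref{the_classic_lemma} (move the leaf labeled $2^{j-1}(2i-1)$ from $v$ to the vertex labeled $2^{j}(2i-1)$, using $2\cdot 2^{j-1}(2i-1)=0+2^{j}(2i-1)$), not batch transfers along an alternating sequence.
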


\begin{proof}  We construct a labeling $f$.  Let $f(v) = 0$, and for each $i$ with $m_{i}\ge 2$, label the $i$th leg, from the vertex adjacent to the center outward, with labels
$$2^{m_{i} - 1}(2i - 1), \; 2^{m_{i} - 2}(2i - 1), \; 2i - 1$$
Assign the remaining labels to the vertices of the legs of length one.

No label is assigned to vertices in two different legs, so $f$ is injective and hence a labeling.  Moreover, for each $i$ with $m_{i}\ge 2$, we have
\begin{align*}
2^{m_{i} - 1}(2i - 1)&\le 2^{\log_{2}\left(\frac{n}{2i - 1}\right)}(2i - 1)\\
&\le \frac{n}{2i - 1}\cdot (2i - 1)\\
&\le n
\end{align*}
Therefore, all labels are at most $n$, so $V_{f} = \{0, \ldots , n\}$.  Finally, note that for each $i$ with $m_{i}\ge 2$, the induced edge labels along the $i$th leg are exactly the vertex labels along the $i$th leg.  The same is true for the legs of length 1, so $E_{f} = \{1, \ldots , n\}$, and hence $f$ is graceful.
\end{proof}

Since this theorem gives a graceful labeling with the center labeled 0, we can attach a path of arbitrary length to a spider $T$ satisfying the conditions of the theorem and obtain another graceful spider.

\subsection{Attaching many leaves to a vertex gives a graceful tree}

Earlier, we cited Kotzig's result that subdividing any edge of a tree sufficiently many times produces a tree with an $\alpha$-labeling (see Theorem \ref{subdivide_Kotzig}).  Here we prove a similar result, that attaching sufficiently many leaves to any fixed vertex of a tree produces a graceful tree.

To prove this result, we begin by making two reductions.  Let $T$ be a tree, let $v$ be a vertex of $T$, and consider the three statements below.  We show that (2) implies (1), and then that (3) implies (2).
\begin{enumerate}[(1)]
\item Attaching many leaves to $T$ at $v$ gives a graceful tree.
\item $T$ has a \emph{consistent range-relaxed graceful labeling} $f$ with $f(v) = 0$.
\item $T$ has a \emph{labeling function} $g$ with $g(v) = 0$.
\end{enumerate}

\subsubsection{Reduction to existence of labeling functions}

First, we must introduce relaxed graceful labelings of trees.  The phrase, ``relaxed graceful labeling," has been used to describe certain relaxations of the conditions for graceful graphs.  Van Bussel \cite{van2002relaxed} gives the following examples in his survey of the topic:
\begin{itemize}
\item \emph{Vertex-relaxed} graceful labelings, which allow repeated vertex labels.
\item \emph{Edge-relaxed} graceful labelings, which allow repeated edge labels.
\item \emph{Range-relaxed} graceful labelings, which allow labels greater than $n$.
\end{itemize}
Usually $\sigma$-labelings and $\rho$-labelings are not considered relaxed graceful labelings, even though they could be viewed as generalizations of graceful labelings.

Here we consider range-relaxed graceful labelings.  Recall that a labeling is an injective mapping of the vertices of a graph to the non-negative integers, so to define a range-relaxed graceful labeling, we need only require that the induced edge labels are distinct.  For simplicity, we restrict our attention to trees, rather than general graphs.

\begin{definition}
(see Van Bussel \cite{van2002relaxed}) Let $T$ be a tree, and let $f$ be a labeling of $T$.  The labeling $f$ is a \emph{range-relaxed graceful} labeling if the induced edge labels of $T$ under $f$ are all distinct.
\end{definition}

We now define a new type of relaxed graceful labeling, by requiring that the sets of vertex labels and edge labels to be the same, except for the vertex labeled 0.

\begin{definition}
Let $T$ be a tree, and let $f$ be a range-relaxed graceful labeling of $T$.  Let $V_{f}$ be the set of vertex labels under $f$, and let $E_{f}$ be the set of induced edge labels under $f$.  The labeling $f$ is a \emph{consistent range-relaxed graceful labeling} if $V_{f} = E_{f}\cup\{0\}$.
\end{definition}

The following lemma gives our motivation for introducing this new type of relaxed graceful labeling.  Specifically, given a tree with a consistent range-relaxed graceful labeling, we can attach leaves to the vertex labeled 0 and obtain a graceful tree.

\begin{lemma} \label{addleaves}
Let $T$ be a tree, and let $v$ be a vertex of $T$.  If $T$ has a consistent range-relaxed graceful labeling $f$ with $f(v) = 0$, then there exists $N$ such that, for all $k\ge N$, attaching $k$ leaves to $T$ at $v$ produces a graceful tree.

\end{lemma}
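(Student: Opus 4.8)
The plan is to construct the graceful labeling of the tree $T^{+}$ obtained by attaching $k$ leaves to $T$ at $v$ directly, by extending the labeling $f$. First I would set $n = |E(T)|$, so that $V_f = \{0\} \cup E_f$ with $|E_f| = n$, and let $M = \max(V_f \cup E_f)$ be the largest label used. Attaching $k$ leaves adds $k$ edges, so $T^{+}$ has $n + k$ edges, and a graceful labeling of $T^{+}$ must use exactly the vertex labels $\{0, 1, \ldots, n+k\}$ and induce exactly the edge labels $\{1, \ldots, n+k\}$. The natural idea is: keep $f$ on $V(T)$, and assign to the $k$ new leaves (all adjacent to $v$, which has label $0$) the labels that are "missing" — I want the new leaves to carry precisely the labels in $\{1, \ldots, n+k\}$ that are not already vertex labels of $T$, so that the vertex label set becomes all of $\{0, \ldots, n+k\}$, and simultaneously the edges from $v$ to these leaves induce exactly the labels that are not already edge labels of $T$. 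Since $v$ is labeled $0$, the induced label of the edge from $v$ to a new leaf equals the leaf's label, so these two requirements coincide: I need the set of new leaf labels to equal $\{1, \ldots, n+k\} \setminus V_f$ on the vertex side and $\{1, \ldots, n+k\} \setminus E_f$ on the edge side, and since $V_f \setminus \{0\} = E_f$, these are the same set.

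The key step is choosing $N$ large enough. Take $N = M$ (or $N = M - n$, whatever makes the counting work out); the point is that once $k \ge N$, the interval $\{1, \ldots, n+k\}$ contains every label already used by $f$. Then define the set $L = \{1, \ldots, n+k\} \setminus (V_f \setminus\{0\})$. I would check that $|L| = (n+k) - n = k$, exactly the number of new leaves, using that $V_f \setminus \{0\} \subseteq \{1, \ldots, n+k\}$ (which holds because all labels of $f$ are at most $M \le N \le k \le n+k$) and that $|V_f \setminus \{0\}| = n$. Assign the $k$ elements of $L$ bijectively to the $k$ new leaves. Now verify gracefulness of the resulting labeling $g$ of $T^{+}$: the vertex label set is $(V_f) \cup L = \{0\} \cup (V_f\setminus\{0\}) \cup L = \{0, 1, \ldots, n+k\}$, so $g$ is injective with the right range; the induced edge labels are $E_f$ (from the edges of $T$, unchanged) together with $\{\,\ell - 0 : \ell \in L\,\} = L$ (from the new edges), and $E_f \cup L = (V_f \setminus \{0\}) \cup L = \{1, \ldots, n+k\}$, all distinct. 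Hence $g$ is a graceful labeling of $T^{+}$.

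The main obstacle — really the only subtlety — is bookkeeping with the interval endpoints: making sure that for all $k \ge N$ the "missing labels" set $L$ has exactly $k$ elements and that no label collision occurs between the old vertex labels and the new leaf labels. This is purely a counting argument: once $k$ is large enough that $\{1,\ldots,n+k\}$ swallows the finite set of labels used by $f$, the complement of $V_f\setminus\{0\}$ inside $\{1,\ldots,n+k\}$ grows by exactly one each time $k$ increases by one, starting from size $k = N$ downward-consistently, so it always has size $k$. I would state $N$ explicitly (e.g. $N = M$, since then $V_f \setminus\{0\} \subseteq \{1,\ldots,M\} \subseteq \{1,\ldots,n+k\}$ for $k \ge N$) and note that the consistency condition $V_f = E_f \cup \{0\}$ is exactly what makes the vertex-side and edge-side requirements compatible — without it, filling in the missing vertex labels would not automatically fill in the missing edge labels. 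No $\alpha$-labeling machinery or transfers are needed here; it is a short, self-contained construction.
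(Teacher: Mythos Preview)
Your proposal is correct and follows essentially the same approach as the paper: choose $N$ so that $n+k$ exceeds the maximum label used by $f$, then assign the unused labels in $\{0,\ldots,n+k\}$ to the new leaves, noting that since $f(v)=0$ each new edge inherits its leaf's label and the consistency condition $V_f = E_f \cup \{0\}$ makes the missing vertex labels coincide with the missing edge labels. The paper's proof is a two-sentence sketch of exactly this construction; your version simply spells out the bookkeeping.
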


\begin{proof}
Suppose we attach enough leaves that the number of edges $n + k$ in the resulting tree $T'$ is greater than the maximum value of $f$.  Then we can assign the unused labels in $\{0, 1, \ldots , n + k\}$ to the attached leaves.  The attached edges then receive the same labels, so the resulting labeling is graceful.  Hence the desired $N$ exists.
\end{proof}



Therefore, in order to prove that attaching sufficiently many leaves to any fixed vertex of a tree produces a graceful tree, it suffices to prove that every tree has a consistent range-relaxed graceful labeling.  Before proving this, we make one further reduction, by introducing the concept of a labeling function, a generalization of the concept of a consistent range-relaxed graceful labeling to rational-valued functions.

\begin{definition}
Let $T$ be a tree with vertex set $V$, and let $g:V\rightarrow\mathbb{Q}_{\ge 0}$ be a function.  Let $V_{g}$ be the set of vertex labels under $g$, and let $E_{g}$ be the set of induced edge labels under $g$.  The function $g$ is a \emph{consistent range-relaxed graceful labeling function}, or \emph{labeling function} for short, if $V_{g} = E_{g}\cup\{0\}$.
\end{definition}

\begin{lemma} \label{labelinglemma}
Let $T$ be a tree, and let $v$ be a vertex of $T$.  Then $T$ has a consistent range-relaxed graceful labeling $f$ with $f(v) = 0$ if and only if $T$ has a labeling function $g$ with $g(v) = 0$.
\end{lemma}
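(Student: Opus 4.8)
The plan is to prove the two implications separately, the forward-used one (a labeling function yields a consistent range-relaxed graceful labeling) being the substantive direction and the other being an immediate inclusion.

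For the direction ``$T$ has a labeling function $g$ with $g(v)=0$ $\implies$ $T$ has a consistent range-relaxed graceful labeling $f$ with $f(v)=0$,'' my first step would be to \emph{clear denominators}. Let $M$ be a common multiple of the (finitely many) denominators of the values $g(u)$, $u\in V(T)$, and set $f=Mg$. Then $f$ maps $V(T)$ into $\mathbb{Z}_{\ge 0}$ and $f(v)=Mg(v)=0$. The second step is the routine verification that scaling preserves everything relevant: $f$ is injective iff $g$ is (multiplication by $M\ne 0$ is a bijection of $\mathbb{Q}$); the induced label of an edge $uw$ under $f$ is $|f(u)-f(w)|=M\,|g(u)-g(w)|$, so $E_f=M\cdot E_g$ and the edge labels are distinct iff those of $g$ are; and
$$V_f = M\cdot V_g = M\cdot(E_g\cup\{0\}) = (M\cdot E_g)\cup\{0\} = E_f\cup\{0\}.$$
Hence $f$ is a consistent range-relaxed graceful labeling with $f(v)=0$.

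For the reverse direction, a consistent range-relaxed graceful labeling $f$ with $f(v)=0$ takes values in $\mathbb{Z}_{\ge 0}\subseteq\mathbb{Q}_{\ge 0}$ and satisfies $V_f=E_f\cup\{0\}$, so it is itself a labeling function with $f(v)=0$; nothing needs to be done.

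The one point that requires care — and which I expect to be the only genuine obstacle — is whether a labeling function, being a priori only required to satisfy $V_g=E_g\cup\{0\}$, is automatically injective with distinct induced edge labels (as the phrase ``consistent range-relaxed graceful labeling function'' suggests, inheriting distinctness from ``range-relaxed graceful labeling''). If one insists on the literal reading in which a labeling function may collapse labels, then after clearing denominators one still has an integer $f$ with $V_f=E_f\cup\{0\}$ and $f(v)=0$ but possibly a collision; a short counting argument shows such a collision is very rigid (it forces $0\in E_f$ and a pair of adjacent vertices sharing a label), and one removes it by first multiplying $f$ by a large constant to create room and then perturbing a single label near that edge. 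I would include this contingency but treat it briefly, since the labeling functions produced by the later existence theorem are injective with distinct edge labels, so in practice only the clearing-of-denominators step is needed.
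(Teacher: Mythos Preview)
Your proof is correct and follows the same approach as the paper: clear denominators for the substantive direction, and observe that an integer-valued consistent range-relaxed graceful labeling is already a rational-valued labeling function for the trivial direction. The paper's own proof is two sentences and does not pause over the injectivity issue you raise; it implicitly treats injectivity and distinctness of edge labels as part of the definition of a labeling function (consistent with the name and with the explicit injectivity checks in the subsequent existence theorem), so your contingency discussion, while careful, is not needed.
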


\begin{proof}
First suppose $T$ has a labeling function $g$ with $g(v) = 0$.  Since there are only finitely many labels, there exists a positive integer $k$ such that $k\cdot g$ takes only non-negative integer values.  Then $k\cdot g$ is a consistent range-relaxed graceful labeling with $(k\cdot g)(v) = 0$.

Now suppose $T$ has a consistent range-relaxed graceful labeling $f$ with $f(v) = 0$.  Then $f$ itself is a labeling function of $T$ with $f(v) = 0$, which completes the proof.
\end{proof}

\subsubsection{Proof of existence of certain labeling functions}

In light of our two lemmas, it now suffices to consider labeling functions.  We begin by proving the following theorem.

\begin{theorem} \label{labelingtheorem}
Let $T$ be a tree, and let $v$ be a vertex of $T$.  If $v$ is a leaf, then $T$ has a labeling function $g$ with $g(v) = 0$.
\end{theorem}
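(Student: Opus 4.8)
The plan is to prove a stronger \emph{rooted} statement by induction on the number of vertices, then deduce the theorem by one extra step. The rooted claim I would establish is: for every rooted tree $(S,r)$ there is an injective $h\colon V(S)\to\mathbb{Q}_{>0}$ with (i) $h(r)>h(x)$ for all $x\neq r$, and (ii) $\{h(x):x\in V(S)\}=\{\,|h(x)-h(p(x))|:x\in V(S),\ x\neq r\,\}\cup\{h(r)\}$, where $p(x)$ is the parent of $x$. Condition (ii) is exactly the labeling-function condition with the distinguished value $0$ replaced by $h(r)$; condition (i) is a technical strengthening needed to close the induction.

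Granting this, the theorem is immediate: root $T$ at $v$, let $w$ be the unique neighbour of $v$, apply the rooted claim to $S:=T-v$ rooted at $w$, and define $g(v):=0$ and $g:=h$ on $T-v$. The added edge $vw$ acquires label $|0-h(w)|=h(w)$, so $E_g=E_h\cup\{h(w)\}=\{h(x):x\in V(T-v)\}$ by (ii), whence $V_g=\{0\}\cup E_g$; also $g(v)=0$, and $g$ is injective because $h>0$.

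For the induction, the base case $|V(S)|=1$ is trivial (take $h(r)=1$). In the step, let $r$ have children $r_1,\dots,r_k$, $k\geq1$, with subtrees $S_1,\dots,S_k$, and let $h_i$ be the labeling of $(S_i,r_i)$ supplied by induction. I would set $h(r)=\beta$ (say $\beta=1$) and $h|_{S_i}=\lambda_ih_i$ for positive rationals $\lambda_i$ to be chosen; write $\gamma_i:=\lambda_ih_i(r_i)$ for the resulting label of $r_i$. The edge $rr_i$ then has label $|\beta-\gamma_i|$, and the whole construction hinges on arranging $\{\,|\beta-\gamma_i|:i\,\}=\{\gamma_i:i\}$. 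I achieve this by pairing up the children and imposing $\gamma_i+\gamma_{i'}=\beta$ on each pair, with, when $k$ is odd, one leftover child forced to $\gamma_{i_0}=\beta/2$; then $|\beta-\gamma_i|$ runs through exactly the $\gamma_i$. A direct computation now gives that the edge-label set of $S$ equals $\bigcup_i\lambda_ih_i(V(S_i))$ and the vertex-label set equals $\{\beta\}\cup\bigcup_i\lambda_ih_i(V(S_i))$, which is (ii); and since (i) for $S_i$ gives $\max h_i(V(S_i))=h_i(r_i)$, every scaled label of $S_i$ is $\leq\gamma_i<\beta$, so (i) holds for $S$ — and this same bound rules out the one dangerous collision, $\beta\in\lambda_{i_0}h_{i_0}(V(S_{i_0}))$, that the forced value $\gamma_{i_0}=\beta/2$ might otherwise produce.

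The remaining work — and the only mildly delicate point — is the choice of scalars. Each pair constraint $\gamma_{2j-1}+\gamma_{2j}=\beta$ leaves one free rational parameter per pair; these must be chosen so that the sets $\lambda_ih_i(V(S_i))$ are pairwise disjoint and avoid $\beta$, each $\lambda_i$ is positive, and no $\gamma_{2j-1}$ equals $\beta/2$. Every one of these failure modes is cut out by finitely many proper affine equations in the free parameters, so the admissible choices form a nonempty open subset of the region where all $\lambda_i>0$ and in particular contain rational points. I expect this genericity bookkeeping, together with checking that the "root is the strict maximum" invariant persists, to be the main obstacle; everything else is routine.
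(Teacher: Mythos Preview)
Your argument is correct. Both proofs rest on the same combinatorial kernel---pair up the children of a vertex so their labels sum to the parent's label, handle an odd leftover by forcing it to half the parent's label, and use a genericity argument to kill the finitely many collision constraints---but the organization is genuinely different. The paper grows the labeled tree incrementally from the edge $v,n_v$, adding one or two children at a time to an existing vertex $u$ (labeling a single new child $g(u)/2$, or a pair $q$ and $g(u)-q$), and to make the halving step safe it carries a somewhat delicate invariant: whenever two labels are related by a power of $2$, the smaller one sits strictly below the larger in the tree and the larger vertex is already full. Your proof instead runs a clean structural induction on rooted subtrees, rescaling each child's subtree by its own $\lambda_i$ and carrying only the invariant that the root has the strict maximum label; this single inequality simultaneously guarantees $\beta\notin\bigcup_i\lambda_iV_{h_i}$, that all $\gamma_i<\beta$ so the pairing $|\beta-\gamma_i|=\gamma_{i'}$ works, and that condition~(i) persists. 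The paper's route is more local (one never needs global control of a whole subtree at once), while yours is more modular and the invariant is simpler to state and verify; in particular you avoid the powers-of-$2$ bookkeeping entirely.
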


\begin{proof}
Let $n_{v}$ be the unique neighbor of $v$.  Consider $T$ as a rooted tree with root $v$.  Given a subtree $H$ of $T$, call a vertex $u$ of $H$ \emph{full} if $H$ also contains all neighbors of $u$, that is, if $\deg_{H}u = \deg_{T}u$.  Then let $\mathcal{F}$ be the family of subtrees $H$ of $T$ with the following properties:
\begin{enumerate}[(1)]
\item $H$ contains $v, n_{v}$.
\item For any non-full vertex $u$ of $H$, the degree $\deg_{H}u$ is odd.
\item $H$ has a labeling function $g$ with $g(v) = 0$, such that if $v_{1}, v_{2}$ are vertices of $H$ with $g(v_{1}) = 2^{k}g(v_{2})$ for some positive integer $k$, then $v_{2}$ is a descendent of $v_{1}$, and $v_{1}$ is full.
\end{enumerate}

Consider the subtree of $T$ induced by $v, n_{v}$.  This subtree has a labeling function $g$ given by $g(v) = 0$, $g(n_{v}) = 1$, so $\mathcal{F}$ is nonempty.  Therefore, there exists a maximal tree $H_{\text{max}}\in\mathcal{F}$.

Suppose for contradiction that $H_{\text{max}}\ne T$.  Then there exists a non-full vertex $u$ of $H_{\text{max}}$.  (In particular, this implies $u\ne v$, so $g(u) > 0$.)  By the definition of $\mathcal{F}$, the degree $\deg_{H_{\text{max}}}u$ is odd.  We have two cases:\\

\begin{center}
\noindent\emph{Case 1: $\deg_{H_{\textnormal{max}}}u = \deg_{T}u - 1$.}\\
\end{center}

Let $w$ be the unique child of $u$ not in $H_{\text{max}}$, and let $H_{\text{max}}'$ be the subtree of $T$ induced by $w$ and the vertices of $H_{\text{max}}$.  We claim $H_{\text{max}}'\in\mathcal{F}$.  Clearly $H_{\text{max}}'$ satisfies (1).  For (2), the degrees of all vertices other than $u, w$ are the same in $H_{\text{max}}'$ as in $H_{\text{max}}$, so we need only check $u, w$, but $u$ is full in $H_{\text{max}}'$ and $w$ has degree 1 in $H_{\text{max}}'$.  Hence $H_{\text{max}}'$ also satisfies (2).  

It remains to show that $H_{\text{max}}'$ satisfies (3).  Let $g$ be a labeling function of $H_{\text{max}}$ with $g(v) = 0$ that satisfies the condition in (3), and extend $g$ to a function $g'$ on the vertex set of $H_{\text{max}}'$ by taking $g'(w) = g(u)/2$.  We claim $g'$ is a labeling function of $H_{\text{max}}'$ that satisfies the condition in (3).\\

\noindent\emph{Claim: $g'$ is injective.}\\

\noindent\emph{Proof of Claim.}  Suppose for contradiction that there exist distinct vertices $x, y$ of $H_{\text{max}}'$ with $g'(x) = g'(y)$.  Since $g$ is injective, $x, y$ cannot both be vertices of $H_{\text{max}}$, so we may assume that $x$ is $w$ and $y$ is a vertex of $H_{\text{max}}$.  Then $g'(w) = g(y)$, so $g(u) = 2g(y)$.  Since $g$ satisfies the condition in (3), the vertex $u$ is full in $H_{\text{max}}$, a contradiction.  Therefore, $g'$ is injective.\\

\noindent\emph{Claim: $g'$ is a labeling function of $H_{\text{max}}'$.}\\

\noindent\emph{Proof of Claim.}  We have $V_{g'} = V_{g}\cup\{g'(w)\}$ and $E_{g'} = E_{g}\cup\{|g'(u) - g'(w)|\}$.  Note that
$$|g'(u) - g'(w)| = |g(u) - g(u)/2| = g(u)/2 = g'(w)$$
Then since $V_{g} = E_{g}\cup\{0\}$, we also have $V_{g'} = E_{g'}\cup\{0\}$, so $g'$ is a labeling function of $H_{\text{max}}'$.\\

\noindent\emph{Claim: If $v_{1}, v_{2}$ are vertices of $H_{\text{max}}'$ with $g'(v_{1}) = 2^{k}g'(v_{2})$ for some positive integer $k$, then $v_{2}$ is a descendent of $v_{1}$, and $v_{1}$ is full in $H_{\text{max}}'$.}\\

\noindent\emph{Proof of Claim.}  Suppose $v_{1}, v_{2}$ are both vertices of $H_{\text{max}}$, so that $g(v_{1}) = 2^{k}g(v_{2})$ for some positive integer $k$.  Since $g$ satisfies the conditions in (3), $v_{2}$ is a descendent of $v_{1}$, and $v_{1}$ is full in $H_{\text{max}}$.  Then $v_{1}$ is also full in $H_{\text{max}}'$.  Therefore, $g'$ satisfies the condition in (3) for such $v_{1}, v_{2}$, so we may assume that one of $v_{1}, v_{2}$ is $w$.

Suppose for contradiction that $v_{1}$ is $w$.  Then $g'(w) = 2^{k}g(v_{2})$, so $g(u) = 2^{k + 1}g(v_{2})$.  Since $g$ satisfies the condition in (3), $u$ is full in $H_{\text{max}}$, a contradiction.  

Therefore, $v_{2}$ is $w$, so $g(v_{1}) = 2^{k}g'(w)$, and we have
$$g(v_{1}) = 2^{k - 1}g(u)$$
If $k > 1$, then since $g$ satisfies the condition in (3), $u$ is a descendent of $v_{1}$, and $v_{1}$ is full in $H_{\text{max}}$.  Then $v_{1}$ is also full in $H_{\text{max}}'$.  Therefore, $g'$ satisfies the condition in (3) for such $v_{1}, v_{2}$, so we may assume $k = 1$.

Then $g(v_{1}) = g(u)$, so $v_{1}$ is $u$, since $g$ is injective.  Then $v_{1}, v_{2}$ are $u, w$, and $w$ is a descendant of $u$, and $u$ is full in $H_{\text{max}}'$.  Therefore, $g'$ satisfies the condition in (3) for all $v_{1}, v_{2}$, as desired.\\

Therefore, $H_{\text{max}}'$ satisfies (1), (2), and (3), and hence $H_{\text{max}}'\in\mathcal{F}$, a contradiction to the maximality of $H_{\text{max}}$.  Therefore, in this case, we can conclude that $H_{\text{max}} = T$.\\

\begin{center}
\noindent\emph{Case 2: $\deg_{H_{\textnormal{max}}}u \le \deg_{T}u - 2$.}\\
\end{center}

Let $w_{1}, w_{2}$ be two children of $u$ not in $H_{\text{max}}$, and let $H_{\text{max}}'$ be the subtree of $T$ induced by $w_{1}, w_{2}$ and the vertices of $H_{\text{max}}$.  We claim $H_{\text{max}}'\in\mathcal{F}$.  Clearly $H_{\text{max}}'$ satisfies (1).  For (2), the degrees of all vertices other than $u, w_{1}, w_{2}$ are the same in $H_{\text{max}}'$ as in $H_{\text{max}}$, so we need only check $u, w_{1}, w_{2}$, but $u$ is full in $H_{\text{max}}'$ and $w_{1}, w_{2}$ have degree 1 in $H_{\text{max}}'$.  Hence $H_{\text{max}}'$ also satisfies (2).

It remains to show that $H_{\text{max}}'$ satisfies (3).  Let $g$ be a labeling function of $H_{\text{max}}$ with $g(v) = 0$ that satisfies the condition in (3), and extend $g$ to a function $g'$ on the vertex set of $H_{\text{max}}'$ by taking, for some rational $q$ with $0 < q < g(u)$,
\begin{align*}
g'(w_{1}) &= q\\
g'(w_{2}) &= g(u) - q
\end{align*}

Consider an equivalence relation on the positive rationals, such that $q_{1}, q_{2}$ are equivalent if and only if $q_{1}/q_{2} = 2^{k}$ for some integer $k$.  We choose $q$ according to the following claim.\\

\noindent\emph{Claim: There exists $q$ such that $g'(w_{1}), g'(w_{2})$ are not equivalent to each other or to any other nonzero label of $H_{\text{max}}'$ under $g'$.}\\

\noindent\emph{Proof of Claim.}  Take a small interval $(a, b)$ centered at $f(u)/2$ with $b < 2a$, so that no two rationals in $(a, b)$ are equivalent.  Then each label of $H_{\text{max}}$ under $g$ is equivalent to at most one rational in $(a, b)$.
\begin{itemize}
\item $g'(w_{1})$ is equivalent to a nonzero label of $H_{\text{max}}'$ other than $g'(w_{1}), g'(w_{2})$ for only finitely many rationals $q$ in $(a, b)$.
\item $g'(w_{2})$ is equivalent to a nonzero label of $H_{\text{max}}'$ other than $g'(w_{1}), g'(w_{2})$ for only finitely many rationals $q$ in $(a, b)$.
\item $g'(w_{1})$ is equivalent to $g'(w_{2})$ for only $q = g(u)/2$.
\end{itemize}
Therefore, since there are infinitely many rationals $q$ in $(a, b)$, there exists $q$ with the desired property.\\

Therefore, we can choose $q$ such that $g'(w_{1}), g'(w_{2})$ are not equivalent to each other or to any other nonzero label of $H_{\text{max}}'$ under $g'$.  We now prove that $g'$ satisfies the condition in (3).\\

\noindent\emph{Claim: $g'$ is injective.}\\

\noindent\emph{Proof of Claim.}  The labels $g'(w_{1}), g'(w_{2})$ are not equivalent, and hence not equal, to each other or to any other nonzero label of $H_{\text{max}}'$ under $g'$.  Then since $g$ is injective, $g'$ is also injective.\\

\noindent\emph{Claim: $g'$ is a labeling function of $H_{\text{max}}'$.}\\

\noindent\emph{Proof of Claim.}  We have $V_{g'} = V_{g}\cup\{g'(w_{1}), g'(w_{2})\}$ and $E_{g'} = E_{g}\cup\{|g'(u) - g'(w_{1})|, |g'(u) - g'(w_{2})|\}$.  Note that
\begin{align*}
|g'(u) - g'(w_{1})| &= g(u) - q = g'(w_{2})\\
|g'(u) - g'(w_{2})| &= g(u) - (g(u) - q) = g'(w_{1})
\end{align*}
Then since $V_{g} = E_{g}\cup\{0\}$, we also have $V_{g'} = E_{g'}\cup\{0\}$, so $g'$ is a labeling function of $H_{\text{max}}'$.\\

\noindent\emph{Claim: If $v_{1}, v_{2}$ are vertices of $H_{\text{max}}'$ with $g'(v_{1}) = 2^{k}g'(v_{2})$ for some positive integer $k$, then $v_{2}$ is a descendent of $v_{1}$, and $v_{1}$ is full in $H_{\text{max}}'$.}\\

\noindent\emph{Proof of Claim.}  By the hypotheses, $v_{1}, v_{2}$ are equivalent, so neither of $v_{1}, v_{2}$ is $w_{1}$ or $w_{2}$ by our choice of $q$.  Then since $g$ satisfies the condition in (3), $g'$ also satisfies the condition in (3).\\

Therefore, $H_{\text{max}}'$ satisfies (3), so $H_{\text{max}}'\in\mathcal{F}$, a contradiction to the maximality of $H_{\text{max}}$.  Therefore, in this case, we can conclude that $H_{\text{max}} = T$.\\

In both cases, we have $H_{\text{max}} = T$.  Therefore, $T\in\mathcal{F}$, so in particular, $T$ has a labeling function $g$ with $g(v) = 0$, as desired.
\end{proof}


\subsubsection{Application to original problem}

\begin{lemma} \label{rangerelaxed}
Let $T$ be a tree, and let $v$ be a vertex of $T$.  If $v$ is a leaf, then $T$ has a consistent range-relaxed graceful labeling $f$ with $f(v) = 0$.
\end{lemma}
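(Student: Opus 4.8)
The plan is to read this off immediately from the two results just proved. First I would invoke Theorem \ref{labelingtheorem}: since $v$ is a leaf of $T$, that theorem provides a labeling function $g$ of $T$ — a map $g\colon V(T)\to\mathbb{Q}_{\ge 0}$ with $V_g = E_g\cup\{0\}$ — satisfying $g(v)=0$. Then I would apply Lemma \ref{labelinglemma}, which states that $T$ has a consistent range-relaxed graceful labeling $f$ with $f(v)=0$ if and only if it has a labeling function $g$ with $g(v)=0$; the direction I need is the easy one, proved by clearing denominators. Explicitly, I would pick a positive integer $k$ for which $k\cdot g$ is integer-valued; then $f := k\cdot g$ is injective, its induced edge labels are distinct, and scaling preserves the identity $V_f = E_f\cup\{0\}$, so $f$ is a consistent range-relaxed graceful labeling of $T$ with $f(v) = k\cdot g(v) = 0$, as required.

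There is no genuine obstacle at this stage: all of the substantive work has already been done inside the proof of Theorem \ref{labelingtheorem}, where a maximal subtree $H_{\max}$ in the family $\mathcal{F}$ of carefully behaved subtrees is forced to equal $T$. The one thing to keep straight is that Theorem \ref{labelingtheorem} and the present lemma carry exactly the same hypothesis that $v$ is a leaf, so the two results plug together without friction. Together with Lemma \ref{addleaves}, this lemma then yields the advertised conclusion that attaching sufficiently many leaves at a leaf of any tree produces a graceful tree.
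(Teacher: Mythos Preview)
Your proposal is correct and matches the paper's own proof exactly: the paper simply writes ``By Lemma \ref{labelinglemma} and Theorem \ref{labelingtheorem},'' which is precisely the combination you invoke. Your additional remark about clearing denominators just unpacks the relevant direction of Lemma \ref{labelinglemma}, so there is nothing to add.
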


\begin{proof}
By Lemma \ref{labelinglemma} and Theorem \ref{labelingtheorem}.
\end{proof}

\begin{theorem} \label{allrangerelaxed}
Let $T$ be a tree, and let $v$ be a vertex of $T$.  Then $T$ has a consistent range-relaxed graceful labeling $f$ with $f(v) = 0$.
\end{theorem}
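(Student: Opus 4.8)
The plan is to deduce this from Theorem \ref{labelingtheorem} and Lemma \ref{labelinglemma}, removing the hypothesis that $v$ be a leaf. By Lemma \ref{labelinglemma} it suffices to produce a labeling function $g$ of $T$ with $g(v) = 0$, and this I would do by rerunning the proof of Theorem \ref{labelingtheorem} with two small modifications. In that proof, $v$ being a leaf is used in exactly two places: to phrase condition (1) of the family $\mathcal{F}$ (``$H$ contains $v, n_v$'') and to furnish the base case (the single edge $\{v, n_v\}$, labeled $0$ and $1$). Everything afterward — the existence of a maximal $H_{\max} \in \mathcal{F}$, the choice of a non-full vertex $u$, and Cases 1 and 2 that enlarge $H_{\max}$ by attaching one or two children of $u$ — never touches the root $v$ once it is full.

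So first I would root $T$ at $v$, set $d = \deg_T v$ with children $n_1, \dots, n_d$, rewrite condition (1) as ``$H$ contains $v$ and all of $n_1, \dots, n_d$,'' and take as base case the subtree $H_0 = K_{1,d}$ on $\{v, n_1, \dots, n_d\}$ with $g(v) = 0$ and $g(n_i) = 2i - 1$. I would check that $H_0 \in \mathcal{F}$: it is a labeling function because its vertex labels are precisely $\{0\}$ together with its edge labels $\{1, 3, \dots, 2d - 1\}$; condition (2) holds because $v$ is full in $H_0$ while each $n_i$ has degree $1$ there; and condition (3) holds vacuously, since no two of $0, 1, 3, \dots, 2d - 1$ differ by a positive power of two. (For $d = 1$ this is exactly the original base case, so the modification is genuinely a generalization.)

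Next I would observe that $v$, being full in $H_0$, stays full in every member of $\mathcal{F}$ that contains $H_0$, in particular in a maximal $H_{\max}$; hence any non-full vertex of $H_{\max}$ is different from $v$. Consequently Cases 1 and 2 of the proof of Theorem \ref{labelingtheorem} apply word for word — each only creates children of the chosen non-full vertex $u \neq v$ and leaves $g(v) = 0$ intact — so $H_{\max} = T$ and therefore $T \in \mathcal{F}$. This yields a labeling function $g$ of $T$ with $g(v) = 0$, and Lemma \ref{labelinglemma} then produces the desired consistent range-relaxed graceful labeling $f$ of $T$ with $f(v) = 0$.

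The step I expect to require real care is the claim that Cases 1 and 2 transfer verbatim: one must confirm that none of the sub-claims there (injectivity of the extended map, that it remains a labeling function, and that property (3) is preserved) secretly used that the root had degree one. Each of those sub-arguments is local to the vertex $u$ being processed, its new child or children, and the inductive hypothesis that $H_{\max}$ satisfies (3), so I expect the verification to succeed, but it should be carried out line by line rather than merely asserted.
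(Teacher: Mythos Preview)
Your argument is correct, and it takes a genuinely different route from the paper's. The paper does not reopen the construction in Theorem~\ref{labelingtheorem}; instead it proceeds by induction on $\deg v$. The base cases $\deg v \in \{0,1\}$ are handled trivially and by Lemma~\ref{rangerelaxed}, respectively. For the inductive step, the paper peels off one branch $H_2$ of $T$ at $v$, lets $H_1$ be the rest, obtains consistent range-relaxed graceful labelings $f_1, f_2$ of $H_1, H_2$ with $f_i(v) = 0$ by the inductive hypothesis, and glues them via $f = f_1$ on $H_1$ and $f = (m+1)f_2$ on $H_2$, where $m = \max f_1$. The scaling guarantees injectivity and keeps the vertex and edge label sets matched.

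Your approach---replacing the seed edge of the family $\mathcal{F}$ by the star $K_{1,d}$ with odd labels $1,3,\dots,2d-1$---is more direct: it avoids the outer induction and the scaling trick, and it shows that the engine of Theorem~\ref{labelingtheorem} never needed the leaf hypothesis except to initialize. The paper's approach, by contrast, is more modular: it treats the leaf case as a black box and exhibits a general gluing principle (scale one piece past the other) that could be reused elsewhere. Your verification that condition~(3) holds vacuously for the star (no two odd numbers differ by a positive power of~$2$) and that $v$ is full in every $H \in \mathcal{F}$ under the modified condition~(1) are both sound, and you are right that Cases~1 and~2 are purely local to the non-full vertex $u \ne v$.
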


\begin{proof}
We proceed by induction on the degree of $v$.
\begin{itemize}
\item If $\deg v = 0$, then $T$ is a single vertex, and the statement holds.
\item If $\deg v = 1$, then the statement follows from Lemma \ref{rangerelaxed}.
\end{itemize}
Suppose the statement holds for $\deg v = k$, and consider a tree $T$ with $\deg v = k + 1$.  Let $H_{2}$ be the subtree of $T$ corresponding to one branch of $T$ at $v$, and let $H_{1}$ be the subtree obtained by deleting from $T$ all vertices of $H_{2}$ other than $v$.  By induction, $H_{1}, H_{2}$ have consistent range-relaxed graceful labelings $f_{1}, f_{2}$ with $f_{1}(v) = 0$ and $f_{2}(v) = 0$.

Let $m$ be the maximum value of $f_{1}$.  Then define a function $f$ on the vertex set of $T$ as follows.
$$f(u) = \left\{
     \begin{array}{lr}
       0 & \text{if } u = v\\
       f_{1}(v) & \text{if $u$ is a vertex of $H_{1}$}\\
       (m + 1)f_{2}(v) & \text{if $u$ is a vertex of $H_{2}$}
     \end{array}
   \right.
$$
The multiplication by $m + 1$ ensures that $f$ is injective and hence a labeling.  Since $f_{1}, f_{2}$ are consistent range-relaxed graceful labelings, $f$ is also a consistent range-relaxed graceful labeling, as desired.  This completes the induction, so the statement holds for all $T$.
\end{proof}


\begin{corollary}
Let $T$ be a tree, and let $v$ be a vertex of $T$.  There exists $N$ such that, for all $k\ge N$, attaching $k$ leaves to $T$ at $v$ produces a graceful tree.
\end{corollary}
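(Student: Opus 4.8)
The plan is to deduce this corollary immediately from the two ingredients assembled above: Theorem~\ref{allrangerelaxed}, which guarantees that every tree admits a consistent range-relaxed graceful labeling placing $0$ at any prescribed vertex, and Lemma~\ref{addleaves}, which converts such a labeling into graceful labelings of the trees obtained by attaching sufficiently many leaves at the vertex labeled $0$. So the corollary is a one-line composition of results already in hand.

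Concretely, I would first apply Theorem~\ref{allrangerelaxed} with the given vertex $v$ to obtain a consistent range-relaxed graceful labeling $f$ of $T$ with $f(v) = 0$. Then I would feed $f$ into Lemma~\ref{addleaves} to produce the constant $N$: writing $n$ for the number of edges of $T$ and $M$ for the maximum value taken by $f$, any $k$ with $n + k > M$ works. Indeed, the resulting tree $T'$ then has more edges than $M$, so the labels in $\{0, 1, \ldots, n + k\}$ not used by $f$ can be assigned bijectively to the $k$ attached leaves; and because $f(v) = 0$, each attached edge (joining $v$ to a new leaf) receives exactly the label of its new leaf. Hence the edge labels of $T'$ are precisely $\{1, \ldots, n + k\}$ and the labeling is graceful. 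Taking $N = \max(M - n, 1)$ — or any value forcing $n + N > M$ — completes the argument.

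There is essentially no obstacle left at this stage; all the real work lies in the results already proved. The genuine difficulty was Theorem~\ref{labelingtheorem}, whose proof builds a maximal subtree $H_{\text{max}}$ in the family $\mathcal{F}$ and must argue, via a careful choice of a rational $q$ avoiding the equivalence classes under $q_1 \sim q_2 \iff q_1/q_2 = 2^k$, that $H_{\text{max}} = T$; together with Lemma~\ref{labelinglemma}, which clears denominators to pass from a rational labeling function to an integer-valued consistent range-relaxed graceful labeling, and the degree induction in Theorem~\ref{allrangerelaxed} that removes the ``$v$ is a leaf'' hypothesis. Given those, the corollary follows formally, and the only thing to take care over in the write-up is stating the bound on $N$ explicitly enough to make the citation of Lemma~\ref{addleaves} literal rather than merely morally correct.
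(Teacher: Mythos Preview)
Your proposal is correct and matches the paper's own proof exactly: the paper simply cites Lemma~\ref{addleaves} and Theorem~\ref{allrangerelaxed}. Your additional elaboration on the explicit bound for $N$ is fine but unnecessary, since Lemma~\ref{addleaves} already handles that.
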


\begin{proof}
By Lemma \ref{addleaves} and Theorem \ref{allrangerelaxed}.
\end{proof}


\subsection{Trees with almost perfect matchings}

In section \ref{perfect_matching_prelim_section}, we used the $\Delta$-construction to prove the following result:
\begin{theorem}
{\normalfont (Broersma \& Hoede \cite{broersma1999another})} If $T$ has a perfect matching, and the contree of $T$ is graceful, then $T$ is graceful.
\end{theorem}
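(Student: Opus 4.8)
The plan is to realize $T$ as the output of the generalized $\Delta$-construction applied to $S$, the contree of $T$, with the building block taken to be the single edge $P_1$. Since $S$ is obtained from $T$ by contracting the $n_S$ edges of a perfect matching $M$, the vertices of $S$ correspond bijectively to the matched pairs of $T$, i.e.\ to $n_S$ disjoint copies of $P_1$; two of these copies are joined by an edge of $T\setminus M$ exactly when the corresponding vertices of $S$ are adjacent. So as an abstract tree, $T$ is precisely $n_S$ copies of $P_1$ wired together according to the adjacency structure of $S$, which is the shape required by the generalized $\Delta$-construction. Setting up this identification is the first step.

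Next I would invoke the hypothesis that $S$ is graceful together with Theorem \ref{delta} in its generalized form: the generalized $\Delta$-construction produces a graceful tree provided the copies of the building block can be oriented so that every edge joining two distinct copies connects a pair of \emph{corresponding} vertices. For $P_1$ this means: orient each matching edge (declare one endpoint its head, the other its tail), and require that for every edge $e$ of $T\setminus M$, the two endpoints of $e$ play the same role (both heads, or both tails) in their respective matching edges. This is exactly the conclusion of Lemma \ref{subdivide_lemma}, so the second step is to apply that lemma to $T$ and $M$, orient the copies of $P_1$ accordingly, and conclude that $T$ is graceful.

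The step that carries the real content is the verification that the orientation supplied by Lemma \ref{subdivide_lemma} is the same data needed to run the generalized $\Delta$-construction — in other words, that ``corresponding vertices of adjacent copies of $P_1$'' translates precisely into ``both heads or both tails.'' Once that dictionary is set up, nothing further is needed: Lemma \ref{subdivide_lemma} does the combinatorial work and the generalized version of Theorem \ref{delta} does the labeling work. I do not expect a genuine obstacle beyond stating this correspondence carefully, since the generalized $\Delta$-construction was explicitly designed so that the joining edges may land on arbitrary corresponding pairs rather than a single fixed vertex of the building block; the whole argument is essentially a translation of the matching-orientation condition into the language of that construction.
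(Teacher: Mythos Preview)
Your proposal is correct and follows essentially the same route as the paper: realize $T$ via the generalized $\Delta$-construction with base $S$ and building block $P_1$, then invoke Lemma~\ref{subdivide_lemma} to orient the matching edges so that every non-matching edge joins corresponding vertices. The paper's proof is exactly this argument, stated slightly more tersely.
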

\noindent We now introduce the concept of an \emph{almost perfect matching}, extend the definition of a contree to trees with almost perfect matchings,
and use the $\Delta_{+1}$-construction to prove an analogous 
result.


\begin{definition}
We say that $M$ is an \emph{almost perfect} matching of $T$ if $M$ is incident with all but at most one of the vertices of $T$.
\end{definition}

\begin{definition}
Let $T$ be a tree, and let $M$ be an almost perfect matching of $T$.  The \emph{contree} of $T$ is the tree obtained from $T$ by contracting the edges in $M$.
\end{definition}

\begin{theorem}
If $T$ has an almost perfect matching, and the contree of $T$ is 0-rotatable, then $T$ is graceful.
\end{theorem}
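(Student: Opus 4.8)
The plan is to split on whether the almost perfect matching $M$ is actually perfect. If $M$ is a perfect matching of $T$ whose contree $S$ is $0$-rotatable, then $S$ is in particular graceful, and Theorem~\ref{broersma's_mistake} immediately gives that $T$ is graceful. So the substance lies in the case where $M$ misses exactly one vertex $w$. There $|V(T)|$ is odd, the contree $S$ has $n_S$ vertices, and exactly one vertex $u$ of $S$ corresponds to $w$ (the vertex $w$ is not contracted), while the other $n_S-1$ vertices correspond to the edges of $M$. I would mimic the proofs of Theorems~\ref{subdivide} and~\ref{broersma's_mistake}, but using the generalized $\Delta_{+1}$-construction rather than the $\Delta$-construction: present $T$ as $S\,\Delta_{+1}\,P_1$, with $u$ the distinguished vertex of $S$, each edge of $M$ a copy of $P_1$, and each edge of $T$ not in $M$ the edge joining the two copies (or the copy and $u$) determined by its endpoints in $S$.

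For the labelings the construction requires, a graceful labeling $g$ of $P_1$ with a chosen leaf $v$ labeled $0$ is trivial, and a graceful labeling $f$ of $S$ with $f(u)=n_S-1$ exists precisely because $S$ is $0$-rotatable: take a graceful labeling of $S$ with $u$ labeled $0$ and pass to its complementary labeling. So the only real work is to orient the copies of $P_1$ compatibly. Recall that in the generalized $\Delta_{+1}$-construction an edge between two copies may connect the two $v$-vertices or the two non-$v$-vertices, but an edge from a copy to $u$ must meet that copy at its $v$-vertex. Thus I need to designate a $v$-vertex for each edge of $M$ so that every non-$M$ edge of $T$ joining two distinct $M$-edges has its endpoints both $v$-vertices or both not, and every non-$M$ edge of $T$ incident with $w$ has its non-$w$ endpoint a $v$-vertex.

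To produce such a choice, I would add an auxiliary leaf $w'$ at $w$ (exactly as a leaf is added at $u$ in the proof of Theorem~\ref{subdivide}), so that $M\cup\{ww'\}$ is a genuine perfect matching of $T+w'$, apply Lemma~\ref{subdivide_lemma} to orient that matching so that the two ends of every non-matching edge are both heads or both tails, and then reverse all matching edges if necessary (which preserves the property) so that $w$ is a head of $ww'$. Declaring the head of each $M$-edge to be its $v$-vertex then meets both requirements: the first is Lemma~\ref{subdivide_lemma} directly, since the non-matching edges of $T+w'$ are exactly the non-$M$ edges of $T$, and the second holds because for an edge $wz\notin M$ the endpoints $w$ and $z$ are both heads or both tails, whence $z$ is a head because $w$ is. With the copies so oriented, applying the generalized $\Delta_{+1}$-construction to $S$, $u$, $P_1$, $v$ together with $f$ and $g$ shows $T$ is graceful.

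I expect this orientation step to be the only genuine obstacle: one must simultaneously satisfy the rigid constraint at $w$ (every non-matching edge there must attach at the $v$-vertex of the opposite $M$-edge) and the more flexible ``both heads or both tails'' constraint between copies. The device of padding $w$ with a single extra leaf to reduce to an honest perfect matching, and then globally reversing the matching orientation so that $w$ sits at a head, is what reconciles the two; everything else parallels the existing proofs closely enough that I would only sketch it.
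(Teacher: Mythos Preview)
Your proposal is correct and follows essentially the same approach as the paper: realize $T$ via the generalized $\Delta_{+1}$-construction with $S$ the contree, $u$ the vertex coming from the unmatched vertex, and copies of $P_1$ for the matching edges; use $0$-rotatability plus complementation to label $u$ with $n_S-1$; and obtain the compatible orientation by attaching a dummy leaf at the unmatched vertex, applying Lemma~\ref{subdivide_lemma}, and reading off $v$-vertices from heads/tails. Your treatment is in fact slightly more careful than the paper's, since you explicitly dispose of the case where $M$ is already perfect (via Theorem~\ref{broersma's_mistake}), and your ``reverse all arrows if necessary so $w$ is a head'' is just a rephrasing of the paper's two-case orientation rule.
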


\begin{proof}
Let $S$ be the contree of $T$, and let $n_{S}$ be the number of vertices of $S$.  The tree $T$ can be formed by the generalized $\Delta_{+1}$-construction, by attaching $n_{S} - 1$ copies of $P_{1}$, and one single vertex, all to each other, according to the structure of $S$.

Let $u$ be the vertex of $S$ corresponding to this single vertex.  Since $S$ is 0-rotatable, there is a graceful labeling of $S$ with $u$ labeled 0.  Therefore, by taking the complementary labeling, there is a graceful labeling of $S$ with $u$ labeled $n_{S} - 1$, as is required for the $\Delta_{+1}$-construction.

As in the proof of Theorem \ref{subdivide}, we must show that we can orient the copies of $P_{1}$ such that edges between different copies connect corresponding vertices.  To do this, we attach a leaf to $T$ at the vertex $u$, resulting in a tree with a perfect matching, and then apply Lemma \ref{subdivide_lemma}.  We orient the copies of $P_{1}$ according to the directions of the edges given by the lemma, as follows:
\begin{itemize}
\item If $u$ is the head of an edge in the matching, then for each copy of $P_{1}$, put the vertex corresponding to $v$ at the head of its edge.
\item If $u$ is the tail of an edge in the matching, then for each copy of $P_{1}$, put the vertex corresponding to $v$ at the tail of its edge.
\end{itemize}
This ensures that the copies of $P_{1}$ are oriented in the appropriate way, so the resulting tree is graceful by the generalized $\Delta_{+1}$-construction.
\end{proof}


\section{Conclusion}


In this thesis, we have introduced and developed three important new ideas:
\begin{enumerate}
\item Consider rearranging branches at all internal vertices. (3.7, 4.1)
\item All type-2 transfers are type-1 transfers in disguise, and hence can be removed from the discussion. (3.4)
\item Transfer backwards to manipulate the resulting labels. (3.5, 4.3, 4.4)
\end{enumerate}
Using these ideas, we have proved that several classes of diameter-6 trees are graceful.  More importantly, we have introduced new tools for proving that trees are graceful, built around the concept of a \emph{permutable sequence}.

If we restrict ourselves to the straightforward sequence of transfers in the first level, as in Lemma \ref{auxiliary_radial_tree}, then each tree corresponds to a \emph{blockwise permutable sequence (BPS)}.  However, if we use a more complicated sequence of transfers in the first level, as in Theorem \ref{rearrange_those_bad_boys}, then each tree corresponds to a more complicated permutable sequence.  These more complicated permutable sequence warrant further investigation.


To guide further research, we pose two conjectures.

\begin{conjecture}
Let $T$ be a rooted diameter-6 tree, with central vertex and root $v$, such that the vertex $v$, and all vertices of distance 1 from $v$, have an odd number of children.  Then $T$ has a graceful labeling $f$ with $f(v) = 0$.
\end{conjecture}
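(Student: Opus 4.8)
The plan is to push the blockwise-permutable-sequence machinery of Section~\ref{preliminary} one notch further, so that it covers \emph{every} odd BPS of depth $2$ with no restriction whatsoever on its depth-$0$ entries. I would begin with the reduction used in the proofs of Theorems~\ref{first_big_boy}, \ref{second_big_boy}, and \ref{rearrange_those_bad_boys}: delete the distance-$3$ leaves of $T$ to obtain a tree $S$ in which every level-$2$ vertex is a leaf, so that $S$ is odd and radial (odd because $v$ and the distance-$1$ vertices have an odd number of children); build a graceful tree $S'$ by attaching leaves to the first leaf of $S$ in some lexicographical order $\mathcal{L}$, via Lemma~\ref{auxiliary_radial_tree}; and recover $T$ from $S'$ by transfers along the alternating sequence formed by the leaves of $S$. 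Exactly as in the proof of Theorem~\ref{first_big_boy}, the vertex $v$ then corresponds to an odd BPS $\mathcal{B}=(\mathcal{B}_1,\dots,\mathcal{B}_k)$ of depth $2$ whose depth-$1$ constituents $\mathcal{B}_i=(n_{i,1},\dots,n_{i,k_i})$ have $k_i$ odd but $n_{i,j}\ge 0$ completely arbitrary, and the existence of a suitable $\mathcal{L}$ (hence a graceful labeling with $f(v)=0$) follows the moment $\mathcal{B}$ is attainable. So the whole conjecture reduces to a single combinatorial statement: \emph{every odd BPS of depth $2$ is attainable.}

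To attack that statement I would merge two ingredients already present in the paper. The first is the reducible/irreducible case analysis of Lemma~\ref{odd_BPS_depth_2}, together with its zero-tolerant refinement Lemma~\ref{odd_BPS_depth_2_with_zeros}, which classifies each depth-$1$ block $\mathcal{B}_i$ by the residue mod $4$ of its number of positive even entries, plus bookkeeping for its zeros. The second is the back-and-forth sequence of transfers of Theorem~\ref{rearrange_those_bad_boys}: routing leaves $v_1\to v_j\to v_1$ rather than straight through is precisely what lets a transfer leave behind $0$ leaves at an intermediate vertex, i.e.\ plant a distance-$2$ leaf beneath its distance-$1$ parent. Concretely, I would attach to each $\mathcal{B}_i$ a pair of \emph{endings} in the style of Lemma~\ref{large_depth_attainable} --- with the definitions of the endings relaxed to allow $e/0$'s as in Lemma~\ref{big_tree_last_lemma} --- depending jointly on the number of positive even entries of $\mathcal{B}_i$ modulo $4$ and on the disposition of its $0$'s; verify that the ending-pair calculus closes up under concatenating an odd number of blocks; and conclude that $\mathcal{B}$ is associated with one of $(\varnothing,\varnothing)$, $(\varnothing,E1)$, $(\varnothing,E2)$ and is therefore attainable. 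Assembling this with the reduction above would yield the theorem. Note that, in contrast to the diameter-$8$ situation behind Theorem~\ref{fourth_big_boy}, no "number of even entries $\not\equiv 3\pmod 4$" hypothesis should be needed here, since Lemma~\ref{odd_BPS_depth_2} already dispenses with it at depth $2$.

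The hard part --- and, I expect, the reason the statement is left as a conjecture --- is the zeros. Theorem~\ref{rearrange_those_bad_boys} handles blocks whose entries are odd or $0$, and Lemma~\ref{odd_BPS_depth_2_with_zeros} handles blocks with at most one $0$ that also carries a positive even companion; but a block such as $(e,0,0)$ --- a distance-$1$ vertex with one distance-$2$ child of positive even degree and two distance-$2$ leaves --- lies outside both regimes, and such a tree is not covered by Theorems~\ref{first_big_boy}, \ref{second_big_boy}, or \ref{rearrange_those_bad_boys}. The primitive $v_1\to v_j\to v_1$ leaves $0$ at only one vertex per excursion and always feeds the excursion from the same source, so leaving $0$ at two or more consecutive second-level vertices within one block seems to demand either a genuinely new transfer pattern or a choice of $\mathcal{L}$ that herds all the awkward $0$'s into a single block whose cost can be absorbed by a large block of $e$'s elsewhere in the tree. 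Showing that this can always be arranged --- in particular when some distance-$1$ vertex has no distance-$2$ child of positive even degree at all, so there is no nearby block of $e$'s to exploit --- is the step at which the argument currently stalls; to finish one would either have to exhibit such an arrangement in every admissible configuration, or, for the recalcitrant trees, abandon the transfer approach and argue gracefulness by a different route.
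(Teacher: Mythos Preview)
The statement is a \emph{conjecture} in the paper, not a theorem; the paper gives no proof and explicitly leaves it open in the Conclusion. So there is no argument to compare yours against. Your reduction is exactly the one the paper sets up: the conjecture is equivalent to the statement that every odd BPS of depth $2$ (with arbitrary non-negative entries) is attainable, and Theorems~\ref{first_big_boy}, \ref{second_big_boy}, and \ref{rearrange_those_bad_boys} are precisely the special cases you list.

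Your diagnosis of the obstruction is also on target, and matches the paper's. You single out a block like $(e,0,0)$ as lying outside both Lemma~\ref{odd_BPS_depth_2_with_zeros} and the back-and-forth mechanism of Theorem~\ref{rearrange_those_bad_boys}; the paper itself, immediately after stating the conjecture, points to the BPS $((o,0,0),(o,0,0),(e))$ as the kind of configuration that defeats the existing methods. In other words, the step at which your argument ``currently stalls'' is exactly where the paper's methods run out, and the author does not claim to know how to get past it. Your proposal is a faithful outline of what a proof would have to accomplish, but it is not a proof, and neither is anything in the paper.
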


\noindent This conjecture has guided much of this thesis, and Theorems \ref{first_big_boy}, \ref{second_big_boy}, and \ref{rearrange_those_bad_boys} are special cases.  To get a sense of some obstructions to the conjecture, consider a diameter-6 tree corresponding to the following BPS:
$$((o, 0, 0), (o, 0, 0), (e))$$

\begin{conjecture}
Let $T$ be a rooted diameter-6 tree, with central vertex and root $v$, such that
\begin{itemize}
\item The vertex $v$ has an odd number of children.
\item Exactly four vertices of distance 1 from $v$ have an even number of children.
\end{itemize}
Then $T$ has a graceful labeling $f$ with $f(v) = 0$.
\end{conjecture}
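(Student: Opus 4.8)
The plan is to carry out the strategy of Section~\ref{blockwise_section} --- as in Theorems~\ref{first_big_boy}, \ref{second_big_boy} and~\ref{rearrange_those_bad_boys} --- but for a blockwise permutable sequence that is no longer \emph{odd}. Under the correspondence between diameter-$6$ trees and depth-$2$ BPS's, $T$ yields a BPS $\mathcal{B}=(\mathcal{B}_1,\dots,\mathcal{B}_k)$ in which $k$ (the number of children of $v$) is odd, but exactly four of the depth-$1$ blocks $\mathcal{B}_i$ have even length, one for each distance-$1$ vertex with an even number of children. So two ingredients must be extended: the construction of the auxiliary gracefully labeled tree (Lemma~\ref{auxiliary_radial_tree} requires $S = T$ minus its leaves to be \emph{odd} radial, which it now is not), and the attainability results for BPS's (which are stated only for odd BPS's).

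I would first reduce to the case where every leaf of $T$ lies at distance exactly $3$ from $v$: leaves at distance $2$ are absorbed by the $e/0$-bookkeeping, exactly as in the step from Theorem~\ref{first_big_boy} to Theorem~\ref{second_big_boy} and the even-caterpillar argument of Section~\ref{even-caterpillar_section}, and the (at most four) distance-$1$ leaves --- which are among the even-degree distance-$1$ vertices, with zero children --- are peeled off at the end. For the auxiliary tree I would build the analogue of Lemma~\ref{auxiliary_radial_tree} for a radial but non-odd $S$: replace the straightforward sequence $s_1\to s_2\to\cdots$ on $K_{1,n}$ (which can only leave an odd number of leaves at each step) by a back-and-forth sequence through the first-level vertices, as in Theorem~\ref{rearrange_those_bad_boys} and Section~\ref{even-caterpillar_section}, positioning the four even-degree vertices near the turn-arounds so that they receive the required even counts, while keeping the second-level vertices an alternating sequence equipped with a transferable set of leaves. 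This step is bookkeeping-heavy but should go through by the standard use of Lemmas~\ref{transfer_parity} and~\ref{jake_the_snake}.

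The crux is then to show that a depth-$2$ BPS $(\mathcal{B}_1,\dots,\mathcal{B}_k)$ with $k$ odd, all entries positive, and exactly four blocks of even length, contains an attainable sequence. I would extend the ``endings'' calculus of the proof of Lemma~\ref{large_depth_attainable}: that recursion only requires its building blocks to satisfy the ending-pair claim, and $k$ is odd, so it would suffice to show that an even-length depth-$1$ block also satisfies the claim --- namely, that it is associated with the ending pairs dictated by its number of even entries modulo $4$ --- by adapting the $a/b$-block case analysis behind Lemmas~\ref{odd_BPS_depth_2} and~\ref{odd_BPS_depth_2_with_zeros} (pairing two even blocks with an odd block into an odd-length super-block where that is more convenient). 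Lemma~\ref{big_tree_last_lemma} would cover any entries equal to $0$ surviving from distance-$2$ leaves. One then concludes that $\mathcal{B}$ is attainable by the $(\varnothing,\varnothing)/(\varnothing,E1)/(\varnothing,E2)$ outcome, exactly as in Lemma~\ref{large_depth_attainable}.

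The main obstacle is the $E3$ phenomenon --- equivalently, a block whose entries cannot be permuted to avoid a number of even entries $\equiv 3\pmod 4$ --- which is already what keeps the all-odd case of the preceding conjecture open. In a genuine odd BPS such a residue can always be balanced against other blocks through the three-at-a-time recursion, but here the four even blocks are structurally rigid: a grandchild of $v$ cannot be moved out of its parent's block, so one cannot simply regroup entries to neutralise a bad residue. The heart of a proof would be showing that four even blocks always leave enough freedom to absorb a bad residue against the surrounding odd blocks; I expect this balancing argument, and not any of the transfer bookkeeping, to be the genuine difficulty, and it is presumably why the statement is posed as a conjecture rather than a theorem.
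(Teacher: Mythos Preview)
The statement is a \emph{conjecture} in the paper, not a theorem: the paper gives no proof. Its only commentary is the single sentence following the conjecture, which says that this case ``provides a relatively simple context to apply a more complicated sequence of transfers in the first level'' and that ``the main challenge is to find a clean way to describe the resulting order of leaves.'' So there is no proof to compare your proposal against.

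Your outline is consistent with, and considerably more detailed than, the paper's hint. You correctly identify that one must abandon the straightforward Lemma~\ref{auxiliary_radial_tree} construction in favour of a back-and-forth first-level transfer (the paper's ``more complicated sequence of transfers''), and you correctly locate the real difficulty in extending the attainability/endings machinery to non-odd depth-$1$ blocks. You also rightly flag the $E3$ obstruction and the rigidity of the four even blocks as the genuine obstacle --- and your closing sentence acknowledges that this is why the statement remains a conjecture. In short: your proposal is not a proof and does not claim to be one; it is a reasonable research plan that matches the paper's suggested direction, and neither you nor the paper resolves the problem.
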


\noindent This conjecture provides a relatively simple context to apply a more complicated sequence of transfers in the first level;  
the main challenge is to find a clean way to describe the resulting order of leaves.

\addcontentsline{toc}{section}{References}
\bibliographystyle{amsplain}
{\raggedright\bibliography{main}}

\end{document}